\DeclareMathSymbol{\pitchfork}    {\mathrel}{AMSa}{"74} 
\definecolor{darkblue}{rgb}{0,0,0.4}
\tikzset{cdlabel/.style={above,sloped,
   execute at begin node=$\scriptstyle,execute at end node=$}}
\tikzset{alga/.style={->, thick}}
\tikzset{taa/.style={->, double}}
\newcommand{\RR}{\mathbb R}
\newcommand{\ZZ}{\mathbb Z}
\newcommand{\FF}{\mathbb F}
\newcommand{\Image}{\mathrm{Im}}
\newcommand{\bD}{\mathbb{D}}
\newcommand{\connectsum}{\mathbin{\#}}
\newcommand{\co}{\nobreak\mskip2mu\mathpunct{}\nonscript
  \mkern-\thinmuskip{:}\penalty300\mskip6muplus1mu\relax}
\newcommand{\abs}[1]{{\lvert #1 \rvert}}
\newcommand{\OneHalf}{{\textstyle\frac{1}{2}}}
\newcommand{\onto}{\twoheadrightarrow}
\newcommand{\bdy}{\partial}
\newcommand{\lbracket}{[}
\newcommand{\rbracket}{]}
\newcommand{\hotimes}{\mathbin{\widehat{\otimes}}}
\DeclareMathOperator{\End}{End}
\DeclareMathOperator{\Hom}{Hom}
\DeclareMathOperator{\spin}{spin}
\newcommand{\SpinC}{\spin^c}
\DeclareMathOperator{\Pin}{pin}
\DeclareMathOperator{\gr}{gr}
\newcommand{\grt}{\widetilde{\gr}}
\DeclareMathOperator{\wingr}{wn}
\newcommand{\Barop}{{\mathrm{Bar}}}
\DeclareMathOperator{\Cobarop}{Cob}
\theoremstyle{plain}
\numberwithin{equation}{section}
\newtheorem{theorem}[equation]{Theorem}
\newtheorem{proposition}[equation]{Proposition}
\newtheorem{lemma}[equation]{Lemma}
\newtheorem{corollary}[equation]{Corollary}
\newtheorem{convention}[equation]{Convention}
\newtheorem{definition}[equation]{Definition}
\theoremstyle{definition}
\theoremstyle{remark}
\newtheorem{example}[equation]{Example}
\newtheorem{remark}[equation]{Remark}
\newcommand{\HF}{\mathit{HF}}
\newcommand{\HFa}{\widehat {\HF}}
\newcommand{\HFm}{{\HF}^-}
\newcommand{\CF}{{\mathit{CF}}}
\newcommand{\x}{\mathbf x}
\newcommand{\y}{\mathbf y}
\newcommand{\w}{\mathbf w}
\newcommand\HH{\mathit{HH}}
\newcommand\HC{\mathit{HC}}
\newcommand\Hochschild\HH
\newcommand{\Ainf}{A_\infty}
\newcommand{\Alg}{\mathcal{A}}
\newcommand\Blg{\mathcal{B}}
\newcommand{\DA}{\textit{DA}}
\newcommand{\cZ}{\mathcal{Z}}
\newcommand{\PtdMatchCirc}{\cZ}
\newcommand{\PMC}{\PtdMatchCirc}
\newcommand{\CircPts}{{\mathbf{a}}}
\newcommand\Id{\mathbb{I}}
\newcommand\Ground{\mathds{k}}
\newcommand\Tensor{\mathcal T}
\newcommand\Zmod[1]{\mathbb{Z}/{#1}\mathbb{Z}}
\newcommand{\Field}{{\FF_2}}
\newcommand{\FieldSup}[1]{(\FF_2)^{#1}}
\renewcommand{\th}{^\text{th}}
\newcommand{\bigGroup}{G'}
\newcommand{\smallGroup}{G}
\newcommand{\grb}{\gr'}
\newcommand{\Torus}{\mathbb{T}}
\newcommand{\Cat}{\mathscr{C}}
\DeclareMathOperator{\Mor}{Mor}
\DeclareMathOperator{\ob}{Ob}
\newcommand{\op}{\mathrm{op}}
\newcommand\honestalg[3]{\bigl\lbracket
\begin{smallmatrix} #1\@ifempty{#3}{}{&#3} \\ #2 \end{smallmatrix}
\bigr\rbracket}
\newcommand{\lsup}[2]{{}^{#1}\mskip-.6\thinmuskip#2}
\newcommand{\lsupv}[2]{{}^{#1}#2}
\newcommand{\SmallCobar}{\Alg'}
\newcommand{\SmallCobarZ}{\SmallCobar_{\ZZ}}
\newcommand{\AsUnDefAlg}{\Alg_-^{0,\mathrm{as}}} 
\newcommand{\UnDefAlg}{\Alg_-^{0}}
\newcommand{\MAlg}{\Alg_-} 
\newcommand{\MAlgA}{\Alg^A_-}
\newcommand{\wild}{\mathalpha{*}}
\newcommand{\corolla}[1]{\Psi_{#1}}
\newcommand{\wcorolla}[2]{\corolla{#1}^{#2}}
\newcommand{\kotimes}[1]{\otimes}
\newcommand{\aAlg}{\widehat{\Alg}}
\newcommand{\obstr}{\mathfrak{O}}
\newcommand{\fobstr}{\mathfrak{F}}
\newcommand{\Filt}{\mathcal{F}}
\newcommand{\WFuk}{\mathcal{W}}
\newcommand{\LagGr}[2][{}]{\mathcal{L}\!\mathit{ag}_{#1}(#2)}
\newcommand{\LagGras}{\mathcal{L}\!ag}
\newcommand{\GrSet}{S}
\newcommand{\GrGroupoid}{\mathcal{G}}
\newcommand\Graphs{\mathfrak S}
\newcommand\Region{\mathcal D}
\newcommand{\CDisk}{\mathbb{D}}
\newcommand{\gpath}[3]{\gamma_{#1}^{#2\to#3}}%
\newcommand{\hpath}[4]{\eta_{#1}^{\phi^{#3}(#2)\to\phi^{#4}(#2)}}%
\newcommand{\epath}[3]{\epsilon_{#1\to#2}^{#3}}
\newcommand\AsUnDefAlgZ{\Alg_{-;\ZZ}^{0,\mathrm{as}}} 
\newcommand\UnDefAlgZ{\Alg_{-;\ZZ}^{0}} 
\newcommand\MAlgZ{\Alg_{-;\ZZ}}
\newcommand\Obj{\mathrm{Obj}}
\newread\testin
\def\input@path{{}{draws/}}
\def\mathcenter#1{%
  \vcenter{\hbox{$#1$}}%
}
\newcommand\mi@kern[1]{%
  \settowidth\@tempdima{$\mi@obj^{#1}$}
  \kern-\@tempdima
  #1
  \settowidth\@tempdima{$\mi@obj$}
  \kern\@tempdima
}
\newtoks\mi@toksp
\newtoks\mi@toksb
\DeclareRobustCommand{\manyindices}[5]{
  \def\mi@obj{#5}
  \mi@toksp\expandafter{\mi@kern{#2}}
  \mi@toksb\expandafter{\mi@kern{#1}}
  \@mathmeasure4\textstyle{#5_{#1}^{#2}}
  \@mathmeasure6\textstyle{#5_{#3}^{#4}}
  \dimen0-\wd6 \advance\dimen0\wd4
  \@mathmeasure8\textstyle{\hphantom{{}_{#1}^{#2}}#5^{\the\mi@toksp#4}_{\the\mi@toksb#3}}
  \hbox to \dimen0{}{\kern-\dimen0\box8}
}
\begin{document}
\title{A bordered {$\HFm$} algebra for the torus}

\author[Lipshitz]{Robert Lipshitz}
\thanks{\texttt{RL\ \ was supported by NSF grant DMS-1810893.}}
\address{Department of Mathematics, University of Oregon\\
  Eugene, OR 97403}
\email{lipshitz@uoregon.edu}

\author[Ozsv\'ath]{Peter Ozsv\'ath}
\thanks{\texttt{PSO was supported by NSF grants DMS-1708284  and DMS-2104536.}}
\address {Department of Mathematics, Princeton University\\ New
  Jersey, 08544}
\email {petero@math.princeton.edu}

\author[Thurston]{Dylan~P.~Thurston}
\thanks{\texttt{DPT was supported by NSF grant DMS-2110143.}}
\address{Department of Mathematics\\
         Indiana University,
         Bloomington, Indiana 47405\\
         USA}
\email{dpthurst@indiana.edu}

\begin{abstract}
  We describe a weighted $\Ainf$-algebra associated to the torus.  We
  give a combinatorial construction of this algebra, and an abstract
  characterization. The abstract characterization also gives a
  relationship between our algebra and the wrapped Fukaya category of
  the torus.  These algebras underpin the (unspecialized) bordered
  Heegaard Floer homology for three-manifolds with torus boundary,
  which will be constructed in forthcoming work.
\end{abstract} 

\subjclass[2020]{Primary 57R58; 
  Secondary 18G70, 
  53D40, 
  57M27} 
\keywords{}

\maketitle 

\tableofcontents


\section{Introduction}

This is a tale of an algebra. In fact, it is a tale of several algebras:
\begin{itemize}
\item $\aAlg$, the algebra introduced in~\cite{LOT1} governing $\HFa$
  for 3-manifolds with torus boundary. (In~\cite{LOT1}, $\aAlg$ is
  denoted $\Alg(T^2)$.) The algebra $\aAlg$ is a finite-dimensional,
  associative algebra over~$\Field$ with a grading by a particular group
  $G$ with a distinguished central element $\lambda$.
\item $\AsUnDefAlg$, an associative algebra over $\Field$ containing
  $\aAlg$ as a subalgebra. The algebra $\AsUnDefAlg$ is also graded by
  $G$. The algebra $\AsUnDefAlg$ is
  infinite-dimensional, but is finitely generated and is
  finite-dimensional in each grading. The definition of $\AsUnDefAlg$
  is an unsurprising extension of the definition of $\aAlg$.
\item $\UnDefAlg$, an $\Ainf$-algebra over $\Field[U]$, with trivial
  differential ($m_1=0$). The algebra $\UnDefAlg$ is free over
  $\Field[U]$, and the $\Ainf$-operations are $U$-equivariant. Since
  $m_1=0$, $\UnDefAlg$ has an underlying associative algebra; this
  algebra is $\AsUnDefAlg[U]\coloneqq \AsUnDefAlg\otimes\Field[U]$.
\item $\MAlg=\MAlgA$, the main algebra of interest for studying $\HF^-$
  for $3$-manifolds with torus boundary. This is what we call a
  \emph{weighted $\Ainf$-algebra} (see
  Section~\ref{sec:weighted-alg} and \cite{LOT:abstract}). The weight-zero part of $\MAlgA$ is
  $\UnDefAlg$.
\end{itemize}

Although weighted $\Ainf$-algebras are discussed in detail in
Section~\ref{sec:weighted-alg}, perhaps a few words are in order
now. In brief, a weighted $\Ainf$-algebra is a curved $\Ainf$-algebra
$A[[t]]=A\otimes_\Field\Field[[t]]$ over $\Field[[t]]$, such that
 the curvature lies in the ideal $tA[[t]]$.
The operations $\mu\co
T^*A[[t]]\to A[[t]]$ are determined by the maps
$\mu_{n}^k\co A^{\otimes n}\to
t^kA$, which we call the \emph{weight $k$ part} of $\mu_n$. 
Weighted
$\Ainf$-algebras are also called one-parameter deformations of
$\Ainf$-algebras (see, e.g.,~\cite[Section 3b]{Seidel15:quartic}).

While this paper is self contained, we explain briefly how the algebra fits into the broader
bordered context from~\cite{LOT1,LOT:torus-mod}.  The algebra $\aAlg$ is an algebra of Reeb
chords that are not permitted to cross a certain basepoint adjacent to
the boundary; multiplication is encoded by collisions of curves at
``east infinity''. The algebra $\AsUnDefAlg$ is a larger algebra,
consisting of all Reeb chords. The $\Ainf$ deformation $\UnDefAlg$
is needed to account for boundary deformations, i.e. holomorphic
curves whose boundary lie entirely in the $\alpha$-tori. Indeed, the
deformation $\UnDefAlg$ is the one which counts the disk that covers
the torus once.
(See~\cite[Figure~\ref{TM:fig:simple-bdy-degen-eg}]{LOT:torus-mod}.)  The
weighted deformation $\MAlg$ is the algebraic object which also encodes the
Reeb orbits.
(See~\cite[Figure~\ref{TM:fig:orbit-curve-eg}]{LOT:torus-mod}.)

The main goal of this paper is to define $\MAlg$. In fact, we give two
paths to defining $\MAlg$. The first construction is based on the
combinatorics of certain kinds of planar graphs. We give this construction 
in Section~\ref{sec:def}.
Interpreting the planar graphs as coverings of the torus, one can think of this definition as an
almost trivial case of the theory of pseduoholomorphic curves; see Section~\ref{subsec:Immersions}.

The second path to defining $\MAlg$
is more indirect. As mentioned above, it is easy to
define the algebra $\AsUnDefAlg$. The algebra $\AsUnDefAlg$ turns out
to have a unique nontrivial $\Ainf$-deformation over $\Field[U]$
respecting the gradings, up to $\Ainf$-isomorphism; see Theorem~\ref{thm:UnDefAlg-unique}. This deformation
is $\UnDefAlg$. (Theorem~\ref{thm:UnDefAlg-unique} follows from a computation of
the Hochschild cohomology of $\AsUnDefAlg$.) Similarly, $\UnDefAlg$
turns out to have a unique extension to a weighted $\Ainf$-algebra
subject to the conditions that the curvature is a particular element
(the sum of the ``length 4 chords'') and respecting particular
gradings; see Theorem~\ref{thm:MAlg-unique}. (Again, Theorem~\ref{thm:MAlg-unique}
follows from computing the Hochschild cohomology of $\UnDefAlg$.) This
extension is $\MAlg$. In particular, the uniqueness theorem implies
that the two definitions of $\MAlg$ agree.

Auroux showed that $\aAlg$
is derived equivalent to a certain partially-wrapped version of the
Fukaya category of the torus~\cite{Auroux10:ICM,Auroux10:Bordered}. We show that $\UnDefAlg$ is equivalent
to the fully wrapped Fukaya category of the torus.

We note that various versions of the Fukaya categories of Riemann surfaces have
been studied extensively. For instance, Abouzaid explicitly computed $K_0$ of (a
particular variant of the) Fukaya category of a
surface~\cite{Abouzaid08:FukSurf} (see also~\cite{AB22:Fukaya}). In a previous
paper~\cite{LOT:faith}, we gave an explicit description of the mapping class
group action on a partially-wrapped Fukaya category of a surface, and showed
this action was faithful and captures the dilatation, a point rediscovered by
Dimitrov-Haiden-Katzarkov-Kontsevich~\cite{DHKK14:dynamic-cat}. Lekili-Perutz
studied a different variant of the Fukaya category of the torus, giving an
explicit description of it and showing it is not
formal~\cite{LekiliPerutz11:torus}. A reformulation of bordered $\HFa$ with
torus boundary in terms of (a version of) the Fukaya category of the torus was
given by Hanselman-Rasmussen-Watson~\cite{HRW}.  There have been many papers
about mirror symmetry for Riemann surfaces
(e.g.,~\cite{Seidel11:genus2,Efimov12:higher-g,LekPol17:arithmetic,PS19:Fuk-surf-mirror,AS:Fuk-surf},
and many others). There has also been substantial interest in the Fukaya
category from the representation theory community
(e.g.,~\cite{KhS02:BraidGpAction,CS20:Hall}).

This paper is organized as follows. In Section~\ref{sec:weighted-alg}
we introduce weighted $\Ainf$-algebras, the algebraic structure
underlying $\MAlg$. In Section~\ref{sec:def} we define $\AsUnDefAlg$,
$\UnDefAlg$ and $\MAlg$; the gradings on these algebras are deferred
to Section~\ref{sec:grading}. Section~\ref{sec:unique} uses
computations of Hochschild homology groups to prove uniqueness
theorems for $\UnDefAlg$ and $\MAlg$.  Section~\ref{sec:Fukaya}
gives a Fukaya-categorical interpretation of $\UnDefAlg$.  In
Section~\ref{sec:Signs}, we explain how to lift the constructions from
this paper to arbitrary characteristic.

\subsection*{Acknowledgements}
We thank Rumen Zarev for helpful conversations. We also thank
Mohammed Abouzaid, Tim Perutz, and Nick Sheridan, for helpful
conversations, and Tom Hockenhull for corrections to an earlier
version. We also thank the referee for many helpful comments and
corrections.


\section{Weighted \texorpdfstring{$\Ainf$}{A-infinity} algebras and modules}
\label{sec:weighted-alg}
\subsection{Definition and first properties}\label{sec:walg-first-props}
The $\HF^-$ extension of bordered Floer homology associates to the torus $T^2$
an object $\MAlg=\MAlg(T^2)$, having a particular algebraic structure,
which we will call a \emph{weighted $\Ainf$-algebra}. (Such objects
have appeared in the literature already in several forms; see
Remarks~\ref{remark:weighted-is-deformation}, 
\ref{remark:gap-in-the-literature}
and~\ref{remark:left-leaning}.) A more detailed discussion can be
found in our previous paper~\cite[Section 4]{LOT:abstract}.

For now, we will discuss ungraded weighted $\Ainf$-algebras, and defer
the discussion of gradings to Section~\ref{sec:W-gradings}.

Fix a unital commutative ring $R$ of characteristic $2$.

\begin{definition}
  A curved \emph{$\Ainf$-algebra over $R$} consists of $R$-bimodule maps
  \[
    \mu_n\co A^{\otimes_R n}\to A
  \]
  for $n\geq 0$ satisfying the structure equation
  \[
    \sum_{k=-1}^{n-1}\sum_{i=1}^{n-k}\mu_{n-k}(a_1\otimes\dots\otimes a_{i-1}\otimes\mu_{k+1}(a_i\otimes\dots\otimes a_{i+k})\otimes a_{i+k+1}\otimes\dots\otimes a_n)=0.
  \]
  for all $n$ and all $a_1,\dots,a_n\in A$. (In the graded setting,
  the map $\mu_n$ decreases the homological grading by $2-n$.)

  In the special case
  $n=0$, $A^{\otimes_R 0}=R$, so $\mu_0\co R\to A$, so $\mu_0$ is
  determined by $\mu_0(1)\eqqcolon\mu_0$, which is the
  \emph{curvature} of the curved $\Ainf$-algebra.
  
  A curved $\Ainf$-algebra $\Alg=(A,\{\mu_n\})$ over $R$ is
  \emph{strictly unital} if there is an element $1\in A$ so that for
  all $a\in A$, $\mu_2(a,1)=\mu_2(1,a)=a$ and for all $n\neq 2$, and
  $a_1,\dots,a_n\in A$, if some $a_i=1$ then
  $\mu_n(a_1,\dots,a_n)=0$.
\end{definition}

\begin{definition}\label{def:weighted-alg}
  A \emph{weighted $\Ainf$-algebra} over $R$ is a curved $\Ainf$-algebra
  $(A[[t]],\{\mu_n\co A[[t]]^{\otimes n}\to A[[t]]\}_{n=0}^\infty)$ over
  $R[[t]]$ (where $A[[t]]=A\otimes_RR[[t]]$ for some $R$-module $A$)
  such that the curvature $\mu_0$ lies in $tA[[t]]\subset A[[t]]$.
\end{definition}

\begin{convention}
  Henceforth, all curved or weighted $\Ainf$-algebras will be assumed
  strictly unital. We will identify elements $r\in R$ with their images $r\cdot 1\in A$.
\end{convention}

\begin{lemma}
  A weighted $\Ainf$-algebra over $R$ is specified by an $R$-bimodule
  $A$ and maps $\mu_n^w\co A^{\otimes_R n}\to A$ for $n,w\in\ZZ_{\geq
    0}$ such that:
  \begin{enumerate}
  \item $\mu_0^0=0$.
  \item For any $r\in R$, $\mu_2^0(a,r)=ar=ra=\mu_2^0(r,a)$.
  \item For any $(n,w)\neq (2,0)$, $\mu_n^w(a_1,\dots,a_n)=0$ if some $a_i\in R$.
  \item For each $n,w\in\ZZ_{\geq 0}$ and $a_1,\dots,a_n\in A$,
    \begin{equation}
      \label{eq:weighted-Ainf-relation}
      \sum_{\substack{p+q=n+1\\ u+v=w}}\sum_{i=0}^{n-q}
      \mu_p^u(a_1\otimes\cdots\otimes a_i\otimes
      \mu_q^v(a_{i+1}\otimes\cdots\otimes a_{i+q})\otimes
      a_{i+q+1}\otimes\cdots\otimes a_n)=0.
    \end{equation}
  \end{enumerate}
\end{lemma}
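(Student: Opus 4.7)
The plan is to show that the data $\{\mu_n^w\}$ satisfying (1)--(4) is precisely the data of a strictly unital weighted $\Ainf$-algebra, by decomposing the operations $\mu_n\co A[[t]]^{\otimes n}\to A[[t]]$ according to powers of~$t$.

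First, suppose we are given a weighted $\Ainf$-algebra $(A[[t]],\{\mu_n\})$. For each $n$, the operation $\mu_n$ is $R[[t]]$-multilinear, so it is determined by its restriction to $A^{\otimes_R n}\subset A[[t]]^{\otimes_{R[[t]]} n}$, and this restriction takes values in $A[[t]]$. Define $\mu_n^w\co A^{\otimes_R n}\to A$ to be the coefficient of $t^w$ in $\mu_n|_{A^{\otimes_R n}}$, so that
\[
  \mu_n(a_1\otimes\cdots\otimes a_n)=\sum_{w=0}^\infty t^w\,\mu_n^w(a_1\otimes\cdots\otimes a_n).
\]
Then (1) is equivalent to $\mu_0\in tA[[t]]$; (2) and~(3) are equivalent to strict unitality together with the fact that the unit $1\in A\subset A[[t]]$ has no $t$-component. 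For~(4), I would substitute the above expansion into the curved $\Ainf$-relation for $\mu$ and collect the coefficient of~$t^w$; the Cauchy product $(\sum_u t^u\mu_p^u)(\sum_v t^v \mu_q^v)=\sum_w t^w \sum_{u+v=w}\mu_p^u\mu_q^v$ yields exactly~\eqref{eq:weighted-Ainf-relation}.

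Conversely, given the data $\{\mu_n^w\}$ satisfying (1)--(4), I would define $\mu_n$ on $A^{\otimes_R n}$ by the same displayed formula and extend $R[[t]]$-multilinearly to $A[[t]]^{\otimes_{R[[t]]} n}$. The sum $\sum_w t^w\mu_n^w$ is a well-defined element of $A[[t]]$ since only nonnegative powers of~$t$ appear, and the $R[[t]]$-multilinear extension is well-defined because each $a_i\in A[[t]]$ is itself a power series whose coefficients lie in $A$, and $\mu_n^w$ is $R$-multilinear. Strict unitality and the condition $\mu_0\in tA[[t]]$ follow from (1)--(3), and the curved $\Ainf$-relation for $\mu$ follows by summing the relations~\eqref{eq:weighted-Ainf-relation} against $t^w$ and recognizing the result as the Cauchy product identity for compositions of the $\mu_n$.

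The main obstacle, such as it is, is purely bookkeeping: one must verify that the Cauchy-product manipulation of power series matches the double sum over $(p,q)$ and $(u,v)$ on the nose, and that the strict unitality conditions interact correctly with the $R[[t]]$-multilinear extension (in particular that specifying $\mu_n^w$ on pure tensors $a_1\otimes\cdots\otimes a_n\in A^{\otimes_R n}$ with $a_i\in A$ suffices, rather than needing to specify it on tensors involving $t$'s). Both of these are formal consequences of working over the power series ring $R[[t]]$ with curvature in the ideal $tA[[t]]$, so no analytic convergence issue arises.
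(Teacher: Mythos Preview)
Your proposal is correct and is exactly the unpacking of what the paper means by ``immediate from the definitions'': you decompose $\mu_n$ into its $t$-components $\mu_n^w$, match conditions (1)--(3) with the curvature-in-$tA[[t]]$ and strict unitality hypotheses, and recover \eqref{eq:weighted-Ainf-relation} as the $t^w$-coefficient of the curved $\Ainf$-relation via the Cauchy product. There is no substantive difference in approach.
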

\begin{proof}
  Immediate from the definitions.
\end{proof}

\begin{definition}
  Given a weighted $\Ainf$-algebra $\Alg=(A,\{\mu_n^k\})$, the
  operations $\{\mu_n^0\}$ make $A$ into an ordinary
  $\Ainf$-algebra. We call $(A,\{\mu_n^0\})$ the \emph{undeformed
    $\Ainf$-algebra} of $\Alg$.
\end{definition}

\begin{definition} 
  \label{def:HomomorphismWeightedAinf}
  Given weighted $\Ainf$-algebras $\Alg$ and $\Blg$, a
  \emph{homomorphism} $f\co \Alg\to\Blg$ is a homomorphism of curved
  $\Ainf$-algebras (i.e., a sequence of maps $f_n\co A[[t]]^{\otimes n}\to
  B[[t]]$, $n=0,\dots,\infty$, over $R[[t]]$, satisfying the $\Ainf$-algebra homomorphism
  relations) so that $f_0\in tB[[t]]\subset B[[t]]$. A homomorphism is called
  \emph{uncurved} if $f_0=0$ (which implies that
  $f_1(\mu_0^\Alg)=\mu_0^\Blg$).

  The identity homomorphism of $\Alg$ is given by $f_1=\Id$ and
  $f_n=0$ for $n\neq 1$. The composition of homomorphisms of weighted
  $\Ainf$-algebras is induced by the usual composition of homomorphisms of
  curved $\Ainf$-algebras, i.e.,
  \[
    (g\circ f)_n=\sum_{\substack{k_1+\cdots+k_m=n\\k_i\geq 0}}
    g_m\circ(f_{k_1}\otimes\cdots\otimes f_{k_m}).
  \]
  Convergence of this sum follows from the fact that $f_0\in tB[[t]]$.
  Also, since
  \[
    (g\circ f)_0 = g_0 + g_1(f_0)+g_2(f_0,f_0)+\cdots,
  \]
  a composition of uncurved homomorphisms is uncurved.

  An \emph{isomorphism} of weighted $\Ainf$-algebras is an invertible homomorphism.
\end{definition}

Given a weighted $\Ainf$-homomorphism $f\co\Alg\to\Blg$ we let
$f_n^k\co A^{\otimes n}\to B$ be the coefficient of $t^k$ in
$f_n$ (restricted to $A^{\otimes n}\subset A[[t]]^{\otimes n}$). So,
for example, the identity homomorphism is given by
$\Id_1^0(a)=a$ and $\Id_{n}^k=0$ for $(n,k)\neq(1,0)$. 

\begin{lemma}\label{lem:bij-hom-is-iso}\cite[Lemma 4.19]{LOT:abstract}
  If $f\co\Alg\to\Blg$ is a homomorphism of weighted $\Ainf$-algebras
  so that $f_1^0$ is an isomorphism of $R$-modules then $f$ is an
  isomorphism.
\end{lemma}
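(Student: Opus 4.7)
The plan is to construct an explicit two-sided inverse $g \colon \Blg \to \Alg$ of $f$ by induction on the bidegree $(n, w)$, using lexicographic order on $\ZZ_{\geq 0}^2$ (with $n$ primary). The weighted condition $g_0 \in tA[[t]]$ forces $g_0^0 = 0$; the $(1, 0)$-component of the target equation $f \circ g = \id_\Blg$ reads $f_1^0 \circ g_1^0 = \id_B$, forcing $g_1^0 \coloneqq (f_1^0)^{-1}$, which exists by hypothesis.

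For the inductive step at $(n, w) \neq (0, 0), (1, 0)$, expand the weighted $\Ainf$-composition:
$$(f \circ g)_n^w = \sum_{k \geq 0} \sum_{\substack{n_1 + \cdots + n_k = n \\ w_0 + w_1 + \cdots + w_k = w}} f_k^{w_0}(g_{n_1}^{w_1} \otimes \cdots \otimes g_{n_k}^{w_k}).$$
The crucial combinatorial observation is that $g_n^w$ appears in exactly one summand, namely the one with $k = 1$, $n_1 = n$, $w_0 = 0$, $w_1 = w$, contributing $f_1^0(g_n^w)$. Indeed, any other summand in which some $g_{n_i}^{w_i}$ equals $g_n^w$ must have $k \geq 2$ with the remaining $n_j = 0$; since $g_0^0 = 0$, each such $g_0^{w_j}$ requires $w_j \geq 1$, forcing $w_i \leq w - (k-1) < w$. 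And if $k = 1$ with $w_0 \geq 1$, then $w_1 < w$. Hence the equation $(f \circ g)_n^w = 0$ can be rewritten as $f_1^0(g_n^w) = E_{n, w}$, where $E_{n, w}$ depends only on previously-constructed values $g_{n'}^{w'}$ with $(n', w') \prec (n, w)$. We define $g_n^w \coloneqq (f_1^0)^{-1}(E_{n, w})$.

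This constructs a sequence $\{g_n^w\}$ realizing $f \circ g = \id_\Blg$ component-wise; the main obstacle is verifying that this $g$ is in fact a weighted $\Ainf$-morphism, rather than merely a sequence of $R$-module maps. To do so, let $R_{n, w}$ denote the defect of the $\Ainf$-morphism relation for $g$ at bidegree $(n, w)$, and run a parallel induction on $(n, w)$: applying $f_1^0$ to $R_{n, w}$ and invoking the $\Ainf$-morphism relations for $f$ together with the just-constructed identity $f \circ g = \id_\Blg$, one finds that $f_1^0(R_{n, w})$ is a combination of terms $f_1^0(R_{n', w'})$ with $(n', w') \prec (n, w)$, plus terms that vanish; injectivity of $f_1^0$ and the inductive hypothesis then give $R_{n, w} = 0$. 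Finally, for the two-sided inverse: $g_1^0 = (f_1^0)^{-1}$ is itself an isomorphism, so applying the construction to $g$ produces $h \colon \Alg \to \Blg$ with $g \circ h = \id_\Alg$, and associativity of composition of weighted $\Ainf$-morphisms yields $f = f \circ (g \circ h) = (f \circ g) \circ h = h$, whence $g \circ f = g \circ h = \id_\Alg$.
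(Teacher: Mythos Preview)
The paper does not supply its own proof of this lemma; it merely cites \cite[Lemma~4.19]{LOT:abstract}. So there is nothing in the present paper to compare against directly. That said, your approach is the standard one and is essentially correct, with one step that deserves more care.

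The construction of the components $g_n^w$ satisfying $f\circ g=\id_\Blg$ is fine: your triangularity argument is right, and the lexicographic induction (with $n$ primary) works exactly as you describe.

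The sketchy part is step~4, verifying that $g$ is a weighted $\Ainf$-homomorphism. Your description of what happens when you apply $f_1^0$ to the defect $R_{n,w}$ is imprecise: $f_1^0(R_{n,w})$ is not literally a combination of terms $f_1^0(R_{n',w'})$. The correct identity comes from observing that, on the bar coalgebras, $D_{f\circ g}=\hat F\circ D_g + D_f\circ\hat G$, where $D_h$ denotes the failure of $\hat H$ to be a chain map (a $(\hat H,\hat H)$-coderivation whose corestriction to degree one is the defect $R^h$). Since $f$ and $\id_\Blg$ are $\Ainf$-morphisms, $D_f=0$ and $D_{f\circ g}=0$, so $\hat F\circ D_g=0$. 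Corestricting to degree one at bidegree $(n,w)$ gives
\[
0=\sum_{k\ge 1}\ \sum_{j=1}^{k}\ \sum_{\substack{n_1+\cdots+n_k=n\\ w_0+\cdots+w_k=w}} f_k^{w_0}\bigl(g_{n_1}^{w_1}\otimes\cdots\otimes R_{n_j,w_j}^{\,g}\otimes\cdots\otimes g_{n_k}^{w_k}\bigr).
\]
The unique term containing $R_{n,w}^{\,g}$ is $f_1^0(R_{n,w}^{\,g})$ (same triangularity argument as before, using $g_0^0=0$); every other term involves some $R_{n',w'}^{\,g}$ with $(n',w')\prec(n,w)$ and hence vanishes by induction. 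Injectivity of $f_1^0$ then gives $R_{n,w}^{\,g}=0$. This is presumably what you meant by ``plus terms that vanish,'' but the identity $\hat F\circ D_g=0$ (or its componentwise form above) is the substance of the argument and should be stated.

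Step~5 is correct as written.
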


\begin{definition}
  An \emph{augmentation} of a weighted $\Ainf$-algebra $\Alg$ over $R$ is
  a $\Ainf$-homomorphism $f\co A[[t]]\to R[[t]]$, where $R[[t]]$ has $\mu_n=0$ for
  $n\neq 2$ and $\mu_2$ the usual multiplication.
\end{definition}
Notice, in particular, that any augmentation $f\co A[[t]]\to R[[t]]$ sends the curvature
$\mu_0$ to $0$, i.e., $f_1(\mu_0)=0$.

With these properties established, we turn to some alternate
formulations of weighted $\Ainf$-algebras.

\begin{remark}\label{remark:weighted-is-deformation}
  A weighted $\Ainf$-algebra $\Alg=(A,\{\mu_n^k\})$ is the same as a
  one-parameter deformation of $(A,\{\mu_n^0\})$, in the sense of
  (for instance)~\cite[Section 3b]{Seidel15:quartic}, except that we
  allow curved deformations. See also Section~\ref{sec:weighted-deformation}.
\end{remark}

\begin{remark}\label{remark:gap-in-the-literature}
  A weighted $\Ainf$-algebra is a special case of a gapped, filtered $\Ainf$-algebra~\cite{FOOO1}.
\end{remark}

\begin{remark}\label{remark:left-leaning}
  Let $\Alg=(A,\{\mu_n\})$ be an uncurved $\Ainf$-algebra so that
  $(A,\mu_1,\mu_2)$ is an associative differential algebra, and let
  $\omega\in\Alg$ be an element such that:
  \begin{itemize}
  \item $\omega$ is central with respect to $\mu_2$.
  \item $A=B\oplus \omega B$ is a free $\Field[\omega]/(\omega^2)$-module.
  \item If $a_1,\dots,a_n\in B\cup\{\omega\}$ and $n\geq 2$ then $\mu_n(a_1,\dots,a_n)\in B$.
  \end{itemize}
  Then there is an associated weighted $\Ainf$-algebra
  $(B,\{\mu_n^k\})$ where
  \[
  \mu_n^k(a_1,\dots,a_n)=\mu_{n+k}(\overbrace{\omega,\dots,\omega}^k,a_1,\dots,a_n)+\mu_{n+k}(\overbrace{\omega,\dots,\omega}^{k-1},a_1,\omega,a_2,\dots,a_n)+\cdots
  \]
  is the sum of all ways of inserting $k$ copies of $\omega$ into the
  sequence $(a_1,\dots,a_n)$ and then applying the operation
  $\mu_{n+k}$. In particular, $\mu_0^k=\mu_k(\omega,\dots,\omega)$.
\end{remark}

Let $T$ be a rooted, planar tree with $n$ distinguished \emph{input}
leaves, and one distinguished output, the root.
We call the vertices of $T$ other than
the inputs and the output \emph{internal}. (Note that this definition allows for some leaves
that are neither inputs nor the output.)
A \emph{weight function} on $T$ is
a function $w$ from the internal vertices of $T$ to the non-negative
integers, with the property that any internal vertex $v$ with valence $1$
or $2$ has weight $w(v)\geq 1$. A \emph{weighted tree} is a rooted, planar
tree $T$, a choice of input leaves for $T$, and a weight function on
the internal vertices of $T$. The \emph{dimension} of a weighted tree $T$ with $n$ inputs,
$i$ internal vertices, and total weight $w$ is
\begin{equation}\label{eq:tree-dim}
  \dim(T)=n+2w-i-1.
\end{equation}
  
A weighted $\Ainf$ algebra structure on $A$ associates a map
\[
  \mu(T)\co A^{\otimes n}\to A
\]
to a weighted tree $T$ with $n$ inputs.  Specifically, the operation
$\mu^w_n$ specifies the action of the $n$ input corolla (planar graph
with one internal vertex) with weight $w$, $\wcorolla{w}{n}$; actions
of more complicated trees are obtained from these actions by suitable
compositions. In particular, the valence-1 internal vertices
correspond to operations of the form $\mu_0^w$.

If $S$ and $T$ are two rooted, planar, weighted trees, and $i$ is some
input to $S$, let $S\circ_i T$ denote the rooted, planar, weighted tree
obtained by joining the output to $T$ to the $i^{\th}$ input to $S$.
The weighted $\Ainf$ relation can now be phrased as the identities
(indexed by pairs of non-negative integers $w$ and $n$)
\[
  \sum_{\substack{a+b=n+1\\u+v=w}}\sum_{i=1}^a\mu({\wcorolla{a}{u}\circ_i\wcorolla{b}{v}})=0.
\]

\subsection{Gradings}\label{sec:W-gradings}
Fix a set $S$. By an \emph{$S$-graded $R$-module} we mean an
$R$-module $M$ together with a decomposition $M=\bigoplus_{s\in
  S}M_s$. Elements of $M_s$ are called \emph{homogeneous of grading
  $s$.} (The element $0\in M$ is viewed as being homogeneous of all
gradings.)

Now, fix a group $G$ and central elements $\lambda_d$ and $\lambda_w$
in $G$. By a \emph{$(G,\lambda_d,\lambda_w)$-graded weighted
  $\Ainf$-algebra} we mean a weighted $\Ainf$-algebra
$\Alg=(A,\{\mu_m^w\})$ and a $G$-grading of $A$ so that $\mu_m^w$ is
graded of degree $\lambda_d^{m-2}\lambda_w^w$. More explicitly, if
$a_1,\dots,a_m$ are homogeneous of gradings $g_1,\dots,g_m$ then
$\mu_m^w(a_1,\dots,a_m)$ is homogeneous of grading
$\lambda_d^{m-2}\lambda_w^wg_1\cdots g_m$. Equivalently, viewing
$\Alg$ as a curved $\Ainf$-algebra over $\Field[[t]]$, $t$ has grading
$\lambda_w^{-1}$ and $\mu_m$ has degree $\lambda_d^{m-2}$.

\begin{remark}
  In the notation of our previous paper~\cite{LOT:abstract}, where we considered only
  integer-valued gradings, $\lambda_d$ was $-1$ and $\lambda_w$ was
  denoted $\kappa$.
\end{remark}


\section{Definitions of the algebras via planar graphs}\label{sec:def}%
\subsection{The \texorpdfstring{associative algebra $\AsUnDefAlg$}{underlying associative algebra}}\label{sec:AsUnDefAlg}%
The associative algebra $\AsUnDefAlg$ is the path algebra with
relations
\begin{equation}\label{eq:path-alg} \mathcenter{ \begin{tikzpicture}
  \node at (0,0) (iota0) {$\iota_0$};
  \node at (4,0) (iota1) {$\iota_1$};
  \draw[->, bend right=15] (iota0) to node[above]{$\rho_3$} (iota1);
  \draw[->, bend left=45] (iota0) to node[above]{$\rho_1$} (iota1);
  \draw[->, bend right=15] (iota1) to node[above]{$\rho_2$} (iota0);
  \draw[->, bend left=45] (iota1) to node[above]{$\rho_4$} (iota0);
\end{tikzpicture}}\big/\bigl(\rho_2\rho_1=\rho_3\rho_2=\rho_4\rho_3=\rho_1\rho_4=0\bigr)
\end{equation}
over the ground ring $\Field$.

The idempotents
$\iota_0$, $\iota_1$ and elements
\[
  \rho_{i,i+1,\dots,i+n}=\rho_i\rho_{i+1}\cdots\rho_{i+n},
\]
$i\in\ZZ/4\ZZ$, $n\in\ZZ_{>0}$ form a $\Field$-basis for $\AsUnDefAlg$. We call these
elements \emph{basic elements} of the algebra; we call the elements of
the form $\rho=\rho_i\cdots\rho_{i+n}$ \emph{Reeb elements}.
to distinguish them from 
$\iota_0$ and $\iota_1$. The \emph{length} of
$\rho_{i,\dots,i+n}$ is $n+1$ and is denoted $|\rho_{i,\dots,i+n}|$;
the idempotents are defined to have length $0$:
\[
  |\iota_0|=|\iota_1|=0.
\]

The idempotents $\iota_0$ and $\iota_1$ generate a $2$-dimensional
subalgebra $\Ground\cong \Field\oplus\Field\subset \AsUnDefAlg$.

\begin{figure}
  \centering
  \includegraphics{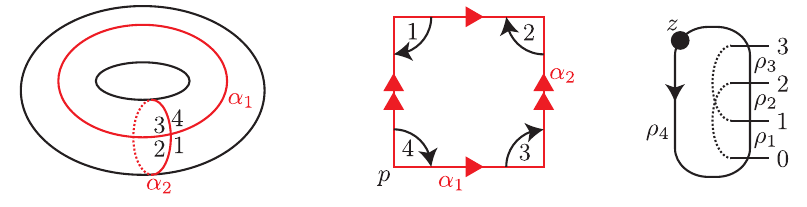}
  \caption{\textbf{The torus.} Left: the torus $T^2$ with $\alpha_1$
    and $\alpha_2$ drawn on it and the corners $1,\dots,4$
    labeled. Center: the same torus, as an identification space of the
    rectangle $S$. Right: the pointed matched circle for the torus and
    labeling of the points, matching, and indecomposable chords.}
  \label{fig:torus}
\end{figure}

To put this algebra into a wider context, recall from~\cite{LOT1} the
\emph{pointed matched circle} $\PMC=(Z,\CircPts,M,z)$ for $T^2$;
see Figure~\ref{fig:torus}. Then each $\rho_i$ is a chord in
$(Z,\CircPts)$ (as indicated in the figure), and the
$\rho_{i,\dots,i+n}$ are chords of length $n+1$. In particular, each
chord $\rho_{i,\dots,i+n}$ has a \emph{support} in
$H_1(Z,\CircPts)$. Explicitly, identify $Z=[0,4]/(0\sim4)$ and
$\CircPts=\{0,1,2,3\}$, so that $\rho_1$ corresponds to the interval
$[1,2]$, $\rho_2$ to the interval $[2,3]$, and so on. Declare that the
support of $\rho_i$ is $[i,i+1]$ and that the support of a product
$\rho\rho'$ is the sum of the support of $\rho$ and the support of
$\rho'$.
We will write
the support of $\rho$ as
$[\rho]=(n_1(\rho),n_2(\rho),n_3(\rho),n_4(\rho))$. So, for
instance, $[\rho_1]=(1,0,0,0)$, $[\rho_2]=(0,1,0,0)$, and so on. We
have
\[
  |\rho|=n_1(\rho)+n_2(\rho)+n_3(\rho)+n_4(\rho)
\]

\begin{remark}
  Let $I$ be the 2-sided ideal of $\AsUnDefAlg$ generated by $\rho_4$.
  Then $\AsUnDefAlg/I$ is the algebra $\Alg(T^2)$ associated to the
  torus in~\cite{LOT1}.
\end{remark}

We will extend the length grading on  $\Field[U]\otimes\AsUnDefAlg$,
by declaring $|U|=4$.

\subsection{The definition of \texorpdfstring{$\MAlg$}{weighted A-infinity algebra}}
\label{sec:ConstructMAlg}%

We will define a weighted $\Ainf$-algebra structure $\MAlg$ on
$\AsUnDefAlg[U]\coloneqq \Field[U]\otimes\AsUnDefAlg$.  The higher operations will be defined in
terms of combinatorial objects associated to certain planar graphs, as
follows.  These are interpreted as certain maps of the disk into the torus in
Section~\ref{subsec:Immersions}. (Note that these graphs play a
different role from the rooted, planar trees in
Section~\ref{sec:walg-first-props}.)

\begin{definition}
  \label{def:TilingPattern}
  By a {\em rooted, planar graph}, we mean a graph $\Gamma$,
  together with an embedding of $\Gamma$ into the disk $\CDisk$, so that
  $\Gamma$ meets $\partial\CDisk$ in its leaves, together with a
  choice of a distinguished
  leaf of $\Gamma$. This distinguished leaf is called
  the {\em root} (and we will no longer refer to it as a ``leaf'').
  Let $\Gamma$ be a rooted, planar, graph 
  with the
  following properties:
  \begin{itemize}
    \item $\Gamma$ is connected.
    \item $\Gamma$ has at least one internal vertex.
    \item Each internal vertex in $\Gamma$ has valence $4$.
    \item Each component of $\CDisk\setminus \Gamma$ either meets
      $\partial \CDisk$, or it meets exactly $4$ edges and vertices of
      $\Gamma$. In the second case, these embedded, $4$-edged cycles in $\Gamma$ 
      are called {\em short cycles}.
  \end{itemize}
  Let $Q$ denote the set of sectors (quadrants) around all of the internal
  vertices. A {\em valid labelling} on $\Gamma$ is a function
  $\Lambda\co 
  Q \to \{1,\dots,4\}$ with the following properties:
  \begin{itemize}
  \item If $q_1,q_2,q_3,q_4$ are the four quadrants around some 
    vertex, labelled in clockwise order (around that vertex),
    then, up to cyclic reordering, 
    \[ (\Lambda(q_1),\dots,\Lambda(q_4))=(1,2,3,4).\]
  \item Let $v_1$ and $v_2$ be two vertices that are connected by an
    edge $e$.  Orient $e$, and let $q_1$ and $q_2$ be the two sectors
    around
    $v_1$ and $v_2$ to the right of $e$, labelled in the order given
    by the orientation of $e$. Then, 
    $\Lambda(q_1)+1\equiv \Lambda(q_2)\pmod{4}$. (This is required to hold
  for both orientations of $e$.)
  \end{itemize}
  A {\em centered tiling pattern} is a rooted planar graph with the above properties,
  equipped with a valid labelling $\Lambda$.
\end{definition}

Observe that a valid labelling is uniquely determined by its value on
any single sector.
The conventions of a valid labelling are summarized in
Figure~\ref{fig:ValidLabels}.

\begin{figure}
\centering
\input{ValidLabels}
\caption{{\bf Conventions on a valid labeling.} The labels are in $\{1,\dots,4\}$ (mod $4$).}
\label{fig:ValidLabels} 
\end{figure}

We will extend slightly our graphs:

\begin{definition}
  \label{def:ExtendedTilingPattern}
  Let $\Gamma$ be a
  centered tiling pattern as in
  Definition~\ref{def:TilingPattern}. Let $e$ be the edge of $\Gamma$
  adjacent to the root; orient $e$ so it points towards the root.  Enlarge the graph to obtain a new graph $\Gamma'$ by inserting
  a sequence of $2$-valent vertices along $e$.
  A \emph{valid labeling} on $\Gamma'$ is now a labelling on certain distinguished
  sectors, as follows. Near each $2$ valent
  vertex, there are two sectors.  For a {\em left-extended} (respectively
  {\em right-extended}) tiling the sectors to the left (respectively right) of $e$ are 
  distinguished;  in both cases, we
  will also think of all the quadrants as being distinguished sectors.
  A \emph{valid labelling} of $\Gamma'$ is a
  function $\Lambda$ from the distinguished sectors to $\{1,\dots,4\}$
  satisfying the conditions of Definition~\ref{def:TilingPattern}.  An {\em
   extended tiling pattern} is a planar, rooted graph $\Gamma'$
  as above, together with a valid labelling.
  A {\em tiling pattern} is either a centered tiling pattern or an extended tiling pattern.
  For a tiling pattern, the underlying graph has first homology $H_1(\Gamma)$ isomorphic to $\ZZ^w$ for
  some $w\geq 0$. This integer $w$ is called the {\em weight} of the tiling pattern.
\end{definition}

\begin{definition}
  \label{def:ChordSequence}
  Let $\Gamma$ be a tiling pattern. The graph $\Gamma$ divides
  $\partial\CDisk$ into $k$ intervals $I_1,\dots,I_k$, 
  starting and ending at the root, labelled in the order they appear
  with respect to the
  boundary orientation. 
  Let $(q_1,\dots,q_m)$ be those distinguished sectors
  which are visible from the boundary,
  ordered as one  traverses the boundary with its
  orientation, starting at the root vertex.
  There is a sequence $1=n_1<n_2<\dots<n_k<m$, with the property that
  $q_{n_i},\dots,q_{n_{i+1}-1}$ are the sectors visible along~$I_i$.
  The {\em chord sequence} of a labelled tiling pattern
  is defined to be the
  element of $\left(\AsUnDefAlg\right)^{\otimes_{\Ground} {m}}$ given by
    \[ \left(\prod_{i=1}^{n_2-1} \rho_{\Lambda(q_i)}\right)\otimes
    \dots\otimes \left(\prod_{i=n_\ell}^{n_{\ell+1}-1}
    \rho_{\Lambda(q_i)}\right) \otimes\dots \otimes
    \left(\prod_{i=n_k}^{m} \rho_{\Lambda(q_i)}\right).\]
\end{definition}

\begin{lemma}
  \label{lem:alg-property-nonmult}
  If $\Gamma$ is a tiling pattern, its chord sequence
  $\rho^1\otimes\dots\otimes\rho^n$ is non-zero; and moreover for each
  $i=1,\dots,n-1$, we have that $\rho^i\cdot \rho^{i+1}=0$.
\end{lemma}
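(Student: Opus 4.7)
The plan is to analyze how the label $\Lambda$ varies as one traverses $\partial\CDisk$ in the boundary direction, distinguishing what happens inside a single interval $I_\ell$ from what happens when crossing a leaf between $I_\ell$ and $I_{\ell+1}$. Claim~(1) amounts to the statement that the labels of consecutive distinguished sectors within any $I_\ell$ increase by $1$ modulo $4$; claim~(2) amounts to the statement that across each leaf the label decreases by $1$ modulo $4$. Both are direct consequences of the two valid labeling conditions of Definition~\ref{def:TilingPattern}.

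For claim~(1), I would first observe that, since $\Gamma$ is connected and every component of $\CDisk\setminus\Gamma$ is either external or a short cycle, each external face $F_\ell$ bordering $I_\ell$ is a topological disk, and its boundary consists of $I_\ell$ together with a simple polyline $P_\ell\subset\Gamma$ joining the two leaves at the endpoints of $I_\ell$ and passing through the internal vertices of $\Gamma$ on $\partial F_\ell$. The distinguished sectors visible along $I_\ell$ are exactly the quadrants of $F_\ell$ at these internal vertices, ordered by their projection onto $I_\ell$. Two consecutive such sectors $q_i,q_{i+1}$ lie at adjacent vertices on $P_\ell$ joined by a single edge $e\subset\Gamma$; orienting $e$ in the direction induced by the boundary orientation of $\partial\CDisk$ places $F_\ell$ on a consistent (right) side of $e$, and the edge condition in Definition~\ref{def:TilingPattern} then forces $\Lambda(q_{i+1})\equiv\Lambda(q_i)+1\pmod 4$. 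Concatenating, $\rho^\ell$ takes the form $\rho_a\rho_{a+1}\cdots\rho_{a+t-1}=\rho_{a,a+1,\dots,a+t-1}$, which is a basic element of $\AsUnDefAlg$ and hence nonzero.

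For claim~(2), let $L$ be the leaf separating $I_\ell$ from $I_{\ell+1}$, let $f$ be the unique edge of $\Gamma$ at $L$, and let $v$ be the internal vertex at the other end of $f$. The last sector $q^-$ of $\rho^\ell$ and the first sector $q^+$ of $\rho^{\ell+1}$ are both quadrants at $v$, lying on the two sides of $f$, and therefore are consecutive in the cyclic order of the four sectors around $v$. The boundary orientation of $\partial\CDisk$ and the cyclic order around $v$ are coherent via $f$, and unwinding the orientations shows that $q^-$ immediately precedes $q^+$ in the cyclic direction for which the vertex condition of Definition~\ref{def:TilingPattern} decreases labels by $1$. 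Hence $\Lambda(q^-)\equiv\Lambda(q^+)+1\pmod 4$, and setting $b=\Lambda(q^+)$, the product $\rho^\ell\cdot\rho^{\ell+1}$ contains the factor $\rho_{b+1}\rho_b$, which vanishes by the defining relations~\eqref{eq:path-alg}; thus $\rho^\ell\cdot\rho^{\ell+1}=0$.

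The main obstacle is orientation bookkeeping: one must simultaneously track the boundary orientation of $\partial\CDisk$, the cyclic order around each internal vertex, and the left/right convention on each edge, and check that they are compatibly induced from the orientation of $\CDisk$. Once this is done once, both computations above are immediate. The argument for extended tiling patterns is essentially identical: the additional distinguished sectors at the two-valent vertices along the root edge are absorbed into the first or last tensor factor of the chord sequence, and the vertex and edge conditions still govern their labels in exactly the same way, preserving the pattern of $+1$ within each interval and $-1$ across each leaf.
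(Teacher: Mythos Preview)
Your proposal is correct and takes essentially the same approach as the paper: the paper's proof is a single sentence declaring the result ``immediate from the conventions on a valid labelling, and the definition of the chord sequence,'' and your argument is precisely the unpacking of that sentence---using the edge condition to get the $+1$ increment within each interval (hence each $\rho^\ell=\rho_{a,a+1,\dots}$ is a basic element) and the vertex condition to get the $-1$ step across each non-root leaf (hence $\rho^\ell\rho^{\ell+1}$ contains a factor $\rho_{b+1}\rho_b=0$).

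One small point you leave implicit: nonvanishing of the tensor product over $\Ground$ also requires that the right idempotent of $\rho^\ell$ agree with the left idempotent of $\rho^{\ell+1}$. This follows immediately from your claim~(2), since the terminal idempotent of $\rho_{b+1}$ equals the initial idempotent of $\rho_b$ for every $b\in\ZZ/4\ZZ$; you might state this explicitly.
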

\begin{proof}
  This is immediate from the conventions on a valid labelling,
  and the definition of the chord sequence.
\end{proof}

\begin{lemma}
  \label{lem:alg-even-operations}
  If $\Gamma$ is a tiling pattern, then 
  length $k$ of its chord sequence is even.
\end{lemma}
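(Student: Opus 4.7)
The plan is a one-step parity argument via the handshake lemma. First, I would identify the integer $k$ from Definition~\ref{def:ChordSequence} with the number of boundary leaves of the underlying graph. The graph divides $\partial\CDisk$ into $k$ arcs whose endpoints are precisely the leaves of $\Gamma$ (or $\Gamma'$ in the extended case) on $\partial\CDisk$ (including the root). Since any $V_b$ distinct points on a circle partition it into exactly $V_b$ arcs, $k$ equals the number $V_b$ of such boundary leaves.

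Next, I would apply the handshake lemma to the graph. Let $V_4$, $V_2$, $V_b$ denote the numbers of internal $4$-valent vertices, internal $2$-valent vertices (nonzero only in the extended case, where they are inserted along the root edge), and boundary leaves, respectively, and let $E$ be the number of edges. Since each edge contributes $2$ to the total valence,
\[
  2E = 4V_4 + 2V_2 + V_b.
\]
Reducing modulo $2$ gives $V_b \equiv 0 \pmod{2}$, and combining this with the identification from the first step yields that $k = V_b$ is even.

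The main ``obstacle'' is essentially conceptual: one has to read off from Definition~\ref{def:ChordSequence} that $k$ really is the number of boundary leaves, and one has to take care in the extended case to include the $2$-valent vertices in the handshake count. Since their contribution $2V_2$ is manifestly even, it does not affect the parity of $V_b$, so both the centered and extended cases are handled uniformly by the same count.
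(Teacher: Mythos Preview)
Your proof is correct. The identification $k = V_b$ is exactly what the paper also uses (it says ``the number of arcs is the length of the chord sequence''), and your handshake count $2E = 4V_4 + 2V_2 + V_b$ immediately gives $V_b$ even.

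The paper's argument is a close cousin but phrased geometrically rather than combinatorially: it observes that a graph whose internal vertices are $4$-valent can be viewed as a union of $\ell$ arcs in the disk crossing transversely at the internal vertices, so the number of boundary endpoints is $2\ell$. Your handshake argument is slightly more robust in that it handles the $2$-valent vertices of the extended case without any extra words (the paper's phrase ``the graph underlying $\Gamma$ is $4$-valent'' is literally true only in the centered case, though the arc decomposition still works with $2$-valent vertices absorbed into arcs). Conversely, the paper's version yields the mild bonus that $k = 2\ell$ for a geometrically meaningful $\ell$ (the number of arcs), which your argument does not directly produce.
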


\begin{proof}
  The boundary of $\Gamma$ cuts $\partial\CDisk$ into arcs. The number
  of arcs is the length of the chord sequence. Since the graph
  underlying $\Gamma$ is $4$-valent, we can think of this graph as a
  union of $\ell$ arcs embedded in $\CDisk$, which intersect transversely.
  The boundary of each arc meets $\partial\CDisk$ in a pair of points;
  so $\Gamma$ meets $\partial\CDisk$ in $2\ell$ points. Clearly, $k=2\ell$.
\end{proof}

\begin{definition}
  \label{def:OutputElement}
  Given an extended tiling pattern, let $d$ denote the number of 
  valence $4$ vertices. The {\em output element} in $\AsUnDefAlg[U]$ of the tiling pattern is $U^d$ times the following
  element of $\AsUnDefAlg$:
  \begin{itemize}
  \item If the tiling pattern is centered, the element is the left
    idempotent of $\rho_{\Lambda(q_1)}$, which coincides with the
    right idempotent of $\rho_{\Lambda(q_m)}$.
  \item If the tiling pattern is extended, the element is the product of $\rho_{\Lambda(q)}$, taken over all the distinguished
    valence~$2$ sectors (taken in their natural order).
  \end{itemize}
\end{definition}

\begin{example}
  Consider Figure~\ref{fig:TilingPatterns}. We have illustrated five
  tiling patterns: (a), (d), and (e) are centered; (b) is right
  extended; (c) is left-extended. The weights of (a)--(d) are~$0$; (e)
  has weight~$1$.  We have chord sequences:
  (a) $\rho_4\otimes
  \rho_3\otimes\rho_2\otimes\rho_1$;
  (b) $\rho_{34}\otimes
  \rho_3\otimes\rho_2\otimes\rho_1$;
  (c) $\rho_4\otimes\rho_3\otimes\rho_2\otimes \rho_{123}$;
  (d) $\rho_4\otimes\rho_{341}\otimes\rho_4\otimes\rho_3\otimes\rho_{23}\otimes\rho_2\otimes\rho_{12}\otimes\rho_1$; and
  (e) $\rho_{41}\otimes\rho_4\otimes\rho_{34}\otimes\rho_3\otimes\rho_{23}\otimes\rho_2\otimes\rho_{12}\otimes\rho_1$.
  The outputs are (a) $U\iota_0$; (b) $U\rho_3$; (c) $U \rho_{23}$; (d)
  $U^3\iota_0$; and (e) $U^4 \iota_0$.
\end{example}

\begin{figure}
\centering
\input{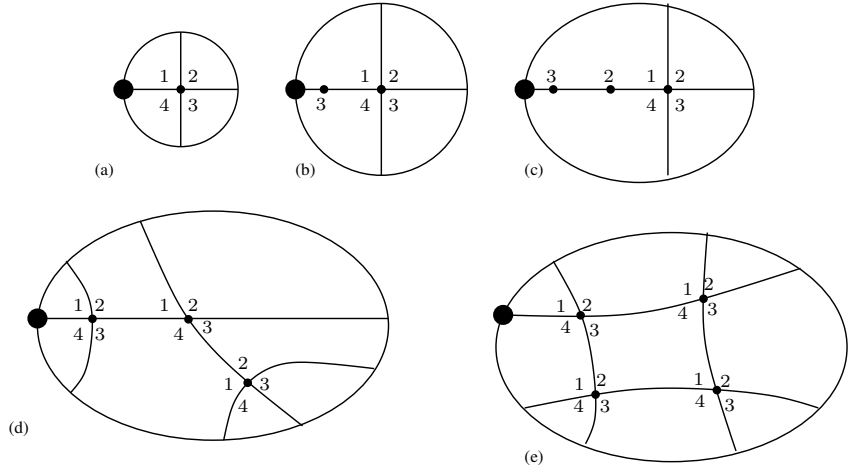}
\caption{{\bf Tiling patterns.} When these are extended (as in (b) and (c) above), we indicate the distinguished sectors by writing the labels in those sectors. The root is indicated by the large dot on the boundary.}
\label{fig:TilingPatterns} 
\end{figure}

\begin{definition}
  \label{def:MAlg}
  As an $\Field[U]$-module, the weighted $\Ainf$-algebra $\MAlg$ is
  $\AsUnDefAlg[U]$. The operations $\mu_n^w\co
  (\MAlg)^{\otimes n}\to\MAlg$ are defined as follows. First, the
  operations are $U$-multilinear, so it suffices to define them
  on basic elements of $\AsUnDefAlg$. The operation $\mu_2^0$ is given
  by the multiplication on $\AsUnDefAlg$, and
  $\mu_n^w(a_1,\dots,a_n)=0$ if $(n,w)\neq(2,0)$ and some
  $a_i\in\{\iota_0,\iota_1\}$.
  So, it remains to define
  $\mu_n^w(\rho^1,\dots,\rho^n)$, where the $\rho^i$ are Reeb
  elements.

  The element $\mu_0^1$ is
  \begin{equation}\label{eq:mu-0-1}
    \mu_0^1=\rho_{1234}+\rho_{2341}+\rho_{3412}+\rho_{4123},
  \end{equation}
  the sum of the length-$4$ chords. For all $w\neq 1$, we have $\mu_0^w=0$.

  For the remaining pairs $(n,w)$, the function $\mu_n^w(\rho^1,\dots,\rho^n)$ is the sum of all the output elements
  (in the sense of Definition~\ref{def:OutputElement})
  of labelled tiling patterns which have  weight $w$ and chord sequence $\rho^1\otimes\dots\otimes \rho^n$.
\end{definition}

Interpreted in terms of the algebra, Lemma~\ref{lem:alg-even-operations} 
states that $\mu^w_n=0$ if $n$ is odd.

In Sections~\ref{sec:unique} and~\ref{sec:Fukaya} we will also be interested in the ordinary $\Ainf$-algebra underlying $\MAlg$:
\begin{definition}
  Let $\UnDefAlg$ denote the undeformed $\Ainf$-algebra underlying
  $\MAlg$, i.e., $\UnDefAlg=(\AsUnDefAlg[U],\allowbreak \{\mu_n^0\})$,
  where the operations $\mu_n^0$ are as in Definition~\ref{def:MAlg}.
\end{definition}
In other words, the operations on $\UnDefAlg$ count tile patterns whose
underlying graph $\Gamma$ is a tree.

\subsection{Verifying the \texorpdfstring{$\Ainf$}{A-infinity} relations}

The aim of this section is to prove that the operations from
Definition~\ref{def:MAlg} make $\MAlg$ into a weighted
$\Ainf$-algebra. To establish this property, it is helpful to have a
graphical representation of the composition of tiling patterns.

\begin{definition}
  \label{def:CompositePattern}
  A {\em centered composite pattern}
  consists of the following data:
  \begin{itemize}
    \item a rooted, planar graph $\Gamma$ satisfying the conditions of
      Definition~\ref{def:ExtendedTilingPattern}, except that now
      rather than being connected, the underlying graph is required to
      have exactly two components, labelled $\Gamma_1\coprod \Gamma_2$;
      and each component is required to have at least one
      $4$-valent vertex.
\item An arc $S$ on the boundary of $\CDisk$, with the following property. If we cut
      $\CDisk$ along $\Gamma_1\cup\Gamma_2$, there is a
      distinguished region $\Region$ that meets both $\Gamma_1$ and
      $\Gamma_2$. This region meets the boundary in two arcs.
      The arc $S$ is required to be one of those two arcs.
    \item A valid labelling $\Lambda$ as in Definition~\ref{def:TilingPattern}, which is also required
      to satisfy a compatibility condition at the
      distinguished edge $e$, as shown in
      Figure~\ref{fig:CompositePattern}.
  \end{itemize}
  A composite pattern is called {\em
    extremal} if the arc $S$ meets the root vertex; otherwise it
  is called {\em generic}. 
  An {\em extended composite pattern} is defined similarly, but has a sequence of valence two vertices adjacent to the root vertex,
  with distinguished sectors, all of which lie on the same side of the arc from the root to the first $4$-valent vertex.
\end{definition}

  \begin{figure}
    \centering
    \input{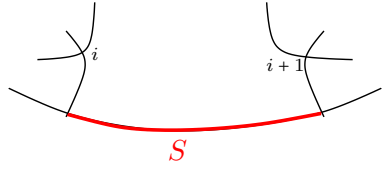}
    \caption{{\bf Composite labelling convention.}
      We require that $\Lambda$ changes as shown, as we cross $S$
      between the two components $\Gamma_1$ and $\Gamma_2$.}
    \label{fig:CompositePattern} 
\end{figure}

See Figure~\ref{fig:Composition} for an example.

Let $\Gamma_1\subset \CDisk_1$ be a tiling pattern with chord sequence 
$\rho^1\otimes \dots\otimes\rho^n$, fix some $i\in \{1,\dots,n\}$, and
let $\Gamma_2\subset \CDisk_2$ be an extended tiling pattern with
chord sequence $\sigma^1\otimes\dots \otimes\sigma^m$ whose output
element is $\rho^i$.
We will compose these two tiling patterns, to obtain a
composite in the sense of Definition~\ref{def:CompositePattern}, as
follows.
Let $J\subset \partial\CDisk_2$ be the boundary segment
that meets the distinguished $2$-valent sectors: i.e. this is
immediately after the root if $\Gamma_2$ is left-extended, and it is
immediately before the root if $\Gamma_2$ is right-extended.  Let
$\Gamma_2'$ be obtained from $\Gamma_2$ by removing all the $2$-valent
vertices.  The composition
$\Gamma_1\connectsum_i\Gamma_2$ is the
boundary connected sum of $\CDisk_1$ with $\CDisk_2$, gluing $I_i$ to
$J$, which we think of as a disk $\CDisk$, equipped with the graph
$\Gamma=\Gamma_1\coprod \Gamma_2'$.  Let $S\subset\partial\CDisk$ 
be the interval that connects the root vertex of $\Gamma_2'$ with
some vertex in $\Gamma_1$.  See Figure~\ref{fig:Composition} for an
illustration.

\begin{figure}
\centering
\input{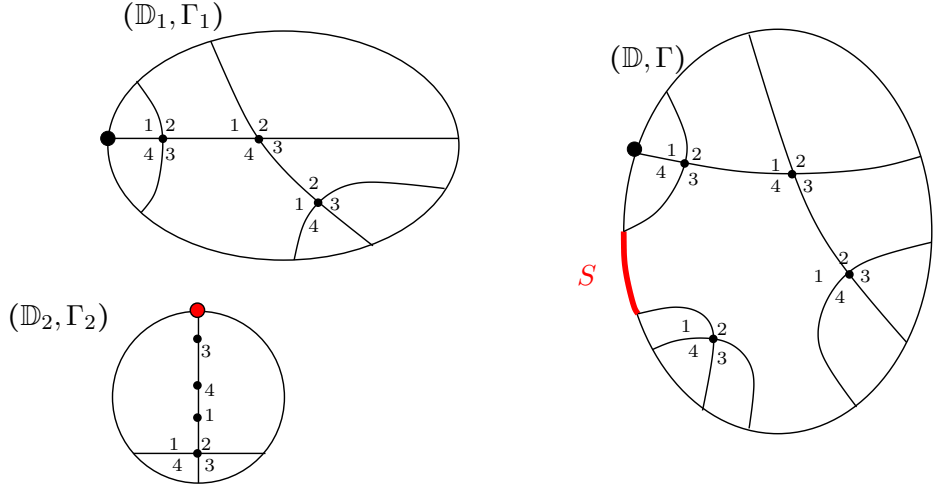}
\caption{{\bf Example of a composite pattern.}  Here, $\Gamma_1$ and
  $\Gamma_2$ are two tiling patterns; we can form their composition
  $\Gamma_1 \connectsum_2 \Gamma_2$ to obtain the composite pattern~$\Gamma$ on
  the right.  The red dot is the root vertex of $\Gamma_2$, which is
  to be joined to the $2^{\textrm{nd}}$ boundary arc of $\Gamma_1$.}
\label{fig:Composition} 
\end{figure}

Each composite pattern~$\Gamma$ can be uniquely decomposed as
$\Gamma_1 \connectsum_i \Gamma_2$ for suitable choices of $\Gamma_1$
and~$\Gamma_2$. Explicitly, cut $\CDisk$ in two along an arc $A$ that
intersects $S$ exactly once and is disjoint from $\Gamma$. Let
$(\CDisk_1,\Gamma_1)$ be the component that contains the root of
$\Gamma$; the other component is $(\CDisk_2,\Gamma_2')$, equipped with
the root vertex induced from $S$. The sequence of $2$-valent vertices
needed to reconstruct $\Gamma_2$ from~$\Gamma_2'$ can be read off the $\Gamma_1$
side.

\begin{definition}
The \emph{chord sequence} of a composite pattern~$\Gamma$ is defined as in
Definition~\ref{def:ChordSequence}, with the understanding that the
factor in the tensor product of chords coming from the sectors adjacent to the interval $S$ is
dropped.
Let $d$ denote the number of valence $4$ vertices in $\Gamma$.
The \emph{output element} of the composite pattern~$\Gamma$ is 
$U^{d}$ times the product of the chords
associated to the distinguished valence~$2$ sectors. Equivalently, if
$\Gamma=\Gamma_1\connectsum_i\Gamma_2$, the output element of $\Gamma$ is the
output element of $\Gamma_1$ times $U^{d_2}$, where $d_2$ denotes the
number of valence $4$ vertices in $\Gamma_2$.
\end{definition}

For example, consider the picture on the right of Figure~\ref{fig:Composition}.
Reading the chords in order as they are seen from the boundary (starting at the root) gives the sequence
\[ \rho_4\otimes\rho_{3412}\otimes \rho_{1}\otimes \rho_{4}\otimes \rho_3 \otimes \rho_{2341} \otimes\rho_4\otimes\rho_3\otimes\rho_{23}\otimes\rho_2\otimes\rho_{12}\otimes\rho_1.\]
For the chord sequence, though, we drop the second tensor factor
$\rho_{3412}$, since that is the term visible from $S$; thus,
thus the chord sequence associated to the composite is:
\[ \rho_4\otimes\rho_{1}\otimes \rho_{4}\otimes \rho_3 \otimes\rho_{2341} \otimes\rho_4\otimes\rho_3\otimes\rho_{23}\otimes\rho_2\otimes\rho_{12}\otimes\rho_1.\]

By construction, the chord sequence of $\Gamma_1\connectsum_i\Gamma_2$ is obtained from the chord sequence for $\Gamma_1$ by replacing
the $i^{th}$ tensor factor with the chord sequence for $\Gamma_2$.

\begin{lemma}
  \label{lem:ProductCancellations}
  Fix a sequence of Reeb elements $\rho^1,\dots,\rho^n$ of
  $\AsUnDefAlg$, with the property that
  $\rho^1\otimes\dots\otimes\rho^n\in (\AsUnDefAlg)^{\otimes_{\Ground}
    n}$ is non-zero and $\rho^k\rho^{k+1}=0$ for all $k$
  (cf.\ Lemma~\ref{lem:alg-property-nonmult}). Fix some $1\le i\le n$ and
  fix a factorization
  $\rho^i=\sigma_1\cdot \sigma_2$ into two Reeb elements.  Consider the following sets.
  \begin{itemize}
    \item  The set $\Graphs_1$ of tiling patterns with weight $w$ and chord sequence
  $\rho^1\otimes\dots\otimes\rho^n$.
    \item The set ${\mathfrak P}$ of
      composite patterns with weight $w$ and chord sequence
      \[\rho^1\otimes\dots\otimes \rho^{i-1}\otimes
        \sigma_1\otimes\sigma_2\otimes\rho^{i+1}\otimes\dots\otimes\rho^n.\]
      These composite patterns are always generic in the sense of
      Definition~\ref{def:CompositePattern}.
    \item The set  ${\mathfrak T}$ of tiling patterns with weight
      $w-1$ and chord sequence
      \[
        \rho^1\otimes\dots\otimes \rho^{i-1}\otimes
        \sigma_1\otimes\tau\otimes\sigma_2\otimes\rho^{i+1}\otimes\dots\otimes\rho^n,
      \]
      where $\tau$ has length $4$.      
    \item (When $i=1$) the set ${\mathfrak L}$ of centered or
      left-extended tiling patterns $\Gamma$ with weight $w$ and chord sequence
      \[
        \sigma_2\otimes\rho^{2}\otimes\dots\otimes\rho^n.
      \] 
      If $\Gamma$ has output element $U^d \rho$, then when viewed as an
      element of~${\mathfrak L}$ we take its associated output element to
      be $U^d \sigma_1\cdot \rho$.
    \item (When $i=n$) the set ${\mathfrak R}$ of centered or
      right-extended tiling patterns $\Gamma$ with weight $w$ and
      chord sequence
      \[
        \rho^1\otimes\rho^2\otimes\dots\otimes \rho^{n-1}\otimes\sigma_1.
      \]
      If $\Gamma$ has output element $U^d \rho$, then when viewed as an
      element of $\mathfrak{R}$ we take its associated output element
      to be $U^d \rho\cdot \sigma_2$.
  \end{itemize}
  Then there is a one-to-one correspondence between $\Graphs_1$
  and $\mathfrak{P}\cup\mathfrak{T}\cup\mathfrak{L}\cup\mathfrak{R}$ 
  which preserves the associated output elements
\end{lemma}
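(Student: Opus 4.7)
The plan is to construct the bijection explicitly via case analysis at a \emph{split point} $p$ determined by the factorization. Writing $|\sigma_1|=a$, the point $p$ is the location on $I_i\subset \partial\CDisk$ after the $a$-th distinguished boundary sector; it lies strictly inside $I_i$ when $0<a<|\rho^i|$, and at an endpoint of $I_i$ when one of $\sigma_1,\sigma_2$ is an idempotent. The key geometric observation is that when $p$ is strictly interior to $I_i$, its two flanking sectors are around a common $4$-valent vertex $v$ (since two sectors within the same interval cannot be separated by a boundary-meeting edge of $\Gamma$), so $p$ is canonically adjacent to a unique edge $e$ of $\Gamma$ emerging from $v$ into the interior of $\CDisk$.

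The forward map is then defined by cases. If $p$ is interior to $I_i$ and $e$ is a \emph{bridge} of $\Gamma$, cutting $e$ along a small transverse arc $S$ disconnects $\Gamma$ into two components, producing a composite pattern in $\mathfrak{P}$ with the required chord sequence. If $e$ lies on a short cycle $C$ of $\Gamma$, we remove $e$ and open the interior face of $C$ out to $\partial\CDisk$ via a slit; this creates a new boundary arc populated by a length-$4$ chord $\tau$ whose labels are forced by the valid labelling of $\Gamma$ around $C$, yielding an element of $\mathfrak{T}$ of weight $w-1$ (since removing an edge of a cycle lowers the rank of $H_1(\Gamma)$ by one). If $p$ coincides with the root of $\Gamma$ (which happens exactly when $i=1$ with $\sigma_1$ an idempotent, or $i=n$ with $\sigma_2$ an idempotent), then $\Gamma$ is interpreted directly as an element of $\mathfrak{L}$ or $\mathfrak{R}$, with output $\sigma_1\cdot\rho$ or $\rho\cdot\sigma_2$. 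The remaining cases --- trivial factorization with $p$ an endpoint of $I_i$ other than the root --- give composite patterns in $\mathfrak{P}$ whose separating arc $S$ is adjacent to the idempotent factor, produced by splitting the boundary-meeting edge of $\Gamma$ at that endpoint.

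Each case admits an explicit inverse --- gluing back along the bridge, contracting $\tau$ into a short cycle, or reinterpreting the extended structure --- so the constructed map is indeed a bijection. Preservation of output elements is essentially tautological in the $\mathfrak{P}$, $\mathfrak{L}$, and $\mathfrak{R}$ cases, where the count of $4$-valent vertices is unchanged; in the $\mathfrak{T}$ case, the weight drops by one exactly as we add the length-$4$ chord, and the opening of $C$ preserves the total number of $4$-valent vertices, so the output element matches. The main obstacle will be making the $\mathfrak{T}$ construction combinatorially precise: one must verify that the opened short cycle produces a valid labelling in the new boundary region, using the conventions of Figure~\ref{fig:ValidLabels} to show that the four labels cyclically arranged around $C$ give exactly a length-$4$ Reeb chord $\tau$. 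The dichotomy ``bridge vs.\ short cycle'' covering all interior edges is justified by the fact that every bounded face of $\Gamma$ in $\CDisk$ is a short cycle (Definition~\ref{def:TilingPattern}), so an interior edge fails to be a bridge precisely when it lies on some short cycle.
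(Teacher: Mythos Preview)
Your bridge/short-cycle dichotomy misses a third possibility, and your account of $\mathfrak{L}$ and $\mathfrak{R}$ is not what the lemma means.

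First, your ``key geometric observation'' is mis-stated. Two consecutive distinguished sectors along $I_i$ do not in general lie around a common vertex; they lie at \emph{adjacent} internal vertices $v$ and $v'$ on the boundary walk of the region containing $I_i$, with $e=vv'$ the edge between them. This is more than a quibble: when $\Gamma$ is an extended tiling pattern and $i\in\{1,n\}$, the split point $p$ may land among the distinguished $2$-valent sectors along the chain from the root to the first $4$-valent vertex. In that case $v$ or $v'$ is $2$-valent and $e$ is one of the edges on the root chain.

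That is exactly where your argument breaks down. Such an $e$ is always a bridge, so your rule sends $\Gamma$ to $\mathfrak{P}$; but cutting $e$ leaves one component (the root together with some $2$-valent vertices) containing \emph{no} $4$-valent vertex, contrary to the requirement in Definition~\ref{def:CompositePattern} that each component of a composite pattern have at least one $4$-valent vertex. So the result is not in $\mathfrak{P}$. The paper's resolution is to delete the degenerate root-side component; what remains is a tiling pattern (centered or left-extended) with first chord $\sigma_2$, and this is precisely the $\mathfrak{L}$ case (symmetrically $\mathfrak{R}$ for $i=n$). The bottom row of Figure~\ref{fig:PushOutExamples} is an instance with $|\sigma_1|=2$.

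Your alternative description of $\mathfrak{L},\mathfrak{R}$ --- that they arise only from trivial factorizations with one $\sigma_j$ an idempotent --- is both outside the lemma's intended scope and internally inconsistent. If $\sigma_1$ were an idempotent then the chord sequences defining $\mathfrak{P}$ and $\mathfrak{T}$ would contain an idempotent tensor factor, which no tiling or composite pattern can produce; and your direct assignment ``$\Gamma\in\mathfrak{L}$'' fails whenever $\Gamma$ is right-extended. The lemma is meant (and is only used in Theorem~\ref{thm:AinftyAlgebra}) for $\sigma_1,\sigma_2$ both Reeb elements.
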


\begin{proof}
  Fix an element of $\Graphs_1$.
  The factorization 
  $\rho^i=\sigma_1\cdot \sigma_2$ corresponds to an arc $A$ whose interior
  is disjoint from $\Gamma$, with one endpoint on some edge
  $e$ in $\Gamma$ and the other endpoint on $\partial \CDisk$.
  Assume first that $i\not\in\{1,n\}$.
  The one-to-one correspondence from $\Graphs_1$ to $\mathfrak{P}\cup \mathfrak{T}$ is
  obtained by pushing $e$ out to the boundary along $A$, as shown in
  Figure~\ref{fig:PushingOut}. 

  If pushing out $e$ disconnects $\Gamma$, we label the newly introduced
  arc on $\partial \CDisk$ by $S$, and the result is a
  composite pattern, which is in $\mathfrak{P}$.  If pushing out $e$ does not
  disconnect~$\Gamma$ (i.e., if $e$ is part of a cycle in~$\Gamma$), the
  result is another tiling pattern, which is in ${\mathfrak T}$.  Examples are
  illustrated in Figure~\ref{fig:PushOutExamples}.

  In the case $i=1$, pushing out $e$ from $\Gamma$ may result in a disconnected
  graph, one of whose components has no valence $4$ vertices. In this
  case, the operation of pushing $e$ out to the boundary along $A$
  results in a configuration which is not a composite pattern in the
  sense of Definition~\ref{def:CompositePattern}, as one of the two
  components $\Gamma_1$ and $\Gamma_2$ consists of a chain of
  $2$-valent vertices.  Deleting that component, and placing the new
  root as for composite patterns, gives a tiling pattern
  which is either left-extended or centered, with chord sequence
  $\sigma_2\otimes\rho^2\otimes\dots\rho^n$. View the result as an
  element of~${\mathfrak L}$. With this addition, we obtain the
  desired one-to-one correspondence
  between $\Graphs_1$ and $\mathfrak{L}\cup\mathfrak{P}\cup\mathfrak{T}$.
  
  The case where $i=n$ is analogous to the case when $i=1$, except in that case
  the tiling pattern is either centered or right-extended, and we view it now as an element 
  of $\mathfrak{R}$.
  This gives the desired one-to-one correspondence between 
  $\Graphs_1$ and ${\mathfrak R}\cup{\mathfrak P}\cup{\mathfrak T}$.
\begin{figure}
\centering
\input{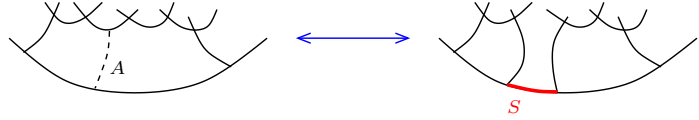}
\caption{{\bf Pushing out the edge.}
Illustration of the $1:1$ correspondence from Lemma~\ref{lem:ProductCancellations}}
\label{fig:PushingOut} 
\end{figure}
\end{proof}

\begin{remark}
  If ${\mathfrak T}$ is non-empty, the newly-introduced length $4$ chord $\rho$
  appearing in its chord sequence is determined by $\sigma_1$: it is
  the length four chord for which $\sigma_1\otimes\rho\neq 0$
  (i.e., with matching idempotents), and for which
  $\sigma_1\cdot\rho=\rho\cdot \sigma_2 =0$.
\end{remark}

\begin{figure}
\centering
\input{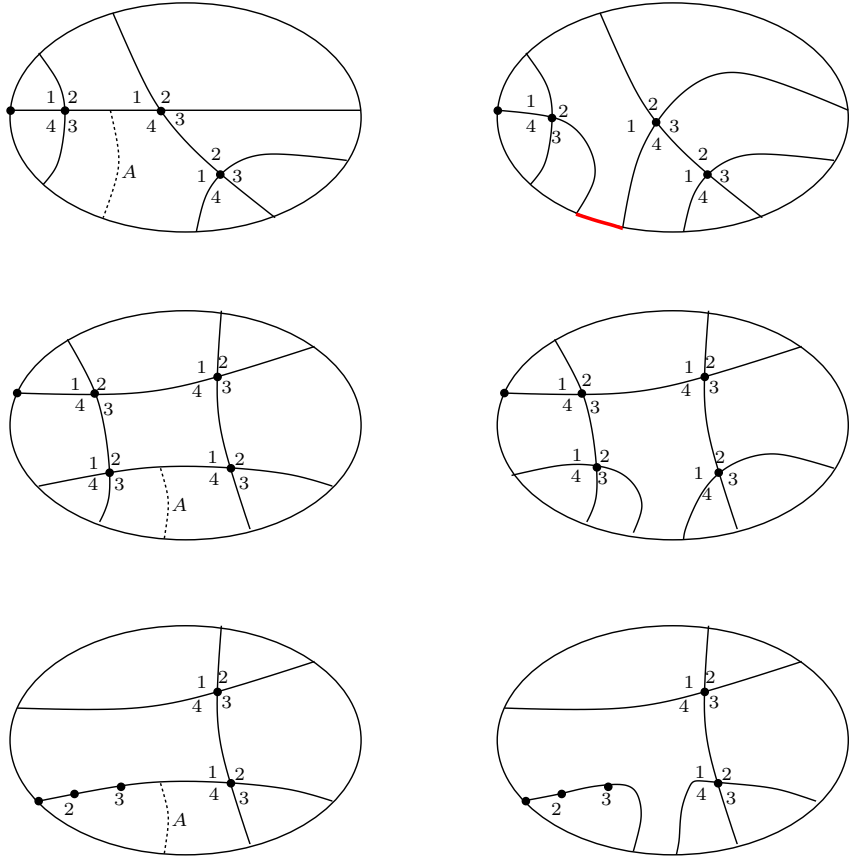}
\caption{{\bf Pushing out the edge: examples.}  On the upper left is
  an arc $A$ connecting an edge to the boundary, so that if we push
  the edge out along $A$, we obtain a composite pattern, as
  illustrated on the upper right.  On the middle left is an arc
  connected to a non-disconnecting edge; pushing out along that arc
  gives another tiling pattern (with one smaller weight), as
  illustrated on the middle right.  On the bottom left is a
  left-extended tiling pattern and an arc $A$ which disconnects the
  graph; but pushing out the edge, as illustrated on the lower right,
  does not give a composite pattern. Rather, if we delete the component with no $4$-valent vertices, we obtain a (centered) tiling pattern of type ${\mathfrak L}$.}
\label{fig:PushOutExamples} 
\end{figure}

\begin{lemma}
  \label{lem:WhatIsGeneric}
  Let $\Gamma$ be a composite pattern, and write
  $\Gamma=\Gamma_1\connectsum_i\Gamma_2$. Let $n$ be the length of the chord sequence for 
  $\Gamma_1$. Then $\Gamma$ is generic (in the sense of Definition~\ref{def:CompositePattern})
  if and only if one of the following conditions holds:
  \begin{itemize}
    \item $1<i<n$;
    \item $i=1$ and  $\Gamma_2$ is  left-extended; or
    \item $i=n$ and $\Gamma_2$ is right-extended.
  \end{itemize}
  The composite pattern~$\Gamma$ is extremal, and its
  distinguished $2$-valent sectors, if any, lie in the region~$\Region$ 
  (see Definition~\ref{def:CompositePattern})
  if and only if one of the following conditions holds:
  \begin{itemize}
  \item $i=1$,  $\Gamma_1$ is centered or left-extended, and
    $\Gamma_2$ is right-extended; or
    \item $i=n$,  $\Gamma_1$ is centered or right-extended, and $\Gamma_2$ is left-extended.
  \end{itemize}
  The composite pattern~$\Gamma$ is extremal, extended, and its
  distinguished $2$-valent sectors lie in a region other than $\Region$
  if and only if one of the following conditions holds:
  \begin{itemize}
    \item $i=1$,  $\Gamma_1$ is right-extended, and $\Gamma_2$ is
      right-extended; or
    \item $i=n$,  $\Gamma_1$ is left-extended, and $\Gamma_2$ is left-extended.
  \end{itemize}
\end{lemma}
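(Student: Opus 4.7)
The plan is a direct case analysis on the pair consisting of the index $i$ and the extension types (centered, left-extended, or right-extended) of $\Gamma_1$ and $\Gamma_2$. In each case I track two pieces of data: the position of the arc $S$ relative to the root of $\Gamma$ (which equals the root of $\Gamma_1$), and the location of any distinguished $2$-valent sectors in $\Gamma$. The latter can only come from $\Gamma_1$, since the $2$-valent vertices of $\Gamma_2$ are deleted when forming $\Gamma_2'$.

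First I dispose of the case $1<i<n$. The root of $\Gamma_1$ sits on $\partial\CDisk_1$ between the boundary arcs $I_n$ and $I_1$, while $S$ is adjacent to the glued-in arc $I_i=J$, which sits in the interior of $\CDisk$. Hence neither arc of $\Region\cap\partial\CDisk$ contains the root, and $\Gamma$ is generic. For $i=1$, the segment $I_1$ has one endpoint at the root of $\Gamma_1$, so after gluing exactly one of the two arcs of $\Region\cap\partial\CDisk$ is adjacent to the root and the other is not. Which of them is $S$ is controlled by the extension type of $\Gamma_2$: the distinguished $2$-valent sectors of $\Gamma_2$ lie on the left of the root-edge $e$ if $\Gamma_2$ is left-extended and on the right if $\Gamma_2$ is right-extended, and the compatibility condition of Figure~\ref{fig:CompositePattern} then forces $\Region$ to lie on the opposite side of $e$ after gluing. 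Unwinding the orientations, this produces the dichotomy: $\Gamma$ is generic precisely when $\Gamma_2$ is left-extended, and extremal precisely when $\Gamma_2$ is right-extended or centered. The case $i=n$ is the mirror image of $i=1$.

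Assuming extremality, I next determine whether the distinguished $2$-valent sectors of $\Gamma$ lie inside $\Region$ or elsewhere. These sectors come entirely from $\Gamma_1$, so if $\Gamma_1$ is centered there are none and this case falls under case~(2). If $\Gamma_1$ is extended, its $2$-valent sectors lie on a specific side of the edge in $\Gamma_1$ adjacent to the root, and after gluing this side falls either inside or outside $\Region$. For $i=1$ the side of $I_1$ inside $\Region$ is the left side (reading $\partial\CDisk$ away from the root), so a left-extended $\Gamma_1$ places the sectors inside $\Region$ (case~(2)) while a right-extended $\Gamma_1$ places them outside (case~(3)). For $i=n$ the left and right roles are swapped. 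Assembling the results recovers the trichotomy in the statement.

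The main obstacle is purely bookkeeping: consistently aligning the left/right convention for extended patterns of each of $\Gamma_1$ and $\Gamma_2$ with the boundary orientation of $\CDisk$ and with the labelling compatibility across $S$ from Figure~\ref{fig:CompositePattern}. Once these conventions are nailed down, each combination of $i\in\{1,n\}$ with the possible extension types of $\Gamma_1$ and $\Gamma_2$ falls unambiguously into exactly one of the three cases of the lemma.
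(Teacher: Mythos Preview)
Your proposal is correct and follows essentially the same approach as the paper: both are direct case analyses from the definition of the composition $\Gamma_1\connectsum_i\Gamma_2$, tracking where $S$ ends up relative to the root and where any $2$-valent sectors of $\Gamma_1$ sit. The paper's version is terser (``clear from the definitions'' plus figure references), while yours spells out the bookkeeping more explicitly. One minor slip: you write that for $i=1$ the composite is ``extremal precisely when $\Gamma_2$ is right-extended or centered,'' but in this construction $\Gamma_2$ is always extended, so the centered case does not arise; this is harmless.
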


\begin{proof}
  It is clear from the definitions that $\Gamma_1\connectsum_i\Gamma_2$ is extremal
  precisely when $i=1$ and $\Gamma_2$ is right-extended or
  $i=n$ and $\Gamma_2$ is left-extended. See
  Figure~\ref{fig:WhatIsGeneric}. The case of an extremal composite pattern with no distinguished $2$-valent sectors
  is illustrated in Figure~\ref{fig:DegExtremal}.
\begin{figure}
\centering
\input{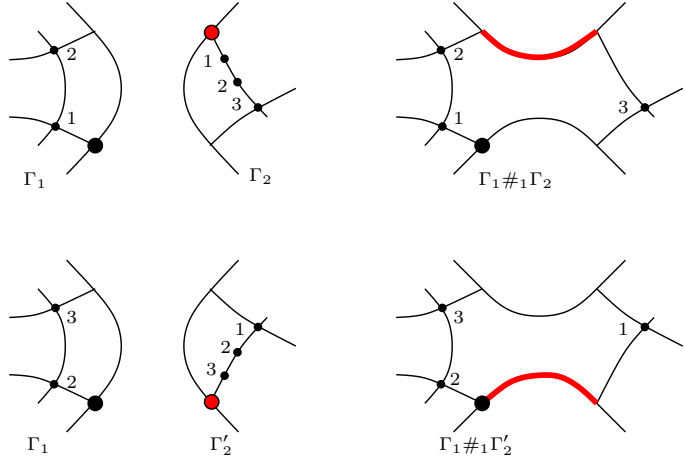}
\caption{{\bf Generic and extremal compositions.}
  We have drawn here compositions $\Gamma_1\connectsum_1\Gamma_2$ and $\Gamma_1\connectsum_1\Gamma_2'$. In the top line, we use a
  left-extended  $\Gamma_2$ and the result is generic; 
  while in the second line, we use  a right-extended  $\Gamma_2'$, and the result is extremal.}
\label{fig:DegExtremal} 
\end{figure}
\begin{figure}
\centering
\input{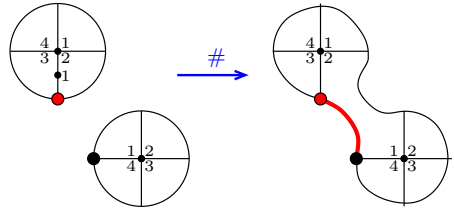}
\caption{{\bf An extremal composite pattern.}
  In this case, $\Gamma_1$ is centered and $\Gamma_2$ is left-extended.}
\label{fig:WhatIsGeneric} 
\end{figure}
\end{proof}

\begin{lemma}
  \label{lem:FirstInvolution}
  Let $\Graphs_2$ denote the set of extremal composite patterns whose
  distinguished $2$-valent sectors are in the region $\Region$. This
  set $\Graphs_2$
  admits a fixed-point-free involution which preserves the
  chord sequence, output element, and total weight.
\end{lemma}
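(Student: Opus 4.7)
The plan is to construct a fixed-point-free involution $\Phi \colon \Graphs_2 \to \Graphs_2$ that pairs up the two types of extremal composite patterns with distinguished $2$-valent sectors in $\Region$. By Lemma~\ref{lem:WhatIsGeneric}, these types are: Type~(I), with $i=1$, $\Gamma_1$ centered or left-extended, and $\Gamma_2$ right-extended; and Type~(II), with $i=n$, $\Gamma_1$ centered or right-extended, and $\Gamma_2$ left-extended. The involution $\Phi$ will exchange these two types.

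Given $\Gamma = \Gamma_1 \connectsum_1 \Gamma_2$ of Type~(I), I would define $\Phi(\Gamma) = \widetilde{\Gamma}_1 \connectsum_{\widetilde{n}} \widetilde{\Gamma}_2$ of Type~(II) by swapping the roles of the two components---taking $\widetilde{\Gamma}_1$ to be essentially $\Gamma_2$ and $\widetilde{\Gamma}_2$ to be essentially $\Gamma_1$---while reassigning the $2$-valent vertices on the composite's root edge so that they appear as $\widetilde{\Gamma}_2$'s left-extension rather than as $\Gamma_2$'s right-extension, with the attachment position shifted from $i=1$ to $\widetilde{i} = \widetilde{n}$. The symmetry of the construction (extension on the $\Region$ side of the root edge can be attributed to either component via the labeling compatibility of Figure~\ref{fig:CompositePattern}) ensures that applying $\Phi$ twice recovers the original pattern.

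Fixed-point-freeness is immediate because Type~(I) requires $i=1$ while Type~(II) requires $\widetilde{i} = \widetilde{n} \geq 2$, and by uniqueness of the decomposition of a composite pattern, the two types are disjoint within $\Graphs_2$. (Here $\widetilde{n} \geq 2$ because $\widetilde{\Gamma}_1$ has at least one $4$-valent vertex, so its chord sequence has length at least $2$ by Lemma~\ref{lem:alg-even-operations}.) Preservation of the chord sequence, output element, and weight is straightforward, since these quantities depend on the labeled planar graph and the arc $S$, not on the bookkeeping of which component is called $\Gamma_1$ and which is called $\Gamma_2$: the chord sequence is read off the boundary labels away from $S$, the output element is $U^d$ times the product of distinguished $2$-valent sector labels, and the weight equals $\rank H_1$ of the underlying graph.

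The main obstacle will be verifying the explicit reassignment of $2$-valent vertices in the edge cases where $\Gamma_1$ in Type~(I) is centered rather than left-extended (or where $\Gamma_1$ in Type~(II) is centered rather than right-extended). In these cases the entire extension of $\Gamma$'s root edge initially belongs to $\Gamma_2$ but must be redistributed under the swap so that the image pattern satisfies the correct Type~(II) (respectively Type~(I)) conditions. This reduces to a careful analysis of the labeling compatibility on the root edge and across $S$, using Figure~\ref{fig:CompositePattern}, to confirm that the labels forced on each $2$-valent sector are consistent with either attribution.
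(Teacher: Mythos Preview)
Your proposal is correct and describes the same involution as the paper, but framed through the decomposition $\Gamma = \Gamma_1 \connectsum_i \Gamma_2$ rather than directly on the composite pattern. The paper's description is more concise: since an extremal composite pattern has its root at one endpoint of the arc $S$, the involution simply moves the root to the \emph{other} endpoint of $S$, carrying the $2$-valent vertices along so they remain adjacent to the root (which is possible precisely because their sectors lie in $\Region$, the region bordering $S$). This is exactly your swap of $\Gamma_1$ and $\Gamma_2$---moving the root to the other component is what exchanges which component is called $\Gamma_1$ in the decomposition---but working directly on the composite pattern makes the ``obstacle'' you flag (the centered case and the reassignment of $2$-valent vertices) disappear: there is no bookkeeping of which component owns the extension, only the geometric operation of sliding the root and its adjacent $2$-valent vertices across $\Region$. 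Your fixed-point-freeness argument via the disjointness of Types (I) and (II) is fine and is equivalent to observing that the two endpoints of $S$ lie in different components of the graph.
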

\begin{proof}
  The involution is obtained by placing the root vertex on the other
  endpoint of $S$ and then moving the $2$-valent sectors (if there are
  any) so that they remain adjacent to the root vertex.  See
  Figure~\ref{fig:FirstInvolution}. (In the bottom row, we have an
  example where $2$-valent vectors need to be moved).
\end{proof}

\begin{figure}
\centering
\input{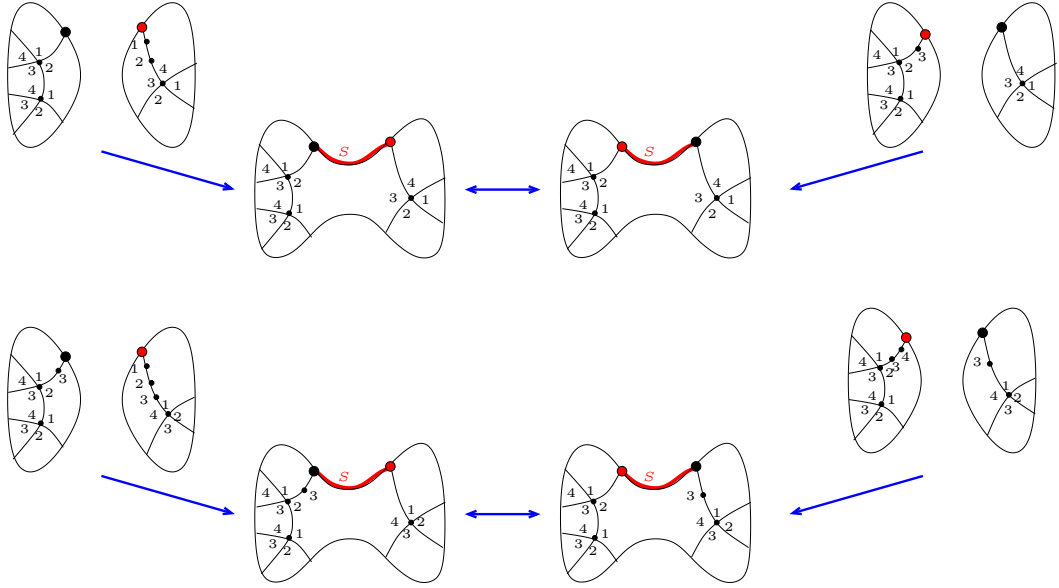}
\caption{{\bf Involution from Lemma~\ref{lem:FirstInvolution}.}  The
  involution is given by moving the root vertex from one endpoint of
  $S$ to the other (as indicated by the double arrows).  In the second
  row, we also had to move a $2$-valent vertex during this operation.
  We have also included compositions that give rise to the composites.
  As before, we use the conventions on compositions: the red dot lies on the
  tree $\Gamma_2$, which is joined to some input of $\Gamma_1$.
}
\label{fig:FirstInvolution} 
\end{figure}

\begin{theorem}
  \label{thm:AinftyAlgebra}
  The operations from Definition~\ref{def:MAlg} give $\MAlg$ the
  structure of a weighted $\Ainf$-algebra.
\end{theorem}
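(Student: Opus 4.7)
The plan is to fix a chord sequence $\rho^1,\dots,\rho^n$ of basic Reeb elements with $\rho^1\otimes\cdots\otimes\rho^n\neq 0$ (otherwise both sides vanish), fix a weight $w$, expand the weighted $\Ainf$-relation term-by-term, and then exhibit involutions on the resulting configurations to show the sum vanishes over~$\FF_2$.

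First I would identify what each term on the left-hand side of the weighted $\Ainf$-relation contributes. A term $\mu_p^u(\rho^1,\ldots,\rho^i,\mu_q^v(\rho^{i+1},\ldots,\rho^{i+q}),\ldots,\rho^n)$ with $q\geq 1$ expands, after unfolding the two inner and outer sums over tiling patterns $\Gamma_2$ and $\Gamma_1$, as a sum of output elements of the composite $\Gamma_1\connectsum_{i+1}\Gamma_2$. Since each composite pattern with chord sequence $\rho^1\otimes\cdots\otimes\rho^n$ and weight $w$ decomposes uniquely in this way, the total contribution from all such terms is exactly $\sum_{\Gamma}\text{output}(\Gamma)$ over all composite patterns of the correct chord sequence and weight. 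The $q=0,v=1$ term, using $\mu_0^1=\rho_{1234}+\rho_{2341}+\rho_{3412}+\rho_{4123}$, contributes the outputs of all tile patterns of weight $w-1$ with chord sequence $\rho^1\otimes\cdots\otimes\rho^i\otimes\tau\otimes\rho^{i+1}\otimes\cdots\otimes\rho^n$, where $\tau$ is length $4$; these are precisely the $\mathfrak T$-type patterns of Lemma~\ref{lem:ProductCancellations}. Remaining values of $(q,v)$ contribute zero because $\mu_0^v=0$ for $v\neq 1$, $\mu_1^0=0$, and units cannot be inputs to higher operations by strict unitality.

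The remaining task is to show that
\[
\sum_{\text{composite }\Gamma}\text{output}(\Gamma)\;+\;\sum_{\mathfrak{T}\text{-patterns}}\text{output}=0.
\]
I would first invoke Lemma~\ref{lem:FirstInvolution}: the set of extremal composite patterns whose distinguished $2$-valent sectors (if any) lie in the region $\Region$ carries a fixed-point-free, output-preserving involution (moving the root to the other endpoint of $S$, dragging any distinguished $2$-valent sectors along with it). In characteristic~$2$ these pair-up terms cancel, leaving only generic composites, extremal composites whose distinguished sectors lie outside $\Region$ (classified by Lemma~\ref{lem:WhatIsGeneric}), and $\mathfrak T$-patterns.

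For this second round of cancellation I would use Lemma~\ref{lem:ProductCancellations} as an involution-generating principle rather than a counting bijection. Each internal edge $e$ of a tile pattern $\Gamma\in\Graphs_1$ is adjacent to two regions of $\CDisk\setminus\Gamma$, giving two choices of arc along which to push $e$ out; each choice produces a configuration in $\mathfrak P\cup\mathfrak T\cup\mathfrak L\cup\mathfrak R$ whose output element equals that of $\Gamma$. The involution on the set of triples $(\Gamma,e,\text{side})$ that swaps the two sides of $e$ is fixed-point-free, preserves $\Gamma$ and hence the output, and (by the pushing-out construction) descends to an output-preserving involution that exchanges the remaining generic/extremal composite patterns and $\mathfrak T$-patterns with one another — the $\mathfrak L$ and $\mathfrak R$ cases correspond exactly to the extremal composites with sectors outside $\Region$, which in the $\Ainf$-relation come from compositions in which the outer operation is the algebra multiplication $\mu_2^0$ absorbing a factor of $\rho^1$ or $\rho^n$. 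Thus, all remaining terms cancel in pairs over~$\FF_2$.

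The main obstacle will be carrying out this second involution cleanly, since one must verify that the two sides of any edge always push out to configurations that are both among the surviving terms (rather than, say, one of them landing in the already-cancelled Lemma~\ref{lem:FirstInvolution} class), and that the identification of $\mathfrak L/\mathfrak R$ configurations with extremal composites outside $\Region$ is compatible with the output-element convention that multiplies by $\sigma_1$ or $\sigma_2$. This book-keeping essentially amounts to a careful case analysis matching Lemma~\ref{lem:WhatIsGeneric}'s three cases with the possible outcomes of the push-out operation.
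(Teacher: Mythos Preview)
Your bookkeeping in the first paragraph has a genuine gap: you have not accounted for the terms where the inner operation is $\mu_2^0$, i.e.\ the case $(q,v)=(2,0)$. These terms are $\mu_{n-1}^{w}(\rho^1,\dots,\rho^j\cdot\rho^{j+1},\dots,\rho^n)$, which are nonzero whenever $\rho^j\rho^{j+1}\neq 0$. They are precisely the $\Graphs_1$ configurations of Lemma~\ref{lem:ProductCancellations}, and that lemma is the mechanism by which they cancel against $\mathfrak{P}\cup\mathfrak{T}\cup\mathfrak{L}\cup\mathfrak{R}$ --- not a device for producing an involution on the latter set alone. You also do not carefully include the outer-$\mu_2^0$ terms in your initial expansion; the $\mathfrak{L}$ and $\mathfrak{R}$ sets are (single) tile patterns corresponding to outer-$\mu_2^0$ terms of types $(2L-)$, $(2C-)^\times$, $(2R+)$, $(2C+)^\times$, not extremal composite patterns.

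The proposed ``side-swapping'' involution does not work. If you pull the arc $S$ of a composite with chord sequence $\rho^1\otimes\cdots\otimes\rho^n$ back in, you obtain a tile pattern whose chord sequence is $\rho^1\otimes\cdots\otimes(\rho^j\rho^{j+1})\otimes\cdots\otimes\rho^n$ for the relevant~$j$. Pushing the same edge out through its other side factors some \emph{other} chord $\rho^k$ of this shorter sequence, yielding a configuration whose chord sequence is $\rho^1\otimes\cdots\otimes(\rho^j\rho^{j+1})\otimes\cdots\otimes\tau_1\otimes\tau_2\otimes\cdots\otimes\rho^n$, which is not $\rho^1\otimes\cdots\otimes\rho^n$. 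So the involution does not stay inside the fixed $A_\infty$ relation. The paper's proof instead uses Lemma~\ref{lem:ProductCancellations} to cancel the inner-$\mu_2^0$ terms against generic composites, curvature insertions at intermediate positions, and some outer-$\mu_2^0$ terms; uses Lemma~\ref{lem:FirstInvolution} and a direct $(L0+)\leftrightarrow(R0-)$, $(C0+)\leftrightarrow(C0-)$ matching for further cancellations; and then constructs a separate, nontrivial bijection~$M$ (Table~\ref{tab:remaining}) to handle the remaining extremal composites $[LL-]$, $[RR+]$, the curvature terms $(L0-)$, $(R0+)$, and the outer-$\mu_2^0$ terms $(2C\pm)^0$, $(2L+)$, $(2R-)$. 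This last step is where the real combinatorial work lies, and nothing in your outline substitutes for it.
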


\begin{proof}
  We must verify the $\Ainf$ relation for each fixed input
  sequence of algebra elements and fixed weight. This relation is a sum of contributions
  of weighted, rooted trees with two internal vertices. (Note that we
  consider a leaf with positive weight internal.)

  The $\Ainf$ relations with $\leq 2$ inputs or with $3$ inputs and
  weight $0$ are easy consequences of the following facts:
  \begin{itemize}
    \item $\mu^w_0=0$ except when $w=1$.
    \item $\mu^w_1=0$ for all $w\geq 0$.
    \item $\mu^w_2=0$ for all $w>0$.
    \item $\AsUnDefAlg$ is an associative algebra.
    \item $\mu^1_0$ is a central element of $\AsUnDefAlg$.
  \end{itemize}

  Consider an $\Ainf$ relation, then, with at least $4$ inputs or with
  $3$ inputs and weight $\geq 1$. 
  We describe these terms according to
  the type of the two $\Ainf$ operations involved, with the outer
  $\mu^w_n$ written first, using a symbol
  \begin{itemize}
  \item $0$ for the curvature $\mu^1_0$,
  \item $2$ for a multiplication $\mu^0_2$,
  \item $L$ for a left-extended $\mu^w_n$ with $n \ge 4$,
  \item $C$ for a centered $\mu^w_n$ with $n \ge 4$, and
  \item $R$ for a right-extended $\mu^w_n$ with $n \ge 4$.
  \end{itemize}
  In addition, we add a final symbol $+$, $-$, or~$g$ describing
  whether the inner $\mu^w_n$ is fed into the leftmost input, rightmost input,
  or any other input of the outer $\mu^w_n$. We will also use a $\wild$
  for a wild-card any of these symbols.

  We thus have terms of the following types, classified according to
  the number of inputs of the smaller~$\mu^w_n$.
  \begin{itemize}
  \item \textbf{Terms of type~$(0)$} involve $\mu^1_0$, which is
    necessarily the inner $\Ainf$ operation.
    The other operation is
    associated to a tiling pattern~$\Gamma$. The overall term is
    written, for example, $(C0+)$ for a term where $\Gamma$ is
    centered and the curvature is fed into the first input of the
    corresponding $\mu^w_n$.
  \item \textbf{Terms of type~$(2)$} involve $\mu^0_2$, which can be
    the inner or outer operation, giving terms like $(2L-)$, from a
    left-extended tiling pattern~$\Gamma$ fed into the second input of
    the $\mu^0_2$, or $(R2g)$, from a $\mu^0_2$ fed into a generic
    input of a right-extended tiling pattern. Note that if the $2$
    comes first, then the last symbol cannot be a $g$, since $\mu^0_2$
    has no generic inputs.
  \item \textbf{Terms of type~$[]$} involve two higher multiplications
    $\mu^w_n$ with $n > 2$. These correspond to composite patterns in
    the sense of Definition~\ref{def:CompositePattern}, and are written
    with square brackets to distinguish them from types~$(0)$
    and~$(2)$.
    So, for example, $[CR+]$
    means a right-extended operation feeding into the first input of a
    centered operation.
    The second letter cannot be~$C$: the output of a 
    centered operation is a power of $U$ times an idempotent, and if such
    an output is channelled into another $\mu^w_n$ with
    $(w,n)\neq (0,2)$, the result vanishes.
  \end{itemize}

  Finally, we make one further distinction: we divide terms of type
  $(2C-)$ into two types:
  \begin{itemize}
  \item $(2C-)^0$, where $\rho^1 \cdot \rho^2 = 0$, and
  \item $(2C-)^\times$, where $\rho^1 \cdot \rho^2 \ne 0$.
  \end{itemize}
  We similarly distinguish $(2C+)$ into $(2C+)^0$, where $\rho^{n-1}
  \cdot \rho^n = 0$, and $(2C+)^\times$, where $\rho^{n-1} \cdot
  \rho^n \ne 0$.

  Our goal is to explain how these terms
  cancel in the $\Ainf$ relations. Before proceeding to the main
  verification, we make some remarks about the sequence of incoming
  algebra elements.  By linearity, it suffices to verify the $\Ainf$
  relation when the sequence of incoming algebra elements consists of
  basic algebra elements. Moreover, the case where at least one of
  those elements is an idempotent can be handled easily. Since the
  $\mu_m^w(a_1,\dots,a_m)=0$ if some $a_i$ is an idempotent and
  $(n,w)\neq (2,0)$, if $(a_1,\dots,a_n)$ is a sequence of inputs to a
  non-zero term in the $\Ainf$ relation and $a_i$ is an idempotent,
  then either $i=1$, in which case we have two cancelling terms in the
  $\Ainf$ relation of types $(2\wild-)$ and a $(\wild2+)$; or $i=n$, in which
  case we have two cancelling terms of types $(2\wild+)$ and $(\wild2-)$.  For
  example, in the weight $0$ $\Ainf$ relation with inputs
  $\iota_1\otimes\rho_4\otimes\rho_3\otimes\rho_2\otimes \rho_1$,
  there are two non-zero terms of type $(2C+)$ and $(C2-)$
  respectively:
  \[ \mu^2_0(\iota_1,\mu^4_0(\rho_4,\rho_3,\rho_2,\rho_1))= 
  \mu^4_0(\mu^2_0(\iota_1,\rho_4),\rho_3,\rho_2,\rho_1)), \]

  We now proceed to the verification of the $\Ainf$ relation, assuming
  that the input sequence consists of Reeb elements
  $\rho^1\otimes \dots\otimes \rho^n$, with $n\geq 4$ or
  $n=3$ and $w\geq 1$.
  See Table~\ref{tab:Ainf-terms} for a listing of the term types, and how
they cancel in the following proof.

\begin{table}
  \centering
  \begin{tabular}{cl@{\qquad}cl@{\qquad}cl}
    \toprule
    Term&Cancellation&Term&Cancellation&Term&Cancellation \\ \midrule
    $(L0+)$ & Cancels $(R0-)$ &
      $(L2+)$ & $\Graphs_1$ in Lem.~\ref{lem:ProductCancellations}&
        $(2L+)$ & See Table~\ref{tab:remaining} \\
    $(L0g)$ & $\mathfrak{T}$ in Lem.~\ref{lem:ProductCancellations}&
      $(L2g)$ & $\Graphs_1$ in Lem.~\ref{lem:ProductCancellations}&
        $(2L-)$ & ${\mathfrak L}$  in  Lem.~\ref{lem:ProductCancellations}\\
    $(L0-)$ & See Table~\ref{tab:remaining} &
      $(L2-)$ & $\Graphs_1$ in Lem.~\ref{lem:ProductCancellations}&
        $(2C-)^\times$ & ${\mathfrak L}$ in Lem.~\ref{lem:ProductCancellations} \\
    $(C0+)$ & Cancels $(C0-)$ &
      $(C2+)$ & $\Graphs_1$ in Lem.~\ref{lem:ProductCancellations}&
        $(2C-)^0$ & See Table~\ref{tab:remaining} \\
    $(C0g)$ & $\mathfrak{T}$ in Lem.~\ref{lem:ProductCancellations}&
      $(C2g)$ & $\Graphs_1$ in Lem.~\ref{lem:ProductCancellations}&
        $(2C+)^\times$ & ${\mathfrak R}$ in Lem.~\ref{lem:ProductCancellations} \\
    $(C0-)$ & Cancels $(C0+)$ &
      $(C2-)$ & $\Graphs_1$ in Lem.~\ref{lem:ProductCancellations}&
        $(2C+)^0$ & See Table~\ref{tab:remaining} \\
    $(R0+)$ & See Table~\ref{tab:remaining} &
      $(R2+)$ & $\Graphs_1$ in Lem.~\ref{lem:ProductCancellations}&
        $(2R+)$ & ${\mathfrak R}$ in Lem.~\ref{lem:ProductCancellations} \\
    $(R0g)$ & $\mathfrak{T}$ in Lem.~\ref{lem:ProductCancellations}&
      $(R2g)$ & $\Graphs_1$ in Lem.~\ref{lem:ProductCancellations}&
        $(2R-)$ & See Table~\ref{tab:remaining} \\
    $(R0-)$ & Cancels $(L0+)$ &
      $(R2-)$ & $\Graphs_1$ in Lem.~\ref{lem:ProductCancellations} \\ \addlinespace
    $[LL+]$ & $\mathfrak{P}$ in Lem.~\ref{lem:ProductCancellations} &
      $[CL+]$ & $\mathfrak{P}$ in Lem.~\ref{lem:ProductCancellations}&
        $[RL+]$ & $\mathfrak{P}$ in Lem.~\ref{lem:ProductCancellations}\\
    $[LLg]$ & $\mathfrak{P}$ in Lem.~\ref{lem:ProductCancellations} &
      $[CLg]$ & $\mathfrak{P}$ in Lem.~\ref{lem:ProductCancellations} &
        $[RLg]$ & $\mathfrak{P}$ in Lem.~\ref{lem:ProductCancellations} \\
    $[LL-]$ & See Table~\ref{tab:remaining} &
      $[CL-]$ & $\Graphs_2$ in Lem.~\ref{lem:FirstInvolution} &
        $[RL-]$ & $\Graphs_2$ in Lem.~\ref{lem:FirstInvolution} \\
    $[LR+]$ & $\Graphs_2$ in Lem.~\ref{lem:FirstInvolution} &
      $[CR+]$ & $\Graphs_2$ in Lem.~\ref{lem:FirstInvolution} &
        $[RR+]$ & See Table~\ref{tab:remaining} \\
    $[LRg]$ & $\mathfrak{P}$ in Lem.~\ref{lem:ProductCancellations} &
      $[CRg]$ & $\mathfrak{P}$ in Lem.~\ref{lem:ProductCancellations} &
        $[RRg]$ & $\mathfrak{P}$ in Lem.~\ref{lem:ProductCancellations}\\
    $[LR-]$ & $\mathfrak{P}$ in Lem.~\ref{lem:ProductCancellations} &
      $[CR-]$ & $\mathfrak{P}$ in Lem.~\ref{lem:ProductCancellations} &
        $[RR-]$ & $\mathfrak{P}$ in Lem.~\ref{lem:ProductCancellations} \\
    \bottomrule
  \end{tabular}
  \caption{Types of terms in $\Ainf$ relations not involving idempotents, and how they cancel.}
  \label{tab:Ainf-terms}
\end{table}

  Terms of types $(L0+)$ and $(R0-)$ cancel, as follows.
A term of type $(L0+)$ is determined by a left-extended tiling pattern $\Gamma$,
which has a
string of at least $1$ but at most $3$ distinguished 2-valent sectors after
the root vertex.
The corresponding term of type $(R0-)$  is obtained from
$\Gamma$ by moving the root vertex of $\Gamma$ to the next position (with respect to the boundary orientation) of the intersection of $\Gamma$ with $\partial \CDisk$,  and moving around the valence $2$ vertices
as needed, as illustrated in Figure~\ref{fig:LeftRightC}.
For example, in the weight $1$ $\Ainf$ relation with inputs
$\rho_3\otimes\rho_2\otimes\rho_1$, there are two non-zero terms, of
type $(L0+)$ and $(R0-)$, respectively:
  \begin{align*}
    \mu^0_4(\mu_0^1,\rho_3,\rho_2,\rho_1)&=\mu^0_4(\rho_{1234},\rho_3,\rho_2,\rho_1)
    = U \rho_{123}\\
    \mu^0_4(\rho_3,\rho_2,\rho_1,\mu_0^1) &=
       \mu^0_4(\rho_3,\rho_2,\rho_1,\rho_{4123}) = U \rho_{123}.
  \end{align*}
Another pair of cancelling terms of type $(L0+)$ and $(R0-)$ is given by
\[ \mu^0_6(\rho_{3412},\rho_1,\rho_4,\rho_{34},\rho_3,\rho_2)=U^2 \rho_{34}=\mu^0_6(\rho_1, 
\rho_4, \rho_{34},\rho_3,\rho_2,\rho_{1234}).\]

There is a similar cancellation of terms of types~(C0+) and (C0-), as shown in the last picture of Figure~\ref{fig:LeftRightC}.
An example is provided by the cancellation of 
\[ \mu^0_{10}(\rho_{1234},\rho_3,\rho_2,\rho_{12},\rho_{1},\rho_{41},\rho_4,\rho_{34},\rho_3,\rho_2)=U^4\cdot \iota_0
=\mu^0_{10}(\rho_3,\rho_2,\rho_{12},\rho_{1},\rho_{41},\rho_4,\rho_{34},\rho_3,\rho_2,\rho_{1234}). \]

\begin{figure}
\centering
\input{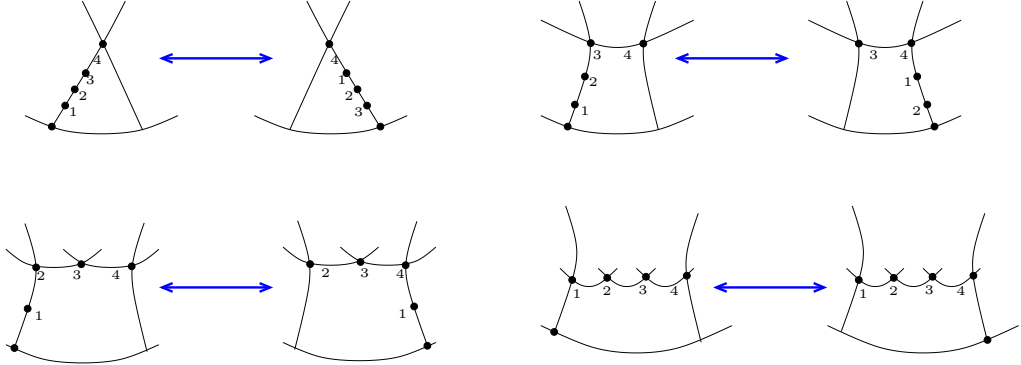}
\caption{{\bf Cancellation of type~$(L0+)$ and~$(R0-)$; and $(C0+)$ and $(C0-)$}.
    The first three pictures illustrate cancellations of $(L0+)$ and $(R0-)$, while the last
    one is a cancellation of $(C0+)$ with $(C0-)$. For each picture
    there are three other cases obtained by cyclically permuting the subscripts.
}
\label{fig:LeftRightC} 
\end{figure}

The
patterns of type $\Graphs_1$ from Lemma~\ref{lem:ProductCancellations}
contribute terms of types
$(\wild2\wild)$. According to that lemma, these cancel against the following types of terms:
\begin{itemize}
  \item 
    generic composition patterns (i.e., the terms in ${\mathfrak P}$) which, 
    by Lemma~\ref{lem:WhatIsGeneric}, are the terms of type 
    $[\wild\wild g]$, $[\wild L+]$,
    and $[\wild R-]$;
  \item 
    terms corresponding to configurations in $\mathfrak{T}$, which are
    of type $(\wild0g)$;
  \item 
    configurations in ${\mathfrak L}$ from the notation of
    Lemma~\ref{lem:ProductCancellations}, which in turn correspond to all terms
    of type $(2L-)$ and $(2C-)^\times$; and
  \item configurations in ${\mathfrak R}$, which in turn correspond to all terms of type 
    of type $(2R+)$ and $(2C+)^\times$.
\end{itemize}

For example, we have the following cancellations from 
Lemma~\ref{lem:ProductCancellations}.
The terms
\[
  \mu^0_8(\rho_4,\mu^0_2(\rho_3,\rho_{41}),\rho_4,\rho_3,\rho_{23},\rho_2,\rho_{12},\rho_1)
\qquad{\text{and}}\qquad
 \mu^0_4(\rho_4,\rho_3,\mu^0_6(\rho_{41},\rho_4,\rho_3,\rho_{23},\rho_2,\rho_{12}),\rho_1),
\]
are of types $(C2g)$ and $[CRg]$ respectively; they correspond to the diagrams
on the top row of Figure~\ref{fig:PushOutExamples}. Likewise,
\[
  \mu^1_8(\rho_{41},\rho_4,\mu^0_2(\rho_3,\rho_4),\rho_3,\rho_{23},\rho_2,\rho_{12},\rho_1)
  \qquad\text{and}\qquad
  \mu^0_{10}(\rho_{41},\rho_4,\rho_3,\mu^1_0,\rho_4,\rho_3,\rho_{23},\rho_2,\rho_{12},\rho_1)
\]
are of types $(C2g)$ and $(C0g)$ respectively; they correspond to the diagrams
on the middle row of Figure~\ref{fig:PushOutExamples}.
Finally, 
\[ \mu^1_6(\mu^0_2(\rho_{23},\rho_{4}),\rho_3,\rho_{23},\rho_2,\rho_1,\rho_{41})
\qquad\text{and}\qquad
\mu^0_2(\rho_{23},\mu^0_6(\rho_4,\rho_3,\rho_{23},\rho_2,\rho_4,\rho_{41}))
\]
are of types $(L2+)$ and $(2C-)^\times$ respectively; they correspond to the diagrams
from the bottom row of Figure~\ref{fig:PushOutExamples}.

By Lemma~\ref{lem:WhatIsGeneric}, the following terms correspond to
extremal composite patterns, all of whose distinguished $2$-valent
sectors lie in the region~$\Region$ from
Definition~\ref{def:CompositePattern}:
$[LR+]$, $[CR+]$, $[CL-]$, and $[RL-]$.
By Lemma~\ref{lem:FirstInvolution}, these terms cancel each other.
For example, the terms
\[
  \mu^0_6(\rho_1,\rho_4,\rho_{34},\rho_3,\rho_2,\mu^0_4(\rho_{123},\rho_2,\rho_1,\rho_4))
  \qquad{\text{and}}\qquad
  \mu^0_4(\mu^0_6(\rho_1,\rho_4,\rho_{34},\rho_3,\rho_2,\rho_{123}),\rho_2,\rho_1,\rho_4)
\]
are of types $[CL-]$ and $[CR+]$, and cancel by
Lemma~\ref{lem:FirstInvolution}; they correspond to the diagrams on
the top of Figure~\ref{fig:FirstInvolution}. Likewise,
\[
  \mu^0_6(\rho_1,\rho_4,\rho_{34},\rho_3,\rho_2,\mu^0_4(\rho_{1234},\rho_3,\rho_2,\rho_1))
  \qquad{\text{and}}\qquad
  \mu^0_4(\mu^0_6(\rho_1,\rho_4,\rho_{34},\rho_3,\rho_2,\rho_{1234}),\rho_3,\rho_2,\rho_1)
\]
are of types $[RL-]$ and $[LR+]$, and correspond to the diagrams on
the bottom of Figure~\ref{fig:FirstInvolution}.

The remaining possible terms are of types
$(2L+)$, $(2R-)$, $(2C+)^0$, $(2C-)^0$, $[LL-]$, and $[RR+]$.
For cancellations among these remaining terms, we take a closer look
at the sequence of algebra elements
$(\rho^1\otimes\dots\otimes \rho^n)$ entering the $\Ainf$ relation.
For all of these terms, we have that $\rho^i\cdot\rho^{i+1}=0$ for all
$i=1,\dots,n-1$; this follows from
Lemma~\ref{lem:alg-property-nonmult}.

We will now construct a bijection $M$ between terms of types~$[LL-]$,
$(L0-)$, $(2C-)^0$, and $(2R-)$ with terms of types~$[RR+]$, $(R0+)$, $(2C+)^0$, and
$(2L+)$, preserving the input sequence and output
element.  Indeed, for each given chord sequence
$(\rho^1\otimes\dots\otimes \rho^n)$ and output element
$U^n\otimes \rho$, exactly one of $[LL-]\cup(L0-)$, $(2C-)^0$, and
$(2R-)$ can be non-empty; similarly, exactly one of $[RR+]\cup (R0+)$,
$(2C+)^0$, and $(2L+)$ is non-empty. These possibilities are subdivided
according to these according to the relative lengths of $\rho$,
$\rho^1$, and $\rho^n$, as follows.

For a left-extended operation $[L\wild\wild]$ or $(L\wild\wild)$, the
output chord is shorter than the
first input; whereas for a $\mu^0_2$ operation $(2\wild-)$,
the output chord is
at least as long as the first input. Thus, 
if $|\rho|<|\rho^1|$, there can be no terms of type $(2\wild -)$; if
$|\rho|\geq |\rho^1|$, there can be no terms of types~$[LL-]$ or $(L0-)$.
Symmetrically, $|\rho|<|\rho^n|$ excludes terms of
type~$(2\wild +)$, and $|\rho|\geq |\rho^n|$ excludes terms of types~$[RR+]$ or $(R0+)$. Thus, the relative sizes of $(|\rho|,|\rho^1|)$ and
$(|\rho|,|\rho^n|)$ exclude all but the types of remaining terms as shown in Table~\ref{tab:remaining}.
\begin{table}
  \centering
  \begin{tabular}{r|ccc}
  \multicolumn{1}{r}{} & $|\rho|<|\rho^1|$ & $|\rho|=|\rho^1|$ & $|\rho|>|\rho^1|$ \\ \cmidrule{2-4}
  $|\rho|<|\rho^n|$ & $[LL-]$, $(L0-)$; $[RR+]$, $(R0+)$
            & $(2C-)^0$; $[RR+]$, $(R0+)$
              & $(2R-)$; $[RR+]$ $(R0+)$ \\ [2pt]
  $|\rho|=|\rho^n|$ & $[LL-]$, $(L0-)$; $(2C+)^0$
            & $(2C-)^0$; $(2C+)^0$
              & $(2R-)$; $(2C+)^0$\\[2pt]
  $|\rho|>|\rho^n|$ & $[LL-]$, $(L0-)$; $(2L+)$
            & $(2C-)^0$; $(2L+)$
                                                               & $(2R-)$; $(2L+)$\\[1em]
\end{tabular}
  \caption{Remaining cancellation of terms}
  \label{tab:remaining}
\end{table}
\begin{figure}
\centering
\input{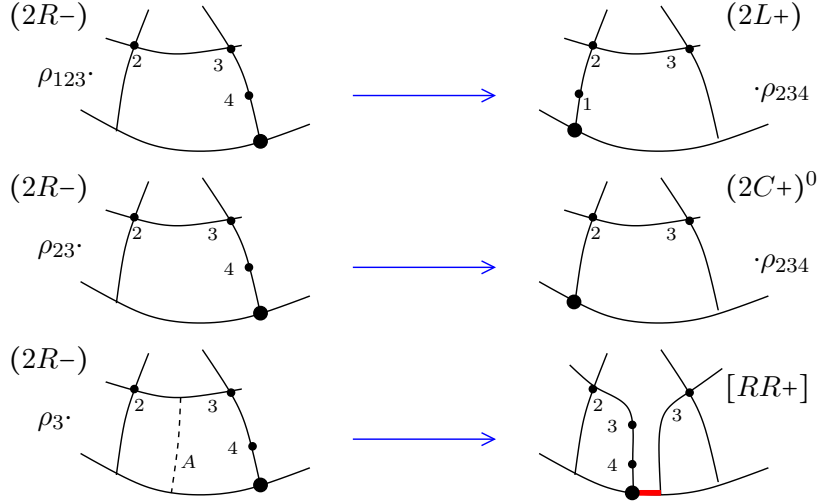}
\caption{{\bf Defining $M(2R-)$.}
  At the left are elements of $(2R-)$; the image of the term under $M$
  is indicated on the right. In the first line, $\rho=\rho_{1234}$; in
  the second, $\rho=\rho_{234}$; and in the third, $\rho=\rho_{34}$. We have also listed the types of the result.
  In all cases, $\rho^n=\rho_{234}$.
\label{fig:2Rm}}
\end{figure}

\begin{figure}
\centering
\input{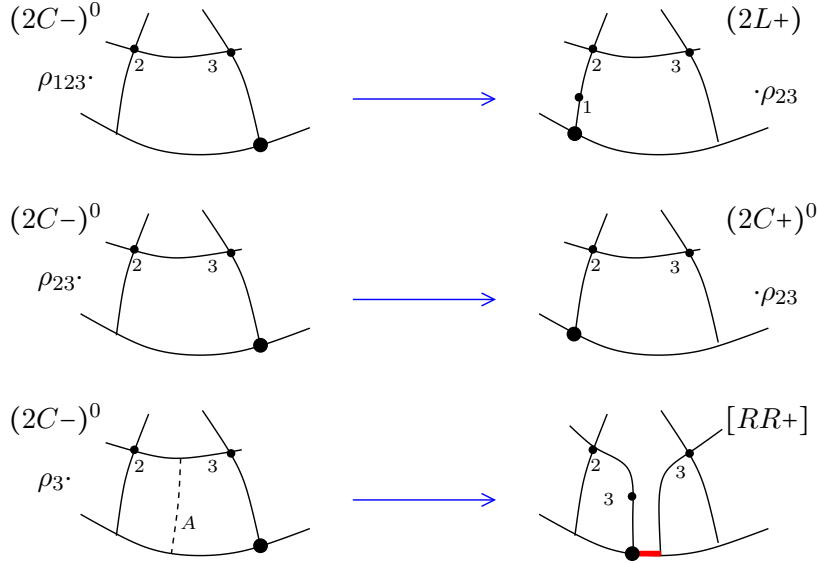}
\caption{{\bf Defining $M(2C-)^0$.}
  \label{fig:2Cm}
The element of type $(2C-)^0$ is on the left column. 
In the first line, $\rho=\rho_{123}$;
in the second, $\rho_{23}$; and in the third, $\rho=\rho_3$.
In all cases,
$\rho^n = \rho_{23}$.}
\end{figure}

\begin{figure}
\centering
\input{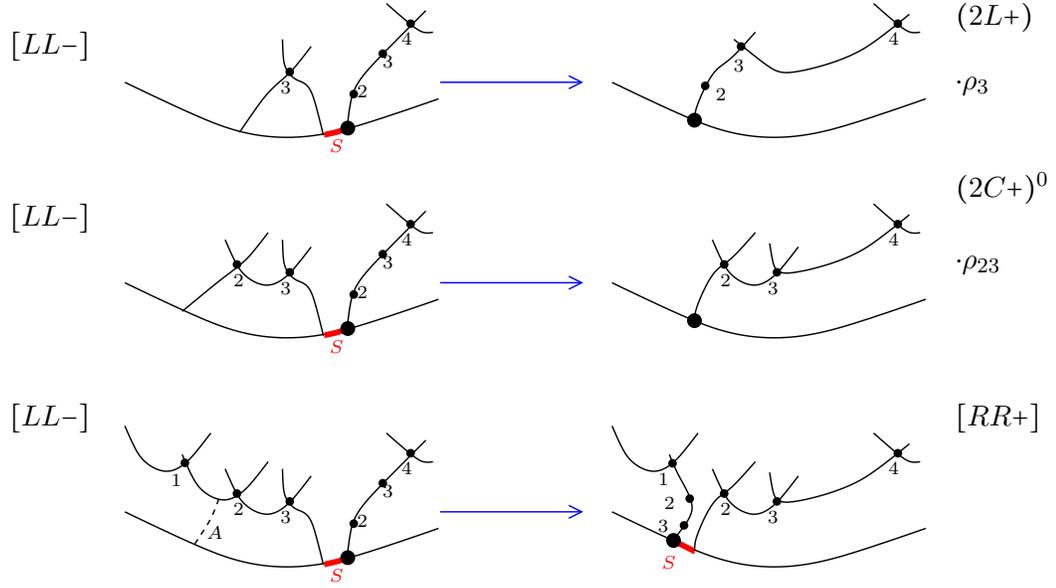}
\caption{{\bf Defining $M[LL-]$.}
\label{fig:LLm} In these pictures, $\rho=\rho_{23}$.}
\end{figure}

The map $M$ is defined as follows. Suppose the input to $M$ is an
element of type $(2C-)^0$ or $(2R-)$ (so that $|\rho|\geq
|\rho^1|$). Let $\Gamma$ be the tiling pattern appearing in the
configuration. When $|\rho|\geq |\rho^n|$, the map $M$ gives a result
of type $(2C+)^0$ or $(2L+)$, whose tiling pattern is obtained from
$\Gamma$ by moving the root vertex one spot back (with respect to the
boundary orientation) and reshuffling the $2$-valent vertices, as
needed.
When $|\rho|<|\rho^n|$, there is an arc $A$ that connects some edge in
$\Gamma$ to the a point on $\partial\CDisk$, corresponding to the
factorization $\rho^n=\tau\cdot\rho$ for some suitable choice of
$\tau$ (in the sense that one endpoint of $A$ is on the edge
corresponding to the factorization). In that case, $M$ is defined by
pushing $A$ to the boundary, moving the root vertex back, and
reshuffling the $2$-valent vertices. In that case, the result
of $M$ is a term of type $[RR+]$ or $(R0+)$ (depending on whether or
not the edge pushed out disconnects).
In all these cases, the ability to do the necessary reshuffling
depends on the fact that $\rho^1 \cdot \rho^2 = 0$.
See Figures~\ref{fig:2Rm} and
\ref{fig:2Cm} for illustrations.

Starting from a composite pattern~$\Gamma$ of type $[LL-]$, we proceed
similarly. Provided that $|\rho|\geq |\rho^n|$,  we pull in the
arc~$S$
and then move the root vertex one spot backwards,
to find cancelling terms of type $(2C+)^0$ or $(2R+)$.
When $|\rho|<|\rho^n|$, we find an arc $A$ to push out to the boundary (again, as before). 
Now, when we
pull in $S$ and push out along $A$, we obtain the cancelling term of
type $[RR+]$ or $(R0+)$.
When the input is a configuration of $(L0-)$, we perform the same
operations as in the case of $[LL-]$, thinking of $S$ as the last interval before the root vertex.
See Figure~\ref{fig:LLm}.
Examples are provided by the cancelling terms
\begin{equation}
  \label{eq:LLm2Cp}
  \mu^0_4(\rho_{34},\rho_3,\rho_2,\mu^0_4(\rho_{12},\rho_{1},\rho_{4},\rho_{3})) 
\qquad{\text{and}}\qquad
\mu^0_2(\mu^0_6(\rho_{34},\rho_{3},\rho_{2},\rho_{12},\rho_{1},\rho_{4}),\rho_{3}) 
\end{equation}
which are of type
$[LL-]$ and $(2C+)^0$ respectively. Cancelling terms
\begin{equation}
  \label{eq:LLm2Lp}
  \mu^0_4(\rho_{234},\rho_{3},\rho_{2},\mu^0_4(\rho_{12},\rho_{1},\rho_{4},\rho_{3})) 
\qquad{\text{and}}\qquad
\mu^0_2(\mu^0_6(\rho_{234},\rho_{3},\rho_{2},\rho_{12},\rho_{1},\rho_{4}),\rho_{3}) 
\end{equation}
are of types $[LL-]$ and $(2L+)$ respectively.
Finally, terms
\begin{equation}
  \label{eq:LLRRex}
  \mu^0_4(\rho_{34},\rho_3,\rho_2,\mu^0_6(\rho_{12},\rho_1,\rho_{41},\rho_4,\rho_3,\rho_{23})) 
\qquad{\text{and}}\qquad
 \mu^0_4(\mu^0_6(\rho_{34},\rho_{3},\rho_{2},\rho_{12},\rho_{1},\rho_{41}),\rho_{4},\rho_{3},\rho_{23}) 
\end{equation}
are of types  $[LL-]$ and $[RR+]$ respectively. 
The map $M$ in these three cases is illustrated in
Figure~\ref{fig:LLRRex}.
\begin{figure}
  \centering
  \input{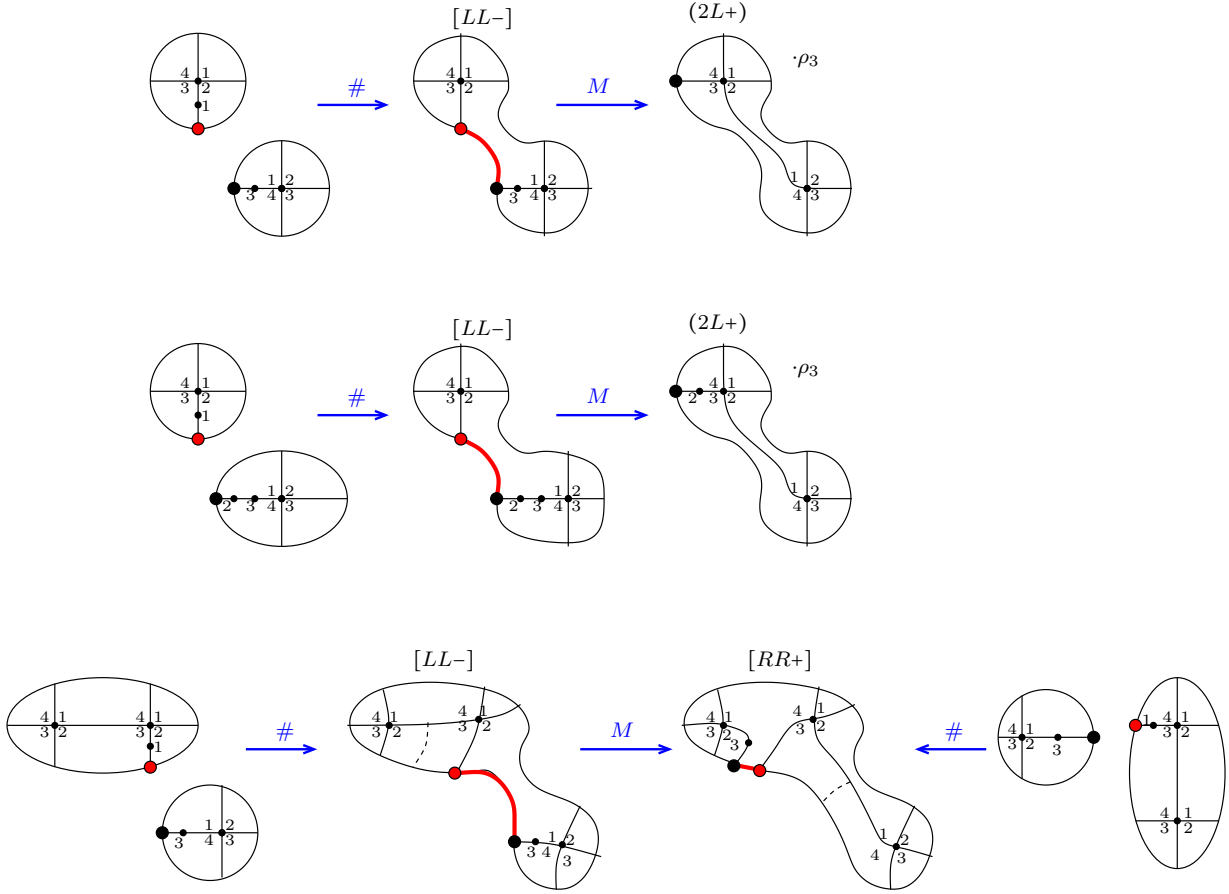}
  \caption{\textbf{Cancellation of terms of type $[LL-]$.}
    The three lines represent the cancellations from Equation~\eqref{eq:LLm2Cp},~\eqref{eq:LLm2Lp},
    and~\eqref{eq:LLRRex} respectively.}
  \label{fig:LLRRex}
\end{figure}

An inverse to $M$ is constructed by pushing out along an arc $A$ and
then moving the vertex one spot forwards. In cases where
$|\rho|<|\rho^n|$, we must first pull an interval on the boundary in
before pushing out along $A$: when the configuration is of type
$[RR+]$, the interval to be pulled in is the interval $S$; when the
configuration is of type $(R0+)$, the interval to be pulled in is the first interval
after the root vertex. 
\end{proof}

\subsection{First properties of \texorpdfstring{$\MAlg$}{the weighted A-infinity
    algebra of the torus}}

\begin{lemma}
  \label{lem:WeightGrading}
  If $a^1,\dots,a^k$ are basic algebra elements, then for every
  non-zero operation,
  \[ |\mu^w_k(a^1\otimes\dots\otimes a^k)|=4w+\sum_{i=1}^k|a^i|.\]  
\end{lemma}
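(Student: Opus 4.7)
The plan is to reduce the lemma to a sector-counting identity for each contributing tiling pattern. By $U$-linearity and the vanishing rule $\mu_n^w(\ldots,\iota_j,\ldots)=0$ for $(n,w)\neq(2,0)$, three cases remain: (i) the associative product $\mu_2^0(\rho,\rho')=\rho\rho'$, for which $|\rho\rho'|=|\rho|+|\rho'|$ is immediate from the definition of length; (ii) the curvature $\mu_0^1=\rho_{1234}+\rho_{2341}+\rho_{3412}+\rho_{4123}$, a sum of length-$4$ chords matching $4\cdot 1+0$; and (iii) the contribution of a single tiling pattern $\Gamma$ to $\mu_n^w(\rho^1,\dots,\rho^n)$, where each $\rho^i$ is a Reeb element.

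For case (iii), write $d$ for the number of degree-$4$ vertices and $V_2$ for the number of degree-$2$ vertices of $\Gamma$ (with $V_2=0$ if $\Gamma$ is centered). By Definition~\ref{def:OutputElement}, the output element $U^d\omega$ has length $4d+V_2$: the centered case multiplies $U^d$ by an idempotent of length~$0$, whereas the extended case further multiplies by one length-$1$ generator $\rho_{\Lambda(q)}$ per distinguished degree-$2$ sector. Thus the lemma reduces to the combinatorial identity
\[
  \sum_{i=1}^n|\rho^i| \;=\; (4d+V_2) - 4w.
\]

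To prove this identity I plan to count distinguished sectors adjacent to $\partial\CDisk$ in two ways, using that by Definition~\ref{def:ChordSequence} each letter in the chord sequence records exactly one such sector, so $\sum|\rho^i|$ equals the total count. The total number of sectors around internal vertices of $\Gamma$ is $4d+2V_2$. Of the $2V_2$ degree-$2$ sectors only the $V_2$ distinguished ones matter; since the 2-valent vertices all lie along the root edge, which has an endpoint on $\partial\CDisk$, every distinguished degree-$2$ sector is boundary-adjacent. Among the $4d$ quadrants, those not contained in a short cycle are boundary-adjacent; by Definition~\ref{def:TilingPattern} each short cycle encloses exactly $4$ quadrants (four edges meeting four degree-$4$ vertices). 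Since $\Gamma$ is connected and the short cycles are precisely its bounded faces as a planar graph in $\RR^2$ (the boundary-adjacent faces in $\CDisk$ being absorbed into the unbounded face upon extension), Euler's formula yields that the number of short cycles equals $\rank H_1(\Gamma)=w$. Hence the boundary-adjacent quadrants number $4d-4w$, and in total $\sum|\rho^i|=V_2+(4d-4w)$, as required.

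The one subtlety worth verifying is the extended-pattern count: one must confirm that each of the $V_2$ distinguished degree-$2$ sectors genuinely contributes a letter to the chord sequence, and that the $V_2$ non-distinguished degree-$2$ sectors do not. This follows from Definitions~\ref{def:ExtendedTilingPattern} and~\ref{def:ChordSequence}, since the root edge with its subdivisions locally separates $\CDisk$ near the root into two boundary-adjacent regions, only one of which contains distinguished sectors.
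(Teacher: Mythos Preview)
Your proof is correct and takes essentially the same approach as the paper: both arguments count the distinguished sectors of a contributing tiling pattern $\Gamma$ in two ways---as the output length $4d+V_2$ on the one hand, and as the boundary-visible sectors $\sum|\rho^i|$ plus the $4w$ quadrants hidden inside short cycles on the other. Your version is more detailed, explicitly invoking Euler's formula to identify the number of short cycles with $w=\rank H_1(\Gamma)$ and carefully handling the distinguished versus non-distinguished degree-$2$ sectors in the extended case, whereas the paper leaves these points as brief assertions.
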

\begin{proof}
  The cases $\mu_2^0$ and $\mu_0^1$ are immediate. Otherwise,
  let $\Gamma$ be a graph that contributes to the operation.
  From Definition~\ref{def:MAlg}, we see that $|\mu^w_k(a^1\otimes\dots\otimes a^k)|$
  is the total number of distinguished sectors in $\Gamma$.
  On the other hand, the number of distinguished sectors in $\Gamma$ is given by
  $4w + \sum_{i=1}^k|a^i|$, since
  the sectors visible from the boundary are the ones in $\sum_{i=1}^k |a^i|$, and 
  the ones that are not occur $4$ times in each short cycle, and there are
  $w$ short cycles.
\end{proof}

The following properties of $\MAlg$ will be useful when studying the
gradings:

\begin{lemma}\label{lem:alg-property-factor} 
  For any $n+2w>4$, if $\rho^1,\dots,\rho^n$ are Reeb elements with
  $\mu_n^w(\rho^1,\dots,\rho^n)\neq 0$ then there exists an~$i$ so that
  $\rho^i$ factors nontrivially (i.e., $\rho^i=\rho\rho'$ for some
  Reeb elements $\rho$, $\rho'$). In fact, there are at least two
  such integers $i$.
\end{lemma}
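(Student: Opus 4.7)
The plan is to show that any tiling pattern $\Gamma$ contributing to a nonzero $\mu_n^w(\rho^1,\dots,\rho^n)$ with $n+2w>4$ must have at least two boundary intervals $I_i$ with $|\rho^i|\geq 2$. Since any Reeb element $\rho=\rho_{j,\dots,j+\ell}$ of length $\ell+1\geq 2$ factors as $\rho_j\cdot\rho_{j+1,\dots,j+\ell}$, this yields two distinct indices with nontrivial factorizations.

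First I would extract the basic combinatorial invariants of $\Gamma$. Writing $d$ for its number of $4$-valent vertices and $v_2$ for its number of $2$-valent vertices (so $v_2=0$ in the centered case), an Euler-characteristic count on the CW decomposition of $\CDisk$ cut by $\Gamma\cup\partial\CDisk$, using that the complementary faces biject with the $w$ short cycles together with the $n$ boundary intervals, yields $d=n/2-1+w$; summing distinguished sectors then gives $\sum_i|\rho^i|=2n-4+v_2$. Since $n$ must be even by Lemma~\ref{lem:alg-even-operations}, the hypothesis $n+2w>4$ in fact forces $n+2w\geq 6$, equivalently $d\geq 2$.

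Next I would characterize length-$1$ intervals: $|\rho^i|=1$ holds precisely when the boundary face for $I_i$ contains a single $4$-valent corner, equivalently when the two edges of $\Gamma$ ending at the endpoints of $I_i$ meet at the same $4$-valent vertex $v$ at cyclically adjacent positions. For each $4$-valent $v$, let $\ell_v$ be the number of edges at $v$ leading directly to $\partial\CDisk$ (in the extended case the root edge counts as such an edge at $v_{\mathrm{root}}$), so $\sum_v \ell_v = n$. Connectedness of $\Gamma$ combined with $d\geq 2$ forces each $4$-valent vertex to have at least one edge going to another $4$-valent vertex; hence $\ell_v\leq 3$ for every $v$, and the number of cyclically adjacent boundary-incident pairs at $v$ is at most $\ell_v-1$ (for $\ell_v\geq 1$).

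The final step is the global bound. Letting $d_+$ be the number of $4$-valent vertices with $\ell_v\geq 1$, summation yields at most $n-d_+$ single-corner boundary faces. If $d_+\leq 1$ then all $n\geq 4$ boundary-incident edges accumulate at one vertex, contradicting $\ell_v\leq 3$; hence $d_+\geq 2$. In the centered case, single-corner faces are exactly the length-$1$ intervals, so there are at least $d_+\geq 2$ long intervals. In the extended case, the extension interval is automatically long, since its length is $\sigma_{\mathrm{ext}}+v_2\geq 2$ where $\sigma_{\mathrm{ext}}\geq 1$ is the number of $4$-valent corners in the extension face; a short case split on whether $\sigma_{\mathrm{ext}}=1$ or $\sigma_{\mathrm{ext}}\geq 2$ again produces at least $d_+\geq 2$ long intervals. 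The principal subtlety will be the bookkeeping at $v_{\mathrm{root}}$ in the extended case, where the root edge behaves like a leaf for counting purposes but its distinguished-side adjacent quadrant belongs to the extension interval rather than a length-$1$ interval; the saving feature is precisely that the extension interval is always long, balancing out the lost single-corner face.
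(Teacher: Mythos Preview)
Your argument is correct and genuinely different from the paper's. The paper's four-sentence proof locates an internal edge (one joining two $4$-valent vertices) that is visible from the boundary and observes that such an edge forces the adjacent boundary chord to factor; it then asserts that either there are at least two such edges or one such edge is visible from two boundary faces. Your approach instead globally bounds the number of length-$1$ intervals via the inequality $\#\{i:|\rho^i|=1\}\le\sum_v\max(\ell_v-1,0)=n-d_+$, so that at least $d_+\ge 2$ intervals are long. The paper's route is more geometric but terse: as written, it does not spell out why two visible internal edges cannot both be seen only from the same boundary face. Your counting argument closes exactly that gap, and in fact yields the sharper bound of at least $d_+$ long intervals.

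Two small points. First, you invoke $n\ge 4$ without justification; this is true but not entirely automatic from $n+2w\ge 6$, since one must rule out tiling patterns with $n=2$ (easy: with $n=2$ and $d\ge 2$ you would need at least two internal edges between the first two $4$-valent vertices, creating a forbidden bigon). Second, your extended-case split is unnecessary. Once you treat the root edge as a leaf-edge at $v_{\mathrm{root}}$, the inequality $\#\{\text{length-}1\text{ faces}\}\le n-d_+$ holds uniformly: single-$4$-valent-corner faces are in bijection with adjacent leaf-edge pairs, and length-$1$ faces form a subset (the extension face with $\sigma_{\mathrm{ext}}=1$ is counted among the former but is long, which only strengthens the bound). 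So the casework on $\sigma_{\mathrm{ext}}$ can be dropped.
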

\begin{proof}
  If $n+2w>4$, the tile pattern contains at least $2$ internal
  vertices.
  There is some internal edge connecting $2$ distinct vertices that is visible from the boundary.
  That edge corresponds to a factorization of
  $\rho^i=\rho\rho'$. Further, there is either more than one such
  internal edge or there is one internal edge visible from the
  boundary in two different places; so, there are at least two
  integers $i$ so that $\rho^i$ factors.
\end{proof}

\begin{lemma}\label{lem:alg-diet}
  Suppose that $\rho^1,\dots,\rho^n$ are Reeb elements so that
  $\mu_n^w(\rho^1,\dots,\rho^n)=b\neq 0$ for some $w>0$. Then there is
  an $i$ and a factorization $\rho^i=\rho\rho'$ so that
  $\mu_{n+2}^{w-1}(\rho^1,\dots,\rho^{i-1},\rho,\mu_0^1,\rho',\rho^{i+1},\dots,\rho^n)$
  has $b$ as a term.
\end{lemma}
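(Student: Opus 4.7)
The plan is to apply the ``pushing out an edge'' correspondence from Lemma~\ref{lem:ProductCancellations} to an edge of a short cycle inside a tiling pattern that contributes $b$ to $\mu_n^w(\rho^1,\dots,\rho^n)$. Informally, the presence of a short cycle (a weight unit) is precisely what lets us insert an extra length-$4$ chord (a summand of $\mu_0^1$) into the chord sequence while dropping one unit of weight.

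Since $b\neq 0$, fix a tiling pattern $\Gamma$ of weight $w$ and chord sequence $\rho^1\otimes\cdots\otimes\rho^n$ contributing a summand of $b$. Because $w>0$, $\Gamma$ contains at least one short cycle. I first claim that $\Gamma$ contains a short cycle $C'$ with at least one edge $e$ shared with a region $R$ of $\CDisk\setminus\Gamma$ that meets $\partial\CDisk$. To see this, form the dual graph whose vertices are the regions of $\CDisk\setminus\Gamma$ and whose edges correspond to the edges of $\Gamma$. This dual graph is connected, so there is a dual path from any chosen short cycle to a boundary region; the last short cycle traversed on that path and the dual edge leading out of it provide the desired $C'$ and $e$. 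Now choose an arc $A\subset R$ from an interior point of $e$ to $\partial\CDisk$, with interior disjoint from $\Gamma$. Its boundary endpoint lies on some boundary arc $I_i$ of $\Gamma$, and $A$ induces a factorization $\rho^i=\rho\cdot\rho'$, given by splitting the sectors visible from $I_i$ to the two sides of the meeting point of $A$ with $e$.

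Since $e$ lies on the short cycle $C'$, removing $e$ does not disconnect $\Gamma$, so this configuration falls into the case $\mathfrak{T}$ of Lemma~\ref{lem:ProductCancellations}. Pushing $e$ outward along $A$ produces a tiling pattern $\Gamma'$ of weight $w-1$ with chord sequence
\[
\rho^1\otimes\cdots\otimes\rho^{i-1}\otimes\rho\otimes\tau\otimes\rho'\otimes\rho^{i+1}\otimes\cdots\otimes\rho^n,
\]
where $\tau$ is a length-$4$ chord forced by the labels around $e$, and with the same output element as $\Gamma$ (pushing out an edge preserves the number of $4$-valent vertices, and hence the $U$-power, as well as the distinguished sector(s) at the root). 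Because every length-$4$ chord is one of the four summands of $\mu_0^1=\rho_{1234}+\rho_{2341}+\rho_{3412}+\rho_{4123}$, $\Field[U]$-multilinearity gives
\[
\mu_{n+2}^{w-1}(\rho^1,\dots,\rho^{i-1},\rho,\mu_0^1,\rho',\rho^{i+1},\dots,\rho^n)=\sum_{\tau'}\mu_{n+2}^{w-1}(\rho^1,\dots,\rho^{i-1},\rho,\tau',\rho',\rho^{i+1},\dots,\rho^n),
\]
and $\Gamma'$ contributes the output $b$ to the $\tau'=\tau$ summand, so $b$ appears as a term, as claimed.

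The main technical point is the existence of an ``accessible'' short cycle, i.e., one with an edge bordering a boundary region; the shortest-path argument in the dual graph handles this cleanly. A secondary point to verify is that the length-$4$ chord $\tau$ arising from pushing out is determined uniquely by the labels and idempotents around $e$ (as in the remark following Lemma~\ref{lem:ProductCancellations}), so the resulting $\Gamma'$ really is a valid tiling pattern with the claimed chord sequence and output.
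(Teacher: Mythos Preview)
Your proof is correct and follows the same idea as the paper's, which is a one-liner: ``Some edge on a short cycle is visible from the boundary. That edge corresponds to the desired factorization.'' You have supplied the details the paper omits --- the dual-graph connectivity argument for why such an edge exists, and the explicit appeal to the $\mathfrak{T}$ case of Lemma~\ref{lem:ProductCancellations} to identify the resulting weight-$(w-1)$ pattern and its output --- so your write-up is simply a fleshed-out version of the same approach.
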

\begin{proof}
  Some edge on a short cycle
  is visible from the boundary.
  That edge corresponds to the desired factorization.
\end{proof}

Finally, we note a boundedness property of the algebra, which is
useful for working with twisted complexes (type $D$ structures) over
it, and for defining certain kinds of bimodules over it.

\begin{definition}
  Recall that a weighted tree $T$ with $n$ inputs,
  $i$ internal vertices, and total weight $w$ has a dimension
  $\dim(T)=n-1+2w-i$ (Equation~\eqref{eq:tree-dim}).
  A weighted algebra $\Alg$ is called {\em bonsai} if 
  there is an integer $N$ with the property that 
  for all weighted trees $T$ with $\dim(T)>N$, $\mu(T)=0$.
  A weighted algebra $\Alg$ over $\Field[U]$ is called {\em
    pre-filtered bonsai} if, for every~$m$, the quotient
  $\Alg/U^m\Alg$ is bonsai.
\end{definition}

If $\Alg$ is  pre-filtered bonsai, then its completion with respect to the sequence of 
(weighted $\Ainf$) ideals 
\[ U \Alg\supset U^2\Alg \supset U^3 \Alg \supset \dots \]
is  filtered bonsai, in the sense of~\cite[Definition~\ref*{AbstDiag:def:filtered-bonsai}]{LOT:abstract}.

The algebra $\MAlg$ constructed here satisfies these hypotheses, according to the following:

\begin{proposition}
\label{lem:Bonsai}
If $T$ is a weight $w$ operation tree with $n$ inputs then $\mu(T)$
maps $A^{\otimes n}$ into $U^{\dim(T)/2} A$.
In particular, the weighted algebra $\MAlg$ is pre-filtered bonsai.
\end{proposition}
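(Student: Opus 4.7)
The strategy is to show that every term contributing to $\mu(T)$ carries exactly $U^{\dim(T)/2}$, by first analyzing the contribution of each corolla and then combining via a standard tree identity.

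First, I would establish a per-corolla formula: for any tile pattern $\Gamma$ (centered or extended) of weight $w$ with $n$ boundary intervals, the number $d$ of internal $4$-valent vertices equals $w + n/2 - 1$. This is a direct Euler-characteristic computation on $\Gamma$. Writing $d$ for the four-valent count, $v_2$ for the two-valent count, and $\ell$ for the number of boundary leaves (including the root), we have $V_\Gamma = d + v_2 + \ell$ and $2E_\Gamma = 4d + 2v_2 + \ell$; combined with connectedness and $b_1(\Gamma) = w$, the relation $E_\Gamma - V_\Gamma + 1 = w$ simplifies to $d = w + \ell/2 - 1$. The count $v_2$ cancels entirely. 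Since the $\ell$ boundary leaves partition $\partial\CDisk$ into $\ell = n$ arcs $I_1,\dots,I_n$ (one per tensor factor of the chord sequence), this gives $d = w + n/2 - 1$, and by Definition~\ref{def:OutputElement} the output element of $\Gamma$ is divisible by exactly $U^d$. (The degenerate cases $\mu^0_2$ and $\mu^1_0$, which are not themselves given by tile patterns, trivially satisfy $d = w + n/2 - 1 = 0$.)

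Second, I would pass from a single corolla to a general operation tree $T$ by observing that $\mu(T)$ is built by composing corolla operations, and composition is $\Field[U]$-linear. Hence the total power of $U$ appearing in $\mu(T)$ equals $\sum_v d_v$, summed over the internal vertices $v$ of $T$, where $d_v = w_v + n_v/2 - 1$. The standard rooted-tree identity $\sum_v n_v = n + i - 1$ (obtained by noting that each of the $n + i$ edges of $T$ is the input edge of exactly one internal vertex, with the sole exception of the edge going into the root leaf) then yields
\[
  \sum_v d_v \;=\; w + \frac{n + i - 1}{2} - i \;=\; \frac{n + 2w - i - 1}{2} \;=\; \frac{\dim(T)}{2}.
\]
All numerators are even because Lemma~\ref{lem:alg-even-operations} forces $n_v$ to be even at every non-trivial internal vertex. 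This gives the main statement $\mu(T)(A^{\otimes n}) \subseteq U^{\dim(T)/2} A$.

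For the pre-filtered bonsai conclusion, I would fix $m \geq 0$ and set $N = 2m - 1$. Any operation tree $T$ with $\dim(T) > N$ then satisfies $\dim(T)/2 \geq m$, so $\mu(T)$ lands in $U^m \MAlg$ and vanishes in the quotient $\MAlg/U^m\MAlg$, verifying that this quotient is bonsai. The only real obstacle is the per-corolla count in the first step, where one must treat extended tile patterns on equal footing with centered ones; the happy fact that $v_2$ cancels out of the Euler-characteristic formula is what makes the argument go through uniformly, and the remainder is pure tree bookkeeping.
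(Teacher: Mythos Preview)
Your proposal is correct and follows essentially the same approach as the paper: an Euler-characteristic count on a single tiling pattern to get $d = w + n/2 - 1$, followed by the tree identity $\sum_v(n_v - 1) = n - 1$ (equivalently your $\sum_v n_v = n + i - 1$) to assemble the per-vertex contributions into $\dim(T)/2$. Your explicit remarks on the parity of $\dim(T)$ and on the pre-filtered bonsai conclusion are slightly more detailed than the paper's, but the argument is the same.
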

\begin{proof}
  We start by verifying that $\mu^w_n$ maps $A^{\otimes n}$ to $U^{w+n/2-1} A$; 
  this verifies the lemma when 
  $T=\wcorolla{n}{w}$.  This is vacuous for $\mu^0_2$ and $\mu^1_0$.
  Let $\Gamma$ be a tiling pattern that contributes to~$\mu^w_n$. 
  $e$ the number of edges of $\Gamma$, $v$ the number
  of vertices, $x$ the number of valence $2$ internal
  vertices,
  and $d$ the number of valence $4$ internal vertices.
  Then $v=n+x+d$, $2e=2x+4d+n$,
  and the Euler characteristic of $\Gamma$ is $1-w$; so
  $1-w=(n+x+d)-(x+2d+n/2)=n/2-d$; and the operation contributes $U^d =
  U^{w+n/2-1}$, as desired.

  Suppose now that $T$ is a weighted operation tree with $n$~inputs, $e$~edges,
  $i$~internal vertices,
  and weight~$w$, where an internal vertex~$v$ has $n_v$ inputs,
  weight~$w_v$, and contribution $u_v$ to the $U$ power.
  It is elementary to see that
  \[
    n-1 = \sum_{\text{internal vertices $v$}} (n_v-1).
  \]
  Since we have already verified that $u_v = w_v + (n_v-1)/2 - 1/2$,
  it follows that the total $U$ contribution is
  \[
    \sum u_v = w + \frac{n-1}{2} - \frac{i}{2} = \frac{\dim(T)}{2}. \qedhere
  \]
\end{proof}

\subsection{From tiling patterns to immersions}
\label{subsec:Immersions}

The weighted algebra operations have an interpretation in terms of
immersions of the disk into the torus, as follows.

Mark the torus, drawing a pair of curves $\alpha_1$, $\alpha_2$ on
the torus so that $\alpha_1\pitchfork \alpha_2$ in a single point $p$.
Label the four corners near $p$ by $1,2,3,4$ in \emph{clockwise} order
around $p$. (The reason for this ordering is that we will sometimes
think of $p$ as a puncture; then 
this is the orientation induced on the circle, thought of as 
the circle at infinity on  $T^2\setminus\{p\}$.)  Cutting along
$\alpha_1\cup\alpha_2$, we obtain a square with opposite sides
identified, as shown in Figure~\ref{fig:torus}, whose four
corners are labelled $1,\dots,4$.

Given a centered tiling pattern $(\Gamma,\Lambda)$, there is an
immersion of the disk to the torus (with possible branching at the corners),
defined as follows.  To each vertex $v$ in $\Gamma$, we associate a
copy of the standard square $S(v)$ with labelled corners (as in
Figure~\ref{fig:torus}). If $\Gamma$ has an edge from $v_1$ to $v_2$,
we perform an identification of $S(v_1)$ with $S(v_2)$ along a shared
edge. The result of these identifications is a topological disk
$\Delta(\Gamma)$, equipped with a tiling by squares. The map which
projects each tile to $T^2$ induces a map from $\Delta(\Gamma)$ to
$T^2$, which is an immersion away from the corners of
$\Delta(\Gamma)$.  

An inverse operation is given as follows. Given a square-tiled disk
with an immersion as above, let $\alpha_1^\vee$
(respectively $\alpha_2^\vee$) be a disjoint, isotopic translate of
$\alpha_2$ (respectively $\alpha_1$) so that
$\alpha_i\pitchfork \alpha_i^\vee$ and
$\alpha_1^\vee\pitchfork \alpha_2^\vee$ in a single point each. Then
$\Gamma$ is the preimage of $\alpha_1^\vee\cup\alpha_2^\vee$ under the
map $u\co \CDisk\to T^2$.  

If the tiling pattern has $d$ internal vertices, then the degree of
the corresponding immersion is $d/4$.  The weight of the operation is
the number of preimages of $p$ in the interior of $\Delta(\Gamma)$.

This compelling geometric interpretation of the $\Ainf$ operations,
as counts of immersions, is closely connected to pseudo-holomorphic
curve theory; but we will not make further use of it in this paper.


\section{Gradings}\label{sec:grading}
This section is devoted to the gradings on $\MAlg$. As in the $\HFa$
case, the algebra $\MAlg$ is graded by a non-commutative group. (See
Section~\ref{sec:W-gradings} for a discussion of gradings of weighted
$\Ainf$-algebras by non-commutative groups, and~\cite[Section
2.5]{LOT1} for a more leisurely discussion of gradings of
$\Ainf$-algebras by non-commutative groups.) Also as in the $\HFa$
case, there are three different groups that can be used to grade
$\MAlg$. Consider the pointed matched circle $(Z,\CircPts,M,z)$ for the
torus (Figure~\ref{fig:torus}). The largest of the grading groups,
denoted $\bigGroup$, is a central extension
\begin{equation}\label{eq:big-group-extension}
0\longrightarrow\ZZ\longrightarrow \bigGroup\longrightarrow H_1(Z,\CircPts)\cong \ZZ^4\longrightarrow 0.
\end{equation}
(This is analogous to $\bigGroup(4)$ from~\cite[Section 3.3.1]{LOT1}.)
The smallest of the grading groups, denoted $\smallGroup(\mathbb{T})$, is a
central extension
\begin{equation}\label{eq:small-group-extension}
0\longrightarrow\ZZ\longrightarrow \smallGroup(\mathbb{T})\longrightarrow H_1(T^2)\cong \ZZ^2\longrightarrow 0.
\end{equation}
(This is analogous to $\smallGroup(\PMC)$ from~\cite[Section 3.3.2]{LOT1}.)
An intermediate grading group $\smallGroup$ is described in
Section~\ref{sec:Intermediate} (analogous to~\cite[Section 11.1]{LOT1}).
In Section~\ref{sec:unique}, we will primarily work with this
intermediate group.

The gradings by $\smallGroup$ and $\smallGroup(\mathbb{T})$ are
inherited from the grading by $\bigGroup$, but through different
processes. The grading by $\smallGroup$ is induced by a homomorphism
$\bigGroup\onto\smallGroup$ (Section~\ref{sec:Intermediate}). The
grading by $\smallGroup(\mathbb{T})$ is defined using grading
refinement data (Section~\ref{sec:small-gr}).

We also define two other gradings, the winding number grading and
total weight grading, in Section~\ref{sec:win-num}, and a (rather boring) mod-2
grading coming from the grading by $\smallGroup(\mathbb{T})$ with
respect to suitable choices, in Section~\ref{sec:mod-2-gr}.

\subsection{The big grading group}\label{sec:big-group}
Elements of $H_1(Z,\CircPts)$ are linear combinations of connected
components of $Z\setminus\CircPts$. There is a map $m\co
H_1(Z,\CircPts)\otimes H_0(\CircPts)\to \ZZ$ given by defining, for $I$
a component of $Z\setminus\CircPts$ and $p\in\CircPts$, a multiplicity
\[
  m(I,p) =
  \begin{cases}
    1/2 & \text{if $p$ is the terminal endpoint of $I$}\\
    -1/2 & \text{if $p$ is the initial endpoint of $I$}\\
    0 & \text{if $p$ is disjoint from the closure of $I$}
  \end{cases}
\]
and extending bilinearly. We can use $m$ to define
a linking pairing $L\co H_1(Z,\CircPts)\otimes H_1(Z,\CircPts)\to \OneHalf\ZZ$
by setting $L(\alpha_1,\alpha_2)=m(\alpha_2,\bdy\alpha_1) = -m(\alpha_1,\bdy\alpha_2)$.

The big grading group $\bigGroup$ is the central extension as in
Formula~\eqref{eq:big-group-extension} with commutation relation
$gh=\lambda^{2L([g],[h])}hg$ where $[g]$ denotes the image of $g$ in $\ZZ^4$ and
$\lambda$ is a generator of the central $\ZZ$.
Explicitly, consider the set of quintuples
$(m;a,b,c,d)\in\left((\OneHalf\ZZ)\times\ZZ^4\right)$. Define a multiplication by
setting
\begin{multline*}
(m;a,b,c,d)\cdot (m';a',b',c',d')
\\=
\left(m+m'
+\OneHalf\left|
  \begin{smallmatrix}
    a & b\\
    a' & b'
  \end{smallmatrix}
  \right|
+\OneHalf\left|
  \begin{smallmatrix}
    b & c\\
    b' & c'
  \end{smallmatrix}
  \right|
+\OneHalf\left|
  \begin{smallmatrix}
    c & d\\
    c' & d'
  \end{smallmatrix}
  \right|
+\OneHalf\left|
  \begin{smallmatrix}
    d & a\\
    d' & a'
  \end{smallmatrix}
  \right|,a+a',b+b',c+c',d+d'\right)
\end{multline*}

\begin{lemma}\label{lem:def-big-group}
  This operation makes $(\OneHalf\ZZ)\times\ZZ^4$ into a
  group. Further, the elements $(-1/2;1,0,0,0)$, $(-1/2;0,1,0,0)$,
  $(-1/2;0,0,1,0)$ and $(-1/2;0,0,0,1)$ and $\lambda=(1;0,0,0,0)$
  generate an index $2$ subgroup isomorphic to $\bigGroup$.
\end{lemma}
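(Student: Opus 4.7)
The plan is to recognize the operation as a central extension by a cocycle. Setting $\sigma(v,v')=\OneHalf\sum_{i\in\ZZ/4\ZZ}(v_iv'_{i+1}-v_{i+1}v'_i)$, the multiplication becomes $(m;v)\cdot(m';v')=(m+m'+\sigma(v,v');v+v')$. Associativity reduces to the cocycle identity $\sigma(v_1,v_2)+\sigma(v_1+v_2,v_3)=\sigma(v_2,v_3)+\sigma(v_1,v_2+v_3)$, which is immediate from bilinearity of $\sigma$. The identity is $(0;0,0,0,0)$, and $(m;v)^{-1}=(-m;-v)$ since $\sigma(v,-v)=-\sigma(v,v)=0$ (each defining determinant has equal rows). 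This settles the first sentence of the lemma.

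For the subgroup $H$ generated by $\lambda$ and the four elements $e_1,\dots,e_4$, I would first compute by induction the normal-form product
\[
\lambda^k e_1^{a_1}e_2^{a_2}e_3^{a_3}e_4^{a_4}=\bigl(k+f(a_1,\dots,a_4);\,(a_1,\dots,a_4)\bigr)
\]
for an explicit half-integer-valued quadratic polynomial $f$. This parametrizes $H$ injectively and characterizes it as $\{(m;v):m-f(v)\in\ZZ\}$. To show $H$ has index $2$, I would exhibit a surjective homomorphism $\phi\colon(\OneHalf\ZZ)\times\ZZ^4\to\ZZ/2\ZZ$ with $\ker\phi=H$, namely
\[
\phi(m;v)=2m+\sum_{i\in\ZZ/4\ZZ}v_i+\sum_{i\in\ZZ/4\ZZ}v_iv_{i+1}\pmod{2}.
\]
Verifying $\phi$ is a homomorphism reduces to the mod-$2$ identity $2\sigma(v,v')\equiv\sum_i v_iv'_{i+1}+\sum_iv'_iv_{i+1}\pmod{2}$, which follows by direct expansion using the reindexing $\sum_iv_{i+1}v'_i=\sum_iv_iv'_{i-1}$. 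The five generators all lie in $\ker\phi$ while $\phi(1/2;0,0,0,0)=1$, so $\phi$ is surjective and $H$ has index $2$.

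Finally, to identify $H$ with $\bigGroup$: the projection $H\to\ZZ^4$, $(m;v)\mapsto v$, is surjective with kernel $\langle\lambda\rangle\cong\ZZ$, and a direct computation gives $gh\cdot(hg)^{-1}=(2\sigma(v,v');0,0,0,0)=\lambda^{2L([g],[h])}$, matching the defining commutator cocycle of $\bigGroup$. Thus $H$ is a central extension of $\ZZ^4$ by $\ZZ$ realizing the presentation of $\bigGroup$, so the tautological map $\bigGroup\to H$ is surjective; the normal-form uniqueness makes it also injective. The main technical effort is the inductive normal-form computation for $\lambda^ke_1^{a_1}\cdots e_4^{a_4}$ together with the matching homomorphism check for $\phi$; each is a routine expansion, but requires careful bookkeeping because of the cyclic indexing structure.
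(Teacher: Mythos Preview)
Your argument is correct. The paper itself gives no real proof here---it simply says ``Straightforward; see also~[LOT1, Proposition~3.37]''---so your approach cannot really be compared against one in the text. What you have written is the natural way to unpack that ``straightforward'': the cocycle formulation of the multiplication handles the group axioms cleanly, the explicit homomorphism $\phi$ to $\ZZ/2\ZZ$ is exactly the right tool for the index-$2$ claim (and your verification that $\phi$ is a homomorphism is correct), and matching commutator pairings identifies $H$ with $\bigGroup$.

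One small simplification: in your last step you can bypass the ``tautological map plus normal-form injectivity'' argument. Since $\bigGroup$ is specified only as a $\ZZ$-central extension of $\ZZ^4$ with a prescribed commutator pairing, and since such extensions of a free abelian group are classified up to isomorphism by that pairing (because $H^2(\ZZ^4;\ZZ)\cong\Lambda^2(\ZZ^4)^*$, all symmetric cocycles being coboundaries), your computation $gh(hg)^{-1}=\lambda^{2\sigma(v,v')}$ together with $\sigma=L$ already forces $H\cong\bigGroup$. The normal-form computation is still useful for showing $H=\ker\phi$ (not just $H\subseteq\ker\phi$), but you do not need it again for the isomorphism with $\bigGroup$.
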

\begin{proof}
  Straightforward; see also~\cite[Proposition 3.37]{LOT1} (which also makes the index $2$ subgroup explicit).
\end{proof}

Given an element $g=(m;a,b,c,d)\in\bigGroup$ we refer to $m$ as 
\emph{the Maslov component} of $g$ and $(a,b,c,d)$ as \emph{the $\SpinC$
component} of $g$.

To define the grading of $\MAlg$ by $\bigGroup$,
recall from Section~\ref{sec:AsUnDefAlg} that the algebra
$\AsUnDefAlg$ has an $\Field$-basis given by idempotents $\iota_0$,
$\iota_1$ and
chords $\rho_{i,i+1,\dots,i+n}$ for some $i\in\{1,2,3,4\}$ and
$n\geq 0$. Each $\rho_{i,\dots,i+n}$ has a support $[\rho_{i,\dots,i+n}]\in
H_1(Z,\CircPts)$.

Define $\grb(\iota_0)=\grb(\iota_1)=0$.  Define the grading of
$\rho_{i,\dots,i+n}$ to be
\begin{equation}\label{eq:gr-of-rho}
\grb(\rho_{i,\dots,i+n})=
\begin{cases}
(-\frac{n+1}{4};[\rho_{i,\dots,i+n}]) & 4 \mid n+1\\
(-1/2-\left\lfloor\frac{n+1}{4}\right\rfloor;[\rho_{i,\dots,i+n}]) & 4\nmid n+1.
\end{cases}
\end{equation}

For example:
\begin{align*}
  \grb(\rho_1)&=(-1/2;1,0,0,0) & \grb(\rho_2)&=(-1/2;0,1,0,0) \\
  \grb(\rho_3)&=(-1/2;0,0,1,0) & \grb(\rho_4)&=(-1/2;0,0,0,1)\\
  \grb(\rho_{12})&=(-1/2;1,1,0,0) & \grb(\rho_{23})&=(-1/2;0,1,1,0)\\
  \grb(\rho_{34})&=(-1/2;0,0,1,1)& \grb(\rho_{41})&=(-1/2;1,0,0,1)\\
  \grb(\rho_{123})&=(-1/2;1,1,1,0) & \grb(\rho_{234})&=(-1/2;0,1,1,1) & \dots \\
  \grb(\rho_{1234})&=(-1;1,1,1,1) & \grb(\rho_{2341})&=(-1;1,1,1,1) & \dots\\
  \grb(\rho_{12341})&=(-3/2;2,1,1,1) & \dots.
\end{align*}

\begin{lemma}\label{lem:gr-on-AsUnDef}
  Formula~\eqref{eq:gr-of-rho} defines a grading on the associative
  algebra $\AsUnDefAlg$ by $\bigGroup$.
\end{lemma}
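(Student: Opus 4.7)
The lemma requires checking that $\grb$ is multiplicative on non-zero products of basic elements. Since $\grb(\iota_0)=\grb(\iota_1)=0\in\bigGroup$ and multiplication by an idempotent (when non-zero) returns the other factor, the only substantive case is a product of two Reeb elements $\rho_{i,\dots,j}\cdot\rho_{j+1,\dots,k}=\rho_{i,\dots,k}$. By associativity in both $\bigGroup$ (Lemma~\ref{lem:def-big-group}) and $\AsUnDefAlg$, it suffices to prove
\[
\grb(\rho_{i,\dots,j})=\grb(\rho_i)\cdot\grb(\rho_{i+1})\cdots\grb(\rho_j)
\]
for every Reeb element; I would proceed by induction on the length $\ell=j-i+1$, the base case $\ell=1$ being tautological.

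For the inductive step, write $\grb(\rho_{i,\dots,j})=(M_\ell;n_1,n_2,n_3,n_4)$ and $\grb(\rho_{j+1})=(-\OneHalf;e_r)$ with $r\equiv j+1\pmod 4$. The $\SpinC$ components clearly add. Plugging into the explicit group law, the only non-vanishing $2\times 2$ minors are the two involving the single non-zero entry of $e_r$; the four-term sum collapses to $n_{r-1}-n_{r+1}$ (subscripts read mod~$4$). So the inductive step amounts to verifying
\[
M_{\ell+1}=M_\ell-\OneHalf+\tfrac{1}{2}(n_{r-1}-n_{r+1}).
\]

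Writing $\ell=4q+s$ with $s\in\{0,1,2,3\}$, the support of $\rho_{i,\dots,j}$ satisfies $n_k=q+1$ for $k\in\{i,i+1,\dots,i+s-1\}\pmod 4$ and $n_k=q$ otherwise, while $r\equiv i+s\pmod 4$. I would then check the four cases: when $s\in\{0,3\}$ both $r-1$ and $r+1$ lie in the same bucket (both excess or both non-excess), so the twist vanishes and this matches $M_{\ell+1}-M_\ell=-\OneHalf$ (the jump at multiples of $4$ in~\eqref{eq:gr-of-rho}); when $s\in\{1,2\}$ exactly $r-1$ is in the excess bucket, so the twist is $\OneHalf$ and $M_{\ell+1}=M_\ell$, again matching~\eqref{eq:gr-of-rho}. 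The main obstacle is just organizing the bookkeeping across these four residue classes of $\ell$ (and four values of $r$), but each verification is a routine check once one observes that the excess residues form a contiguous arc of length $s$ in $\ZZ/4\ZZ$ starting at $i$.
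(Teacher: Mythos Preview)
Your proof is correct. The computation of the twist term as $\tfrac{1}{2}(n_{r-1}-n_{r+1})$ and the four-way case split on $s=\ell\bmod 4$ all check out.

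The paper takes a somewhat different route. Rather than inducting on length, it first observes that $\grb(\rho_{i,i+1,i+2,i+3})=(-1;1,1,1,1)$ is central in $\bigGroup$ and that multiplying any chord by a length-$4$ chord shifts the grading by exactly this element; this immediately reduces the problem to products of chords each of length at most $3$. Then a $\ZZ/4\ZZ$ symmetry lets one assume $i=1$, and the remaining verification is five explicit computations (e.g.\ $\grb(\rho_1)\grb(\rho_2)=\grb(\rho_{12})$, etc.). Your approach trades the centrality observation for a uniform inductive step with a systematic case analysis; the paper's approach trades the general induction for a handful of concrete arithmetic checks. Both are short, and neither is clearly superior---yours is more self-contained, while the paper's makes the role of the central element $(-1;1,1,1,1)$ (which reappears as $\grb(U)$) more visible.
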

\begin{proof}
  First, note that $\grb(\rho_{i,i+1,i+2,i+3})=(-1;1,1,1,1)$ is central, and
  the grading satisfies
  \[
    \grb(\rho_{i,\dots,i+3})\grb(\rho_{i,\dots,i+n})=\grb(\rho_{i,\dots,i+n+4}).
  \]
  So, it
  suffices to verify that if $n,m\leq 2$ and $\rho_{i,\dots,i+n}\cdot
  \rho_{j,\dots,j+m}\neq 0$ then
  \[
    \grb(\rho_{i,\dots,i+n})\grb(\rho_{j,\dots,j+m})
    =\grb(\rho_{i,\dots,i+n}\cdot\rho_{j,\dots,j+m}).
  \]
  By symmetry, we may assume $i=1$. Further, it suffices to
  check the cases $n=1$ or $m=1$, as we can factor any chord into
  length $1$ chords. So, we check:
  \begin{align*}
    \grb(\rho_{1})\grb(\rho_{2})&=(-1/2;1,0,0,0)(-1/2;0,1,0,0)=(-1/2;1,1,0,0)=\grb(\rho_{12})\\
    \grb(\rho_{12})\grb(\rho_{3})&=(-1/2;1,1,0,0)(-1/2;0,0,1,0)=(-1/2;1,1,1,0)=\grb(\rho_{123})\\
    \grb(\rho_{123})\grb(\rho_{4})&=(-1/2;1,1,1,0)(-1/2;0,0,0,1)=(-1;1,1,1,1)=\grb(\rho_{1234})\\
    \grb(\rho_{1})\grb(\rho_{23})&=(-1/2;1,0,0,0)(-1/2;0,1,1,0)=(-1/2;1,1,1,0)=\grb(\rho_{123})\\
    \grb(\rho_{1})\grb(\rho_{234})&=(-1/2;1,0,0,0)(-1/2;0,1,1,1)=(-1;1,1,1,1)=\grb(\rho_{1234}).
  \end{align*}
  This proves the result.
\end{proof}

Next, to grade $\UnDefAlg$, define 
\begin{equation}\label{eq:gr-of-U}
  \grb(U)=(-1;1,1,1,1).
\end{equation}

\begin{proposition}\label{prop:gr-on-UnDef}
  Formulas~\eqref{eq:gr-of-rho} and~\eqref{eq:gr-of-U} define a
  grading on the $\Ainf$-algebra $\UnDefAlg$ by $\bigGroup$ with
  \begin{equation}\label{eq:lambdad-is}
    \lambda_d = \lambda = (1;0,0,0,0).
  \end{equation}
\end{proposition}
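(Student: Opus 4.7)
The plan is to check the graded identity
\[
  \grb(\mu_n^0(a_1, \ldots, a_n)) = \lambda^{n-2} \grb(a_1) \cdots \grb(a_n)
\]
for every operation $\mu_n^0$. First, I would note that a direct expansion of the four determinants in the multiplication law for $\bigGroup$ shows that $L((1,1,1,1),(a,b,c,d)) = 0$ for every $(a,b,c,d)$, so $\grb(U) = (-1;1,1,1,1)$ is central in $\bigGroup$. Together with Lemma~\ref{lem:gr-on-AsUnDef}, this gives a $\bigGroup$-grading on the underlying associative algebra $\AsUnDefAlg[U]$, which handles $\mu_2^0$; the operations $\mu_0^0$ and $\mu_1^0$ vanish.

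For $n \geq 4$ (odd arities yield zero by Lemma~\ref{lem:alg-even-operations}), it suffices to verify the identity separately for each contributing tile pattern $\Gamma$ with tree underlying graph, whose output takes the form $U^{d}\rho_\Gamma$ where $d = d(\Gamma)$ is the number of $4$-valent internal vertices. I would split the check into the $\SpinC$ and Maslov components of $\bigGroup$. The $\SpinC$ identity
\[
  \sum_{i=1}^n [\rho^i] = d\cdot(1,1,1,1) + [\rho_\Gamma]
\]
is immediate by sector counting: each $4$-valent internal vertex contributes exactly one sector of each label in $\{1,2,3,4\}$, giving the $d\cdot(1,1,1,1)$ on the right, while every remaining sector appears either as a quadrant visible from the boundary (hence in the chord sequence) or as a distinguished $2$-valent sector for extended patterns (hence in the output element), with those distinguished $2$-valent sectors contributing to both.

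The Maslov identity then reduces to checking
\[
  M(|\rho_\Gamma|) - d = (n-2) + \sum_{i=1}^n M(|\rho^i|) + \sum_{i<j} L([\rho^i],[\rho^j]),
\]
where $M(\ell)$ is the Maslov component of $\grb$ on a chord of length $\ell$ (and $M(0)=0$ for idempotents). I plan to verify this by induction on $d$: the base case $d=1$ reduces to a direct computation over the small family of $4$-input centered and extended tile patterns (for example, the case of input $\rho_4\otimes\rho_3\otimes\rho_2\otimes\rho_1$ in Figure~\ref{fig:TilingPatterns}(a) works out cleanly because the cross-term $L(\rho_4,\rho_1)$ cancels half of the other $L$ contributions). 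For the inductive step, I would remove an outermost $4$-valent vertex of $\Gamma$ (one joined to the rest by a single edge whose other three edges end at boundary leaves), producing a smaller tile pattern $\Gamma'$ with $d-1$ vertices in which the three adjacent chord-sequence factors have been merged into a single factor, and reconcile the changes on both sides of the identity.

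The main obstacle is this inductive Maslov bookkeeping. The function $M$ is not additive in length---it carries a correction of $\tfrac{1}{2}$ depending on length modulo $4$---and merging three consecutive chord-sequence factors into one can change both the $M$ sum and the linking pairings $L([\rho^i],[\rho^j])$ in nontrivial ways that must be tracked carefully, since the three merged supports may each pair nontrivially with the other $[\rho^j]$. A cleaner alternative would be to identify the Maslov component of $\grb$ with the geometric Maslov index of the associated immersion $\CDisk \to T^2$ from Section~\ref{subsec:Immersions} and exploit additivity of that index under gluing; however, setting up the dictionary between the algebraic and geometric gradings requires a calculation of essentially the same flavor as the direct check above.
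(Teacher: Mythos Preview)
Your approach is plausible in outline but takes a substantially different route from the paper's, and the part you flag as the main obstacle is precisely what the paper sidesteps.

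The paper inducts on the \emph{total input length} $L=\sum_i|a_i|$, not on the number of internal vertices, and after the base case it never touches tile-pattern combinatorics directly. The base case is genuinely a single computation (up to cyclic symmetry): for $L=4$ one has $n=4$ with all length-1 inputs, and one checks $\lambda^2\grb(\rho_4)\grb(\rho_3)\grb(\rho_2)\grb(\rho_1)=\grb(U)$. For the inductive step, Lemma~\ref{lem:alg-property-factor} supplies an input $a_i$ that factors as $a_i'\cdot a_i''$, and the paper applies the $A_\infty$ relation with inputs $(a_1,\dots,a_i',a_i'',\dots,a_n)$. One term is $\mu_n(\dots,\mu_2(a_i',a_i''),\dots)=b$; by Lemma~\ref{lem:alg-property-nonmult} every other nonzero term is a composite of two operations each with strictly smaller total input length, so the inductive hypothesis applies to both factors. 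Since the $A_\infty$ relation holds and every constituent operation is already known to be graded, the grading identity for the original $\mu_n$ falls out purely algebraically. No Maslov or $L$-pairing bookkeeping is needed.

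Two further points about your version. First, your base case $d=1$ is not a small family: there are infinitely many extended tile patterns with one 4-valent vertex (the first or last input chord can have arbitrary length), so you would need an inner induction there as well. The paper's base case, by contrast, is only the four centered $d=1$ patterns. Second, your description of the inductive step is slightly off: removing an outermost 4-valent vertex does not merge three chord-sequence factors into one; it deletes the two middle length-1 chords $\rho^i,\rho^{i+1}$ and shortens each of the two flanking chords $\rho^{i-1},\rho^{i+2}$ by one letter (the net effect is four entries becoming two, total length dropping by four). That makes the $L$-pairing bookkeeping at least as delicate as you feared. The paper's device of using the $A_\infty$ relation as a black box avoids all of this.
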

\begin{proof}
  Since $\mu_n$ is $U$-equivariant and $\grb(U)$ is central,
  it suffices to prove that for any $n$, basic elements
  $a_1,\dots,a_n$
  and term $b\in \mu_n(a_1,\dots,a_n)$,
  \begin{equation}
    \label{eq:GradedOperation}
  \grb(b)=\lambda^{n-2}\grb(a_1)\cdots\grb(a_n).
  \end{equation}

  Lemma~\ref{lem:gr-on-AsUnDef} implies Equation~\eqref{eq:GradedOperation}
  when $n=2$.

  The
  operation $\mu_3$ vanishes identically. For $\mu_4$, note that
  \begin{align*}
  \grb(\mu_4(\rho_4,\rho_3,\rho_2,\rho_1))&=\grb(U)=(-1;1,1,1,1)\\
  \lambda^2\grb(\rho_4)\grb(\rho_3)\grb(\rho_2)\grb(\rho_1)&=\lambda^2(-3/2;0,0,1,1)(-3/2;1,1,0,0)\\
  &=(2;0,0,0,0)(-3;1,1,1,1)=(-1;1,1,1,1).
  \end{align*}
  Similar computations hold for cyclic permutations of the indices.

  We prove Equation~\eqref{eq:GradedOperation} in general
  by induction on
  the total length of the inputs $L=\sum_{i=1}^n |a_i|$. 

  If $L>4$, there is some $i$ so that $a_i$ factors as a product $a_i=a_i'\cdot a_i''$ of
  Reeb elements. This is obvious  when $n=4$ and it follows from
  Lemma~\ref{lem:alg-property-factor} when $n>4$. Consider the
  $\Ainf$ relation with inputs
  $(a_1,\dots,a_{i-1},a_i',a_i'',a_{i+1},\dots,a_n)$.
  One term in the relation is
  $\mu_n(a_1,\dots,\mu_2(a_i',a_i''),\dots,a_n)=b$.
  By Lemma~\ref{lem:alg-property-nonmult}, the only other non-zero
  terms have the form $\mu_{n-k+2}(a_1,\dots,
  a'_i,\mu_{k}(a''_i,\dots,a_{i+k}),\dots,a_n)$ or
  $\mu_{n-k+2}(a_1,\dots,\mu_k(a_{i-k+1},\dots,a'_i),a''_i,\dots,a_n)$,
  for some $k>2$; and there must be a non-zero term of at least one of
  these two forms. (In the language of
  Lemma~\ref{lem:ProductCancellations}, in the unweighted case, we
  cancel $\Graphs_1$ against patterns of type $\mathfrak{P}$, $\mathfrak{L}$,
  and~$\mathfrak{R}$, all of which are of the above form.)
  So,
  in view of Lemma~\ref{lem:WeightGrading}, the
  inductive hypothesis ensures that
  \[
  \grb(U^\ell)=\lambda^{n-k}\lambda^{k-2}\grb(a_1)\cdots\grb(a'_i)\grb(a''_i)\cdots\grb(a_n)
  =\lambda^{n-2}\grb(a_1)\cdots\grb(a_n),
  \]
  as desired.
\end{proof}

Finally, to define a grading on $\MAlg$, we need to specify an element
$\lambda_w$, so that $\mu_m^k$ has degree $\lambda_d^{m-2}\lambda_w^k$
(or equivalently, the formal variable $t$ has grading
$\lambda_w^{-1}$); see Section~\ref{sec:W-gradings}. Define:
\begin{equation}
  \label{eq:gr-of-weight}
  \lambda_w=(1;1,1,1,1).
\end{equation}

\begin{theorem}\label{thm:grading}
  Formulas~\eqref{eq:gr-of-rho},~\eqref{eq:gr-of-U},~\eqref{eq:lambdad-is},
  and~\eqref{eq:gr-of-weight} define a grading on the weighted $\Ainf$-algebra
  $\MAlg$ by $\bigGroup$.
\end{theorem}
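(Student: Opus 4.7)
The plan is to induct on the weight $w$, using Lemma~\ref{lem:alg-diet} to peel a copy of the curvature off each weighted operation and thereby reduce to the undeformed case. The base case $w=0$ is exactly Proposition~\ref{prop:gr-on-UnDef}. For the other ``initial'' case $(n,w)=(0,1)$, I would verify directly that each of the four length-$4$ chords $\rho_{i,i+1,i+2,i+3}$ in~\eqref{eq:mu-0-1} has grading $(-1;1,1,1,1)$ by~\eqref{eq:gr-of-rho}, and that $\lambda_d^{-2}\lambda_w=(-2;0,0,0,0)\cdot(1;1,1,1,1)=(-1;1,1,1,1)$, so $\mu_0^1$ is homogeneous of the required degree.

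Before the inductive step, the key observation is that $\lambda_w=(1;1,1,1,1)$ is central in $\bigGroup$: the linking pairing $L(\cdot,(1,1,1,1))$ vanishes because $(1,1,1,1)\in H_1(Z,\CircPts)$ has zero boundary in $H_0(\CircPts)$. Since $\grb(U)=\lambda_d^{-2}\lambda_w$ is consequently central, $U$-equivariance of the operations reduces matters to verifying that for Reeb elements $\rho^1,\dots,\rho^n$ and any term $b$ of $\mu_n^w(\rho^1,\dots,\rho^n)$,
\[
\grb(b)=\lambda_d^{n-2}\lambda_w^w\,\grb(\rho^1)\cdots\grb(\rho^n).
\]
Inputs containing an idempotent are handled trivially by strict unitality: the only non-vanishing operation on such an input is $\mu_2^0$, and $\grb(\iota_0)=\grb(\iota_1)=0$ makes the check automatic.

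For the inductive step, fix $w\geq 1$, $n\geq 1$, and a term $b$ of $\mu_n^w(\rho^1,\dots,\rho^n)$. Lemma~\ref{lem:alg-diet} produces an index $i$ and a factorization $\rho^i=\rho\rho'$ so that $b$ is a term of
\[
\mu_{n+2}^{w-1}(\rho^1,\dots,\rho^{i-1},\rho,\mu_0^1,\rho',\rho^{i+1},\dots,\rho^n),
\]
where $\mu_0^1$ is treated as a single homogeneous algebra element of grading $\lambda_d^{-2}\lambda_w$. Applying the inductive hypothesis at weight $w-1$, combining $\grb(\rho)\grb(\rho')=\grb(\rho^i)$ via Lemma~\ref{lem:gr-on-AsUnDef}, and sliding the central factor $\grb(\mu_0^1)$ past the remaining $\grb(\rho^j)$'s yields
\[
\grb(b)=\lambda_d^{n}\lambda_w^{w-1}\cdot\lambda_d^{-2}\lambda_w\cdot\grb(\rho^1)\cdots\grb(\rho^n)=\lambda_d^{n-2}\lambda_w^w\,\grb(\rho^1)\cdots\grb(\rho^n),
\]
completing the induction.

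The only real obstacle is the non-commutativity of $\bigGroup$; the proof goes through cleanly precisely because $\lambda_w$ (equivalently $\grb(\mu_0^1)$) happens to be central, which in turn is exactly the statement that the sum of the four indecomposable length-$4$ chords has vanishing linking pairing with everything in $H_1(Z,\CircPts)$.
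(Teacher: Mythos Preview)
Your proof is correct and follows essentially the same approach as the paper: induction on the weight, with Lemma~\ref{lem:alg-diet} providing the reduction to lower weight via a factored input and an inserted copy of $\mu_0^1$. You are somewhat more explicit than the paper about the edge cases (the curvature $(n,w)=(0,1)$, idempotent inputs, $U$-equivariance, and centrality of $\lambda_w$), but the core argument is identical.
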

\begin{proof}
  The proof is by induction on the weight. The case of weight $0$ is
  Proposition~\ref{prop:gr-on-UnDef}. Next, suppose that we know the
  statement for weight $k-1$, and that $\mu_n^k(a_1,\dots,a_n)=b\neq
  0$. By Lemma~\ref{lem:alg-diet}, we can find an $i$ and a
  factorization $a_i=a'_ia''_i$ so that
  $\mu_{n+2}^{k-1}(a_1,\dots,a'_i,\mu_0^1,a''_i,\dots,a_n)$ has $b$ as
  a term. Then
  \begin{align*}
  \grb(b)&=\lambda_w^{k-1}\lambda_d^{n}\grb(a_1)\cdots\grb(a'_i)\grb(\mu_0^1)\grb(a''_i)\cdots\grb(a_n)\\
  &=\lambda_w^{k-1}\lambda_d^{n-1}\grb(\mu_0^1)\grb(a_1)\cdots\grb(a_n).\\
  \shortintertext{So, the result follows from the fact that }
  \lambda_w^{k-1}\lambda_d^{n-1}\grb(\mu_0^1)&=(k-1;k-1,k-1,k-1,k-1)(n;0,0,0,0)(-1;1,1,1,1)\\
  &=(k+n-2;k,k,k,k)=\lambda_w^k\lambda_d^{n-2}.\qedhere
  \end{align*}
\end{proof}


\subsection{The intermediate grading group}\label{sec:Intermediate}
As in the case of bordered $\HFa$ with torus (but not higher genus)
boundary~\cite[Section 11.1]{LOT1}, there is a grading group $\smallGroup$
between $\bigGroup$ and $\smallGroup(\mathbb{T})$ which admits a homomorphism
from $\bigGroup$.
That is, let the \emph{intermediate grading group} be
\begin{equation}
  \smallGroup=\biggl\{(m;a,b)\in (\OneHalf\ZZ)^3\biggm\mid a+b\in\ZZ, m+\frac{(2a+1)(a+b+1)+1}{2}\in\ZZ\biggr\}.
\end{equation}
The second arithmetic condition for $m$, $a$, and $b$ is equivalent to:
$m\in\ZZ$ if and only if $a,b\in\ZZ$ and $a\equiv b\pmod{2}$.
The multiplication on $\smallGroup$ is 
\begin{equation}
  (m;a,b)\cdot(n;c,d)=(m+n+ad-bc;a+c,b+d).
\end{equation}
The homological grading element is
\[
  \lambda_d=(1;0,0).
\]

\begin{lemma}
  This operation makes $\smallGroup$ into a group.
\end{lemma}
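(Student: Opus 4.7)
The plan is to verify the four group axioms in turn: closure of the multiplication on $\smallGroup$, associativity, existence of an identity, and existence of inverses. Since the projection $\smallGroup\to (\OneHalf\ZZ)^2$, $(m;a,b)\mapsto (a,b)$ is a map onto an abelian group by the rule $(a,b)+(c,d)=(a+c,b+d)$, and the condition $a+b\in\ZZ$ is preserved under $(a,b)+(c,d)$, the real content is in tracking the Maslov component $m$ and the arithmetic condition $m+\frac{(2a+1)(a+b+1)+1}{2}\in\ZZ$.

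For \emph{associativity}, the Maslov component of a product is the sum of the Maslov components plus the cocycle $c((a,b),(c,d))=ad-bc$; since this cocycle is bilinear and antisymmetric in the pairs $(a,b)$ and $(c,d)$, a direct computation of both sides of
\[
\bigl((m;a,b)(n;c,d)\bigr)(p;e,f) \stackrel{?}{=} (m;a,b)\bigl((n;c,d)(p;e,f)\bigr)
\]
reduces to the identity $ad-bc + (a+c)f - (b+d)e = a(d+f) - b(c+e) + cf-de$, which holds trivially. The \emph{identity} element is $(0;0,0)$, which lies in $\smallGroup$ since $\frac{0+1+1}{2}=1\in\ZZ$, and from the multiplication formula it acts as identity on both sides. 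The \emph{inverse} of $(m;a,b)$ must have second and third entries $-a,-b$, and the Maslov component of $(m;a,b)(-m;-a,-b)$ is $m-m+a(-b)-b(-a)=0$, so $(m;a,b)^{-1}=(-m;-a,-b)$. One must then check that $(-m;-a,-b)$ satisfies the arithmetic condition.

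The main obstacle is \emph{closure}: showing that if $(m;a,b)$ and $(n;c,d)$ both satisfy $m+\frac{(2a+1)(a+b+1)+1}{2}\in\ZZ$ and $n+\frac{(2c+1)(c+d+1)+1}{2}\in\ZZ$, then so does $(m+n+ad-bc;a+c,b+d)$. This reduces to computing the discrepancy
\[
\delta = \frac{(2(a+c)+1)((a+c)+(b+d)+1)+1}{2} - \frac{(2a+1)(a+b+1)+1}{2} - \frac{(2c+1)(c+d+1)+1}{2}
\]
and verifying that $\delta + (ad-bc) \in \ZZ$. A direct expansion (using $(2x+1)(x+y+1)=2x^2+2xy+3x+y+1$ with $x=a+c,y=b+d$ and subtracting the two pieces) yields $\delta = 2ac+ad+bc-1$, so $\delta+(ad-bc) = 2ac+2ad-1 = 2a(c+d)-1$. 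Since $c+d\in\ZZ$ and $2a\in\ZZ$ by the defining conditions, this lies in $\ZZ$, establishing closure. The same expansion, specialized to $(n;c,d)=(-m;-a,-b)$, shows $(-m;-a,-b)$ lies in $\smallGroup$, completing the verification of inverses.

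Once closure is in hand, the remaining axioms reduce to the short computations above, and the lemma follows. I expect the expansion of $\delta$ to be the only place where a careful arithmetic check is needed; everything else is formal.
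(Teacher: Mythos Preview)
Your proof is correct and follows essentially the same approach as the paper: the paper also identifies closure as the only nontrivial point and verifies it by computing the same discrepancy $f((m;a,b)\cdot(n;c,d))-f(m;a,b)-f(n;c,d)=ad-bc+\tfrac{4ac+2ad+2bc-2}{2}\equiv 2a(c+d)\pmod 1$, which is your $\delta+(ad-bc)$. Your treatment of associativity, identity, and inverses is more explicit than the paper's (which simply declares them trivial), and your observation that the closure computation only uses $c+d\in\ZZ$, not the full membership of $(n;c,d)$ in $\smallGroup$, is exactly what makes the specialization to $(n;c,d)=(-m;-a,-b)$ legitimate for the inverse check.
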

\begin{proof}
  The only nontrivial part is verifying that $\smallGroup$ is closed under
  multiplication. The condition that $a+b\in\ZZ$ is certainly closed under
  multiplication. If we let
  \[
    f(m;a,b)=m+\frac{(2a+1)(a+b+1)+1}{2}
  \]
  then
  \[
    f((m;a,b)\cdot(n;c,d))-f(m;a,b)-f(m;c,d)=ad-bc+\frac{4ac+2ad+2bc-2}{2}\equiv 2a(c+d)\pmod{1}
  \]
  which is an integer, since $c+d$ is.
\end{proof}

There is a homomorphism $\bigGroup\to\smallGroup$ defined by
\begin{equation}\label{eq:big-to-small}
(j;a,b,c,d)\mapsto \left(j-d;\frac{a+b-c-d}{2},\frac{-a+b+c-d}{2}\right).
\end{equation}
(See also~\cite[Section 11.1]{LOT1}.)
Composing with this homomorphism allows us to turn the grading by $\bigGroup$ into a grading by $\smallGroup$, with
\begin{align*}
  \gr(\rho_1)&=(-1/2;1/2,-1/2) & \gr(\rho_2)&=(-1/2;1/2,1/2) \\
  \gr(\rho_3)&=(-1/2;-1/2,1/2) & \gr(\rho_4)&=(-3/2;-1/2,-1/2)\\
  \gr(\rho_{12})&=(-1/2;1,0) & \gr(\rho_{23})&=(-1/2;0,1)\\
  \gr(\rho_{34})&=(-3/2;-1,0)& \gr(\rho_{41})&=(-3/2;0,-1)\\
  \gr(\rho_{123})&=(-1/2;1/2,1/2) & \gr(\rho_{234})&=(-3/2;-1/2,1/2) & \dots \\
  \gr(\rho_{1234})&=(-2;0,0) & \gr(\rho_{2341})&=(-2;0,0) & \dots\\
  \gr(\rho_{12341})&=(-5/2;1/2,-1/2) & \dots\\
  \gr(U)&=(-2;0,0) & \lambda_w&=(0;0,0).
\end{align*}
In particular, having the term $j-d$ instead of just $j$ in
Formula~\eqref{eq:big-to-small} ensured that $\lambda_w=(0;0,0)$.

\subsection{Grading refinements and the small grading group}\label{sec:small-gr}
The small grading group $\smallGroup(\mathbb{T})$ is a central extension as
in Formula~\eqref{eq:small-group-extension} with commutation relation
\[
  gh=\lambda^{2[g]\cdot[h]}hg
\]
where $[g]$ denotes the image of $g$ in $H_1(F)$ and $\cdot$ is the intersection
pairing; that is, the central extension corresponding to the 2-cocycle on
$H_1(T^2)$ given by the intersection form (not twice it).
We have several explicit models for this group:
\begin{lemma}\label{lem:G-is-subgroup}
  The group $\smallGroup(\mathbb{T})$ is isomorphic to the following:
  \begin{enumerate}[label=(SG-\arabic*)]
  \item\label{item:real-small-1} The subgroup
    $\{(m;a,b,c,d)\in\bigGroup\mid b=a+c,\ d=0\}\subset\bigGroup$.
  \item\label{item:real-small-2} The subquotient
    $\{(m;a,b,c,d)\in\bigGroup/ (1;1,1,1,1)\mid a+c=b+d\}\subset
    \bigGroup/\langle (1;1,1,1,1)\rangle$.
  \item\label{item:real-small-3} The subgroup
    $\{(m;a,b)\in \smallGroup\mid a,b\in\ZZ\}\subset \smallGroup$.
  \end{enumerate}
\end{lemma}
\begin{proof}
  Model~\ref{item:real-small-1} for the small grading group is the one
  given in our first paper and is identified with the abstract
  definition there~\cite[Section 3.3.2]{LOT1}. The isomorphism between
  model~\ref{item:real-small-1} and~\ref{item:real-small-2} is
  clear. The isomorphism between model~\ref{item:real-small-1}
  and~\ref{item:real-small-3} is given by
  $(m;a,b)\mapsto (m;a,a+b,b,0)$.
\end{proof}
We will most often use the third of these ways of realizing
$\smallGroup(\mathbb{T})$, viewing elements of
$\smallGroup(\mathbb{T})$ as triples 
$(m;a,b)\in\OneHalf \ZZ \times \ZZ\times \ZZ$ with $m+\frac{a+b}{2}\in\ZZ$, equipped with group law
\[
  (m;a,b)\cdot (m';a',b')=\bigl(m+m'+ \left|\begin{smallmatrix}
      a & b \\
      a' & b'
    \end{smallmatrix}\right|;a+a',b+b'\bigr).
\]
The grading elements are, again,
\begin{align*}
  \lambda_d&=\lambda=(1;0,0) & \lambda_w&=(0;0,0).
\end{align*}

Note that the surjection $\bigGroup\to\smallGroup$ does not send
$\smallGroup(\mathbb{T})\subset\bigGroup$ to
$\smallGroup(\mathbb{T})\subset\smallGroup$.

To define a grading on $\MAlg$ by $\smallGroup$, we use the notion of
grading refinement data, which we review first.

\subsubsection{Grading refinement data in general}
The following is an abstract reformulation of material from our
earlier papers~\cite[Section~3.3.2]{LOT1},~\cite[Section~3.2.1]{LOT2}.

Recall that a groupoid is a category in which every morphism is invertible. All
groupoids relevant later will also be \emph{connected}, i.e.,
$\Hom(i,j)\neq\emptyset$ for all objects $i,j$.
Since we will think of a groupoid as a group with many
objects, given a groupoid $\mathcal{G}$ and morphisms
$g\in\Hom_{\mathcal{G}}(x,y)$, $h\in\Hom_{\mathcal{G}}(y,z)$, let
\[
  g\cdot h = h\circ g.
\]
From here on, we will drop the subscript $\mathcal{G}$ from
$\Hom_{\mathcal{G}}$ when it will not cause confusion.

\begin{definition}\label{def:groupoid-central}
  A \emph{central element} of $\mathcal{G}$ consists of an element
  $\lambda_x\in \Hom(x,x)$ for each object $x\in\Obj(\mathcal{G})$ so
  that for any $g\in\Hom(x,y)$, $\lambda_x\cdot g =
  g\cdot\lambda_y$. We will typically denote the collection of
  elements $\lambda_x$ by $\lambda$.
\end{definition}

Let $\Ground=\bigoplus_{x\in X}\Field$ for some finite set $X$ and let
$\Alg$ be a strictly unital weighted algebra over $\Ground$. So, each
$\x\in X$ corresponds to some idempotent $I_\x = \mu_2^0(\x,1)\in
\Alg$.

\begin{definition}\label{def:groupoid-gr}
  Let $\mathcal{G}$ be a groupoid and $\lambda_d$ and $\lambda_w$ central elements of
  $G$. A \emph{grading on $\Alg$ by $\mathcal{G}$} consists of a map
  of sets $\pi\co X \to \Obj(\mathcal{G})$ and, for each $\x,\y\in X$,
  a decomposition 
  \[
    I_\x \cdot \Alg \cdot I_\y =\bigoplus_{g\in \Hom(\pi(\x),\pi(\y))} (I_\x \cdot \Alg \cdot I_\y)_g
  \]
  satisfying the following property. For each pair of integers $w,m\geq 0$, $(w,m)\neq(0,0)$,
  sequence $\x_0,\dots,\x_m\in X$, elements $g_i\in
  \Hom(\pi(\x_{i-1}),\pi(\x_i))$ for $i=1,\dots,m$, and elements
  $a_i\in (I_{\x_{i-1}}\cdot\Alg\cdot I_{\x_i})_{g_i}$, we have
  \[
    \mu_m^w(a_1,\dots,a_m)\in (I_{\x_0}\cdot\Alg\cdot
    I_{\x_m})_{\lambda_d^{m-2}\lambda_w^wg_1\cdots g_m}.
  \]
\end{definition}

This generalizes the notion of group-valued gradings.  If $G'$ is a
group, there is an associated groupoid with one object. Any
$G'$-graded algebra $\Alg$ can be viewed as graded by this associated
groupoid.

\begin{definition}\label{def:groupoid-refinement-data}
  With notation as in Definition~\ref{def:groupoid-gr}, \emph{grading
    refinement data} consists of an element $\x_0\in X$, called the
  \emph{base idempotent}, and for every object $\x\in X$ an element
  $\psi(\x)\in\Hom(\x_0,\x)$.
\end{definition}

\begin{definition}\label{def:groupoid-refined-gr}
  Let $\Alg$ be a $\mathcal{G}$-graded algebra, and let
  $G=\Hom_{\mathcal{G}}(\x_0,\x_0)$.  Given grading refinement data
  $\{\psi\}$, there is an induced $G$-grading on $\Alg$, denoted
  $\Alg_\psi$, specified by
  \[
    \gr_\psi(I_\x\cdot a \cdot I_\y)=\psi(\x)\cdot \gr(a)\cdot\psi(\y)^{-1}
  \]
  for each $\mathcal{G}$-homogeneous element $a$ with
  $a=I_\x\cdot a \cdot I_\y$. We call this the $G$-valued grading the
  \emph{refined grading with respect to $\psi$.}
\end{definition}

We can also un-refine gradings:
\begin{lemma}\label{lem:unrefine}
  Let $\Alg$ be a weighted $\Ainf$-algebra over $\Ground$,
  $(\mathcal{G},\lambda_d,\lambda_w)$ be a groupoid with distinguished central
  elements, and $(\x_0,\{\psi_\x\})$ be grading refinement data. Suppose $\Alg$
  is graded by $(G=\Hom_{\mathcal{G}}(\x_0,\x_0),\lambda_d,\lambda_w)$.  Let
  $\gr_\psi$ denote this $G$-valued grading. Then setting
  \[
    \gr(a)=\psi(\iota_i)^{-1}\gr_\psi(a)\psi(\iota_j)
    \qquad\text{if }\iota_ia\iota_j=a
  \]
  defines a grading on $\Alg$ by $\mathcal{G}$.
\end{lemma}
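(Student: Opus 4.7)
The plan is to verify the axioms of Definition~\ref{def:groupoid-gr} directly for the proposed formula. First, since $\psi(\x) \in \Hom(\pi(\x_0), \pi(\x))$ and $\gr_\psi(a) \in G = \Hom(\pi(\x_0), \pi(\x_0))$ for $a \in I_\x \cdot \Alg \cdot I_\y$, the composition $\psi(\x)^{-1} \cdot \gr_\psi(a) \cdot \psi(\y)$ lies in $\Hom(\pi(\x), \pi(\y))$. Moreover, the map $G \to \Hom(\pi(\x), \pi(\y))$ given by $g \mapsto \psi(\x)^{-1} \cdot g \cdot \psi(\y)$ is a bijection, with inverse $h \mapsto \psi(\x) \cdot h \cdot \psi(\y)^{-1}$. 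Thus the $G$-homogeneous decomposition of $I_\x \cdot \Alg \cdot I_\y$ transports to a $\Hom(\pi(\x), \pi(\y))$-indexed decomposition, and I would use this as the definition of the proposed grading.

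Next I would verify compatibility with $\mu_m^w$. Suppose $a_i \in I_{\x_{i-1}} \cdot \Alg \cdot I_{\x_i}$ is $\mathcal{G}$-homogeneous of grading $g_i$, equivalently $\gr_\psi(a_i) = \psi(\x_{i-1}) \cdot g_i \cdot \psi(\x_i)^{-1}$. The key observation is that the intermediate $\psi$-factors telescope:
\[
  \gr_\psi(a_1) \cdots \gr_\psi(a_m) = \psi(\x_0) \cdot g_1 \cdots g_m \cdot \psi(\x_m)^{-1}.
\]
By the $G$-grading hypothesis,
\[
  \gr_\psi(\mu_m^w(a_1, \ldots, a_m)) = \lambda_d^{m-2}\, \lambda_w^w \cdot \gr_\psi(a_1) \cdots \gr_\psi(a_m),
\]
where $\lambda_d, \lambda_w$ denote the elements of $G$ corresponding to $\lambda_{d,\pi(\x_0)}, \lambda_{w,\pi(\x_0)}$. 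Applying the un-refining formula and telescoping yields
\[
  \gr(\mu_m^w(a_1, \ldots, a_m)) = \psi(\x_0)^{-1} \cdot \lambda_{d,\pi(\x_0)}^{m-2}\, \lambda_{w,\pi(\x_0)}^w \cdot \psi(\x_0) \cdot g_1 \cdots g_m,
\]
and Definition~\ref{def:groupoid-central} applied to $\psi(\x_0)^{-1} \in \Hom(\pi(\x_0), \pi(\x_0))$ lets us commute the $\lambda$'s past $\psi(\x_0)^{-1}$, simplifying the prefix to $\lambda_{d,\pi(\x_0)}^{m-2}\, \lambda_{w,\pi(\x_0)}^w$. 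This is exactly the grading predicted by Definition~\ref{def:groupoid-gr}.

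There is no significant obstacle: the entire argument is a transport-of-structure between a connected groupoid and the automorphism group of one of its objects. The only care required is bookkeeping with the composition convention $g \cdot h = h \circ g$, and remembering that the same symbols $\lambda_d, \lambda_w$ play double duty as central elements in $G$ and as central elements (in the sense of Definition~\ref{def:groupoid-central}) of the groupoid $\mathcal{G}$.
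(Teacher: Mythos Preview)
Your proposal is correct and is precisely the direct verification that the paper's one-line proof (``This is immediate from the definitions'') is alluding to. You have spelled out the telescoping and centrality argument that makes it immediate; there is nothing to add.
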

\begin{proof}
  This is immediate from the definitions.
\end{proof}

\subsubsection{Grading refinement data for the torus algebra}\label{sec:gr-refine}
Consider the groupoid $\mathcal{G}$ with two objects, $0$ and $1$, and
\[
  \Hom(i,j)=\biggl\{(m;a,b)\in \smallGroup\biggm\mid a+\frac{i+j}{2}\in\ZZ\biggr\}.
\]
\begin{lemma}
  Multiplication in $\smallGroup$ makes $\mathcal{G}$ into a
  groupoid.
\end{lemma}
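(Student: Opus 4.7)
The strategy is to observe that, since $\smallGroup$ is already a group, most of the groupoid axioms (associativity, existence of a two-sided identity on each object, invertibility) will follow automatically once we verify that the constraint ``$a+(i+j)/2\in\ZZ$'' is compatible with composition, contains the identity, and is preserved under inversion. Concretely, I will check four things in turn.

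First, closure under composition: given $g=(m;a,b)\in\Hom(i,j)$ and $h=(n;c,d)\in\Hom(j,k)$, the product $g\cdot h=(m+n+ad-bc;a+c,b+d)$ should lie in $\Hom(i,k)$. The condition we need is $(a+c)+(i+k)/2\in\ZZ$, which follows by adding $a+(i+j)/2\in\ZZ$ and $c+(j+k)/2\in\ZZ$ and subtracting the integer $j\in\{0,1\}$.

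Second, identity morphisms: the element $\One_i=(0;0,0)$ lies in $\smallGroup$ (both arithmetic conditions are trivial to check) and the $\Hom(i,i)$-constraint $0+(i+i)/2=i\in\ZZ$ holds for $i\in\{0,1\}$. It acts as a two-sided identity because it already does so in $\smallGroup$.

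Third, inverses: the $\smallGroup$-inverse of $(m;a,b)$ is $(-m;-a,-b)$. If $(m;a,b)\in\Hom(i,j)$, then $(-a)+(j+i)/2=-(a+(i+j)/2)+(i+j)$, which is an integer, so $(-m;-a,-b)\in\Hom(j,i)$. One should also confirm that $(-m;-a,-b)$ actually lies in $\smallGroup$, i.e.\ that the condition $m+\frac{(2a+1)(a+b+1)+1}{2}\in\ZZ$ is preserved under the substitution $(m,a,b)\mapsto(-m,-a,-b)$; a direct computation shows the sum of the two expressions equals $2a^2+2ab+2$, and this is always an integer (when $a,b\in\OneHalf\ZZ$ with $a+b\in\ZZ$, the quantity $2a^2+2ab=2a(a+b)$ is integral).

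Finally, associativity is inherited from $\smallGroup$ since all the products in question are genuine products in $\smallGroup$. The only mildly nontrivial step is the inversion check in the third point, and that reduces to the integrality of $2a(a+b)$; everything else is a routine parity bookkeeping.
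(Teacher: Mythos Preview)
Your proof is correct and follows the same approach as the paper, which simply states that closure under composition and inverses is straightforward to verify; you have merely spelled out the parity bookkeeping that the paper leaves to the reader.
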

\begin{proof}
  It is straightforward to verify that $\mathcal{G}$ is closed under multiplication
  (composition) and inverses.
\end{proof}

Observe that for $i\in\{0,1\}$, we have that
$\Hom(i,i)=\smallGroup(\Torus)$, the smallest of the three grading
groups.

The element $\lambda\in \smallGroup$ induces an element
$\lambda\in\Hom_{\mathcal{G}}(i,i)$, $i\in\{0,1\}$, forming a central
element of $\mathcal{G}$.

\begin{lemma}
  The $\smallGroup$-grading on $\MAlg$ induces a $\mathcal{G}$-grading
  on $\MAlg$.
\end{lemma}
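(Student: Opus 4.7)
The plan is to exhibit the $\mathcal{G}$-grading by restricting the $\smallGroup$-grading to each piece $I_{\iota_i}\MAlg I_{\iota_j}$ and observing that the grading lands in the specified Hom-subset of $\smallGroup$. First I would set $\pi\co\{\iota_0,\iota_1\}\to\Obj(\mathcal{G})$ by $\pi(\iota_i)=i$ and define, for each pair $(\x,\y)=(\iota_i,\iota_j)$ and each $g\in\Hom_\mathcal{G}(i,j)$,
\[
  (I_\x\cdot\MAlg\cdot I_\y)_g\coloneqq\{a\in I_\x\cdot\MAlg\cdot I_\y\mid \gr(a)=g\}.
\]
The content of the lemma is then the assertion that this decomposition is well-defined, i.e., that homogeneous elements of $I_\x\cdot\MAlg\cdot I_\y$ have grading lying in the subset $\Hom_\mathcal{G}(i,j)\subset\smallGroup$.

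Since an $\smallGroup$-grading was already established in Theorem~\ref{thm:grading} (combined with the homomorphism $\bigGroup\to\smallGroup$), and since $\smallGroup=\bigsqcup_{i,j\in\{0,1\}}\Hom_\mathcal{G}(i,j)$ as a set (indeed, an element $(m;a,b)\in\smallGroup$ lies in $\Hom_\mathcal{G}(i,j)$ for a unique pair $(i,j)$, determined by the parity of $a$ and of $a+b$), it suffices to verify the Hom-containment on the basic elements (idempotents, Reeb elements, and $U$). The idempotents satisfy $\gr(\iota_i)=(0;0,0)\in\Hom_\mathcal{G}(i,i)$ trivially, and $\gr(U)=(-2;0,0)\in\Hom_\mathcal{G}(i,i)$ for $i=0,1$, as $U$ is central and hence lies in $\iota_0\MAlg\iota_0\oplus\iota_1\MAlg\iota_1$.

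The core computation is the verification on the Reeb elements. For the length-one chords one checks directly from the list in Section~\ref{sec:Intermediate}: $\gr(\rho_1)=(-1/2;1/2,-1/2)$ and $\iota_0\rho_1\iota_1=\rho_1$, and indeed $1/2+(0+1)/2=1\in\ZZ$; similarly for $\rho_2,\rho_3,\rho_4$. Every other Reeb element $\rho_{i,\dots,i+n}$ is a product of length-one chords, and Lemma~\ref{lem:gr-on-AsUnDef} shows that $\gr(\rho_{i,\dots,i+n})$ is the product of the gradings of these factors. Since composition in the groupoid $\mathcal{G}$ is closed under the Hom-structure (i.e., $\Hom_\mathcal{G}(i,k)\cdot \Hom_\mathcal{G}(k,j)\subseteq \Hom_\mathcal{G}(i,j)$), the Hom-containment propagates automatically.

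Finally, the compatibility of the grading with the weighted $\Ainf$-operations, as spelled out in Definition~\ref{def:groupoid-gr}, is immediate: the degree-shift condition $\mu_m^w(a_1,\dots,a_m)\in(I_{\x_0}\MAlg I_{\x_m})_{\lambda_d^{m-2}\lambda_w^w g_1\cdots g_m}$ is precisely the $\smallGroup$-grading condition from Theorem~\ref{thm:grading}, while the composability of $g_1\cdots g_m$ in $\mathcal{G}$ holds because consecutive factors $a_i,a_{i+1}$ share the idempotent $I_{\x_i}$, forcing the target of $g_i$ to equal the source of $g_{i+1}$. No new obstacle arises; the only thing to be careful about is keeping track of which parity class each Reeb element falls into, which is streamlined by reducing everything to the four length-one chords.
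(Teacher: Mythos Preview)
Your approach is essentially the same as the paper's: reduce to checking that the gradings of $\rho_1,\rho_2,\rho_3,\rho_4$ lie in the appropriate $\Hom_{\mathcal{G}}(i,j)$, and then propagate via multiplicativity. The paper's proof is a one-liner to this effect; your version spells out the details.

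One minor misstatement: your claim that $\smallGroup=\bigsqcup_{i,j}\Hom_{\mathcal{G}}(i,j)$ with each element lying in a \emph{unique} $\Hom_{\mathcal{G}}(i,j)$ is false. The defining condition $a+\frac{i+j}{2}\in\ZZ$ depends only on $i+j\pmod 2$, so $\Hom_{\mathcal{G}}(0,0)=\Hom_{\mathcal{G}}(1,1)$ and $\Hom_{\mathcal{G}}(0,1)=\Hom_{\mathcal{G}}(1,0)$ as subsets of $\smallGroup$. Fortunately your actual argument does not use this disjointness claim---you only need that the grading of each $\rho_i$ lands in $\Hom_{\mathcal{G}}(i,j)$ for the correct idempotent pair, which you verify directly---so the proof stands once you delete the parenthetical remark.
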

\begin{proof}
  It suffices to verify that the grading of each algebra element $a$
  with $\iota_ia\iota_j=a$ ($i,j\in\{0,1\}$) lies in $\Hom(i,j)$. To
  see this, it suffices to check the result for $\rho_1$, $\rho_2$,
  $\rho_3$, and $\rho_4$, which is straightforward.
\end{proof}

We define grading refinement data for the torus algebra. Choose the
base idempotent $x_0=\iota_0$, and let $\psi(\iota_0)=e$ and
$\psi(\iota_1)=\gr(\rho_1)$. This induces a grading $\gr_\psi$ on
$\MAlg$ by $\smallGroup(\mathbb{T})$. Explicitly, we have:
\begin{align*}
  \gr_\psi(\rho_1)&=(0;0,0) & \gr_\psi(\rho_2)&=(-1/2;1,0) \\
  \gr_\psi(\rho_3)&=(0;-1,1) & \gr_\psi(\rho_4)&=(-5/2;0,-1)\\
  \gr_\psi(\rho_{12})&=(-1/2;1,0) & \gr_\psi(\rho_{23})&=(1/2;0,1)\\
  \gr_\psi(\rho_{34})&=(-3/2;-1,0)& \gr_\psi(\rho_{41})&=(-5/2;0,-1)\\
  \gr_\psi(\rho_{123})&=(1/2;0,1) & \gr_\psi(\rho_{234})&=(-2;0,0) & \dots \\
  \gr_\psi(\rho_{1234})&=(-2;0,0) & \gr_\psi(\rho_{2341})&=(-2;0,0) & \dots\\
  \gr_\psi(\rho_{12341})&=(-2;0,0) &   \gr_\psi(\rho_{23412})&=(-5/2;1,0) &\dots\\
  \gr_\psi(U)&=(-2;0,0) & \lambda_w&=(0;0,0).
\end{align*}

\subsection{The winding number and total weight}\label{sec:win-num}
The algebra $\UnDefAlg$ has two other gradings: the {\em length} and the {\em multiplicity at the chord
$\rho_4$}. The length was defined in
Section~\ref{sec:AsUnDefAlg}; recall in particular that $|U|=4$.
We will think of the multiplicity at $\rho_4$ as the winding number,
and denote it by $\wingr$. Specifically, define
$\wingr(\rho_1)=\wingr(\rho_2)=\wingr(\rho_3)=0$ and
$\wingr(\rho_4)=\wingr(U)=1$, and extend $\wingr$ to all of $\UnDefAlg$ by
$\wingr(ab)=\wingr(a)+\wingr(b)$. Equivalently, there is a homomorphism
$\bigGroup\to\ZZ$ by $(m;a,b,c,d)\mapsto d$ and $\wingr$ is the
composition of $\grb$ with
this projection. In particular, $\wingr$ sends $\lambda_w$ to $1$.

Sometimes, it is convenient to combine the above gradings.
In Section~\ref{sec:unique}, we will formulate a uniqueness result for
the algebra, phrased in terms of a grading
\begin{equation}
  \gamma=\gr\times\wingr
\end{equation}
with values in the group $\Gamma=\smallGroup\times\ZZ$.

\subsection{A mod-2 grading}\label{sec:mod-2-gr}
We conclude with the rather dull mod-2 grading:

\begin{lemma}
  The map $\epsilon\co \smallGroup(\mathbb{T})\to \Zmod{2}$, defined
  by $\epsilon(m;a,b)\equiv m+\frac{a-b}{2}+ a b\pmod{2}$ is a
  homomorphism. Further, the map $\epsilon$ sends $\lambda$ to $1$ and
  the grading $\gr_\psi(a)$ of every homogeneous algebra element $a$
  to $0$.
\end{lemma}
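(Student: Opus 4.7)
The plan is to break the statement into four independent verifications: well-definedness, the homomorphism property, the value on $\lambda$, and the vanishing on algebra elements.

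First, I would check that the expression $m+\frac{a-b}{2}+ab$ actually represents an integer, so reduction mod $2$ makes sense. By the definition of $\smallGroup(\mathbb{T})$ (model~\ref{item:real-small-3}), any $(m;a,b)\in\smallGroup(\mathbb{T})$ satisfies $a,b\in\ZZ$ and $m+\frac{a+b}{2}\in\ZZ$. Writing $m+\frac{a-b}{2} = \bigl(m+\frac{a+b}{2}\bigr)-b$ shows this lies in $\ZZ$, and $ab\in\ZZ$ is obvious.

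Second, I would verify the homomorphism property by direct computation. Using the group law
\[
(m;a,b)\cdot(m';a',b') = \bigl(m+m'+ab'-a'b;\,a+a',\,b+b'\bigr),
\]
compute
\[
\epsilon\bigl((m;a,b)\cdot(m';a',b')\bigr) - \epsilon(m;a,b) - \epsilon(m';a',b')
\equiv (ab'-a'b)+(ab'+a'b) = 2ab' \equiv 0 \pmod{2},
\]
where the $(ab'+a'b)$ comes from expanding $(a+a')(b+b')-ab-a'b'$. This is the main (though still routine) obstacle: one just has to be careful with the signs and confirm the cross terms cancel mod $2$.

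Third, $\lambda = (1;0,0)$ gives $\epsilon(\lambda)=1+0+0=1$ immediately.

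Fourth, for the vanishing claim, I would use that $\gr_\psi$ is a grading and that $\mu_2^0$ has degree $\lambda_d^{0}\lambda_w^{0}=e$, so $\gr_\psi(ab)=\gr_\psi(a)\gr_\psi(b)$ for any product of homogeneous elements. Since $\epsilon$ is already known to be a homomorphism, it suffices to check $\epsilon\bigl(\gr_\psi(x)\bigr)=0$ on a generating set: the idempotents $\iota_0,\iota_1$ (graded $e$, trivially sent to $0$) and $\rho_1,\rho_2,\rho_3,\rho_4,U$. Reading off the values of $\gr_\psi$ tabulated in Section~\ref{sec:gr-refine}:
\begin{align*}
  \epsilon\bigl(\gr_\psi(\rho_1)\bigr) &= 0+0+0 = 0, &
  \epsilon\bigl(\gr_\psi(\rho_2)\bigr) &= -\tfrac{1}{2}+\tfrac{1}{2}+0 = 0,\\
  \epsilon\bigl(\gr_\psi(\rho_3)\bigr) &= 0+(-1)+(-1) \equiv 0, &
  \epsilon\bigl(\gr_\psi(\rho_4)\bigr) &= -\tfrac{5}{2}+\tfrac{1}{2}+0 = -2 \equiv 0,\\
  \epsilon\bigl(\gr_\psi(U)\bigr) &= -2+0+0 \equiv 0.
\end{align*}
This finishes the verification; every homogeneous algebra element is a product of these and an idempotent, so its image under $\epsilon\circ\gr_\psi$ is a sum of $0$'s in $\Zmod{2}$.
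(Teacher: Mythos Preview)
Your proof is correct and follows essentially the same approach as the paper: a direct computation for the homomorphism property, then reduction to the generators $\rho_1,\rho_2,\rho_3,\rho_4,U$ (and the trivial idempotents) for the vanishing claim. The well-definedness check you include is a small addition that the paper leaves implicit.
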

\begin{proof}
  To verify that $\epsilon$ is a homomorphism, observe that
  \begin{align*}
    \epsilon(m;a,b)+\epsilon(n;c,d)&=m+n+\frac{a-b+c-d}{2}+ab+cd\\
                                   &\equiv m+n+ad-bc+\frac{a+c-(b+d)}{2}+(a+c)(b+d)\\
                                   &=\epsilon(m+n+ad-bc;a+c,b+d).
  \end{align*}
  To verify the statement about the gradings of algebra elements, it
  suffices to compute
  $\epsilon(\gr_\psi(\rho_1))=\epsilon(\gr_\psi(\rho_2))=\epsilon(\gr_\psi(\rho_3))=\epsilon(\gr_\psi(\rho_4))=\epsilon(\gr_\psi(U))=0$. Similarly,
  $\epsilon(\lambda)=1+\frac{0-0}{2}+0=1$. 
\end{proof}

\begin{remark}
  A homomorphism $\smallGroup(\mathbb{T})\to \Zmod{2}$ sending
  $\lambda$ to $0$ is an element of $H^1(\mathbb{T};\Zmod{2})$. The
  homomorphisms sending $\lambda$ to $1$ are obtained from $\epsilon$
  by adding one of these four maps.

  Note that since $\lambda$ is a commutator in $\bigGroup$, there is no
  homomorphism $\bigGroup\to\Zmod{2}$ sending $\lambda$ to $1$.
\end{remark}

\begin{remark}
  For the case of $\HFa$, modulo-2 gradings on the bordered
  algebras and modules have been constructed previously by
  Petkova~\cite{Petkova18:decat} and
  Hom-Lidman-Watson~\cite{HLW17:decat}, for surfaces of arbitrary
  genus.
\end{remark}


\newcommand\dcob{\delta^{\Cobarop}}
\newcommand\gammaCobar{\gamma^{\Cobarop}}

\newcommand\doverline[1]{\overline{\overline{#1}}}

\newcommand\dtilde[1]{\widetilde{\widetilde{#1}}}
\newcommand\dtildew[1]{\doverline{#1}}

\section{Abstract approach to existence and uniqueness of the torus
  algebra}\label{sec:unique}
The goal of this section is to provide a more algebraic
characterization of the weighted algebra $\MAlg$. 

\subsection{\texorpdfstring{$\Ainf$}{A-infinity} deformations and Hochschild cohomology}
\label{sec:A-inf-deform}
In this section, we show that $\Ainf$ deformations of an associative
algebra are controlled by Hochschild cohomology.
This material is well-known, but we recall it here for
the reader's convenience. See, for instance,~\cite[Section
3a]{Seidel15:quartic},~\cite{LekiliPerutz11:torus},~\cite[Section 2.3]{Sheridan:CY-hyp} and the references
therein. In particular, the group-graded context of~\cite[Section 2.3]{Sheridan:CY-hyp} is close to the setting of interest here (cf.~Section~\ref{sec:Fukaya}). 

Fix a commutative ring $\Ground$; for us, $\Ground$ will be a finite direct
sum of copies of $\Field$ or $\Field[U]$. Unless otherwise specified tensor
products are over $\Ground$.

By an \emph{$A_n$-algebra} we mean a projective $\Ground$-module $A$ together with
$\Ground$-linear maps $\{\mu_i\co A^{\otimes i}\to A\}_{i=2}^n$ satisfying those $\Ainf$-algebra
relations with at most $n+1$ inputs. (So, in this section, we are only
considering $A_n$-algebras with trivial differential.)
A \emph{homomorphism} of  $A_n$-algebras $f\co\Alg\to\Blg$ consists of maps
$\{f_i\co A^{\otimes i}\to B\}_{i=1}^{n-1}$, satisfying
the relations for an $A_{\infty}$-algebra homomorphism with at most $n$ inputs.

Fix an associative algebra $A$ and an augmentation $\epsilon\co
A\to\Ground$. Let $A_+=\ker(\epsilon)$ be the augmentation ideal
and let $\Pi\co A \to A_+$ denote the projection to the
augmentation ideal.

We are interested in $\Ainf$-algebra structures on $A$
so that $\mu_1=0$, $\mu_2$ is the given multiplication on $A$, and the
operations are strictly unital and $\Ground$-multilinear.
(Unitality
of the $\Ainf$-algebra can be formulated as the condition that the
operations~$\mu_i$
for $i>2$ satisfy
$\mu_i=\mu_i\circ (\overbrace{\Pi\otimes\dots\otimes\Pi}^i)$.)
We will
define such structures inductively. The obstructions to extending, and
different choices of extensions, will be given in terms of the
Hochschild cochains.

\begin{definition}\label{def:HH-assoc}
  The \emph{(reduced) bar complex} of an augmented associative algebra
  $A$ over $\Ground$ is given by
  \[
  \Barop(A)=A\otimes A \longleftarrow A\otimes A_+\otimes A\longleftarrow A\otimes
  A_+\otimes A_+\otimes A\longleftarrow\cdots
  \]
  where the differential $A\otimes A_+^{\otimes n}\otimes A\to
  A\otimes A_+^{\otimes (n-1)}\otimes A$ is specified by
  \begin{equation}
    \label{eq:BarDifferential}
    a_0\otimes\cdots\otimes a_{n+1}\mapsto \sum_{i=0}^{n}
    a_0\otimes\cdots\otimes a_ia_{i+1}\otimes a_{i+2}\otimes\cdots
  \otimes a_{n+1}.
  \end{equation}
  This is a chain complex of $(A,A)$-bimodules, and in
  fact is a resolution of $A$.
  
  The \emph{Hochschild cochain complex} of $A$ is given by 
  \begin{equation}
    \label{eq:DefHC}
  \HC^*(A)=\Hom_{A\otimes A^\op}(\Barop(A),A).
  \end{equation}
  The grading is chosen so that $\Hom(A\otimes A,A)$ lies in grading
  $0$.
\end{definition}

Note that we can absorb the $A$ factors on the left and right into the $\Hom$, to get
\begin{equation}\label{eq:HC-Hom-over-ground}
\HC^n(A)=\Hom_{\Ground\otimes\Ground}(A_+^{\otimes n},A).
\end{equation}
Let $\delta$ denote the differential on this model for the Hochschild complex.

\begin{remark}
  In the terminology of~\cite{LOT2},
  $\HC^*(A)=\Mor(\lsup{A}[\Id]_A,\lsup{A}[\Id]_A)$, the complex of
  type \DA\ bimodule morphisms from the identity \DA\ bimodule of $A$
  to itself.
\end{remark}

\begin{definition}
  Given an associative algebra $A$ (with no differential), an $A_n$-algebra
  structure $\Alg$ on $A$ with $\mu_2$ the given multiplication on $A$ (and
  $\mu_1=0$) is an \emph{$A_n$ deformation of $A$}.
\end{definition}

\begin{proposition}\label{prop:deform-assoc-alg}
  Let $A$ be an associative algebra and let $\Alg$ be an $A_{n-1}$-algebra that
  is a deformation of $A$. Then there is a Hochschild cochain obstruction class
  $\obstr_{n}\in \HC^{n+1}(A)$ so that:
  \begin{enumerate}[label=(A$\obstr$-\arabic*)]
  \item
    \label{o:Cocycle}
    $\obstr_{n}$ is a cocycle.
  \item $\obstr_{n}$
    \label{o:Extension}
    is a coboundary if and only if there is an
    operation $\mu_{n}$ making $\Alg$ into an $A_{n}$-algebra;
    indeed, the operation $\mu_n\in\HC^{n}(A)$
    is a cochain with $\delta(\mu_n)=\obstr_{n}$.
  \item\label{item:coboundary} If $\mu_n$ and $\mu_n'$ are cochains with
    $\delta(\mu_n)=\delta(\mu_n')=\obstr_{n}$ then 
    $\mu_n-\mu_n'$ is itself a coboundary if and only if there is an
    $A_{n}$-homomorphism $f$ between the corresponding structures
     with $f_1=\Id$ and $f_j=0$ for $1<j<n-1$.
  \item\label{item:naturality}
    If $\Alg$ and $\Alg'$ are two $A_{n}$ deformations
    with $\mu_i=\mu_i'$ for all $i<n$, and $\mu_n-\mu_n'$ is a coboundary,
    then their respective obstruction cocycles
    $\obstr_{n+1}, \obstr_{n+1}'\in\HC^{n+2}(A)$ are cohomologous.
  \end{enumerate}
  
  There are analogous statements for maps. In particular, given
  $A_{n}$ deformations $\Alg$ and $\Alg'$ of $A$ and an
  $A_{n-1}$-homomorphism $f\co \Alg\to \Alg'$ so that $f_1\co A\to A$ is
  the identity map, there is an obstruction class $\fobstr_n\in
  \HC^{n}(A)$ so that:
  \begin{enumerate}[label=(A$\fobstr$-\arabic*)]
  \item
    \label{item:mapcocycle} $\fobstr_n$ is a cocycle.
  \item
    \label{item:coboundaryfobstr}
      $\fobstr_n$ is a coboundary if and only if there is an
    $A_{n}$-homomorphism extending $f$. 
  \end{enumerate}
\end{proposition}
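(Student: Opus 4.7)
The plan is to organize all six assertions around the Gerstenhaber pre-Lie circle product $\circ$ on Hochschild cochains, so that each becomes a special case of the same bookkeeping. Using the model~\eqref{eq:HC-Hom-over-ground}, the Hochschild differential is $\delta\phi = \mu_2\circ\phi + \phi\circ\mu_2$, and the $\Ainf$-relation with $m$ inputs can be written as $\sum_{k+l=m+1}\mu_k\circ\mu_l = 0$, where $\circ$ sums over all positions at which one operation can be inserted into another.

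Given an $A_{n-1}$-deformation, I would define
\[
  \obstr_n \coloneqq \sum_{\substack{k+l=n+2\\ 3\leq k,l\leq n-1}} \mu_k\circ\mu_l \;\in\; \HC^{n+1}(A),
\]
the sum of the terms in the $(n{+}1)$-input $\Ainf$-relation that do not involve the as-yet-undefined $\mu_n$. The missing terms add up to $\mu_2\circ\mu_n + \mu_n\circ\mu_2 = \delta\mu_n$, so solving $\delta\mu_n = \obstr_n$ is precisely what it means to extend the structure to an $A_n$-algebra, giving~\ref{o:Extension}. For the cocycle property~\ref{o:Cocycle}, I would compute $\delta\obstr_n$ by applying the Gerstenhaber pre-Lie identity to each $\delta(\mu_k\circ\mu_l)$ and observe that, after reindexing, the result is a sum of $\Ainf$-relations with at most $n$ inputs (already zero by the $A_{n-1}$ hypothesis), together with associator terms that cancel in pairs.

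Part~\ref{item:coboundary} follows by expanding the $n$-input $\Ainf$-morphism relation under the hypotheses $f_1 = \Id$ and $f_j = 0$ for $1 < j < n-1$: nearly every term vanishes and what remains collapses, in characteristic~$2$, to $\mu_n - \mu_n' = \delta f_{n-1}$. For the naturality statement~\ref{item:naturality}, I would use~\ref{item:coboundary} to construct an $A_n$-morphism $f\co \Alg\to\Alg'$ with $f_1 = \Id$, and then compare $\obstr_{n+1}$ with $\obstr_{n+1}'$ by reading the portion of the $(n{+}2)$-input $\Ainf$-morphism relation for $f$ that does not involve the undefined $f_{n}$ or $\mu_{n+1}^{(\prime)}$: this yields an identity $\obstr_{n+1}' - \obstr_{n+1} = \delta\chi$ for an explicit cochain $\chi$ built from $f_{n-1}$, $\mu_2$, and the $\mu_i$'s through order $n$.

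The two map-obstruction parts are exactly parallel to the algebra case. Given an $A_{n-1}$-morphism $f = (\Id, f_2, \ldots, f_{n-1})$ between $A_n$-deformations, I would set $\fobstr_n\in\HC^n(A)$ to be the sum of terms in the $n$-input $\Ainf$-morphism relation not involving $f_n$; the relation then reads $\delta f_n = \fobstr_n$, giving~\ref{item:coboundaryfobstr}. For~\ref{item:mapcocycle}, computing $\delta\fobstr_n$ yields a combination of $A_n$-relations for $\Alg$ and $\Alg'$ together with lower $\Ainf$-morphism relations for $f$, all of which vanish by hypothesis. The main obstacle throughout is this cocycle bookkeeping: there is no conceptual difficulty, but the proliferation of insertion positions calls for care, so I would set up once the identities for the Gerstenhaber bracket $[\alpha,\beta]_G = \alpha\circ\beta + \beta\circ\alpha$ and its interaction with $\delta$, and then read off each cocycle statement and the naturality comparison as instances of a single graded-Jacobi identity.
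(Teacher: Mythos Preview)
Your approach is essentially the paper's: your circle product $\circ$ is the paper's $\star$ (Equation~\eqref{eq:DefStar}), your definition of $\obstr_n$ coincides with the paper's, and your verifications of \ref{o:Extension}, \ref{o:Cocycle}, \ref{item:coboundary}, \ref{item:mapcocycle}, \ref{item:coboundaryfobstr} follow the same lines. The paper packages the cocycle computations via the Leibniz-type identity~\eqref{eq:LeibnizStar} and the associator identity~\eqref{eq:Associator}, which is exactly your ``Gerstenhaber pre-Lie identity'' plan; and, as you anticipate, the map cocycle \ref{item:mapcocycle} is where the bookkeeping is heaviest (the paper introduces auxiliary operators $\widetilde{g}$, $\widetilde{\widetilde{\phi}}$, $D$ to organize it).

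The one place you diverge is \ref{item:naturality}. Your idea of reading the $(n{+}2)$-input morphism relation and discarding the undefined pieces is not obviously valid: that relation also involves $f_{n+1}$, $\mu_{n+2}$, $\mu'_{n+2}$, and removing terms from a true identity does not leave a true identity. The paper instead observes directly that since $\mu_i=\mu'_i$ for $i<n$, only the $\mu_3$-$\mu_n$ cross terms survive in $\obstr_{n+1}-\obstr'_{n+1}$, and then a single application of~\eqref{eq:LeibnizStar} with $\delta c_{n-1}=\mu_n-\mu'_n$ and $\delta\mu_3=0$ gives the explicit cobounding chain $c_{n-1}\star\mu_3+\mu_3\star c_{n-1}$. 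This is a two-line computation and avoids the morphism relation entirely.
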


\begin{proof}
  Given a map $f\co A_+^{\otimes i}\to A$, we will also let $f$ denote
  the extension $f\circ \Pi^{\otimes i}\co A^{\otimes i}\to A$. 
  To define the obstruction cocycle $\obstr_n$, we use a composition map
  \[ \Hom(A_+^{\otimes i},A)\otimes  \Hom(A^{\otimes j}_+,A)\to \Hom(A_+^{\otimes i+j-1},A),\]
  which we denote by $\star$, defined by
  \begin{equation}
    \label{eq:DefStar}
    f_i\star g_j = 
    \sum_{\ell=0}^{i-j+1} f_i(\Id_{A_+^{\otimes\ell}}\otimes \circ g_j \otimes \Id_{A_+^{\otimes(i-\ell-1)}})
    =
    \mathcenter{\begin{tikzpicture}
      \node at (-1,0) (tl) {};
      \node at (0,0) (tc) {};
      \node at (1,0) (tr) {};
      \node at (0,-1) (m1) {$g$};
      \node at (0,-2) (m2) {$f$};
      \node at (0,-3) (bc) {};
      \draw[taa] (tl) to (m2);
      \draw[taa] (tr) to (m2);
      \draw[taa] (tc) to (m1);
      \draw[alga] (m1) to (m2);
      \draw[alga] (m2) to (bc);
    \end{tikzpicture}
    }.
  \end{equation}
  (This uses the extension of $f$, via pre-composition with $\Pi$.)
  Let 
  \begin{equation}\label{eq:DefDObstr}
    \obstr_{n}=\sum_{\substack{i,j\geq 3\\ i+j=n+2}} \mu_i\star \mu_j=
    \sum_{\substack{i,j\geq 3\\i+j=n+2}}\mathcenter{
      \begin{tikzpicture}
        \node at (-1,0) (tl) {};
        \node at (0,0) (tc) {};
        \node at (1,0) (tr) {};
        \node at (0,-1) (m1) {$\mu_i$};
        \node at (0,-2) (m2) {$\mu_j$};
        \node at (0,-3) (bc) {};
        \draw[taa] (tl) to (m2);
        \draw[taa] (tr) to (m2);
        \draw[taa] (tc) to (m1);
        \draw[alga] (m1) to (m2);
        \draw[alga] (m2) to (bc);
      \end{tikzpicture}
    }.
    \end{equation}
    The $\Ainf$ relation with $n+1$ inputs is the condition that
    \begin{equation}
      \label{eq:AinfHochschild}
    \delta \mu_{n} = \sum_{\substack{i, j\geq 3 \\ i+j=n+2}} \mu_i\star \mu_j;
    \end{equation}
    i.e., $\delta\mu_n=\obstr_n$. Property~\ref{o:Extension} follows.

    To verify that $\obstr_n$ is a cocycle, it helps to have the following
    easily verified identity:
    for any $f_i\co A_+^{\otimes i}\to A$ and $g_j\co A_+^{\otimes j}\to A$,
    \begin{equation}
      \label{eq:LeibnizStar}
      \delta (f_i\star g_j)=(\delta f_i)\star g_j
      + f_i\star (\delta g_j)+ \mu_2(f_i,g_j)+\mu_2(g_j,f_i).
    \end{equation}
    Using this identity and Equation~\eqref{eq:AinfHochschild},
    we see that
  \begin{align*}
    \delta \obstr_n &= \sum_{\substack{i,k\geq 3\\i+k=n+2}} (\delta \mu_i)\star \mu_k + \mu_i \star (\delta \mu_k)
    + \mu_2(\mu_i,\mu_k)+\mu_2(\mu_k,\mu_i) \\
    &= \left(\sum_{\substack{i,j,k\geq 3 \\ i+j+k=n+4}}  (\mu_i \star \mu_j)\star \mu_k\right) + 
    \left(\sum_{\substack{i,j,k\geq 3\\ i+j+k=n+4}}  \mu_i \star (\mu_j\star \mu_k)\right).
  \end{align*}
  Note that $\star$ is not associative; rather,
  \begin{equation}
    \label{eq:Associator}
    (a \star b)\star c + a \star(b\star c)= a\circ (\Id \otimes b \otimes \Id\otimes c\otimes \Id)\circ\Delta^5 +
    a\circ (\Id \otimes c \otimes \Id\otimes b\otimes \Id)\circ
  \Delta^5, 
  \end{equation}
  where
  \begin{equation}
    \label{eq:Comultiplication}
    \Delta^m \co \Tensor^*(A_+)\to \overbrace{(\Tensor^*(A_+)\otimes\dots\otimes \Tensor^*(A_+))}^m 
  \end{equation}
  denotes the comultiplication map on $\Tensor^*(A_+)$ applied $m-1$ times.
  Property~\ref{o:Cocycle} follows.
  
  To verify Property~\ref{item:coboundary}, we argue as follows. Let
  ${\mathcal A}$ and ${\mathcal A}'$ be the $A_{n}$ algebras such that
  $\mu_i=\mu_i'$ for all $i<n$, but with possibly different $\mu_n$ and
  $\mu_n'$. The $\Ainf$ relation with $n$ inputs for a map $f\co
  \mathcal{A}\to \mathcal{A}'$ with $f_1=\Id$ and $f_j=0$ for $1\leq
  j< n-1$ is precisely the condition that
  \[
    \delta(f_{n-1})=\mu_n-\mu_n'.
  \]

  To verify Property~\ref{item:naturality},
  observe that the hypothesis that $\mu_i=\mu_i'$ for all $i<n$ ensures that
  \[ \obstr_{n+1}-\obstr_{n+1}'=\mu_n\star \mu_3 + \mu_3\star\mu_n + 
  \mu_n'\star \mu_3 + \mu_3\star\mu_n'.\] Our hypotheses also give us
  a $c_{n-1}$ with 
  \[ \delta c_{n-1}=\mu_n-\mu_n'.\]
  Now, using Equation~\eqref{eq:LeibnizStar} (and using the fact that
  $\delta\mu_3=0$), we see that
  \[
    \obstr_{n+1}-\obstr_{n+1}'
    = \delta(c_{n-1}\star \mu_3 + \mu_3\star c_{n-1}).
  \]
  
  We now turn to the statements for maps.  Fix $A_n$ deformations
  $\Alg$ and $\Alg'$ of $A$ and an $A_{n-1}$-homomorphism $f\co
  \Alg\to\Alg'$ with $f_1$ the identity map.  To define the
  obstruction cocycle $\fobstr_n$, we introduce some notation. Extend
  $\{f_i\co A_+^{\otimes i}\to A\}_{i\leq n-2}$ to a map $F_{n}\co
  A^{\otimes n}\to \Tensor^*A$ by summing over all ways of
  parenthesizing $A^{\otimes n}$ into strings of $\leq n-2$ elements
  and applying the appropriate $f_i$ to each string. For example, for
  $n=4$,
  \begin{multline*}
    F_4(a_1\otimes a_2\otimes a_3\otimes a_4)
    \\=f_1(a_1)\otimes f_1(a_2)\otimes f_1(a_3)\otimes f_1(a_4)
    +f_2(a_1\otimes a_2)\otimes f_1(a_3)\otimes f_1(a_4)+
    f_1(a_1)\otimes f_2(a_2\otimes a_3)\otimes f_1(a_4)\\+
    f_1(a_1)\otimes f_1(a_2)\otimes f_2(a_3\otimes a_4)+
    f_2(a_1\otimes a_2)\otimes f_2(a_3\otimes a_4).
  \end{multline*}

  Define the obstruction class by
  \begin{equation}\label{eq:DefFObstr}
    \fobstr_n= \Bigl(\sum_{\substack{j\geq 3 \\ i+j=n+1}} f_i\star
    \mu_j\Bigr)+\Bigl(\mu\circ F_{n}\Bigr)\co A_+^{\otimes n}\to A,
  \end{equation}
  or graphically by
  \[
  \fobstr_n=\mathcenter{
    \begin{tikzpicture}
      \node at (-1,0) (tl) {};
      \node at (0,0) (tc) {};
      \node at (1,0) (tr) {};
      \node at (0,-1) (m) {$\mu_{\geq 3}$};
      \node at (0,-2) (f) {$f_{\leq n-2}$};
      \node at (0,-3) (bc) {};
      \draw[taa] (tl) to (f);
      \draw[taa] (tr) to (f);
      \draw[taa] (tc) to (m);
      \draw[alga] (m) to (f);
      \draw[alga] (f) to (bc);
    \end{tikzpicture}
  }
  +
  \mathcenter{
    \begin{tikzpicture}
      \node at (-1,0) (tl) {};
      \node at (0,0) (tdots) {$\cdots$};
      \node at (1,0) (tr) {};
      \node at (-1,-1) (lf) {$f_{\leq {n-2}}$};
      \node at (1,-1) (rf) {$f_{\leq {n-2}}$};
      \node at (0,-1) (moredots) {$\cdots$};
      \node at (0, -2) (mu) {$\mu_{\geq 2}$};
      \node at (0, -3) (bc) {};
      \draw[taa] (tl) to (lf);
      \draw[taa] (tr) to (rf);
      \draw[alga] (lf) to (mu);
      \draw[alga] (rf) to (mu);
      \draw[alga] (mu) to (bc);
    \end{tikzpicture}
  }.
  \]
  The condition on a map $f_{n-1}\co A_+^{\otimes n-1}\to A$ that
  \[
    \delta f_{n-1}=\fobstr_n
  \]
  is precisely the $\Ainf$ relation with $n$ inputs for the map
  $\{f_i\}_{i=1}^{n-1}$, so Part~\ref{item:coboundaryfobstr} holds.

  To verify that $\fobstr$ is a cocycle, we introduce a little more notation.
  Given $g_j\co A_+^{\otimes j} \to A$, let
  $\widetilde{g_j}\co \Tensor^*(A_+)\to \Tensor^*(A)$
  be the map defined by
  \begin{equation}
    \label{eq:deftilde}
  \widetilde{g_j}(a_1\otimes\dots\otimes a_i)=\sum_{m=0}^{i-j}
  a_1\otimes \dots\otimes a_m \otimes g_j(a_{m+1},\dots,a_{m+j})\otimes
  a_{m+j+1}\otimes \dots\otimes a_i.
  \end{equation}
  In particular, $f\star g= f\circ \widetilde{g_j}$. (Again, recall
  that we are abusing notation so $f=f\circ\Pi^{\otimes n}$.)
  There is a differential
  $D \co \Hom(\Tensor^*(A),\Tensor^*(A))\to \Hom(\Tensor^*(A),\Tensor^*(A))$
  defined by
  \begin{equation}
    \label{eq:DefD}
    D(\Phi)=\widetilde{\mu_2}\circ \Phi + \Phi\circ \widetilde{\mu_2}. 
  \end{equation}
  The fact that $\mu_2$ is associative implies that $D^2=0$.
  
  Given maps $\{\phi_i\in \Hom_{\Ground\otimes\Ground}(A^{\otimes
    j}_+,A)\}_{j=1}^{m}$ with $\phi_1=\Id$, let 
  $\dtilde{\phi}\co \Tensor^*(A_+) \to \Tensor^*(A)$ denote the map
  defined by
  \begin{equation}
    \label{eq:deftilde2}
    \dtilde{\phi}(a_1,\dots,a_t) = 
    \sum_{i_1+\dots+i_k=t} \phi_{i_1}(a_1,\dots,a_{i_1})\otimes
    \phi_{i_2}(a_{i_1+1},\dots,a_{i_1+i_2})\otimes \dots\otimes
    \phi_{i_k}(a_{m-i_k+1},\dots a_{t}).
  \end{equation}
  Given maps $\{ f_i\in \Hom(A^{\otimes i},A)\}_{i=1}^m$, we will let
  $f = \sum_{i=1}^m f_i \co \Tensor^*(A) \to A$, and similarly let
  $f_{\le n} = \sum_{i=1}^n f_i$, and so forth.
  Conversely, let $(g)_m$ denote the $m$-input component of $g$.
  Observe that
  \begin{equation}
    \label{eq:SecondLeibniz}
    \delta((f\circ \dtilde{\phi})_m)= (\delta(f)\circ \dtilde{\phi})_{m+1}
    + (f \circ D \dtilde{\phi})_{m+1} + \sum_{\substack{i,j\\ i+j=m+1, i>1}}
    \mu_2(\phi_{i},(f\circ \dtilde{\phi})_j)+
    \mu_2((f\circ \dtilde{\phi})_j,\phi_{i}).
  \end{equation}

  In this notation,
  \[
    \fobstr_n=f_{\leq n-2}\star {\mu_{\geq 3}}+ \mu_{\geq 2}\circ {\widetilde{\widetilde{f_{\leq n-2}}}}=
    f_{\leq n-2}\star\mu_{\geq 3} + \mu_{\geq 3}\circ \widetilde{\widetilde{f_{\leq n-2}}} +
    \sum_{\substack{i>1,\ j>1\\  i+j=n+1}} \mu_2(f_i,f_j).
  \]

  By hypothesis, we have maps $\{f_i\co A^{\otimes i}\to A\}_{i=1}^{n-2}$, that satisfy the 
  $\Ainf$ relation with $k$ inputs for all $k\leq n-1$. As noted earlier,
  the 
  $\Ainf$ relation on these maps with $k$ inputs can be formulated as
  \[ \delta (f_{k-1})= f_{\leq k-2}\star \mu_{\geq 3}
  + \mu_{\geq 2}\circ \dtilde{f_{\leq k-2}}
  = \left(\sum_{\substack{i+j=k+1\\ j\geq 3}} f_i\star\mu_j + \mu_j\circ {\dtilde{f}_{\leq k}} \right)
  + \sum_{\substack{i>1, j>1 \\ i+j=k}} \mu_2(f_i,f_j),\]
  which we abbreviate
  \begin{equation}
    \label{eq:AinfMap1}
    \delta(f_{k-1}) =(f_{\leq k-2}\star\mu_{\geq 3})_k + (\mu_{\geq 3}\circ \dtilde{f_{\leq k-2}})_k + \mu_2(f_{>1},f_{>1})_k.
  \end{equation}
  
  We will also use a reformulation of the $A_n$-algebra homomorphism
  relations, stated in terms of the map $D$ from
  Equation~\eqref{eq:DefD}. Abusing notation, given a tensor product
  $g^1\otimes\cdots\otimes g^k$ of maps $\Tensor^*(A)\to A$ (such as
  $\dtilde{f_{\leq n-1}}$ or $D(\dtilde{f_{\leq n-1}})$), let
  $(g^1\otimes\cdots\otimes g^k)_{\leq n}$ denote the restriction that
  no individual $g^i$ has more than $n$ inputs. Then the reformulation is
  \begin{equation}
    \label{eq:AinfMap2}
    \bigl(D(\dtilde{f_{\leq n-1}})\bigr)_{\leq n}=\bigl({\dtilde{f_{\leq n-2}}}\circ {\widetilde \mu_{\geq 3}} + \widetilde{\mu_{\geq 3}}\circ \dtilde{f_{\leq n-2}}\bigr)_{\leq n}
  \end{equation}

  Another identity we shall use is that
  \[ \delta(\mu_2(f,g))=\mu_2(\delta(f),g)+\mu_2(f,\delta(g)).\]

  With these preliminaries in hand, we compute
  \begin{equation}  \label{eq1}
    \begin{split}
  \delta \bigl((f_{\leq n-2}\star \mu_{\geq 3})_n\bigr)
  & =\bigl(\delta(f_{\leq n-2})\star \mu_{\geq 3}
  + f_{\leq n-3} \star \delta(\mu_{\geq 3}) + \mu_2(f_{>1},\mu_{\geq 3})+
  \mu_2(\mu_{\geq 3},f_{>1})\bigr)_{n+1} \\
    &= \begin{aligned}[t]
      &\bigl(\mu_{\geq 3} \circ {\widetilde{\widetilde{f_{\leq n-3}}}}\circ {\widetilde {\mu_{\geq 3}}}
      + \mu_2(f_{>1}\star \mu_{\geq 3},f_{>1})
      +\mu_2(f_{>1},f_{>1}\star \mu_{\geq 3})\\
      &\qquad + \mu_2(f_{>1},\mu_{\geq 3})+
      \mu_2(\mu_{\geq 3},f_{>1})\bigr)_{n+1}
    \end{aligned} \\
  &= \bigl(\mu_{\geq 3} \circ {\dtilde{f_{\leq n-3}}}\circ {\widetilde {\mu_{\geq 3}}}
  + \mu_2(f\star \mu_{\geq 3},f_{>1})+\mu_2(f_{>1},f\star \mu_{\geq 3})\bigr)_{n+1}.
  \end{split}
  \end{equation}
  In going from the first to the second line above, we are using a
  cancellation of $(f_{\leq n-1}\star \mu_{\geq 3})\star \mu_{\geq 3}$
  against $f_{\leq n-2}\star (\mu_{\geq 3}\star \mu_{\geq 3})$, which
  uses Equation~\eqref{eq:Associator}.

Next, we compute
  \begin{align}
    \delta\bigl( (\mu_{\geq 3}\circ \dtilde{f_{\leq n-2}})_n\bigr)
    &= \bigl(\delta(\mu_{\geq 3})\circ \dtilde{f_{\leq n-2}}
    + \mu_{\geq 3} \circ D(\dtilde{f_{\leq n-2}})  + \mu_2(\mu_{\geq 3}\circ \dtilde{f},f_{>1})
    + \mu_2(f_{>1},\mu_{\geq 3}\circ \dtilde{f})\bigr)_{n+1} \nonumber \\
    &=\bigl(\mu_{\geq 3} \circ \dtilde{f_{\leq n-3}}\circ {\widetilde {\mu_{\geq 3}}} 
        +\mu_2(\mu_{\geq 3}\circ \dtilde{f},f_{>1})
        + \mu_2(f_{>1},\mu_{\geq 3}\circ \dtilde{f})\bigr)_{n+1},
        \label{eq2}
  \end{align}
and also
  \begin{equation}
    \label{eq3}
    \delta\bigl(\mu_2(f_{>1},f_{>1})_n\bigr)
    = \mu_2(\delta f_{>1},f_{>1})_{n+1}+\mu_2(f_{>1},\delta f_{>1})_{n+1}
    = \mu_2(\delta f,f_{>1})_{n+1}+\mu_2(f_{>1},\delta f)_{n+1}.
  \end{equation}
  Adding Equations~\eqref{eq1},~\eqref{eq2}, and~\eqref{eq3},
  and using associativity of $\mu_2$, and once again using Equation~\eqref{eq:AinfMap1},
  we see that
  \begin{align*}
    \delta (\fobstr_n)&=\mu_2(\delta f + f\star \mu_{\geq 3} + \mu_{\geq 3} \circ\dtilde{f},  f_{>1})_{n+1}+
    \mu_2(f_{>1}\delta f + f\star \mu_{\geq 3} + \mu_{\geq 3} \circ \dtilde{f})_{n+1} \\
    &=
      \begin{aligned}[t]
        &\mu_2(\delta f + f\star \mu_{\geq 3} + \mu_{\geq 3}\circ \dtilde{f}+\mu_2(f_{>1},f_{>1}),  f_{>1})_{n+1}+\mu_2(f_{>1},\delta f + f\star \mu_{\geq 3} + \mu_{\geq 3} \circ \dtilde{f}\\
        &\qquad +\mu_2(f_{>1},f_{>1}))_{n+1} \\
      \end{aligned}\\
    &=0,
  \end{align*}
  verifying Property~\ref{item:mapcocycle}.
\end{proof}

\begin{corollary}\label{cor:deform-assoc-alg}
  Let $\Alg$ be an $A_n$-structure on the associative algebra $A$.
  If $\HH^{m+2}(A)=0$ for all $m\geq n$ then $\Alg$ extends to an
  $\Ainf$-algebra structure on $A$. If in addition $\HH^{m+1}(A)=0$
  for all $m\geq n$ then this extension is unique up to isomorphism.
\end{corollary}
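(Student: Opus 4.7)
The plan is to apply Proposition~\ref{prop:deform-assoc-alg} inductively, twice: once to obtain existence of an $\Ainf$-extension, and once to construct an $\Ainf$-isomorphism between any two extensions.

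For existence, I would argue by induction on $m \geq n$ that $\Alg$ admits an extension to an $A_m$-algebra, with the base case $m = n$ being the hypothesis. For the inductive step, suppose we have extended $\Alg$ to an $A_m$-algebra $\Alg^{(m)}$. By Property~\ref{o:Cocycle}, the associated obstruction class $\obstr_{m+1} \in \HC^{m+2}(A)$ is a cocycle; the hypothesis $\HH^{m+2}(A) = 0$ then guarantees it is a coboundary, and Property~\ref{o:Extension} supplies a map $\mu_{m+1}$ promoting $\Alg^{(m)}$ to an $A_{m+1}$-algebra $\Alg^{(m+1)}$ that restricts to $\Alg^{(m)}$. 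Assembling the compatible sequence $\{\mu_i\}_{i=2}^\infty$ obtained in this way yields the desired $\Ainf$-algebra extension of $\Alg$.

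For uniqueness, given two $\Ainf$-algebra extensions $\Alg^{(1)}$ and $\Alg^{(2)}$ of $\Alg$, I would construct an $\Ainf$-isomorphism $f\co \Alg^{(1)} \to \Alg^{(2)}$ by a parallel induction. Take $f_1 = \Id_A$ and $f_i = 0$ for $1 < i \leq n-1$; since $\Alg^{(1)}$ and $\Alg^{(2)}$ agree in operations $\mu_i^{(j)}$ for $i \leq n$, a direct inspection shows that the surviving terms in each $\Ainf$-homomorphism relation with at most $n$ inputs collapse to $\mu_k^{(1)} = \mu_k^{(2)}$, so this data forms a valid $A_n$-homomorphism. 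For the inductive step, given an $A_m$-homomorphism with $m \geq n$, the obstruction $\fobstr_{m+1} \in \HC^{m+1}(A)$ furnished by Property~\ref{item:mapcocycle} is a coboundary by the hypothesis $\HH^{m+1}(A) = 0$, and Property~\ref{item:coboundaryfobstr} then extends $f$ to an $A_{m+1}$-homomorphism. Taking the limit yields an $\Ainf$-homomorphism with $f_1 = \Id_A$, which is an $\Ainf$-isomorphism by the standard inversion argument for $\Ainf$-homomorphisms with invertible linear term (cf.\ the unweighted analogue of Lemma~\ref{lem:bij-hom-is-iso}).

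Because all the algebraic content has already been packaged into Proposition~\ref{prop:deform-assoc-alg}, there is no significant obstacle left to surmount. The one bookkeeping point that merits care is verifying that the trivial initial data $(f_1,\ldots,f_{n-1})=(\Id,0,\ldots,0)$ really satisfies the $A_n$-homomorphism relation with exactly $n$ inputs, since only at this input count do the $\mu_n^{(j)}$ themselves appear; as noted above, the relation reduces there to the equality of $\mu_n^{(1)}$ and $\mu_n^{(2)}$, which holds because both structures extend the given $A_n$-algebra $\Alg$.
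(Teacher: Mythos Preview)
Your proof is correct and follows essentially the same approach as the paper's: both arguments use Proposition~\ref{prop:deform-assoc-alg} inductively, first to extend the operations one at a time using the vanishing of $\HH^{m+2}$, and then to build an $\Ainf$-homomorphism starting from $(f_1,\dots,f_{n-1})=(\Id,0,\dots,0)$ using the vanishing of $\HH^{m+1}$. Your verification that this initial data satisfies the $A_n$-homomorphism relation (because the relation with $n$ inputs reduces to $\mu_n^{(1)}=\mu_n^{(2)}$) is a bit more explicit than the paper's treatment.
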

\begin{proof}
  Suppose that $\HH^{m+2}(A)=0$ for all $m\geq n$.  By
  Properties~\ref{o:Cocycle} we can inductively find the requisite
  sequence of elements $\mu_k\in \HC^{k}$ with
  $\delta(\mu_k)=\obstr_k$ for all $k\geq m+1$,
  giving an extension of $\Alg$ to an $\Ainf$ algebra.

  Suppose that $\HH^{m+1}(A)=0$ for all $m\geq n$, and let $\Alg$ and
  $\Alg'$ be two $\Ainf$ deformations of $A$ that agree as $A_n$
  algebras.  Choose $f_1=\Id$ and $f_k=0$ for all $k=1,\dots,n$.  By
  hypothesis, these are the components of an $A_n$-homomorphism from
  $\Alg$ to $\Alg'$. For the inductive step, suppose that we have
  components $\{f_i\co A^{\otimes i}\to A\}_{i=1}^k$ for $n<k$ of
  an $A_k$ homomorphism, the obstruction $\fobstr_k$ to extending it
  to an $A_{k+1}$-homomorphism lies in $\HH^{k}(A)$ by
  Properties~\ref{item:mapcocycle}
  and~\ref{item:coboundaryfobstr}.
  This map is an isomorphism since $f_1$ is invertible.
\end{proof}


We will actually be interested in deforming a $(G,\lambda)$-graded associative
algebra to a $(G,\lambda)$-graded $\Ainf$-algebra (so that $\mu_n$ has grading
$\lambda^{n-2}$). (For $G$ abelian, this case was studied by
Sheridan~\cite[Section 2.3]{Sheridan:CY-hyp}.) We will assume further that the distinguished
central element $\lambda\in G$ has infinite order.

In this setting, define a grading on the bar complex $\Barop(A)$
by viewing the $n\th$
term as $A\otimes(A_+[1])^{\otimes n}\otimes A$, i.e.,
$\gr(a_0\otimes a_1\otimes\cdots\otimes
a_n\otimes a_{n+1})=\lambda^n\gr(a_0)\cdots\gr(a_{n+1})$. Then, the
differential has grading $\lambda^{-1}$.
If we are interested in deformations which preserve this grading---and
we are---then we are interested in the subcomplex $\HC^*_G(A)\subset \HC^*(A)$ of
morphisms which respect the grading by $G/\langle \lambda\rangle$,
i.e., maps $f\co \Barop(A)\to A[1]$ so that $\gr_{A[1]}(f(x))=\lambda^{k}\gr_{\Barop(A)}(x)$,
or equivalently $\gr_A(f(x))=\lambda^{k-1}\gr_{\Barop(A)}(x)$,
for some $k\in\ZZ$.
In addition to the grading by~$n$, this complex has an obvious
$\ZZ$-grading (by~$k$), and the
differential decreases this grading by~$1$.
 A graded
$\Ainf$ operation lies in grading $-1$.

Let
$\HC^{i,j}_G(A)$ denote the part $\HC^*_G(A)$ consisting of
$(\Ground\otimes\Ground)$-module maps
$A_+[1]^{\otimes i}\to A[1]$ of grading~$j$. Explicitly, if $f\in
\HC^{i,j}_G(A)$, then
\[
  \gr(f(x_1,\dots,x_{i})) = \lambda^{i+j-1}\gr(x_1)\cdots\gr(x_{i}).
\]
The differential has the property that
\[\delta\co \HC^{r,d}_G(A)\to \HC^{r+1,d-1}_G(A).\]

Proposition~\ref{prop:deform-assoc-alg} has the following $(G,\lambda)$-graded analogue:

\begin{proposition}
  \label{prop:deform-assoc-alg-G}
  Let $A$ be a $(G,\lambda)$-graded associatve algebra and let $\Alg$
  be a $(G,\lambda)$-graded $A_{n-1}$ deformation of $A$.  Then, the
  bigradings of the obstruction classes are given by
  $\obstr_n\in\HC^{n+1,-2}_G(A)$ and $\fobstr_n\in \HC^{n,-1}_G(A)$,
  and an operation $\mu_n$ defining a $(G,\lambda)$-graded
  $A_n$-deformation lies in $\HC^{n,-1}_G(A)$.  Moreover, $\obstr_n$
  is the obstruction to extending the $(G,\lambda)$-graded $A_{n-1}$
  deformation to a $(G,\lambda)$-graded $A_n$ deformation. If $\Alg$
  and $\Alg'$ are two $(G,\lambda)$-graded $A_n$ deformations of $A$,
  and $f$ is a $(G,\lambda)$-graded homomorphism of the underlying
  $A_{n-1}$ deformations, then $\fobstr_n$ is the obstruction to
  extending $f$ to a $(G,\lambda)$-graded $A_{n}$ homomorphism.
\end{proposition}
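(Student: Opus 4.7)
The plan is to carry over the arguments of Proposition~\ref{prop:deform-assoc-alg} verbatim, tracking how the grading behaves at each step. The key point is that the ingredients used there --- the $\star$-product, the composition $\mu\circ F_n$ appearing in $\fobstr_n$, and the Hochschild differential $\delta$ --- all restrict to well-defined operations between bigraded pieces of the subcomplex $\HC^{*,*}_G(A)$. Once this is established, each identity from the proof of Proposition~\ref{prop:deform-assoc-alg} splits into identities living in a single bidegree, and the obstruction-theoretic conclusions transfer directly.

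First, I will verify the claimed bigradings on the operations themselves. If $\mu_n$ is an $\Ainf$-operation homogeneous of degree $\lambda^{n-2}$, then substituting into the defining relation $\gr_A(f(x_1,\dots,x_i)) = \lambda^{i+j-1}\gr(x_1)\cdots\gr(x_i)$ for $f \in \HC^{i,j}_G(A)$ yields immediately $\mu_n \in \HC^{n,-1}_G(A)$. For a grading-preserving $\Ainf$-homomorphism, the component $f_n$ has degree $\lambda^{n-1}$, placing it in $\HC^{n,0}_G(A)$.

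Next, I will show that the $\star$-product restricts to a map
\[
  \HC^{r,d}_G(A) \otimes \HC^{r',d'}_G(A) \longrightarrow \HC^{r+r'-1, d+d'}_G(A),
\]
which is a direct substitution from the definitions, using that $\lambda$ is central in $G$. Since $\delta f = \mu_2 \star f + f \star \mu_2$ with $\mu_2 \in \HC^{2,-1}_G(A)$, this implies $\delta\co \HC^{r,d}_G(A) \to \HC^{r+1,d-1}_G(A)$, as stated in the setup. From the formula for $\obstr_n$, each summand $\mu_i\star\mu_j$ lies in $\HC^{i+j-1,-2}_G(A) = \HC^{n+1,-2}_G(A)$, so $\obstr_n \in \HC^{n+1,-2}_G(A)$. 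A parallel analysis of $\fobstr_n$, combining the contribution of $f_i\star\mu_j$ with that of $\mu_{\geq 2}\circ \dtilde{f_{\leq n-2}}$ (both of which produce grading $\lambda^{n-2}$ on $n$ inputs), gives $\fobstr_n \in \HC^{n,-1}_G(A)$.

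With these bigradings in hand, the assertions of the proposition follow from Proposition~\ref{prop:deform-assoc-alg}: the equations $\delta\mu_n=\obstr_n$, $\delta\obstr_n=0$, $\delta f_{n-1}=\fobstr_n$, and $\delta\fobstr_n=0$ all live in a single bidegree of $\HC^{*,*}_G(A)$, so the inductive construction of obstructions and of homomorphism extensions can be performed in a grading-preserving fashion. The only point requiring genuine care is that the assumption that $\lambda$ has infinite order is needed to guarantee that the bigrading on $\HC^{*}_G(A)$ provides an honest direct sum decomposition, so that cocycle and coboundary equations decompose bidegree-by-bidegree and homogeneous cochains can be chosen at each inductive stage. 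Beyond this bookkeeping I expect no new algebraic obstacle.
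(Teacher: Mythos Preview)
Your proposal is correct and takes essentially the same approach as the paper: the paper's proof is the single sentence ``This is a straightforward adaptation of the proof of Proposition~\ref{prop:deform-assoc-alg},'' and you have simply spelled out that adaptation, verifying the bigradings of $\mu_n$, $f_n$, the $\star$-product, $\obstr_n$, and $\fobstr_n$ explicitly.
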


\begin{proof}
  This is a straightforward adaptation of the proof of Proposition~\ref{prop:deform-assoc-alg}.
\end{proof}

\begin{corollary}\label{cor:deform-assoc-alg-G}
  Let $A$ be a $(G,\lambda)$-graded associative algebra and $\Alg$ a
  $(G,\lambda)$-graded $A_{n-1}$ deformation of $A$. If
  $\HH^{m+1,-2}_{G}(A)=0$ for all $m\geq n$ then $\Alg$ extends to a
  $(G,\lambda)$-graded $\Ainf$-algebra structure on $A$. If
  $\HH^{m,-1}_{G}(A)=0$ for all $m\geq n$ then any two
  $(G,\lambda)$-graded $\Ainf$-algebra structures on $A$ extending
  $\Alg$ are $\Ainf$-isomorphic.
\end{corollary}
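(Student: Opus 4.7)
The plan is to follow the proof of the ungraded Corollary~\ref{cor:deform-assoc-alg} essentially verbatim, replacing each invocation of Proposition~\ref{prop:deform-assoc-alg} with its bigraded counterpart, Proposition~\ref{prop:deform-assoc-alg-G}. Two things need to be tracked alongside the usual argument: that each newly chosen $\mu_k$ or $f_k$ lands in the correct bigraded piece of $\HC^{*,*}_G(A)$, and that the indexing of the vanishing hypotheses lines up with the indexing of the obstructions.

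For existence, I would induct on $k \ge n$. Having built a $(G,\lambda)$-graded $A_k$-deformation extending $\Alg$, Proposition~\ref{prop:deform-assoc-alg-G} tells us that the obstruction to producing a graded $\mu_{k+1} \in \HC^{k+1,-1}_G(A)$ is a cocycle $\obstr_{k+1}\in\HC^{k+2,-2}_G(A)$. The corollary's hypothesis, applied with $m = k+1 \ge n$, yields $\HH^{k+2,-2}_G(A) = 0$, so a suitable graded primitive $\mu_{k+1}$ exists and extends the deformation. Iterating over $k$ yields the desired $(G,\lambda)$-graded $\Ainf$-algebra.

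For uniqueness, let $\Alg$ and $\Alg'$ be two $(G,\lambda)$-graded $\Ainf$-extensions. I would start from the partial homomorphism $f_1 = \Id$, $f_i = 0$ for $2 \le i \le n-1$; since $\Alg$ and $\Alg'$ agree as $A_n$-deformations, this automatically satisfies the $\Ainf$-homomorphism relations with at most $n$ inputs and respects the grading. Given an $A_k$-homomorphism, the obstruction $\fobstr_{k+1}$ to extending by $f_k \in \HC^{k,0}_G(A)$ lies in $\HC^{k+1,-1}_G(A)$ by Proposition~\ref{prop:deform-assoc-alg-G}; specializing the hypothesis $\HH^{m,-1}_G(A) = 0$ (with $m = k+1 \ge n$) gives the required vanishing. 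Iterating produces an $\Ainf$-homomorphism $f\co \Alg \to \Alg'$ with $f_1 = \Id$, and this is an $\Ainf$-isomorphism by Lemma~\ref{lem:bij-hom-is-iso} (applied to $\Alg$ and $\Alg'$ regarded as weighted $\Ainf$-algebras with trivial curvature and zero higher-weight operations).

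No step here is seriously difficult; the one place to be careful is the bookkeeping of bidegrees, i.e., verifying that the formulas for $\obstr_{k+1}$ and $\fobstr_{k+1}$ in fact land in $\HC^{k+2,-2}_G$ and $\HC^{k+1,-1}_G$ respectively. This reduces to the fact that $\lambda$ has infinite order (so the bigraded decomposition is honest) together with the graded version of the identities \eqref{eq:AinfHochschild} and \eqref{eq:LeibnizStar}, both of which are already encoded in Proposition~\ref{prop:deform-assoc-alg-G}.
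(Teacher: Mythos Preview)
Your proposal is correct and takes essentially the same approach as the paper: the paper's proof is the single sentence ``This follows from Proposition~\ref{prop:deform-assoc-alg-G} exactly as Corollary~\ref{cor:deform-assoc-alg} follows from Proposition~\ref{prop:deform-assoc-alg},'' and you have spelled out precisely that argument with the bigrading bookkeeping made explicit. Your index-matching between the vanishing hypotheses and the obstruction classes is accurate.
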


\begin{proof}
  This follows from Proposition~\ref{prop:deform-assoc-alg-G} exactly
  as Corollary~\ref{cor:deform-assoc-alg} follows from
  Proposition~\ref{prop:deform-assoc-alg}.
\end{proof}

\begin{remark}
  The discussion in our previous paper~\cite[Section 2.5.3.]{LOT2}
  gives a grading on the space of morphisms from $\Barop(A)$ to $A$ by
  $G\times_{G\times G}G$, which is the set of conjugacy classes in
  $G$. The definition of $\HC^{*,*}_G(A)$ then restricts to morphisms
  lying over the conjugacy class $\{\lambda^n\}$ for $n\in\ZZ$.
\end{remark}

\subsection{The cobar complex of the torus algebra}
\label{subsec:CobarComplex}
Let $\Ground$ be a finite direct sum of copies of $\Field$,
$A$ be an augmented associative $\Ground$-algebra
and $A_+$ be the augmentation ideal. Note that the dual space
$\Hom(A_+,\Field)$ is a $\Ground$-bimodule.

\begin{definition}
  The \emph{reduced cobar algebra $\Cobarop(A)$} is the dual chain complex to
  $\Barop(A)$, that is, the direct sum over $n$ of the dual of
  $(A_+)^{\otimes n}$. The multiplication on $\Cobarop(A)$ is
  the transpose of the comultiplication $\Delta$ on $\Barop(A)$, which
  in turn is defined by
  \[
    \Delta(a_1\otimes\cdots\otimes
    a_n)=\sum_{i=0}^n(a_1\otimes\cdots\otimes
    a_i)\otimes(a_{i+1}\otimes\cdots\otimes a_n)\in \Barop(A)\otimes\Barop(A).
  \]
  The differential on $\Cobarop(A)$ is the transpose of the map
  \[
    \sum_{m,n\geq 0}\Id^{\otimes m}\otimes \mu_2\otimes\Id^{\otimes n}
    \co\Barop(A)\to\Barop(A).
  \]
\end{definition}

Under modest finiteness assumptions, we can describe $\Cobarop(A)$ more
explicitly. The easiest case is if $A$ is finite-dimensional over
$\Field$, in which case $\Cobarop(A)$ is the tensor algebra on
$A_+^*$, with differential given by
\[
  \dcob(a_1^*\otimes\dots\otimes a_n^*)=\sum_{i=1}^{n} a_1^*\otimes\dots\otimes \mu_2^*(a_i^*)\otimes\dots\otimes a_n^*,
\]
where $\mu_2^*$ is the dual of the multiplication $\mu_2$ on
$\Alg$. (Here and below, undecorated tensor products are over $\Ground$.)
We use the convention that 
\[
  (a_n\otimes \dots \otimes a_1)^*=a_1^*\otimes\dots\otimes a_n^*.
\]
For example, when $A=\AsUnDefAlg$, we have that
$\mu_2^*(\rho_{12}^*)=\rho_2^*\otimes \rho_1^*$.  

More generally, suppose that $A$  is filtered by subspaces $F_0A\subset
F_1A\subset\cdots\subset A$ with $A=\bigcup_i F_iA$, and so that each
$F_iA$ is finite-dimensional, $\Ground\subset F_0A$, and
$\mu_2(F_iA,F_jA)\subset F_{i+j}A$. There is an induced filtration on
$\Barop(A)$, which we also denote $F_i$, and
\begin{align*}
  \Cobarop(A)_n&=(A_+^{\otimes n})^*\cong \varprojlim (F_i(A_+^{\otimes n}))^*\\
  \Cobarop(A)&=\bigoplus_{n=1}^{\infty} \Cobarop(A)_n.
\end{align*}

Explicitly, given an $\Field$-basis $\{e_i\}$ for $A_+$, let $A_+^\dagger$ be the
subspace of $A_+^*$ spanned by the dual basis elements $e_i^*$. Then
$\Cobarop(A)_n$ is the completion of $(A_+^\dagger)^{\otimes n}$
with respect to the (descending) filtration dual to $F_i$.

Turning to the example of interest to us, the algebra $\AsUnDefAlg$ from
Section~\ref{sec:AsUnDefAlg} has a filtration by the winding number
grading, $\wingr$, so that each filtration level is
finite-dimensional. Hence, the cobar algebra $\Cobarop(\AsUnDefAlg)$
is the completion of the tensor algebra on $(\AsUnDefAlg)_+^\dagger$ with
respect to this filtration.

The algebra $\AsUnDefAlg$
also has a grading by $\Gamma=\smallGroup\times \ZZ$, with grading given by
$\gamma=\gr\times\wingr$, as defined in Section~\ref{sec:grading}.
This induces a $\Gamma$-grading on $\Cobarop(\AsUnDefAlg)$ by
the formula
\begin{equation}
  \label{eq:GradeCobar}
  \gammaCobar(a_1^*\otimes\dots\otimes a_n^*)=\lambda^{-n}\gamma(a_1)^{-1}\cdots \gamma(a_n)^{-1}.
\end{equation}
(Since the cobar algebra is the completion of the tensor algebra on
$(\AsUnDefAlg)_+^\dagger$, not every element is a finite linear combination of
homogeneous elements.)

There is an automorphism $\alpha\co \Gamma\to\Gamma$ defined by
\begin{equation}
  \label{eq:DefAlpha}
  \alpha((j;a,b)\times i)=(j+2i;-a,-b)\times (-i).
\end{equation}

The following relationship between $\AsUnDefAlg$ and its cobar algebra
can be seen as a kind of Koszul self-duality.
\begin{lemma}\label{lem:Koszul}
  There is a quasi-isomorphism of $\Gamma$-graded differential algebras
  \[
    \phi\co (\Cobarop(\AsUnDefAlg),\gammaCobar)\to (\AsUnDefAlg,\alpha\circ\gamma)
  \]
  specified by $\phi(\iota_0)=\iota_1$, $\phi(\iota_1)=\iota_0$,
  $\phi(\rho_i^*)=[\rho_i]$ for $i=1,\dots,4$, and $\phi(a^*)=0$ if
  $|a|>1$.
\end{lemma}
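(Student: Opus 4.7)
The plan is to prove this in two steps: first, verify that $\phi$ is a well-defined chain map of $\Gamma$-graded differential algebras, and second, show it induces an isomorphism on cohomology.

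For the first step, since $\Cobarop(\AsUnDefAlg)$ is (a completion of) the tensor algebra on $(\AsUnDefAlg)_+^\dagger$, the map $\phi$ extends uniquely from its values on length-one generators $\rho_i^*$ and the idempotents to a multiplicative map. To see $\phi\circ\dcob=0$, it suffices by the Leibniz rule to check vanishing on length-one generators $a^*$: for $|a|=1$ this is trivial ($\dcob(a^*)=0$), while for $|a|=n\geq 2$ with $a=\rho_{i,\ldots,i+n-1}$, each summand of $\dcob(a^*)=\sum_{k=1}^{n-1}\rho_{i+k,\ldots,i+n-1}^*\otimes\rho_{i,\ldots,i+k-1}^*$ is killed by $\phi$: either some factor has length~$\geq 2$ and is sent to zero by definition, or $n=2$ yielding the single term $\rho_{i+1}^*\otimes\rho_i^*\mapsto\rho_{i+1}\rho_i=0$ from the monomial relations. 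Grading compatibility is verified by direct computation on length-one generators using the explicit values of $\gr$ and $\wingr$ from Section~\ref{sec:grading} together with the formula for $\alpha$; checking $\alpha(\gamma(\rho_i))=\gammaCobar(\rho_i^*)=\lambda^{-1}\gamma(\rho_i)^{-1}$ case by case, the identity then propagates to all of $\Cobarop$ because $\alpha$ is a group homomorphism and both gradings respect the algebra structure.

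For the quasi-isomorphism, the key observation is that $\AsUnDefAlg$ is a quadratic monomial algebra: it is $T_\Ground(V)/(R)$ where $V=\bigoplus_{i\in\ZZ/4}\Field\rho_i$ and $R\subset V\otimes_\Ground V$ is spanned by the four cyclic monomials $\rho_{i+1}\rho_i$ for $i\in\ZZ/4$. Such algebras are Koszul, so by standard theory $H^*(\Cobarop(\AsUnDefAlg))\cong (\AsUnDefAlg)^{!}=T_\Ground(V^*)/(R^\perp)$. A direct computation shows that $R^\perp\subset V^*\otimes_\Ground V^*$ is spanned by the four duals $\rho_i^*\otimes\rho_{i+1}^*$ of the surviving ``forward'' length-two chords; iterating, the degree-$n$ part of $(\AsUnDefAlg)^{!}$ has as basis the forward tensors $\rho_i^*\otimes\rho_{i+1}^*\otimes\cdots\otimes\rho_{i+n-1}^*$ for $i\in\ZZ/4$. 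These are automatically cocycles in $\Cobarop$ (being products of length-one generators, on which $\dcob$ vanishes), and under $\phi$ each maps to $\rho_i\rho_{i+1}\cdots\rho_{i+n-1}=\rho_{i,\ldots,i+n-1}\in\AsUnDefAlg$, yielding a bijection with the length-$n$ basic chords; combined with $\iota_a\mapsto\iota_{1-a}$, this realizes the claimed quasi-isomorphism.

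The main technical obstacle is establishing the Koszulness of $\AsUnDefAlg$ or, equivalently, computing $H^*(\Cobarop(\AsUnDefAlg))$ directly. One can either appeal to general results on Koszulness for quadratic monomial algebras with a distributive family of relations, or give a self-contained calculation by constructing the minimal projective resolution of $\Ground$ over $\AsUnDefAlg$: because the relations are cyclic monomials of length two, the $n$-th term $P_n$ is a direct sum of four indecomposable projectives, one for each length-$n$ path $\rho_{i,\ldots,i+n-1}$, with boundary maps tracking the cyclic structure. This yields $\dim\Ext^n_{\AsUnDefAlg}(\Ground,\Ground)=4$ for every $n\geq 1$, exactly matching the number of length-$n$ basic chords, and it is clear from the construction that the generators of $\Ext^n$ correspond under the standard identification of cobar with $\Ext$ to the forward cobar tensors used above.
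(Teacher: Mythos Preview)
Your approach is correct and takes a genuinely different route from the paper. The paper constructs an explicit chain homotopy: it defines a section $j\co \AsUnDefAlg \to \Cobarop(\AsUnDefAlg)$ by $j(\rho_{i,\ldots,\ell}) = \rho_i^* \otimes \cdots \otimes \rho_\ell^*$, checks $\phi \circ j = \Id$, and then writes down a contracting homotopy $H$ (essentially: find the first position where the tensor fails to be a ``forward'' tensor of length-one chords, and absorb the preceding length-one factor into it) satisfying $\dcob H + H \dcob = \Id + j\phi$. Your Koszul-duality argument is more conceptual and avoids this bookkeeping; on the other hand, the paper's explicit homotopy is self-contained and adapts directly to the $\Ainf$ setting of Lemma~\ref{lem:Koszul2} (the cobar of $\UnDefAlg$), where an off-the-shelf Koszulness theorem is not available.

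One correction: your description of $R^\perp$ is inconsistent with your own conclusion. If $R^\perp$ were spanned by the forward tensors $\rho_i^* \otimes \rho_{i+1}^*$, then these would be \emph{relations} in $A^! = T(V^*)/(R^\perp)$, hence zero there---not a basis. In fact, with the paper's convention $(a \otimes b)^* = b^* \otimes a^*$, the coboundaries in $\Cobarop_2$ are $\dcob(\rho_{i,i+1}^*) = \rho_{i+1}^* \otimes \rho_i^*$; it is the \emph{backward} tensors that vanish in cohomology, and the forward tensors $\rho_i^* \otimes \rho_{i+1}^*$ survive as a basis for $H^2$. Your final identification of the cohomology basis is correct; only the labeling of $R^\perp$ is off. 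You should also remark that the completion in the definition of $\Cobarop$ causes no trouble: the differential preserves total length $\sum_i |a_i|$, each fixed-length subcomplex is finite-dimensional, and Koszulness gives $H^n(\Cobarop^{(L)}) = 0$ for $n \neq L$, so the product over~$L$ collapses.
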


(When thinking of $\AsUnDefAlg$ as a bimodule quasi-isomorphic to
$\Cobarop(\AsUnDefAlg)$, we will write elements of $\AsUnDefAlg$
inside brackets.)

\begin{proof}
  By construction, $\phi$ is a ring homomorphism.
  Direct computation shows that if $\rho_i\cdots\rho_\ell\neq 0$ then
  \[
    \lambda^{-\ell+i-1}\cdot \gamma(\rho_i)^{-1}\cdots \gamma(\rho_\ell)^{-1}
    = \alpha(\gamma(\rho_i)\cdots\gamma(\rho_\ell)).
  \]
  It follows that $\phi$ respects the grading.

  Consider the homomorphism $j\co \AsUnDefAlg\to \Cobarop(\AsUnDefAlg)$
  specified by $j(\iota_0)=\iota_1$, $j(\iota_1)=\iota_0$,
  $j(\rho_i)=\rho_i^*$ for $i=1,\dots,4$.
  Clearly,
  \begin{equation}
    \label{eq:phij}
    \phi\circ j=\Id.
  \end{equation}

  The image of $j$ is spanned by elements of the form
  $\rho_i^* \otimes \rho_{i+1}^*\otimes\dots\otimes \rho_\ell^*$.
  Any element of $\Cobarop(\AsUnDefAlg)$ can be written as a series in elements in the image of $j$
  and further elements of the form
  \begin{equation}
    \label{eq:Generic}
    \rho_i^*\otimes \rho_{i+1}^*\otimes\dots\otimes \rho_\ell^*
    \otimes a_1^*\otimes\dots\otimes a_m^*,
  \end{equation}
  where the $a_i$ are basic algebra elements and $a_1$ is the first
  element in the sequence with length greater than one or for which
  $a_1=\rho_{\ell-1}$. Given such an element, let $k=0$ if the element
  is in the image of $j$ and $k=\ell-i+1$ otherwise, and consider the homotopy operator
  \begin{equation}
    \label{eq:DefH}
  H(\overbrace{\rho_i^*\otimes \rho_{i+1}^*\otimes\dots\otimes \rho_\ell^*}^k
  \otimes\, a_1^*\otimes\dots\otimes a_m^*)=
  \begin{cases}
    \rho_i^*\otimes\rho_{i+1}^*\otimes\dots\otimes \rho_{\ell-1}^*
  \otimes (a_1\cdot \rho_\ell)^*\otimes a_2^*\otimes\dots\otimes a_m^* &
  \text{if $k>0$} \\
  0 &\text{if $k=0$.}
  \end{cases}
  \end{equation}
  Since the total winding number of the output of $H$ and the input of
  $H$ are the same, $H$ extends (continuously) to all of
  $\Cobarop(\AsUnDefAlg)$.
  
  We claim that the following formula holds:
  \begin{equation}
    \label{eq:HomotopyFormula}
    \dcob\circ H + H\circ \dcob = \Id + j\circ \phi.
  \end{equation}
  It suffices to verify Equation~\eqref{eq:HomotopyFormula} for each
  element
  $\xi=\rho_i^*\otimes \rho_{i+1}^*\otimes\dots\otimes \rho_\ell^*\otimes
  a_1^*\otimes\dots\otimes a_m^*$. There are three cases:
  \begin{itemize}
  \item If $\xi\in \Image(j)$, $H(\xi)=0$, $\dcob(\xi)=0$, and
    $j\circ\phi (\xi)=\xi$, so Equation~\eqref{eq:HomotopyFormula} is
    immediate.
  \item  Suppose that $k>0$ and $a_1\cdot \rho_\ell=0$; or alternatively,
    that $k=0$.  Then, $H(\xi)=0$, $\phi(\xi)=0$ and the only non-zero
    term in $H(\dcob(\xi))$ is $\xi$ itself, corresponding to the
    factorization $a_1 = \rho_{\ell+1} \cdot a_1'$ for some $a_1'$.
  \item Suppose that $k>0$ and $a_1\cdot \rho_\ell\neq 0$.  In that case,
    $\xi$ is the term in $\dcob\circ H$ corresponding to the
    factorization of $a_1\cdot \rho_\ell$ as $a_1$ times $\rho_\ell$.  All
    other terms in $\dcob \circ H$ cancel corresponding terms in
    $H\circ \dcob$.
  \end{itemize}
  
  Together, Equations~\eqref{eq:phij} and~\eqref{eq:HomotopyFormula}
  ensure that $\phi$ is a quasi-isomorphism, as claimed.
\end{proof}

The cobar algebra is of interest to us because of its relationship
with the Hochschild complex.  Suppose that $A$ is an augmented
associative $\Ground$-algebra (such as $\AsUnDefAlg$). Assume that $A$ is
endowed with an increasing filtration so that each $F_iA$
finite-dimensional.
The tensor product $A\otimes_{\Ground\otimes\Ground} \Cobarop(A)$ inherits a
decreasing filtration from the filtration on $\Cobarop(A)$ (which does
not use the filtration on the $A$-factor). Let
$A\hotimes_{\Ground\otimes\Ground} \Cobarop(A)$
denote the completion with respect to this filtration on each $A\otimes_{\Ground\otimes\Ground} \Cobarop(A)_n$.
Equivalently, $A\hotimes_{\Ground\otimes\Ground} \Cobarop(A)$ is
the direct sum over~$n$ of the completion of $A\otimes_{\Ground\otimes\Ground}(A_+^\dagger)^{\otimes n}$.
Given a (potentially
infinite) basis $\{e_i\}$ for $A_+$, define a differential on
$A\hotimes_{\Ground\otimes\Ground}
\Cobarop(A)$ by
\begin{equation}\label{eq:d-HC-complete}
  \partial(b\otimes \xi)=b\otimes \bigl(\delta^{\Cobarop}(\xi)\bigr)+\sum_i e_ib\otimes \bigl(\xi\otimes e_i^*\bigr)+be_i\otimes \bigl(e_i^*\otimes\xi\bigr)
\end{equation}
and then extending linearly to the completion (which contains infinite
sums of elements of the form $b\otimes[\xi]$). The assumption that
each $F_iA$ is finite-dimensional implies this gives a well-defined
map. (The last two terms come from partly dualizing the operation
$\mu_2\co A\otimes A\to A$ to maps $A\to A\otimes A^*$ and
$A\to A^*\otimes A$.)

\begin{lemma}\label{lem:CobarHC-noU}
  If $A$ is a filtered algebra, and each $F_iA$ is finite-dimensional,
  there is an isomorphism of chain complexes
\begin{equation}
  \label{eq:CobarHC-noU}
  A\hotimes_{\Ground\otimes\Ground} \Cobarop(A)\cong \HC^*(A)
\end{equation}
where on the left side we use the differential from
Equation~\eqref{eq:d-HC-complete}.
\end{lemma}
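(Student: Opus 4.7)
\medskip

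\noindent\textbf{Proof plan for Lemma~\ref{lem:CobarHC-noU}.}

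The plan is to build the isomorphism explicitly in each simplicial degree and then check that the differentials match. Fix a basis $\{e_i\}$ of $A_+$ that is subordinate to the filtration, i.e.\ each $F_jA_+$ is spanned by finitely many $e_i$; then $\{e_i\otimes\Ground\}$ gives a $\Ground$-bimodule basis. Using the convention, noted just before the lemma, that $(a_n\otimes\cdots\otimes a_1)^*=a_1^*\otimes\cdots\otimes a_n^*$, define
\[
  \Phi\co A\hotimes_{\Ground\otimes\Ground}\Cobarop(A)\longrightarrow \HC^*(A),
  \qquad
  \Phi(b\otimes(\alpha_1\otimes\cdots\otimes\alpha_n))(x_1,\dots,x_n)=\alpha_1(x_n)\alpha_2(x_{n-1})\cdots\alpha_n(x_1)\cdot b,
\]
and extend to the completion by linearity. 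Evaluated at a fixed tuple $(x_1,\dots,x_n)$, only finitely many basis-indexed terms in an element of the completion contribute (because the $x_i$ lie in some $F_jA_+$), so $\Phi$ lands in $\Hom_{\Ground\otimes\Ground}(A_+^{\otimes n},A)$ and is $\Ground$-bilinear by inspection.

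Next I will construct an inverse. Given $\phi\in\HC^n(A)$, set
\[
  \Psi(\phi)=\sum_{(i_1,\dots,i_n)} \phi(e_{i_n},\dots,e_{i_1})\otimes e_{i_1}^*\otimes\cdots\otimes e_{i_n}^*.
\]
For each fixed filtration level $F_j A_+^{\otimes n}$, only finitely many indices $(i_1,\dots,i_n)$ are needed, so $\Psi(\phi)$ is a well-defined element of the completion of $A\otimes_{\Ground\otimes\Ground}(A_+^\dagger)^{\otimes n}$. A direct check on basis elements, using $\sum_i e_i^*(x)e_i=x$ for $x$ in any filtration level (a finite sum), shows $\Phi\circ\Psi=\mathrm{id}$ and $\Psi\circ\Phi=\mathrm{id}$, so $\Phi$ is a bijection in each simplicial degree.

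Finally I will verify that $\Phi$ is a chain map. The Hochschild differential of $\phi\in\HC^n(A)$ has three types of terms: left multiplication $x_1\cdot\phi(x_2,\dots,x_{n+1})$, internal multiplication $\phi(x_1,\dots,x_ix_{i+1},\dots,x_{n+1})$, and right multiplication $\phi(x_1,\dots,x_n)\cdot x_{n+1}$. These match, respectively, the three terms of~\eqref{eq:d-HC-complete}: the sum $\sum_i be_i\otimes(e_i^*\otimes\xi)$ produces left multiplication (the $e_i^*$ paired against the new last slot $x_{n+1}$ collapses $\sum e_i^*(x_{n+1})be_i$ to $bx_{n+1}$, which acts on the left of the cochain value because the $b$-factor sits on the right in the definition of $\Phi$); the sum $\sum_i e_ib\otimes(\xi\otimes e_i^*)$ similarly yields right multiplication at the end via the reverse-order pairing; and $b\otimes\dcob(\xi)$ produces the internal multiplications, as the verification $\Phi(b\otimes\dcob(\alpha))(x_1,x_2)=\alpha(x_1x_2)\cdot b$ already illustrates in the $n=1$ case. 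Carrying this bookkeeping out in general is the main obstacle: one must track the reversal convention on the cobar side, keep the left/right bimodule actions straight, and confirm that the (completed) sums appearing on the cobar side converge to give the pointwise Hochschild differential. Once this is checked, $\Phi$ is the required isomorphism of chain complexes.
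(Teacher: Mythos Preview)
Your proposal is correct and follows the same approach as the paper, which simply says ``This is straightforward from the definitions.'' You have spelled out what the paper leaves implicit: the natural pairing $A\hotimes\Cobarop(A)_n\to\Hom(A_+^{\otimes n},A)$ and the verification that the three terms of~\eqref{eq:d-HC-complete} match the three kinds of terms in the Hochschild coboundary.

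One small point of bookkeeping to clean up: in your final paragraph you have the left/right identifications swapped. With your conventions, the term $\sum_i e_ib\otimes(\xi\otimes e_i^*)$ pairs the new $e_i^*$ against $x_1$ (the first slot, since your $\Phi$ reverses order), giving $x_1\cdot\phi(x_2,\dots,x_{n+1})$, i.e.\ \emph{left} multiplication; symmetrically, $\sum_i be_i\otimes(e_i^*\otimes\xi)$ pairs against $x_{n+1}$ and gives $\phi(x_1,\dots,x_n)\cdot x_{n+1}$, i.e.\ \emph{right} multiplication. Your sentence labels $bx_{n+1}$ as ``acting on the left,'' which is backwards. Since both terms are present on both sides this does not affect the conclusion, but the match-up of individual terms should be stated correctly.
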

\begin{proof}
  This is straightforward from the definitions.
\end{proof}

In the application to $\AsUnDefAlg$, we actually want to extend
scalars from $\FF_2$ to $\FF_2[U]$. So, given an algebra $A$ as above,
let $A[U]=A\otimes_\Field \Field[U]$. Consider
$A[U]\otimes_{\Ground\otimes\Ground} \Cobarop(A)$; note that the
$U$-variable appears only on the $A[U]$-factor.  The increasing
filtration on $A$ induces a decreasing filtration on
$A[U]\hotimes_{\Ground\otimes\Ground} \Cobarop(A)_n$ and we again have
a completed tensor product
$A[U]\hotimes_{\Ground\otimes\Ground} \Cobarop(A)$ and, given a basis
$\{e_i\}$ for $A_+$, a differential induced by Equation~\eqref{eq:d-HC-complete}.
We have the following analogue of Lemma~\ref{lem:CobarHC-noU},
reformulating the Hochschild cochain complex of $A[U]$ over $\Ground[U]$:
\begin{lemma}\label{lem:CobarHC}
  If $A$ is a filtered algebra, and each $F_iA$ is finite-dimensional,
  there is an isomorphism of chain complexes
  \begin{equation}
    \label{eq:CobarHC}
    A[U]\hotimes_{\Ground\otimes\Ground} \Cobarop(A)\cong \HC^*(A[U]).
  \end{equation}
  On the left side of Equation~\eqref{eq:CobarHC}, the differential is
  given by Equation~\eqref{eq:d-HC-complete}. On the right side of
  Equation~\eqref{eq:CobarHC}, $U$ is viewed as an element of the ground
  ring.
\end{lemma}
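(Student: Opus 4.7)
The plan is to prove Lemma~\ref{lem:CobarHC} by reducing it to Lemma~\ref{lem:CobarHC-noU} via scalar extension from $\Field$ to $\Field[U]$, combined with the observation that $U$ appears only as a central scalar that passes through the Hochschild cochain identifications without interacting with any structure.

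First, I would unravel $\HC^*(A[U])$ using the ground ring $\Ground[U]$. The analogue of Equation~\eqref{eq:HC-Hom-over-ground} gives
\[
\HC^n(A[U]) = \Hom_{\Ground[U]\otimes\Ground[U]}\bigl((A[U])_+^{\otimes_{\Ground[U]} n}, A[U]\bigr).
\]
Since $A[U] = A \otimes_\Field \Field[U]$ and $(A[U])_+ = A_+\otimes_\Field\Field[U]$, a $\Ground[U]\otimes\Ground[U]$-linear map is uniquely determined by its restriction to $A_+^{\otimes_\Ground n}\subset (A[U])_+^{\otimes_{\Ground[U]} n}$, and that restriction may be any $\Ground\otimes\Ground$-linear map into $A[U]$. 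Hence $\HC^n(A[U])\cong \Hom_{\Ground\otimes\Ground}(A_+^{\otimes n}, A[U])$.

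Next, I would identify this $\Hom$ group with $A[U]\hotimes_{\Ground\otimes\Ground}\Cobarop(A)_n$. Fix an $\Field$-basis $\{e_i\}$ of $A_+$ compatible with the filtration $F_\bullet A$. A $\Ground\otimes\Ground$-linear map $f\co A_+^{\otimes n}\to A[U]$ is determined by the values $f(e_{i_1}\otimes\cdots\otimes e_{i_n})\in A[U]$, and thus corresponds to the formal sum
\[
\sum_{i_1,\ldots,i_n} f(e_{i_1}\otimes\cdots\otimes e_{i_n}) \otimes (e_{i_n}^*\otimes\cdots\otimes e_{i_1}^*).
\]
The finite-dimensionality of each $F_iA$ is precisely the condition needed so that, at each filtration level on the cobar side, only finitely many terms contribute; this makes the series converge in the completion $A[U]\hotimes_{\Ground\otimes\Ground}\Cobarop(A)_n$ and gives an isomorphism of $\Ground[U]$-modules.

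Finally, I would check that the differential of Equation~\eqref{eq:d-HC-complete} matches the Hochschild coboundary $\delta$ under this dictionary. The middle terms of $\delta$ (those of the form $f(\ldots,\mu_2(a_i,a_{i+1}),\ldots)$) dualize to the cobar differential $\dcob(\xi)$ applied to the tensor word, while the outer terms $a_1 f(a_2,\ldots)$ and $f(\ldots,a_n)a_{n+1}$ correspond respectively to the two multiplication terms $\sum_i e_ib\otimes(\xi\otimes e_i^*)$ and $\sum_i be_i\otimes(e_i^*\otimes\xi)$, reflecting the partial dualization of $\mu_2$ to $A\to A\otimes A^*$ and $A\to A^*\otimes A$. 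The main obstacle, and really the only subtle point, is bookkeeping: one must apply the convention $(a_n\otimes\cdots\otimes a_1)^* = a_1^*\otimes\cdots\otimes a_n^*$ consistently so that left versus right multiplication lands on the outer versus inner position of the cobar word, and one must check that $U$-linearity is compatible with each step, both of which are routine once the conventions are pinned down.
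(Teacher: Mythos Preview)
Your proposal is correct and is exactly the kind of unwinding that the paper has in mind: the paper's own proof is simply ``Again, this is straightforward from the definitions.'' You have spelled out those details carefully and accurately, including the scalar-extension step and the matching of the differential with Equation~\eqref{eq:d-HC-complete}.
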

\begin{proof}
  Again, this is straightforward from the definitions.
\end{proof}

\begin{example}
  We consider the torus algebra. Since (as proved below) $\mu_3=0$, it
  follows that the
  obstruction class from Proposition~\ref{prop:deform-assoc-alg} satisfies
  $\obstr_4=0$. (See Eq.~\eqref{eq:DefDObstr}.)
  There is a map $\mu_4\colon (\AsUnDefAlg)^{\otimes 4}\to \AsUnDefAlg[U]$
  constructed in Section~\ref{sec:AsUnDefAlg}, whose non-trivial operations are
  \[ \mu_4(\rho_4,\rho_3,\rho_2,\rho_{1}\cdot a)=U a,\qquad 
    \mu_4(b\cdot \rho_4,\rho_3,\rho_2,\rho_{1})=Ub,\]
  and the
  additional operations obtained by cyclically permuting
  $\rho_4,\rho_3,\rho_2,\rho_1$.  The $5$-input $A_\infty$-relation
  (which holds by a very easy special case of
  Theorem~\ref{thm:AinftyAlgebra}) is equivalent to the statement
  that $\delta\mu_4=0$; i.e., $\mu_4\in\HC^*(A)$ is a Hochschild
  cocycle.
  
  Under the isomorphism from Equation~\eqref{eq:CobarHC}, this cocycle
  $\mu_4$ corresponds to the element of
  $A[U]\hotimes_{\Ground\otimes\Ground} \Cobarop(A)$ specified by
  \begin{multline*}
    U\otimes(\rho_1^*\otimes\rho_2^*\otimes\rho_3^*\otimes\rho_4^*)+U\rho_2\otimes(\rho_{12}^*\otimes\rho_2^*\otimes\rho_3^*\otimes\rho_4^*)+
    U\rho_{23}\otimes(\rho_{123}^*\otimes\rho_2^*\otimes\rho_3^*\otimes\rho_4^*)+\cdots\\
    +U\rho_3\otimes(\rho_{1}^*\otimes\rho_2^*\otimes\rho_3^*\otimes\rho_{34}^*)
    +U\rho_{23}\otimes(\rho_{1}^*\otimes\rho_2^*\otimes\rho_3^*\otimes\rho_{234}^*)
    +\cdots+\cdots
  \end{multline*}
  (where the last $\cdots$ corresponds to cyclically permuting the set
  $1,2,3,4$). For this sum to make sense, we need the completed tensor
  product
  $A[U]\hotimes_{\Ground\otimes\Ground}
  \Cobarop(A)$.
\end{example}

We use Lemma~\ref{lem:Koszul} to obtain a useful small model for the Hochschild
complex (compare~\cite{LOTHomPair}), which we describe after introducing some notation.

\begin{definition}
  \label{def:SmallModel}
  The \emph{small model Hochschild complex $C^*$} is
  defined as follows.  As a vector space, $C^*$ is generated by
  $a\otimes [b]$ with $a\in \AsUnDefAlg$ and $[b]\in\AsUnDefAlg$ are
  basic elements
  with the property that $i\cdot a \cdot j=a$ and
  $[j'\cdot b \cdot i']=[b]$, for some idempotents
  $i,j\in\{\iota_0,\iota_1\}$ and complementary idempotents
  $i',j'$. (That is, if $i=\iota_0$ then $i'=\iota_1$.)  We endow $C^*$ with the following
  further structure:
  \begin{itemize}
  \item  a $\ZZ$-grading, the {\em length grading} specified by
    $|a\otimes[b]|=|b|$  (when $b$ is a basic algebra element).
  \item a $G\times \ZZ$-grading,
    specified by 
    \[ \gamma(a\otimes [b])=\lambda\cdot \gamma(a)\cdot \alpha(\gamma(b)).\]
  \item a differential
    \begin{equation}
      \label{eq:DefCdiff}
      \partial (a\otimes [b])=\sum_{i=1}^4 \left(\rho_i\cdot a \otimes [b \cdot \rho_i]+ a\cdot \rho_i \otimes [\rho_i\cdot b]]\right). 
    \end{equation}
  \end{itemize}
  We let $C^*_\Gamma\subset C^*$ be the portion in grading
  $0\times \ZZ\subset G\times \ZZ$, i.e., generated by $a\otimes [b]$
  with the property that
  \[ \gamma(a)\cdot \alpha(\gamma(b))=\lambda^{k-1},\]
  for some integer $k$, called the {\em homological grading} of $a\otimes [b]$. 
  Let $C^{n,k}_\Gamma\subset C^{n,k}_\Gamma$ denote the portion with
  length grading $n$ and homological grading $k$. The differential
  sends $C^{n,k}$ to $C^{n+1,k-1}$.
\end{definition}

For example, $\rho_{2341}\otimes [\rho_{1234}]\in C^{4,-1}_\Gamma$.
(The element $U\iota_1\otimes [\rho_{1234}]$ also lies in this bigrading.)

To see that $C^*$ is a chain complex, note that for any
$i,j\in\{1,\dots,4\}$, at least one of $\rho_i\rho_j$ and
$\rho_j\rho_i$ vanishes.

\begin{proposition}
  \label{prop:SmallerModel}
  The chain complex $C^*_{\Gamma}$ is quasi-isomorphic to 
  the complex $\HC^*_\Gamma(\AsUnDefAlg[U])$; in particular
  $H^{n,k}(C_\Gamma)\cong \HH^{n,k}(\AsUnDefAlg[U])$.
\end{proposition}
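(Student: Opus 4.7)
The plan is to combine Lemma~\ref{lem:CobarHC} with the Koszul self-duality of Lemma~\ref{lem:Koszul}, using the homological perturbation lemma. By Lemma~\ref{lem:CobarHC}, we identify $\HC^*(\AsUnDefAlg[U])$ with the completed tensor product $\AsUnDefAlg[U] \hotimes_{\Ground \otimes \Ground} \Cobarop(\AsUnDefAlg)$. The differential decomposes as $d_0 + P$, where $d_0 = \Id \otimes \dcob$ is the Cobar differential and the perturbation
\[
  P(b \otimes \xi) = \sum_i \bigl(e_i b \otimes (\xi \otimes e_i^*) + b e_i \otimes (e_i^* \otimes \xi)\bigr)
\]
comes from the outer bimodule structure on $\AsUnDefAlg[U]$. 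Lemma~\ref{lem:Koszul} furnishes a DG-algebra quasi-isomorphism $\phi\co \Cobarop(\AsUnDefAlg) \to \AsUnDefAlg$, a section $j$ with $\phi \circ j = \Id$, and a homotopy $H$ satisfying $\dcob H + H \dcob = \Id - j \phi$. From the explicit formula~\eqref{eq:DefH}, I would directly verify the side conditions $H j = 0$ (since $H$ kills $\image(j)$ by construction) and $\phi H = 0$ (since every nonzero output of $H$ contains a tensor factor of length $\geq 2$, on which the ring homomorphism $\phi$ vanishes).

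Tensoring $(j, \phi, H)$ with $\AsUnDefAlg[U]$ gives a strong deformation retract from $(\AsUnDefAlg[U] \hotimes \Cobarop(\AsUnDefAlg), d_0)$ onto $(\AsUnDefAlg[U] \otimes_{\Ground \otimes \Ground} \AsUnDefAlg, 0)$. Turning on the perturbation $P$, the homological perturbation lemma produces a new differential on the small complex,
\[
  \delta = \sum_{n \geq 0} (\Id \otimes \phi) \circ P \circ \bigl((\Id \otimes H) \circ P\bigr)^n \circ (\Id \otimes j).
\]
The crucial observation is that all $n \geq 1$ terms vanish: a single application of $(\Id \otimes H)$ forces the Cobar factor to contain an element of length $\geq 2$, and subsequent applications of $P$ only append further factors, so the final $(\Id \otimes \phi)$ gives zero. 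For the surviving $n = 0$ term, a direct computation using $\phi(e_i^*) = [\rho_k]$ when $e_i = \rho_k$ for some $k \in \{1,2,3,4\}$ and zero otherwise yields
\[
  \delta(a \otimes [b]) = \sum_{k=1}^4 \bigl(\rho_k a \otimes [b \rho_k] + a \rho_k \otimes [\rho_k b]\bigr),
\]
which is precisely the differential of Definition~\ref{def:SmallModel}, lifted to $\AsUnDefAlg[U] \otimes \AsUnDefAlg$.

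To conclude, we restrict to $\Gamma$-graded subcomplexes on both sides. Because $U$ has a distinguished nontrivial winding grading, each element $a \otimes [b] \in C^*_\Gamma$ with homological grading $k$ corresponds uniquely to an element $U^{\nu} a \otimes j([b])$ in the $\langle \lambda \rangle$-graded portion of $\AsUnDefAlg[U] \hotimes \Cobarop(\AsUnDefAlg)$, where $\nu = \nu(a,b)$ is determined by the grading condition (using that $\gammaCobar(j([b])) = \alpha(\gamma(b))$ by Lemma~\ref{lem:Koszul}). This identification is compatible with differentials, yielding a grading-preserving quasi-isomorphism $C^*_\Gamma \simeq \HC^*_\Gamma(\AsUnDefAlg[U])$. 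The main obstacle will be verifying that the homological perturbation lemma applies in the completed setting (which should follow since $\phi$ annihilates all Cobar terms of length $\geq 2$, making the series for $\delta$ truncate after its first term) and carefully tracking the $U$-power bookkeeping that identifies the $\Gamma$-graded part of $\AsUnDefAlg[U] \otimes \AsUnDefAlg$ with $C^*_\Gamma$.
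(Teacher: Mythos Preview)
Your plan is sound in principle but takes a heavier route than the paper, and it has a gap in the convergence step that you flag but do not actually close.

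The paper does not invoke the homological perturbation lemma at all. It simply writes down the map $\Id\otimes\phi\co \AsUnDefAlg[U]\hotimes_{\Ground\otimes\Ground}\Cobarop(\AsUnDefAlg)\to C^*$, checks by direct inspection that it intertwines the differential~\eqref{eq:d-HC-complete} with the differential~\eqref{eq:DefCdiff} (using only that $\phi$ is a ring map killing $a^*$ for $|a|>1$), and then proves it is a quasi-isomorphism by a filtration/spectral-sequence argument: filtering both sides by the total length of the inputs, the associated graded map is $\Id\otimes\phi_*$, which is an isomorphism by Lemma~\ref{lem:Koszul}. The completion issue is handled by observing that for each fixed bigrading $(n,k)$, both complexes are already complete (indeed finite-dimensional on the small side). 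This is shorter and sidesteps all HPL bookkeeping.

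In your approach, the claim that the series for the transferred differential truncates is correct and you justify it well. But that alone does not show the HPL applies: you also need $\sum_{n}(HP)^n$ to converge on the big complex so that the perturbed inclusion $i'$ and homotopy $h'$ are defined. The operator $HP$ does \emph{not} increase the winding-number filtration at every step (appending $\rho_1$, $\rho_2$, or $\rho_3$ contributes zero winding number), so this requires an argument---e.g., tracing that the only surviving threads of $(HP)^n$ eventually pass through $\rho_4$ and hence the filtration does go to infinity. This is doable, but your stated reason (``$\phi$ annihilates length $\geq 2$ factors'') only addresses $p'$ and $\delta$, not $i'$ or $h'$.

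Finally, the $U$-power bookkeeping in your last paragraph is a red herring. In the paper's intended definition (as borne out by the examples immediately following Definition~\ref{def:SmallModel} and by the computations in Proposition~\ref{prop:AsUnDefHoch}), the $a$-factor in $C^*$ lies in $\AsUnDefAlg[U]$, not $\AsUnDefAlg$; once you read it that way, your HPL output $\AsUnDefAlg[U]\otimes_{\Ground\otimes\Ground}\AsUnDefAlg$ is already $C^*$ and no $U^\nu$ insertion is needed.
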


\begin{proof}
  Recall that elements of $\HC^{n,k}_\Gamma(\AsUnDefAlg)\subset
  \AsUnDefAlg\otimes_{\Ground\otimes\Ground} (\AsUnDefAlg)_+ ^{\otimes
    n}$ are series with terms of the form $a_0\otimes
  (a_1^*\otimes\dots\otimes a_n^*)$, where the $a_i$ are all basic
  algebra elements, with the property that the right idempotent of
  $a_1$ (which is the left idempotent of $a_1^*$) agrees with the
  right idempotent of $a_0$, and the left idempotent of $a_n$ (which
  is the right idempotent of $a_n^*$) agrees with the left idempotent
  of $a_0$.  The $\Gamma$-grading is computed by
  $\lambda\cdot\gamma(a_0)\cdot \gammaCobar(a_1^*\otimes \dots\otimes
  a_n^*)= \gamma(a_0)\cdot \lambda^{1-n} (\gamma(a_1)^{-1}\cdots
  \gamma(a_n)^{-1})$.

  Consider the map $\HC^*(\AsUnDefAlg)\to C^*$ induced by
  \[
    \Id\otimes \phi\co \AsUnDefAlg\otimes_{\Ground\otimes\Ground}\Cobarop(\AsUnDefAlg)
    \to \AsUnDefAlg\otimes_{\Ground\otimes\Ground} \AsUnDefAlg,
  \]
  where $\phi$ is the map from Lemma~\ref{lem:Koszul}.
  Since $\phi$ sends any element of $\Cobarop(\AsUnDefAlg)_n$ with
  filtration greater than $n/4+1$ to $0$, $\Id\otimes\phi$ indeed induces a map
  from
  $A[U]\hotimes_{\Ground\otimes\Ground} \Cobarop(A)\cong
  \HC^*(A[U])$ to $C^*$.

  Comparing the differential on $\HC^*(\AsUnDefAlg)$ from
  Equation~\eqref{eq:d-HC-complete} (with basic algebra elements as
  the basis)
  with Equation~\eqref{eq:DefCdiff}, we see that
  $\Id\otimes\phi$ is a chain map.

  Observe that $\HC^{*,*}_\Gamma(\AsUnDefAlg)$ is a direct summand of
  $\HC^*(\AsUnDefAlg)$, $C^{*,*}_\Gamma$ is a direct summand of $C^*$,
  and $\Id\otimes \phi$ takes $\HC^{*,*}_\Gamma(\AsUnDefAlg)$ to
  $C^{*,*}_\Gamma$. 

  If we filter $\HC^{*,*}_\Gamma(\AsUnDefAlg)$ by the sum of the
  lengths of the input elements and filter $C^{*,*}_\Gamma$ by the
  length grading from Definition~\ref{def:SmallModel} then
  $\Id\otimes\phi$ is a filtered chain map.
  The induced map at the $E_1$-page of the associated
  spectral sequence is $\Id\otimes \phi_*$ where $\phi_*$ is the
  isomorphism $H_*(\Cobarop(\AsUnDefAlg))\to \AsUnDefAlg$ from
  Lemma~\ref{lem:Koszul} (or, rather, its restriction to the
  $\Gamma$-graded part). It follows that $\Id\otimes\phi$ is a
  quasi-isomorphism from the completion of
  $\HC^{*,*}_\Gamma(\AsUnDefAlg)$ to the completion of
  $C^{*,*}_\Gamma$. However, for each fixed $\Gamma$-grading on
  $\HC^{*,*}_\Gamma(\AsUnDefAlg)$, for any element
  $b\otimes[\xi]\in\HC^{n,k}_\Gamma(\AsUnDefAlg)$ there is a bound on
  the difference between the length of $b$ and four times the winding number
  of~$\xi$.  Hence, for each pair of integers $(n,k)$,
  $\HC^{n,k}_\Gamma(\AsUnDefAlg)$ is
  already complete with respect to the length filtration on
  $\AsUnDefAlg$. Similarly, for $C^{*,*}_\Gamma$, there are finitely
  many elements in each grading (see the proof of
  Proposition~\ref{prop:AsUnDefHoch} below) so $C^{*,*}_\Gamma$ is
  also already complete. Hence, $\Id\otimes\phi$ is a
  quasi-isomorphism $\HC^*_\Gamma(\AsUnDefAlg[U])\to
  C^{*,*}_\Gamma$, as desired.
\end{proof}

\subsection{Uniqueness of \texorpdfstring{$\UnDefAlg$}{the un-weighted algebra}}\label{sec:unweighted-unique}

\begin{theorem}\label{thm:UnDefAlg-unique}
  Up to isomorphism, there is a unique $\Ainf$ deformation of
  $\AsUnDefAlg$ over $\Field[U]$ satisfying the following conditions:
  \begin{enumerate}
  \item\label{item:Uuniqe-graded} The deformation is $\Gamma=G\times \ZZ$-graded, where the gradings of the
    chords $\rho_i$ is defined by $\gamma(\rho_i)=\gr(\rho_i)\times \wingr(\rho_i)$.
    (The gradings $\gr$ and $\wingr$ are defined in Section~\ref{sec:grading}.)
  \item\label{item:Uuniqe-mu-4} The operations satisfy
    $\mu_4(\rho_4,\rho_3,\rho_2,\rho_1)=U\iota_1$ and
    $\mu_4(\rho_3,\rho_2,\rho_1,\rho_4)=U\iota_0$.
  \end{enumerate}
\end{theorem}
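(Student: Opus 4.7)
The strategy is to combine the obstruction theory of Corollary~\ref{cor:deform-assoc-alg-G}, specialized to the $(\Gamma,\lambda_d)$-graded setting, with an explicit computation of the relevant Hochschild cohomology via the small model $C^*_\Gamma$ from Proposition~\ref{prop:SmallerModel}.

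First I would verify existence: the algebra $\UnDefAlg$ built from tile patterns satisfies conditions~(1) and~(2). The $G$-grading is Proposition~\ref{prop:gr-on-UnDef}; the winding-number grading is preserved by $\mu_n^0$ because every weight-zero tile pattern is a tree with $d/4 = \wingr(U)$-many $U$-contributions, and the total $\wingr$ of the chord sequence equals that of the output by direct accounting. Condition~(2) comes from the fact that the only tile pattern with chord sequence $\rho_4\otimes\rho_3\otimes\rho_2\otimes\rho_1$ (respectively its cyclic rotation) is the single square as in Figure~\ref{fig:TilingPatterns}(a), whose output element is $U\iota_1$ (resp.\ $U\iota_0$).

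For uniqueness, let $\Alg$ and $\Alg'$ be two deformations satisfying~(1) and~(2). I would construct an $\Ainf$-isomorphism $f\co \Alg\to\Alg'$ stage by stage, starting from $f_1=\Id$. Since $\mu_2=\mu_2'$ is the given multiplication, this is an $A_2$-homomorphism. By Proposition~\ref{prop:deform-assoc-alg-G}, the obstructions to extending $f$ to higher $A_n$-homomorphisms lie in $\HH^{n,-1}_\Gamma(\AsUnDefAlg[U])$, which by Proposition~\ref{prop:SmallerModel} is computed by the small model. The operation $\mu_3$ vanishes in both algebras because the $\Gamma$-grading $\lambda_d\cdot\gamma(a_1)\gamma(a_2)\gamma(a_3)$ that $\mu_3$ would produce is never realized by any basic element of $\AsUnDefAlg[U]$; equivalently, by inspection of $C^{3,-1}_\Gamma$, the only cocycles in that bidegree are already coboundaries. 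At $n=4$, condition~(2) determines the cohomology class of $\mu_4$ precisely, so $\mu_4-\mu_4'$ is a coboundary in $C^{4,-1}_\Gamma$, giving the required $f_3$.

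For the inductive step at $n\geq 5$, I need $H^{n,-1}(C^*_\Gamma)=0$. This is the main technical computation. The plan is to filter $C^*_\Gamma$ by winding number; in each fixed winding number $w$, the bidegree-$(n,-1)$ summand is finite-dimensional because the $\Gamma$-grading constraint $\gamma(a)\cdot\alpha(\gamma(b))=\lambda^0$ rigidly pairs the lengths and $\SpinC$-components of $a$ and $b$. The differential~\eqref{eq:DefCdiff} acts by left/right multiplication by the generators $\rho_i$, so I would construct a contracting homotopy analogous to the map $H$ of equation~\eqref{eq:DefH}, by absorbing a terminal $\rho_i$ in the $[b]$-factor into the $a$-factor. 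Subject to this vanishing, the inductive obstruction class $\fobstr_n\in\HH^{n,-1}_\Gamma$ is automatically trivial, so $f_{n-1}$ exists. The main obstacle will be the contracting-homotopy argument at $n\geq 5$: although the small model reduces it to a combinatorial statement about basic elements, one must verify uniformly across all bidegrees that the homotopy actually witnesses the vanishing, rather than producing a residue supported in a special bidegree (as happens at $n=4$).
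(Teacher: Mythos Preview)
Your overall strategy is correct and matches the paper's: reduce to the Hochschild computation (Proposition~\ref{prop:AsUnDefHoch}) and apply the obstruction theory of Proposition~\ref{prop:deform-assoc-alg-G}/Corollary~\ref{cor:deform-assoc-alg-G}. The existence argument via the explicit $\UnDefAlg$ is a legitimate shortcut over the paper's abstract route (which also invokes $\HH^{*,-2}_\Gamma$), and your treatment of $n=4$ is right: both $\mu_4$ and $\mu_4'$ are cocycles, and the ``moreover'' clause of Proposition~\ref{prop:AsUnDefHoch} pins down their common class.

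The genuine gap is the computation of $\HH^{n,-1}_\Gamma$ for $n\ge 5$, and your proposed method is misdirected. No contracting homotopy is needed, and the worry about ``uniformity across all bidegrees'' is unfounded. What the paper actually does in the proof of Proposition~\ref{prop:AsUnDefHoch} is classify \emph{all} generators of the small model $C^{*,*}_\Gamma$: the grading constraint $\gamma(a)\cdot\alpha(\gamma(b))=\lambda^{k-1}$ forces $|a|=|b|$, $|b|\not\equiv 2\pmod 4$, and $k$ is then a specific affine function of $|b|$ depending only on the residue of $|b|\pmod 4$ (namely $k=1-|b|/2$, $-(|b|-1)/2$, or $-(|b|-3)/2$). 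From this one reads off that $C^{n,-1}_\Gamma=0$ for every $n\neq 4$, so $\HH^{n,-1}_\Gamma=0$ is immediate. The actual work is at $n=4$ (and $n=5,7$ for $\HH^{*,-2}$), where one enumerates the finitely many generators and computes the differentials by hand. Your proposed homotopy ``absorbing a terminal $\rho_i$'' would have to reproduce this classification anyway; the direct enumeration is both simpler and what the paper does. A minor point: your claim that $\mu_3=0$ identically in any $\Gamma$-graded deformation is plausible but not what the paper argues; it only shows $\HH^{3,-1}_\Gamma=0$ and works up to isomorphism.
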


The conditions of the theorem immediately imply that
$\gamma(U)=(-2;0,0)\times 1$. Also, the relation
$\mu_4(\rho_4,\rho_3,\rho_2,\rho_1)=U\iota_1$ implies the relation
$\mu_4(\rho_3,\rho_2,\rho_1,\rho_4)=U\iota_0$, by considering the
$5$-input $\Ainf$-relations.

A key step in the proof is a computation of (part of) the Hochschild cohomology
of $\AsUnDefAlg$:
\begin{proposition}\label{prop:AsUnDefHoch}
  The graded Hochschild cohomology
  $\HH^{*,*}_\Gamma(\AsUnDefAlg[U])$ of
  $\AsUnDefAlg[U]$ over $\Ground[U]$
  satisfies
  \begin{align*}
  \HH^{n,-1}_\Gamma(\AsUnDefAlg[U])&=
  \begin{cases}
   \Field & n=4\\
    0 &\text{otherwise}
  \end{cases}\\
  \HH^{n,-2}_\Gamma(\AsUnDefAlg[U])&=
  \begin{cases}
   \Field & n=5\\
    0 &\text{otherwise}
  \end{cases}
  \end{align*}
  Moreover, suppose $\xi \in
  \HC^{4,-1}_\Gamma(\AsUnDefAlg[U])$ is a cycle and
  $\xi(\rho_4\otimes\rho_3\otimes\rho_2\otimes\rho_1)=U$. Then $\xi$
  represents a generator of
  $\HH^{4,-1}_\Gamma(\AsUnDefAlg[U])$.
\end{proposition}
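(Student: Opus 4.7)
\medskip\noindent
\textbf{Proof proposal.} The approach is to use Proposition~\ref{prop:SmallerModel} to reduce to computing the cohomology of the small model complex $C^{*,*}_\Gamma$. The first step is to observe that each bidegree $(n,k)$ of $C^{*,*}_\Gamma$ is finite-dimensional: given $|b|=n$, the grading condition $\gamma(a)\cdot\alpha(\gamma(b))=\lambda^{k-1}$ together with the idempotent matching and the explicit formulas for $\gamma$ on basic elements of $\AsUnDefAlg[U]$ (Section~\ref{sec:grading}) pin down $\gamma(a)$ uniquely and leave only finitely many options for the pair $(a,[b])$.

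Next, I would enumerate generators case-by-case. For $C^{n,-1}_\Gamma$ with $n\leq 3$, a direct check (using $\wingr(a)-\wingr(b)$ and the $\SpinC$-component constraints) shows either no generators exist or they pair up under the differential, giving vanishing cohomology. For $n=4$, the generators are the pairs $a\otimes[b]$ where $b$ is one of the four length-$4$ chords $\rho_{i,i+1,i+2,i+3}$ and $a$ has grading $\lambda^{-1}\cdot\alpha(\gamma(b))^{-1}$ together with the complementary idempotents; each such $b$ admits essentially two candidates for $a$, namely $U\iota_j$ for appropriate $j$ and the length-$4$ chord $\rho_{i+2,i+3,i,i+1}$. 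An explicit computation of $\partial$ on these eight generators, together with $\partial$ on the generators of bidegrees $(3,0)$ and $(5,-2)$, yields $\HH^{4,-1}_\Gamma(\AsUnDefAlg[U])\cong\Field$, with representative $U\iota_1\otimes[\rho_{1234}]$ (and all cyclic variants differing from this by coboundaries). The analysis of $\HH^{n,-2}_\Gamma$ proceeds in parallel, with the nontrivial class appearing at $n=5$.

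For the vanishing at $n\geq 5$ (in the $-1$ case) and $n\geq 6$ (in the $-2$ case), I would construct a contracting homotopy modeled on the operator $H$ from the proof of Lemma~\ref{lem:Koszul}. That operator moves a length-$1$ factor across the tensor symbol by multiplication; in the present context, one adapts it so that, given a generator $a\otimes[b]$ of sufficiently large length, one strips the last Reeb letter off $b$ and multiplies it into $a$. Relative to the small-model differential $\partial(a\otimes[b])=\sum_{i=1}^4(\rho_i\cdot a\otimes[b\cdot\rho_i]+a\cdot\rho_i\otimes[\rho_i\cdot b])$, a careful analysis of idempotent-matching shows that this homotopy identifies the relevant complex with the mapping cone of an isomorphism, hence acyclic.

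Finally, for the \emph{moreover} statement: the condition $\xi(\rho_4\otimes\rho_3\otimes\rho_2\otimes\rho_1)=U$ means that, under the chain-level identification $\HC^*_\Gamma(\AsUnDefAlg[U])\to C^{*,*}_\Gamma$ induced by $\Id\otimes\phi$, the cycle $\xi$ has coefficient $1$ on the basis element $U\iota_1\otimes[\rho_{1234}]$. Since this element represents the unique nonzero class in $\HH^{4,-1}_\Gamma$, and all other length-$4$ generators are either cohomologous to it or exact, we conclude that $[\xi]$ is a generator. I expect the main obstacle to be executing the acyclicity argument for large $n$: verifying that the natural candidate homotopy respects the $\Gamma$-grading and the idempotent conventions, and correctly isolates the exceptional generators at $n=4$ and $n=5$ without contracting them.
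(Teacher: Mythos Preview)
Your overall strategy of reducing to the small model $C^{*,*}_\Gamma$ via Proposition~\ref{prop:SmallerModel} is correct and matches the paper. However, there are two genuine problems.

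First, you have missed the key structural observation that makes the computation tractable: the $\Gamma$-grading forces $|a|=|b|$. Indeed, the $G/\langle\lambda\rangle$-part of the constraint $\gamma(a)\cdot\alpha(\gamma(b))=\lambda^{k-1}$ gives $|a|\equiv|b|\pmod 4$, and the $\ZZ$-factor (winding number) then forces $\wingr(a)=\wingr(b)$, hence $|a|=|b|$. Once you know this, a short list of base cases ($\rho_1[\rho_1]$, $\rho_{123}[\rho_{123}]$, $\iota_0[\iota_1]$, $\iota_1[\iota_0]$, and their cyclic permutations) together with the rule $k(a\cdot a'[b\cdot b'])=k(a[b])-2s$ when $|a'|=|b'|=4s$ shows that $C^{n,-1}_\Gamma=0$ unless $n=4$, and $C^{n,-2}_\Gamma=0$ unless $n\in\{5,7\}$. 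So the vanishing for large $n$ is not a homotopy argument at all---the chain groups themselves are zero. Your proposed contracting homotopy is unnecessary, and as you suspected, verifying that it respects the idempotent conventions would be delicate; the paper simply sidesteps this.

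Second, your identification of the generator of $\HH^{4,-1}_\Gamma$ is wrong. The element $U\iota_1\otimes[\rho_{1234}]$ is \emph{not} a cycle: from the differential~\eqref{eq:DefCdiff} one computes $\partial(U\iota_1[\rho_{1234}])=U\rho_4[\rho_{41234}]+U\rho_1[\rho_{12341}]\neq 0$. The actual generator is the \emph{sum} $U[\rho_{1234}]+U[\rho_{2341}]+U[\rho_{3412}]+U[\rho_{4123}]$, in which each boundary term appears twice and cancels over $\Field$. (Also, for each length-4 chord $b$ there are three candidates for $a$, not two: for $b=\rho_{1234}$ these are $U\iota_1$, $\rho_{2341}$, and $\rho_{4123}$.) Your argument for the ``moreover'' clause then needs adjustment: the point is that the image of $\xi$ under $\Id\otimes\phi$ is a cycle whose $U\iota_1[\rho_{1234}]$-coefficient is $1$, and since the coboundaries coming from $C^{3,0}_\Gamma$ (which is spanned by the $\rho_{i,i+1,i+2}[\rho_{i,i+1,i+2}]$) involve only chord-times-chord terms and never $U$-terms, such a cycle cannot be a boundary.
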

\begin{proof}
  Proposition~\ref{prop:SmallerModel} supplies a smaller quasi-isomorphic model for this complex $C$, where
  \[ C^{n,k}_\Gamma\subset\AsUnDefAlg\otimes_{\Ground\otimes\Ground}\AsUnDefAlg,\]
  is generated by elements of the form
  $a\otimes [b]$ with $a,b\in\AsUnDefAlg$ for which:
  \begin{enumerate}[label=($HC$-\arabic*)]
  \item 
    \label{c:Idemp}
    the right idempotent of $b$ is complementary to the left idempotent of $a$
    and the left idempotent of $b$ is complementary to the right idempotent of $a$.
  \item 
    \label{c:Grading} the gradings satisfy
    \[
      \gamma(a \otimes [b]) = \lambda\cdot \gamma(a)\cdot \alpha(\gamma(b)) =\lambda^{k},
      \]
    where $\alpha$ is as in Equation~\eqref{eq:DefAlpha}
    and $|b|=n$. 
  \end{enumerate}

(We will typically suppress the $\otimes$ symbol from $a\otimes[b]$.)
  
The above conditions ensure that any such element $a[b]$ must be one of:
  \begin{enumerate}[label=($C$-\arabic*)]
  \item the following elements $a\otimes[b]$
    \[ \rho_1  [\rho_1 ], \qquad \rho_{123}[\rho_{123}],
    \qquad \iota_0[\iota_1], 
\qquad \iota_1[\iota_0];
\]
\item \label{Cenlarge} any of the elements 
  obtained by multiplying the above $b$ by some further
  element $b'$ with $|b'|=4s$, and while also multiplying $a$ by
  some further element $a'$ with $|a'|=4s$;
\item any element obtained by adding some $i\in \Zmod{4}$ to all the indices
  in any of the above obtained elements.
\end{enumerate}
(In particular, the elements
$\rho_{1234}[\rho_{4123}]$, $\rho_{1234}[\rho_{2341}]$, and $U[\rho_{1234}]$
are all obtained from $a[b]=\iota_1[\iota_0]$ by multiplying both $a$ and $b$
by length four algebra elements.)

In a little more detail, suppose that $a[b]=U^n\rho_{i,\dots,j+1}[\rho_{\ell,\dots,m+1}]$
is such an element; so that
$|a|=j-i+4n$ and $|b|=\ell-m$. Then, the
$G/\langle \lambda\rangle $ factor of Condition~\ref{c:Grading} ensures that
$|a|\equiv |b|\mod{4}$; furthermore, if $|a|\not\equiv 0\pmod{4}$, then
$i\equiv \ell\pmod{4}$ and $j\equiv m\pmod{4}$.
Condition~\ref{c:Idemp} now excludes the possibility that $|a|\equiv
2\pmod{4}$.  Finally, the $\ZZ$ factor of Condition~\ref{c:Grading} now ensures
that
$\wingr(a) = \wingr(b)$ and so
$\abs{a}=\abs{b}$. The above classification follows.

Direct computation of the homological grading $k$ gives:
\[ k(\rho_1[\rho_1])=0,\qquad k(\rho_{123}[\rho_{123}])=0, \qquad
k(\iota_0[\iota_1])=k(\iota_1[\iota_0])=1,\]
and the usual symmetry obtained by adding $i\in \Zmod{4}$ to all the subscripts
in the first two equations above.
All other homological gradings are determined by the property that
\[ k(a\cdot a'[b\cdot b'])=k(a[b])-2s,\]
if $|a'|=|b'|=4s$.
In particular,
\[ k(\rho_{1234}[\rho_{4123}])=k(U[\rho_{1234}])=k(\rho_{1234}[\rho_{2341}])=-1.\]

It follows that $\HH^{n,-1}=0$ unless $n=4$, and $\HC^{n,-2}=\HH^{n,-2}=0$
unless $n=5,7$.

We now compute the differentials of elements involving terms with $k=-1$ and $k=-2$:
  \begin{align*}
    \partial(\rho_{123}[\rho_{123}])&=
    \rho_{1234}[\rho_{4123}]+
    \rho_{4123}[\rho_{1234}]\\
    \partial(\rho_{1234}[\rho_{4123}])&=
    \rho_{41234}[\rho_{41234}]\\
    \partial(\rho_{4123}[\rho_{1234}])&=
    \rho_{41234}[\rho_{41234}]\\
    \partial(U[\rho_{1234}])&=
    U\rho_4[\rho_{41234}]+
    U\rho_1[\rho_{12341}]\\
    \partial(\rho_{12341}[\rho_{12341}])&=
    0\\
    \partial(U\rho_1 [\rho_{1234123}])&=0\\
    \partial(\rho_{1234123}[\rho_{1234123}])&=
    \rho_{12341234}[\rho_{41234123}]+
    \rho_{41234123}[\rho_{12341234}] \\
   \partial(U\rho_{123}[\rho_{1234123}])&=
    U\rho_{1234}[\rho_{41234123}]+
    U\rho_{4123}[\rho_{12341234}]
  \end{align*}
  All other such terms are obtained by 
  adding $i\in\ZZ/4\ZZ$ to all of the indices in any of these
  expressions.

For $n=4$, there are two kinds of cycles,
\[ \rho_{1234}[\rho_{4123}]+\rho_{4123}[\rho_{1234}]=\partial(\rho_{123}[\rho_{123}])\]
  (there are four cycles of this form), and
  \begin{equation}\label{eq:HH-4-class}
  U[\rho_{1234}]+
  U[\rho_{2341}]+
  U[\rho_{3412}]+
  U[\rho_{4123}]
  \end{equation}
  (there is a unique cycle of this form). This proves the claim about $\HH^{n,-1}_\Gamma$.

  Turning to $\HH^{n,-2}_\Gamma$, for $n=5$, the single homology class is 
  \[
  U\rho_1 [\rho_{12341}]\sim
  U\rho_2 [\rho_{23412}]\sim
  U\rho_3 [\rho_{34123}]\sim
  U\rho_4 [\rho_{41234}].
  \]
  Finally, for $n=7$ there are no cycles at all.
\end{proof}

\begin{proof}[Proof of Theorem~\ref{thm:UnDefAlg-unique}]
  Throughout this proof, by ``deformation'' we mean ``$\Gamma$-graded
  deformation''.

  It is immediate from Proposition~\ref{prop:deform-assoc-alg-G} and
  Proposition~\ref{prop:AsUnDefHoch} that there is no nontrivial $A_3$
  deformation of $\AsUnDefAlg$. Thus, taking $\mu_3=0$, $[\obstr_4]=0$
  since taking $\mu_4=0$ defines an $A_4$ algebra. (In fact, $\obstr_4$
  vanishes as a chain.) Thus, the choices
  of $A_4$ deformation of $\AsUnDefAlg$ correspond to
  $\HH^{4,-1}_{\Gamma}(\AsUnDefAlg)\cong\Field$. Thus, there is a single
  nontrivial $A_4$ deformation. Moreover, from the description of the
  generator of $\HH^{4,-1}_{\Gamma}(\AsUnDefAlg)\cong\Field$ in
  Proposition~\ref{prop:AsUnDefHoch}, this deformation satisfies and
  is characterized by Property~(\ref{item:Uuniqe-mu-4}) of the
  statement of
  Theorem~\ref{thm:UnDefAlg-unique}. Now, again by
  Proposition~\ref{prop:AsUnDefHoch},
  $\HH^{m+1,-2}_{\Gamma}(\AsUnDefAlg)=\HH^{m,-1}_{\Gamma}(\AsUnDefAlg)=0$ for all
  $m>4$, so by Corollary~\ref{cor:deform-assoc-alg-G}, this
  deformation extends uniquely to an $\Ainf$ deformation of $\AsUnDefAlg$.
\end{proof}

\begin{remark}
  \label{rem:WhyWeHaveAinf}
  The non-trivial deformation from
  Theorem~\ref{thm:UnDefAlg-unique} appears in
  bordered Floer homology.  For example, consider the $A_\infty$
  module for the solid torus, as
  in~\cite[Figure~\ref{TM:fig:cable-HD}]{LOT:torus-mod}.  This has a
  single generator $a$ with $m_2(a,\iota_1)=a$, and actions
  $m_3(a,\rho_2,\rho_1)=a$, $m_3(a,\rho_4,\rho_3)=U\cdot a$. Composing
  these two actions gives a non-zero term in the $A_\infty$ relation
  with input sequence $(a,\rho_4,\rho_3,\rho_2,\rho_1)$. Since
  $m_1=0$, the only possible term that can cancel this sequence is
  $m_2(a,\mu_4(\rho_4,\rho_3,\rho_2,\rho_1))$, forcing
  $\mu_4(\rho_4,\rho_3,\rho_2,\rho_1)$ as in the theorem. This is a
  formalization of the more geometric observation: composing the
  holomorphic disks giving the $m_3$ operations, we obtain a
  one-dimensional moduli space whose other end consists of a curve
  that covers $T^2$ once, with boundary asymptotics as given by
  $\rho_4,\rho_3,\rho_2,\rho_1$; compare also
  Section~\ref{subsec:Immersions}; and
  see~\cite[Figure~\ref{TM:fig:simple-bdy-degen-eg}]{LOT:torus-mod}.
  (This deformation also appears from the wrapped Fukaya
  category, as discussed in Section~\ref{sec:Fukaya}.)
\end{remark}

\subsection{Weighted algebras and Hochschild cohomology}
\label{sec:weighted-deformation}
Next, we discuss deforming $\Ainf$-algebras into weighted
algebras. This is similar to the discussion in~\cite[Section
3b]{Seidel15:quartic} and~\cite[Section 2.4]{Sheridan:CY-hyp}; again,
the group-graded setting of~\cite[Section 2.4]{Sheridan:CY-hyp} is
particularly relevant.

Fix an augmented $\Ainf$-algebra $\Alg^0=(A,\{\mu_m\})$
over $\Ground$ with underlying
vector space $A$ and augmentation ideal $A_+\subset A$. By a
\emph{weighted deformation} of $\Alg^0$ we mean a weighted
$\Ainf$-algebra $(A,\{\mu_m^k\})$ with the same underlying vector
space as $\Alg^0$ and whose weight-zero operations are the same as for
$\Alg^0$: i.e., $\mu_m^0=\mu_m$ for all $m\geq 0$. Suppose that $\Alg$ and $\Blg$ 
are both weighted deformations of the same undeformed $\Ainf$ algebra.
A \emph{homomorphism of deformations}
from $\Alg$ to $\Blg$ is a sequence of maps 
$f^{\bullet}=\{f^W\co \Tensor^*(A_+)\to B\}_{W=0}^{\infty}$
satisfying the weighted $\Ainf$ homomorphism relations
\begin{equation}\label{eq:WeightedAinftyHomomorphism}
  \sum_{a+b=W} f^a \circ (\Id\otimes \mu^b\otimes \Id)\circ \Delta^3 
  + \sum_{a+w_1+\dots+w_m=W} \mu^a\circ (f^{w_1}\otimes\dots\otimes f^{w_m})\circ
  \Delta^m = 0
\end{equation}
for each $W\geq 0$.
In words, the second sum expresses the sum of all ways
of parenthesizing the tensor product into disjoint bundles and
applying some $f^v$ to each bundle, and then channeling the outputs into
a $\mu^a$ so that the total weight of the
$f$'s plus the weight $a$ is $W$.

Like $\Ainf$ deformations, we will build weighted deformations
step-by-step. By a \emph{$W$-truncated weighted $\Ainf$-algebra} we
mean a vector space $A$ and operations $\mu_m^w\co A^{\otimes m}\to A$
for $m\geq 0$ and $0\leq w\leq W$, $(m,w)\neq (0,0)$, satisfying the weighted
$\Ainf$-algebra relations up to weight $W$ (i.e., the ones only
involving the operations defined). A \emph{$W$-truncated weighted
  deformation} of an $\Ainf$-algebra $\Alg^0$ is a $W$-truncated
weighted $\Ainf$-algebra $\Alg^{W}$ whose undeformed (unweighted)
$\Ainf$-algebra is $\Alg^0$.

Let $\Alg^W$ and $\Blg^W$ be $W$-truncated weighted deformations of
$\Alg^0$. By a \emph{homomorphism of $W$-truncated weighted deformations} from $\Alg^W$ to
$\Blg^W$ we mean maps $f_m^w\co A^{\otimes m}\to A$ for $0\leq w\leq
W$ and all $m\geq 0$ with $(m,w)\neq(0,0)$, such that:
\begin{itemize}
\item $f_1^0=\Id$,
\item $f_m^0=0$ for $m\neq 1$, and
\item the $f_m^w$ satisfy the weighted $\Ainf$-algebra homomorphism
  relations in Equation~\eqref{eq:WeightedAinftyHomomorphism} up to weight~$W$.
\end{itemize}
(The first two conditions specify that $f^0$ is the identity map of
$\Ainf$-algebras.) An \emph{isomorphism} is an invertible homomorphism;
by the proof of Lemma~\ref{lem:bij-hom-is-iso}, every homomorphism of
$W$-truncated weighted deformations is an isomorphism.

\begin{definition}\label{def:HH-ainf}
  Let $\Alg^0$ be an augmented $\Ainf$-algebra over $\Ground$, with underlying vector
  space $A$ and augmentation ideal $A_+\subset A$.

  Let $\lsupv{\Alg^0}[\Id]_{\Alg^0}$ denote the identity type \DA\ bimodule over $\Alg^0$
  (see~\cite{LOT2}). The \emph{Hochschild cochain complex} of $\Alg^0$
  is given by
  \[
  \HC^*(\Alg^0)=\Mor(\lsupv{\Alg^0}[\Id]_{\Alg^0},\lsupv{\Alg^0}[\Id]_{\Alg^0})
  \]
  of strictly unital type \DA\ bimodule morphisms from
  $\lsupv{\Alg^0}[\Id]_{\Alg^0}$ to itself. $\HH(\Alg^0)$ is the homology of
  this complex.
\end{definition}
Explicitly, as a vector space,
\[
\HC^*(\Alg^0)=\prod_{n=0}^\infty \Hom_{\Ground\otimes\Ground}(\Ground\otimes (A_+)^{\otimes
  n}, A\otimes \Ground)=\prod_{n=0}^\infty \Hom_{\Ground\otimes\Ground}((A_+)^{\otimes
  n}, A).
\]
The differential is given as follows. Recall the operation $\star$
from Equation~\eqref{eq:DefStar}. Let
$\mu^0=\sum_i\mu^0_i\in \prod_{i=1}^\infty \Hom(A_+^{\otimes i},A_+)$. Then
the differential of $f$ is given by
\[
  \delta(f)=\mu^0\star f+f\star\mu^0 =
      \mathcenter{\begin{tikzpicture}
      \node at (-1,0) (tl) {};
      \node at (0,0) (tc) {};
      \node at (1,0) (tr) {};
      \node at (0,-1) (m1) {$f$};
      \node at (0,-2) (m2) {$\mu^0$};
      \node at (0,-3) (bc) {};
      \draw[taa] (tl) to (m2);
      \draw[taa] (tr) to (m2);
      \draw[taa] (tc) to (m1);
      \draw[alga] (m1) to (m2);
      \draw[alga] (m2) to (bc);
    \end{tikzpicture}
  }
+    \mathcenter{\begin{tikzpicture}
      \node at (-1,0) (tl) {};
      \node at (0,0) (tc) {};
      \node at (1,0) (tr) {};
      \node at (0,-1) (m1) {$\mu^0$};
      \node at (0,-2) (m2) {$f$};
      \node at (0,-3) (bc) {};
      \draw[taa] (tl) to (m2);
      \draw[taa] (tr) to (m2);
      \draw[taa] (tc) to (m1);
      \draw[alga] (m1) to (m2);
      \draw[alga] (m2) to (bc);
    \end{tikzpicture}
    }.
\]

Unlike the associative case (Definition~\ref{def:HH-assoc}), in the
$\Ainf$-setting the Hochschild cohomology is not graded unless $\Alg^0$
is graded.

\begin{proposition}\label{prop:deform-Ainf-alg}
  Let $\Alg^0$ be an $\Ainf$-algebra and $\Alg^{W-1}$ a $(W-1)$-truncated
  weighted deformation of $\Alg^0$. Then there is a Hochschild
  cochain obstruction class $\obstr^W\in \HC^*(\Alg^0)$ so that:
  \begin{enumerate}[label=($\infty\obstr$-\arabic*)]
  \item 
    \label{wo:Cocycle} 
    $\obstr^W$ is a cocycle.
  \item 
    \label{wo:Extension}
    $\obstr^W$ is a coboundary if and only if there are
    operations $\mu^{W}=\{\mu_m^{W}\}_{m=0}^{\infty}$ making $\Alg^W$ into a $W$-truncated
    weighted $\Ainf$ algebra;
    indeed, the operation $\mu^{W}$ is a cochain with $\delta(\mu^{W})=\obstr^W$.
  \item 
        \label{wo:coboundary}
        If $\mu^{W}$ and ${\overline \mu}^{W}$ are cochains with
        $\delta(\mu^{W})=\delta(\overline{\mu}^{W})=\obstr^W$, then
        $\mu^{W}-{\overline\mu}^{W}$ is itself a coboundary if and only if 
        there is a homomorphism $f$ of $W$-truncated deformations 
        between the structures induced by $\mu^W$ and $\overline{\mu}^W$ with $f^w=0$ for 
        all $0<w<W$.
  \item\label{wo:naturality}
    Suppose $\Alg$ and ${\overline\Alg}$ are two $W$-truncated
    deformations, $W>0$,
    with $\mu^w={\overline\mu}^w$ for all $w<W$. If
    $\mu^W-{\overline\mu}^{W}$ is a coboundary
    then their respective obstruction cocycles
    $\obstr^{W+1}$ and ${\overline\obstr}^{W+1}$ are cohomologous.
  \end{enumerate}

  There are analogous statements for maps. In particular given
  $W$-truncated deformations $\Alg$ and ${\overline\Alg}$ of
  $\Alg^0$, for some $W>0$, and a homomorphism 
  $f^{\leq W-1}\co \Alg^{\leq W-1}\to {\overline\Alg}^{\leq W-1}$
  of $W-1$-truncated weighted deformations, there is an obstruction class
  $\fobstr^W\in \HC^*(\Alg^0)$ so that:
  \begin{enumerate}[label=($\infty\fobstr$-\arabic*)]
  \item 
    \label{w:mapcocycle}
    $\fobstr^W$ is a cocycle.
  \item 
    \label{w:coboundary}
    $\fobstr^W$ is a coboundary if and only if there is a
    homomorphism $\Alg^{W}\to{\overline\Alg}^{W}$ of $W$-truncated
    weighted deformations extending $f$. 
  \end{enumerate}
\end{proposition}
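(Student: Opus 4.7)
The plan is to adapt the proof of Proposition~\ref{prop:deform-assoc-alg} to the weighted $\Ainf$ setting, with the main modification being that the role previously played by the multiplication $\mu_2$ is now played by the collection $\mu^0 = \sum_i \mu_i^0$ of undeformed $\Ainf$ operations. In particular, the Hochschild differential $\delta$ on $\HC^*(\Alg^0)$ is given by $\delta f = \mu^0 \star f + f \star \mu^0$ (in the graphical notation used earlier), and the operation $\star$ from Equation~\eqref{eq:DefStar} extends componentwise to formal sums $\mu^u = \sum_i \mu_i^u$. I will define the obstruction class by
\[
\obstr^W = \sum_{\substack{u+v=W\\ 0 \leq u, v < W}} \mu^u \star \mu^v,
\]
so that the weighted $\Ainf$ relation (Equation~\eqref{eq:weighted-Ainf-relation}) at total weight $W$ is exactly the equation $\delta(\mu^W) = \obstr^W$. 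This gives Property~\ref{wo:Extension} immediately, and a cochain $\mu^W$ satisfying this equation is automatically strictly unital by projecting onto $A_+$ as in Section~\ref{sec:A-inf-deform}.

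For Property~\ref{wo:Cocycle}, I will establish the Leibniz rule
\[
\delta(f \star g) = (\delta f) \star g + f \star (\delta g) + \mu^0\text{-terms involving } (f, g),
\]
which follows from the associator identity~\eqref{eq:Associator} applied to $\mu^0$, $f$, and $g$. Then the computation of $\delta \obstr^W$ reduces to a sum of terms of the form $(\mu^u \star \mu^v) \star \mu^w$ with $u+v+w = W$ and each exponent less than $W$, which cancel in pairs via Equation~\eqref{eq:Associator}, together with terms of the form $\mu^0 \star (\mu^u \star \mu^v)$ and their transposes, which cancel against the lower weight $\Ainf$ relations $\sum_{a+b=u} \mu^a \star \mu^b = 0$ (applied with $u < W$). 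For Property~\ref{wo:coboundary}, the weighted homomorphism relations of Equation~\eqref{eq:WeightedAinftyHomomorphism} at weight $W$, for a map with $f^0 = \Id$ and $f^w = 0$ for $0 < w < W$, reduce to the single equation $\delta f^W = \mu^W - \overline{\mu}^W$. For Property~\ref{wo:naturality}, if $\mu^W - \overline{\mu}^W = \delta c^W$, then the same Leibniz argument used in Property~\ref{item:naturality} of Proposition~\ref{prop:deform-assoc-alg} shows that $\obstr^{W+1} - \overline{\obstr}^{W+1} = \delta(c^W \star \mu^0 + \mu^0 \star c^W)$, after again invoking the lower weight $\Ainf$ relations to cancel extraneous terms.

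The statements for homomorphisms follow the same pattern. Given a map $f^{\leq W-1}$ of $(W-1)$-truncated weighted deformations, I will define the map obstruction by collecting all the terms in Equation~\eqref{eq:WeightedAinftyHomomorphism} at total weight $W$ that do \emph{not} involve $f^W$, namely
\[
\fobstr^W = \sum_{\substack{a+b=W\\ b > 0}} f^a \circ (\Id \otimes \mu^b \otimes \Id) \circ \Delta^3 + \sum_{\substack{a+w_1+\cdots+w_m = W\\ \text{not all }(w_j,a) = (0,0)}} \mu^a \circ (f^{w_1} \otimes \cdots \otimes f^{w_m}) \circ \Delta^m,
\]
with appropriate restrictions so that $f^W$ itself does not appear. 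Then the weight-$W$ homomorphism relation is $\delta f^W = \fobstr^W$, yielding Property~\ref{w:coboundary}. The cocycle property~\ref{w:mapcocycle} is verified by a computation that parallels the one for $\fobstr_n$ in Proposition~\ref{prop:deform-assoc-alg}: expanding $\delta \fobstr^W$ using the Leibniz rule for $\star$ and the identity~\eqref{eq:SecondLeibniz}, and then using the weighted $\Ainf$ relations on both $\Alg$ and $\overline{\Alg}$ at weights less than $W$, together with the already-established homomorphism relations for $f^{\leq W-1}$, to produce the required cancellations.

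The main obstacle, and the step requiring the most bookkeeping, will be the verification that $\delta \obstr^W = 0$ and $\delta \fobstr^W = 0$. Unlike the associative case, where only $\mu_2$ enters the Hochschild differential, here every $\mu_i^0$ contributes, so the Leibniz-type identities~\eqref{eq:LeibnizStar} and~\eqref{eq:SecondLeibniz} must be generalized to all of $\mu^0$ at once. Once these generalized Leibniz identities are established, the remaining verification is entirely formal: every term on the right-hand side of $\delta \obstr^W$ either cancels against another such term via the associator~\eqref{eq:Associator}, or is recognized as a lower-weight $\Ainf$ relation that vanishes by the inductive hypothesis.
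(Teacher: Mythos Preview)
Your proposal is essentially correct and follows the same strategy as the paper: define $\obstr^W$ as the ``intermediate weight'' terms of the weighted $\Ainf$ relation, establish a Leibniz-type identity for $\star$ with respect to the Hochschild differential $\delta(f)=\mu^0\star f+f\star\mu^0$ (the paper packages the correction terms into a bilinear operation $\eta(f,g)$ generalizing $\mu_2(f,g)+\mu_2(g,f)$), and use the associator identity~\eqref{eq:Associator} together with the lower-weight relations $\delta\mu^u=(\mu^{\bullet\geq1}\star\mu^{\bullet\geq1})^u$ to verify that $\obstr^W$ is closed. The treatment of $\fobstr^W$ likewise matches the paper's.

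There is one genuine slip. In Property~\ref{wo:naturality} you write
\[
\obstr^{W+1}-\overline{\obstr}^{W+1}=\delta\bigl(c^W\star\mu^0+\mu^0\star c^W\bigr).
\]
But $c^W\star\mu^0+\mu^0\star c^W$ is by definition $\delta c^W$, so applying $\delta$ again gives zero, not the difference of obstructions. The correct primitive, as in the paper, involves $\mu^1$ rather than $\mu^0$: since $\mu^w=\overline{\mu}^w$ for all $w<W$, every term in $\obstr^{W+1}-\overline{\obstr}^{W+1}$ cancels except $(\mu^W-\overline{\mu}^W)\star\mu^1+\mu^1\star(\mu^W-\overline{\mu}^W)$, and the Leibniz identity (using $\delta\mu^1=0$ and the symmetry $\eta(\mu^W-\overline{\mu}^W,\mu^1)=\eta(\mu^1,\mu^W-\overline{\mu}^W)$) then yields
\[
\obstr^{W+1}-\overline{\obstr}^{W+1}=\delta\bigl(c^W\star\mu^1+\mu^1\star c^W\bigr).
\]
This is the direct analogue of the appearance of $\mu_3$ (not $\mu_2$) in the corresponding step of Proposition~\ref{prop:deform-assoc-alg}.
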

\begin{proof}
  Let $\mu^v_*\co \Tensor^*A_+\to A_+$ be the direct sum of the maps
  $\mu^v_n\co A_+^{\otimes n}\to A_+$.

  We think of the operation $\star$ from Equation~\eqref{eq:DefStar}
  as follows. Given $f\co \Tensor^*(A_+)\to A_+$ and $g\co
  \Tensor^*(A_+)\to A_+$,
  \[ f\star g = g\circ (\Id\otimes f\otimes \Id)\circ \Delta^3,\]
  where $\Delta^3$ is as in Equation~\eqref{eq:Comultiplication}.
  We extend the operation $\star$ to sequences of maps
  $f^\bullet=\{f^W\co \Tensor^*(A_+)\to A_+\}_{W=0}^{\infty}$ and $g^\bullet=\{g^W\co \Tensor^*(A_+)\to A^+\}_{W=0}^\infty$, letting
  $f^\bullet\star g^\bullet=\{(f\star g)^W\co \Tensor^*(A_+)\to A_+\}_{W=0}^{\infty}$ be the sequence of maps
  whose components are given by
  \[ (f^\bullet\star g^\bullet)^W = \sum_{a+b=W} f^a \circ (\Id\otimes g^b\otimes \Id)\circ \Delta^3.\]

  Given a sequence of maps $f^\bullet=\{f^W \co \Tensor^*(A_+)\to A_+\}_{W=0}^{\infty}$, let 
  $f^{\bullet\geq 1}$ denote the sequence of maps $\phi^{\bullet}=\{\phi^W\co
  \Tensor^*(A_+)\to A_+\}_{W=0}^{\infty}$ with $\phi^0=0$ and
  $\phi^W=f^W$ for all $W>0$.

  The obstruction class $\obstr^W$ is defined by
  \[
    \obstr^W=(\mu^{\bullet \geq 1}\star\mu^{\bullet \geq 1})^W;
  \]
  i.e.,  $\obstr^W \co \Tensor^*(A_+)\to A_+$ is the map
  \[ \obstr^W=
  \sum_{\substack{a+b=W \\ 1\leq a\leq W-1}}
    \mu^a \star\mu^b, \]
    whose  components $\obstr^W_n\co \overbrace{A_+\otimes\dots\otimes A_+}^n\to A_+$ are given by
  \[ \obstr^W_n=
  \sum_{\substack{a+b=W \\ 1\leq a\leq W-1}}\,\,
  \sum_{\substack{i+j-1=n \\
      1\leq i\leq n+1}}
    \mu_i^a \star\mu_j^b.\]
  Graphically,
  \[
  \obstr^W=\mathcenter{
    \begin{tikzpicture}
      \node at (-1,0) (tl) {};
      \node at (0,0) (tc) {};
      \node at (1,0) (tr) {};
      \node at (0,-1) (m1) {$\mu^{\geq 1}_*$};
      \node at (0,-2) (m2) {$\mu^{\geq 1}_*$};
      \node at (0,-3) (bc) {};
      \draw[taa] (tl) to (m2);
      \draw[taa] (tr) to (m2);
      \draw[taa] (tc) to (m1);
      \draw[alga] (m1) to (m2);
      \draw[alga] (m2) to (bc);
    \end{tikzpicture}
  }.
  \]
  
  Given $f^\bullet=\{f^w\co \Tensor^*(A_+)\to A_+\}_{w=0}^\infty$,
  we define a sequence of maps
  $\dtildew{f}^\bullet=\{\dtildew{f}^W\co\Tensor^*(A_+)\to\Tensor^*(A_+)\}_{W=0}^{\infty}$ whose components $\dtildew{f}^W$ are defined by
  \begin{equation}
    \label{eq:DefdTildew}
  \dtildew{f}^W = \sum_{m=1}^\infty \sum_{ w_1+\dots+w_m=W} (f^{w_1}\otimes\dots\otimes f^{w_m})\circ \Delta^{m}. 
  \end{equation}
  Given $g^\bullet=\{g^w\co\Tensor^*(A_+)\to A_+\}_{w=0}^{\infty}$ and $\phi^\bullet=\{\phi^w\co\Tensor^*(A_+)\to\Tensor^*(A_+)\}_{w=0}^{\infty}$,
  we can define their weighted composition 
  $g^\bullet\circ \phi^\bullet=\{(g^\bullet\circ\phi^\bullet)^w\co \Tensor^*(A_+)\to A_+\}_{w=0}^{\infty}$ by
  \[ (g^\bullet\circ\phi^\bullet)^W=\sum_{a+b=W} g^a\circ \phi^b.\]
  In this notation, the $W\th$ weighted
  $\Ainf$-homomorphism relation is
  \[
    (f^\bullet \star \mu^\bullet)^W
  +(\mu^\bullet\circ \dtildew{f}^\bullet)^W=0
  \]

  Now, suppose that $f$ is only a $(W-1)$-truncated homomorphism,
  with  components $\{f^v\co \Tensor^* A_+\to A_+\}_{v=0}^{W-1}$.
  The obstruction class $\fobstr^W$ is defined to be
  \[
  \fobstr^W=
  (f^\bullet\star \mu^{\bullet\geq 1})^W +
  (\mu^{\bullet}\circ \dtildew{f}^\bullet)^W;
\]
which, in turn,  is shorthand for
\[ 
  \fobstr^W=
\sum_{\substack{a+b=W\\ 0\leq a\leq W-1}}f^a \star \mu^{b}
+ \sum_{\substack{a+b=W\\0\leq a\leq W}}
\mu^a\circ F^b.
  \]
  We represent this equation graphically by
  \[
    \fobstr^W=
  \mathcenter{
    \begin{tikzpicture}
      \node at (-1,0) (tl) {};
      \node at (0,0) (tc) {};
      \node at (1,0) (tr) {};
      \node at (0,-1) (m) {$\mu^{\geq 1}$};
      \node at (0,-2) (f) {$f^{\leq W-1}$};
      \node at (0,-3) (bc) {};
      \draw[taa] (tl) to (f);
      \draw[taa] (tr) to (f);
      \draw[taa] (tc) to (m);
      \draw[alga] (m) to (f);
      \draw[alga] (f) to (bc);
    \end{tikzpicture}
  }+\mathcenter{
    \begin{tikzpicture}
      \node at (-1,0) (tl) {};
      \node at (0,0) (tdots) {$\cdots$};
      \node at (1,0) (tr) {};
      \node at (-1,-1) (lf) {$f^{\leq W-1}$};
      \node at (1,-1) (rf) {$f^{\leq W-1}$};
      \node at (0,-1) (moredots) {$\cdots$};
      \node at (0, -2) (mu) {$\mu^{\geq 0}$};
      \node at (0, -3) (bc) {};
      \draw[taa] (tl) to (lf);
      \draw[taa] (tr) to (rf);
      \draw[alga] (lf) to (mu);
      \draw[alga] (rf) to (mu);
      \draw[alga] (mu) to (bc);
    \end{tikzpicture}
  }.
  \]
  (We write here $f^{\leq W-1}$ to bear in mind that there are no terms $f^i$ with $i\geq W$.)
  
  Having defined $\obstr^W$ and $\fobstr^W$, we now check they satisfy the requisite
  properties. 

  The weight $W$ $\Ainf$ relation for $\mu^w$ can be written
  \begin{equation}
    \label{eq:WeightedAinftyRelation}
    \delta \mu^{W} =
    (\mu^{\bullet\geq 1}\star
    \mu^{\bullet\geq 1})^W=\obstr^W,
  \end{equation}
  which is Property~\ref{wo:Extension}.

  We verify Property~\ref{wo:Cocycle} after introducing some notation.

Equation~\eqref{eq:LeibnizStar} generalizes, as follows.  Given
 $f\co \Tensor^*(A_+)\to A_+$ and $g\co \Tensor^*(A_+)\to A_+$, let
  \[ \eta(f,g)\co \Tensor^*(A_+)\to A_+ \]
  be the map
  \begin{equation}
    \label{eq:DefEta}
    \eta(f,g)=\mu^0\circ (\Id\otimes f\otimes\Id\otimes g\otimes\Id)\circ\Delta^5
    +\mu^0\circ (\Id\otimes g\otimes\Id\otimes f\otimes\Id)\circ\Delta^5.
  \end{equation}
  Graphically
  \[   \eta(f,g)=
  \mathcenter{
    \begin{tikzpicture}
      \node at (-1.5,0) (tll) {};
      \node at (-.75,0) (tl) {};
      \node at (0,0) (tm) {};
      \node at (1.5,0) (trr) {};
      \node at (.75,0) (tr) {};
      \node at (-.375,-1) (f) {$f$};
      \node at (.375,-1) (g) {$g$};
      \node at (0, -2) (mu) {$\mu^0$};
      \node at (0, -3) (bc) {};
      \draw[taa] (tl) to (f);
      \draw[taa] (tr) to (g);
      \draw[alga,bend right=10] (f) to (mu);
      \draw[taa] (tm) to (mu);
      \draw[taa,bend left=15] (trr) to (mu);
      \draw[taa,bend right=15] (tll) to (mu);
      \draw[alga,bend left=10] (g) to (mu);
      \draw[alga] (mu) to (bc);
    \end{tikzpicture}}
+   
\mathcenter{\begin{tikzpicture}
      \node at (-1.5,0) (tll) {};
      \node at (-.75,0) (tl) {};
      \node at (0,0) (tm) {};
      \node at (1.5,0) (trr) {};
      \node at (.75,0) (tr) {};
      \node at (-.375,-1) (f) {$g$};
      \node at (.375,-1) (g) {$f$};
      \node at (0, -2) (mu) {$\mu^0$};
      \node at (0, -3) (bc) {};
      \draw[taa] (tl) to (f);
      \draw[taa] (tr) to (g);
      \draw[alga,bend right=10] (f) to (mu);
      \draw[taa] (tm) to (mu);
      \draw[taa,bend left=15] (trr) to (mu);
      \draw[taa,bend right=15] (tll) to (mu);
      \draw[alga,bend left=10] (g) to (mu);
      \draw[alga] (mu) to (bc);
    \end{tikzpicture}
  }.\]
  (So, for an associative algebra, $\eta(f,g)=\mu_2(f,g)+\mu_2(g,f)$.)
  Equation~\eqref{eq:LeibnizStar} readily generalizes to 
  \begin{equation}
    \label{eq:FirstLeibnizW}
    \delta (f\star g)=(\delta f)\star g
    + f\star (\delta g)+ \eta(f,g).
  \end{equation}

  Extend $\eta$ to sequences of maps $f^\bullet$ and $g^\bullet$ as usual, letting 
  $\eta^\bullet(f^\bullet,g^\bullet)=\{\eta^W(f^\bullet,g^\bullet)\co\Tensor^*(A_+)\to A_+\}_{W=0}^{\infty}$ be the sequence of maps 
  whose $W^{th}$ component is given by 
  \[ \eta^W(f^{\bullet},g^{\bullet})=\sum_{a+b=W} \eta(f^a,g^b).\]
  Equation~\eqref{eq:FirstLeibnizW} generalizes to
  \begin{equation}
    \label{eq:FirstLeibnizW2}
    \delta(f^\bullet\star g^\bullet)=(\delta f^\bullet)\star g^\bullet+f^{\bullet}\star (\delta g^\bullet) +\eta^\bullet(f^\bullet,g^\bullet).
  \end{equation}

  To verify Property~\ref{wo:Cocycle}, observe that
  \begin{align*}
    \delta \obstr^W= \delta(\mu^{\bullet\geq 1} \star \mu^{\bullet\geq 1})^W
    &= (\delta(\mu^{\bullet\geq 1})\star \mu^{\bullet\geq 1})^W + 
     (\mu^{\bullet\geq 1}\star \delta(\mu^{\bullet\geq 1}))^W
    + \eta^{\bullet}(\mu^{\bullet\geq 1},\mu^{\bullet^\geq 1}) \\
    &= ((\mu^{\bullet \geq 1}\star\mu^{\bullet\geq 1})\star \mu^{\bullet \geq 1})^W +
    (\mu^{\bullet \geq 1}\star (\mu^{\bullet \geq 1}\star\mu^{\bullet\geq 1}))^W\\ &= 0
  \end{align*}
  (Here, the last step uses the analogue of Equation~\eqref{eq:Associator}.)
  
  Let
  ${\mathcal A}$ and ${\overline{\mathcal A}}$ be the $W$-truncated deformations
  with weighted operations $\mu^w$ and ${\overline \mu}^w$ respectively.
  The weight $W$ $\Ainf$ relation for a 
  $W$-weighted map $f\co  \mathcal{A}\to \overline{\mathcal{A}}$ 
  with $f^0=\Id$ and $f^w=0$ for all $0<w<W$ 
  is precisely  the condition 
  \[
    \delta(f^W)=\mu^W-{\overline \mu}^W;
  \]
  and this is Property~\ref{wo:coboundary}.

  To verify Property~\ref{wo:naturality}, observe that
  if $\Alg$ and ${\overline\Alg}$ have $\mu^w={\overline\mu}^w$ for all $w<W$, and
  \[ \mu^W-{\overline\mu}^{W}=\delta c\]
  for some $c\in \HC^*(\Alg^0)$,
  then 
  \begin{align*}
    \obstr^{W+1}-{\overline \obstr}^{W+1}&= \mu^W\star \mu^1 + \mu^1\star\mu^W - {\overline\mu}^W\star \mu^1
  -\mu^1\star{\overline \mu}^W \\
  &= (\mu^W-{\overline \mu}^W)\star \mu^1 + \mu^1\star(\mu^W -{\overline \mu}^W) \\
  & = \delta(c\star \mu^1 + \mu^1\star c).
  \end{align*}
  (The last line uses
  Equation~\eqref{eq:FirstLeibnizW}, 
  $\eta(\mu^W-{\overline\mu}^W,\mu^1)+\eta(\mu^1,\mu^W-{\overline\mu}^W)=0$,
  and the $\Ainf$ relation that guarantees that $\delta\mu^1=0$.)
  
  Next, we consider the case for maps.
  Fix two $W$-truncated deformations
  $\Alg$ and ${\overline \Alg}$ of $\Alg^0$, and a homomorphism 
  of the underlying $(W-1)$-truncated deformations
  \[ f^{\leq W-1}\co \Alg^{\leq W-1} \to {\overline\Alg}^{\leq W-1}, \]
  with $f^0_1\co A\to A$ the identity map and $f^0_n=0$ for $n>1$.
  We wish to extend this to a homomorphism of $W$-truncated deformations
  by introducing a new component $f^W\co \Tensor^*(A_+)\to A_+$. 
  The weight $W$ $\Ainf$ relation has the form
  \[ \delta f^W = \fobstr^W;\]
  so Property~\ref{w:coboundary} follows.

  To verify Property~\ref{w:mapcocycle}, we introduce some more notation.

  Given $\phi\co \Tensor^*(A_+)\to A_+$, let
  ${\overline \phi}\co \Tensor^*(A_+)\to \Tensor^*(A_+)$ be the induced map
  \[ {\overline \phi}=(\Id_{\Tensor^*(A_+)}\otimes \phi\otimes \Id_{\Tensor^*(A_+)})\circ \Delta^3.\]
  There is a differential
  \[ D\co\Hom(\Tensor^*(A_+),\Tensor^*(A_+))\to \Hom(\Tensor^*(A_+),\Tensor^*(A_+)) \]
  defined by
  \[ D(\Phi)={\overline{\mu^0}}\circ \Phi+\Phi\circ {\overline{\mu^0}}.\]

  Equation~\eqref{eq:SecondLeibniz} has the following  analogue:
  \begin{equation}
    \label{eq:SecondLeibnizW}
    \delta(f^\bullet\circ \dtildew{\phi}^\bullet)= (\delta(f^\bullet)\circ \dtildew{\phi}^\bullet)
    + (f^\bullet \circ D \dtildew{\phi}^\bullet) 
    + (\mu^0\star f^\bullet)\circ \dtildew{\phi}^\bullet
    + \mu^0\star (f^\bullet\circ \dtildew{\phi}^\bullet).
  \end{equation}
  (The last term is a $\star$ of a single map $\mu^0\in\Mor(\Tensor^*(A_+),A_+)$
  with a sequence of maps $g^\bullet=f^\bullet\circ\dtildew{\phi}^\bullet$.
  This is to be interpreted as a sequence of maps whose $W^{\th}$ component is 
  $\mu^0\star g^W$.)
  Equation~\eqref{eq:SecondLeibnizW} follows from the following identities:
  \begin{align*}
    \delta(f^\bullet\circ \dtildew{\phi}^\bullet)&=
    \mu^0\star (f^\bullet\circ \dtildew{\phi}^\bullet) + (f^\bullet\circ \dtildew{\phi}^\bullet\circ {\overline\mu}^0) \\
    f^\bullet\circ D{\dtildew{\phi}^\bullet}&=
    (f^\bullet\circ {\overline\mu}^0 \circ {\dtildew{\phi}}^\bullet) + (f^\bullet \circ {\dtildew{\phi}}^\bullet\circ {\overline\mu}^0) \\
    (\delta f^\bullet)\circ \dtildew{\phi}\bullet
    &= (\mu^0\star f^\bullet)\circ \dtildew{\phi}^\bullet+ (f^\bullet\star \mu^0)\circ \dtildew{\phi}^\bullet.
  \end{align*}

  The maps $f^\bullet=\{f^w\co \Tensor^*(A_+)\to A_+\}_{w=0}^{\infty}$ form the components of a weighted $\Ainf$ homomorphism if 
  \begin{equation}
    \label{eq:AinfMapFirstFormulation}
    \mu^{\bullet\geq 0}\circ \dtildew{f}^\bullet + \dtildew{f}^\bullet\circ \mu^{\bullet\geq 0}=0.
  \end{equation}
  When $f^\bullet$ is merely a homomorphism of $W$-truncated
  deformations, i.e.  $f^\bullet=\{f^w\co \Tensor^*(A_+)\to
  A_+\}_{w=0}^{W}$, we require that
  Equation~\eqref{eq:AinfMapFirstFormulation} holds for the components
  indexed by $w=0,\dots,W$.

  The $w^{\th}$ component of Equation~\eqref{eq:AinfMapFirstFormulation} has two alternative
  formulations. We begin with the analgue of Equation~\eqref{eq:AinfMap1}, which states that 
  if $f^\bullet$ is a homomorphism of $W$-truncated deformations, then for each $w=0,\dots,W$,
  \begin{equation}
    \label{eq:AinfMap1w}
    \delta f^w = (f^\bullet\star {\overline{\mu^{\bullet\geq 1}}})^w
    +  (\mu^{\bullet \geq 1} \circ {\dtildew f}^\bullet)^w
    + \mu^0\circ \dtildew{f}^w + \mu^0\star f^w;
  \end{equation}
  i.e.,
  \[ 
    \delta f^w = \sum_{\substack{a+b=w\\b\geq 1}} \left(f^a\star \mu^b +  
    \mu^b \circ {\dtildew f}^b\right) + 
    \sum_{w_1+\dots+w_m=w} \mu^0\circ(f^{w_1}\otimes\dots\otimes f^{w_m})\circ \Delta^m
    + \mu^0\star f^w.\]
    Equation~\eqref{eq:AinfMap1w} follows immediately from the homomorphism relation
    \[ \mu^0 \circ (\dtildew{f}^{w})+ (f^w \star \mu^0) = (f^\bullet \star\mu^{\bullet\geq 1} + \mu^{\bullet\geq 1}\star f^\bullet)^w, \]
    together with the definition
    \[ \delta(f^w)=\mu^0\star f^w + f^w\star \mu^0.\]
    Since $f^0=\Id$, 
    the terms in $\mu^0\circ \dtildew{f}^w$ that involve the $f^w$ component cancel against
    the terms in $\mu^0\star f^w$; thus, 
    we can rewrite Equation~\eqref{eq:AinfMap1w} as:
    \begin{equation}
    \label{eq:AinfMap1w2}
    \delta f^w = (f^\bullet\star \mu^{\bullet\geq 1})^w
    +  (\mu^{\bullet \geq 1} \circ {\dtildew f}^\bullet)^w + 
    \mu^0\circ (\dtildew{f}^{\bullet<w})^w.
    \end{equation}
    Here, 
    $\mu^0\circ (\dtildew{f}^{\bullet<w})^w$ is the map which, given an element of $\Tensor^*(A_+)$, splits the element into $m$ tensor factors, 
    applies $f^{w_i}$ to the $i^{\th}$ tensor factor, for all choices of $(w_1,\dots,w_m)$ with $0\leq w_i\leq w-1$ and $\sum_{i=1}^{m} w_i =m$,
    and then applies $\mu^0_m$ to the outputs.

    The analogue of Equation~\eqref{eq:AinfMap2} is
    \begin{equation}
      \label{eq:AinfMap2w}
      D({\dtildew f})^W=({\overline\mu^{\bullet\geq 1}}\circ \dtildew{f})^W+(\dtildew{f}\circ {\overline\mu^{\bullet\geq 1}})^W.
    \end{equation}

    Using the $\Ainf$ homomorphism relation for all $w<W$, we get the following version:
    \begin{align}
      \label{eq:AinfMap2w2}
      D ({\dtildew f^{\bullet<W}})^W&= 
    ({\overline{\mu^{\bullet\geq 1}}}\circ {\dtildew f}^\bullet)^W 
    + ({\dtildew f}\circ \overline{\mu^{\bullet\geq 1}})^W \nonumber \\
    &
    + {\overline {(\mu^0\circ \dtildew{f^{\bullet<W}})^W}} +
    {\overline{(\mu^{\bullet\geq 1}\circ \dtildew{f^{\bullet<W}})^W}}
    - {\overline{(f^{\bullet<W}\star \mu^{\bullet\geq 1})^W}}
  \end{align}
  
  With this notational background in place, we turn to the verification of Property~\ref{w:mapcocycle}.
  By hypothesis, Equations~\eqref{eq:AinfMap1w2}
  holds.
  In particular, although we cannot assume that Equation~\eqref{eq:AinfMap1w} holds for $w=W$, 
  we 
  do have that
  \begin{equation}
    \label{eq:Map1wStarMu}
    (\delta f^\bullet\star \mu^{\bullet\geq 1})^W = 
  \left(\left((f^\bullet\star \mu^{\bullet\geq 1})
    +  (\mu^{\bullet \geq 1} \circ {\dtildew f}^\bullet) 
    + (\mu^0\circ \dtildew{f^{\bullet}})
    + (\mu^0\star f^{\bullet})\right)
    \star\mu^{\bullet\geq 1}\right)^W,
  \end{equation}
    using Equation~\eqref{eq:AinfMap1w2} in the components $w=0,\dots,W-1$.
    Thus, by Equations~\eqref{eq:FirstLeibnizW} and~\eqref{eq:Map1wStarMu},
  \begin{align}
    \delta(f^\bullet\star \mu^{\bullet\geq 1})^W&=
    ((\delta f^\bullet)\star \mu^{\bullet\geq 1})^W
    + (f^\bullet\star \delta(\mu^{\bullet \geq 1}))^W+ 
    \eta^W(f^\bullet,\mu^{\bullet \geq 1}) \nonumber \\
    &=((f^\bullet\star \mu^{\bullet \geq 1})\star \mu^{\bullet \geq 1})^W+ 
    ((\mu^{\bullet \geq 1}\circ \dtildew{f}^\bullet)\star \mu^{\bullet \geq 1})^W
    + ((\mu^0\circ  \dtildew{f^{\bullet}} 
    + \mu^0\star f^\bullet) \star \mu^{\bullet\geq 1})^W \nonumber 
    \\ &
    \qquad
    + (f^\bullet\star (\mu^{\bullet \geq 1}\star\mu^{\bullet \geq 1}))^W 
    + \eta^W(f^\bullet,\mu^{\bullet \geq 1}) \nonumber \\
    &= ((\mu^{\bullet \geq 1}\circ \dtildew{f}^\bullet)\star \mu^{\bullet \geq 1})^W
    + ((\mu^0\circ  \dtildew{f^{\bullet}}) \star \mu^{\bullet\geq 1})^W
    + ((\mu^0\star  f^{\bullet}) \star \mu^{\bullet\geq 1})^W
    +\eta^W(f^\bullet,\mu^{\bullet \geq 1}). \label{eq:Part1} 
  \end{align}
  Note that we cancelled above the terms $(f^\bullet\star \mu^{\bullet
    \geq 1})\star \mu^{\bullet \geq 1}$ and $ f^\bullet\star
  (\mu^{\bullet \geq 1}\star \mu^{\bullet \geq 1})$ as in
  Equation~\eqref{eq:Associator}.

  Similarly, using Equation~\eqref{eq:AinfMap2w} for $w<W$, we see
  that
  \begin{equation}
    \label{eq:MuCircAinfMap2w}
    (\mu^{\bullet\geq 1}\circ D\dtildew{f}^\bullet)^W=\left(\mu^{\bullet\geq 1}\circ (\overline{\mu^{\bullet\geq 1}}\circ \dtildew{f}^\bullet) 
    + \mu^{\bullet\geq 1}\circ (\dtildew{f}^\bullet\circ \overline{\mu^{\bullet\geq 1}})\right)^W
  \end{equation}
  Thus, by Equations~\eqref{eq:SecondLeibnizW},~\eqref{eq:MuCircAinfMap2w}, and \eqref{eq:WeightedAinftyRelation}
  \begin{align}
    \delta(\mu^{\bullet \geq 1}\circ \dtildew{f}^\bullet)^W&=
        \Big(\delta(\mu^{\bullet \geq 1})\circ \dtildew{f}^\bullet
        + \mu^{\bullet \geq 1}\circ (D \dtildew{f}^\bullet)+(\mu^0\star\mu^{\bullet \geq 1})\circ \dtildew{f}^\bullet+\mu^0\star(\overline{\mu^{\bullet \geq 1}}\circ\dtildew{f}^\bullet)\Big)^W
        \nonumber \\
        &=\Big((\mu^{\bullet \geq 1}\star \mu^{\bullet \geq 1})\circ {\dtildew{f}^\bullet} 
        + \mu^{\bullet \geq 1}\circ (\overline{\mu^{\bullet \geq 1}}\circ \dtildew{f}^\bullet)
        + \mu^{\bullet \geq 1}\circ (\dtildew{f}^\bullet \circ\overline{\mu^{\bullet \geq 1}})\nonumber \\
        &\qquad + (\mu^0\circ(\overline{\mu^{\bullet \geq 1}}\circ \dtildew{f}^\bullet)+\mu^0\star(\mu^{\bullet \geq 1}\circ \dtildew{f}^\bullet)\Big)^W 
        \nonumber \\
        &=\Big(\mu^{\bullet \geq 1}\circ (\dtildew{f}^\bullet \circ\overline{\mu^{\bullet \geq 1}})+ (\mu^0\circ(\overline{\mu^{\bullet \geq 1}}\circ \dtildew{f}^\bullet)+\mu^0\star(\mu^{\bullet \geq 1}\circ \dtildew{f}^\bullet)\Big)^W.
        \label{eq:Part2}   
  \end{align}
  Above, we
  have used that $\mu^{\bullet \geq 1}\star \mu^{\bullet \geq
    1}=\mu^{\bullet \geq 1}\circ {\overline\mu^{\bullet \geq 1}}$.

  By Equation~\eqref{eq:SecondLeibnizW}, and using the identities
  $\delta(\mu^0)=0$ and $\mu^0\star\mu^0=0$, we see that
  \begin{align}
    \delta(\mu^0\circ (\dtildew{f^{\bullet<W}})^W)&=(\delta\mu^0)\circ \dtildew{f^{\bullet<W}}^W
    + \mu^0\circ (D\dtildew{f^{\bullet<W}})^W + (\mu^0\star\mu^0)\circ \dtildew{f^{\bullet<W}}^W
    + \mu^0\star(\mu^0\circ \dtildew{f^{\bullet<W}})^W  \nonumber \\
    &= \mu^0\circ (D\dtildew{f^{\bullet<W}})^W 
    + \mu^0\star(\mu^0\circ \dtildew{f^{\bullet<W}})^W.
    \label{eq:Part3}
  \end{align}

  Applying $\mu^0$ to Equation~\eqref{eq:AinfMap2w2}, we find that
    \begin{align}
      \mu^0\circ D (\dtildew{f^{\bullet<W}})^W&= 
    \mu^0\circ({\overline{\mu^{\bullet\geq 1}}}\circ {\dtildew f}^\bullet)^W 
    + \mu^0\circ({\dtildew f}^\bullet\circ \overline{\mu^{\bullet\geq 1}})^W \nonumber \\
    &
    + \mu^0\star (\mu^0\circ \dtildew{f^{\bullet<W}})^W +
    \mu^0\star (\mu^{\bullet\geq 1}\circ \dtildew{f^{\bullet<W}})^W
    + \mu^0\star(f^{\bullet<W}\star \mu^{\bullet\geq 1})^W
    \label{eq:Part4}
  \end{align}
  
  Adding up Equations~\eqref{eq:Part1},~\eqref{eq:Part2},\eqref{eq:Part3}, and \eqref{eq:Part4}, together with 
  the following case of Equation~\eqref{eq:Associator}
  \[ \mu^0\star (f^{\bullet<W}\star \mu^{\bullet\geq 1}))+
  (\mu^0\star f^{\bullet<W})\star \mu^{\bullet\geq 1}
  +\eta^W(f^{\bullet},\mu^{\bullet\geq 1})=0,\]
  we find that
  \[
    \delta(\fobstr^W)= \delta(f^\bullet\star \mu^{\bullet \geq 1}+\mu^{\bullet \geq 1}\circ \dtildew{f}^\bullet + \mu^0\circ \dtildew{f^{\bullet<W}})^W = 0,
  \]
  verifying Property~\ref{w:mapcocycle}.
\end{proof}

We turn next to the graded case. Fix a group $\Gamma$ and central elements
$\lambda_d$ and $\lambda_w$. Assume further that $\lambda_d$
and $\lambda_w$ generate a subgroup isomorphic to $\ZZ^2$.
(We will be working here with $\Gamma=\smallGroup\times \ZZ$,
where $\ZZ$ is winding number grading, as in Section~\ref{sec:win-num}.)

If $\Alg^0$ is a $(\Gamma,\lambda_d)$-graded $\Ainf$ algebra then we can
consider the complex $\HC_\Gamma^{W,*}(\Alg^0)$ generated by elements of
$\Mor([\Id],[\Id])$ which shift the grading by
$\lambda_d^{\ell}\lambda_w^W$ for some~$\ell$. 

More explicitly, the $\Gamma$-grading on $\HC^*(\Alg^0)$ is specified by
\[
\gamma((a_1\otimes\cdots\otimes a_n)\mapsto
b)\coloneqq
\lambda_d^{1-n} \gamma(b)\cdot \gamma(a_n)^{-1}\cdots\gamma(a_1)^{-1}.
\] 
Let $\HC_\Gamma^{W,\ell}$ be the portion with 
\[
  \gamma((a_1\otimes\cdots\otimes a_n)\mapsto b)=\lambda_w^{W}\lambda_d^{\ell}.
\]
Then $\delta$ maps $\HC_\Gamma^{W,\ell}$ to $\HC_\Gamma^{W,\ell-1}$.

With these remarks in place, we have the following graded version of Proposition~\ref{prop:deform-Ainf-alg} 
(compare Proposition~\ref{prop:deform-assoc-alg-G}, as well
as~\cite[Section 2.4]{Sheridan:CY-hyp}):

\begin{proposition}
  \label{prop:deform-Ainf-alg-G}
  Given a $(\Gamma,\lambda_d,\lambda_w)$-graded $(W-1)$-truncated, weighted
  $\Ainf$ deformation $\Alg^{W-1}$ of $\Alg^0$, the obstruction class
  $\obstr^W\in\HC_\Gamma^{W,-2}(\Alg^0)$
  is the obstruction to extending $\Alg^{W-1}$ to a $W$-truncated, weighted $\Ainf$ deformation of $\Alg^0$.
  Given two $(\Gamma,\lambda_d,\lambda_w)$-graded $W$-truncated deformations $\Alg$ and $\Alg'$
  of $\Alg^0$ and a $(\Gamma,\lambda_d,\lambda_w)$-graded homomorphism $f$ between their
  underlying $(W-1)$-truncated parts, the class 
  $\fobstr^W\in \HH_\Gamma^{W,-1}(\Alg^0)$ is the obstruction to extending $f$ to a $(\Gamma,\lambda_d,\lambda_w)$-graded
  $W$-truncated homomorphism.
\end{proposition}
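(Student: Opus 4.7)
The plan is to deduce this from the ungraded Proposition~\ref{prop:deform-Ainf-alg} by tracking how the obstruction classes respect the bigrading. First, I would extend the $\Gamma$-grading from $\Alg^0$ to $\HC^*(\Alg^0)$ via the formula $\gamma\bigl((a_1\otimes\cdots\otimes a_n)\mapsto b\bigr)=\lambda_d^{1-n}\gamma(b)\gamma(a_n)^{-1}\cdots\gamma(a_1)^{-1}$, and check that the Hochschild differential $\delta=\mu^0\star\bullet+\bullet\star\mu^0$ decreases the $\ell$-exponent of $\lambda_d$ by one and preserves the $\lambda_w$-weight (using that each $\mu_n^0$ has degree $\lambda_d^{n-2}$, hence lives in $\HC_\Gamma^{0,-1}$). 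This makes $\HC_\Gamma^{W,*}$ into a chain complex with $\delta\co \HC_\Gamma^{W,\ell}\to \HC_\Gamma^{W,\ell-1}$.

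Next I would verify the asserted bidegrees of the obstruction cocycles. The hypothesis that each $\mu_n^w$ has degree $\lambda_d^{n-2}\lambda_w^w$ translates, under the formula above, to $\mu^w\in \HC_\Gamma^{w,-1}$. Tracking how the operation $\star$ affects the bigrading (using Equation~\eqref{eq:DefStar}), one sees that $\star\co \HC_\Gamma^{a,\ell_1}\otimes \HC_\Gamma^{b,\ell_2}\to \HC_\Gamma^{a+b,\ell_1+\ell_2}$; the potentially tricky dependence on the number of inputs of the outer factor cancels in the total $\lambda_d$-exponent. Therefore $\obstr^W=\sum_{\substack{a+b=W\\ 1\leq a\leq W-1}}\mu^a\star\mu^b$ lies in $\HC_\Gamma^{W,-2}$. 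The analogous computation, now for the homomorphism case, shows that each graded component $f^w$ of a graded weighted homomorphism lies in $\HC_\Gamma^{w,0}$ (since the $\Ainf$-homomorphism operation $f_n^w$ has degree $\lambda_d^{n-1}\lambda_w^w$), and that both summands in the definition of $\fobstr^W$ end up in $\HC_\Gamma^{W,-1}$.

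Finally I would observe that the assertions now follow mechanically from Proposition~\ref{prop:deform-Ainf-alg}. An extension $\mu^W$ defining a $(\Gamma,\lambda_d,\lambda_w)$-graded $W$-truncated deformation is precisely a cochain $\mu^W\in \HC_\Gamma^{W,-1}$ with $\delta\mu^W=\obstr^W$; since $\obstr^W\in \HC_\Gamma^{W,-2}$ automatically by the previous paragraph, the existence of such $\mu^W$ is controlled by the class $[\obstr^W]\in \HH_\Gamma^{W,-2}(\Alg^0)$, giving the first statement. The second statement is identical in form: a graded $W$-truncated extension of $f$ is a cochain $f^W\in \HC_\Gamma^{W,0}$ with $\delta f^W=\fobstr^W\in \HC_\Gamma^{W,-1}$, so its existence is controlled by $[\fobstr^W]\in \HH_\Gamma^{W,-1}(\Alg^0)$. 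The main obstacle is the bidegree bookkeeping for $\star$ and for the iterated comultiplication appearing in $\dtildew{f}$; once that is done, no new $\Ainf$-style computation beyond Proposition~\ref{prop:deform-Ainf-alg} is needed.
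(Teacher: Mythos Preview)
Your proposal is correct and matches the paper's approach: the paper's proof is a single line, ``This is a straightforward adaptation of the proof of Proposition~\ref{prop:deform-Ainf-alg},'' and what you have written is exactly that adaptation spelled out, with the bidegree bookkeeping made explicit.
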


\begin{corollary}
  \label{cor:deform-Ainf-alg-G}
  Let $\Alg^0$ be a $(\Gamma,\lambda_d)$-graded $\Ainf$-algebra and
  $\Alg^W$ a $(\Gamma,\lambda_d,\lambda_w)$-graded $W$-truncated weighted
  deformation of $\Alg^0$. If $\HH_\Gamma^{w,-2}(\Alg^0)=0$ for all $w>W$
  then $\Alg^W$ extends to a $(\Gamma,\lambda_d,\lambda_w)$-graded weighted
  $\Ainf$-algebra structure on $A$. If $\HH_\Gamma^{w,-1}(\Alg^0)=0$ for all
  $w>W$ then any two $(\Gamma,\lambda_d,\lambda_w)$-graded weighted
  $\Ainf$-algebra structures on $A$ extending $\Alg^W$ are isomorphic.
\end{corollary}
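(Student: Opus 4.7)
My plan is to argue by induction on the weight, exactly mirroring the proof of Corollary~\ref{cor:deform-assoc-alg-G} but using the weighted obstruction theory of Proposition~\ref{prop:deform-Ainf-alg-G} in place of its associative counterpart.

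For existence, suppose inductively that we have constructed a $(\Gamma,\lambda_d,\lambda_w)$-graded $w$-truncated weighted deformation $\Alg^{w}$ of $\Alg^0$ extending $\Alg^W$, with $w\geq W$. By Proposition~\ref{prop:deform-Ainf-alg-G}, the obstruction to extending $\Alg^{w}$ to a $(w+1)$-truncated deformation is a cocycle $\obstr^{w+1}\in\HC_\Gamma^{w+1,-2}(\Alg^0)$. Since $\HH^{w+1,-2}_\Gamma(\Alg^0)=0$ by hypothesis, $\obstr^{w+1}$ is a coboundary, so by Property~\ref{wo:Extension} of Proposition~\ref{prop:deform-Ainf-alg} we can choose $\mu^{w+1}$ with $\delta(\mu^{w+1})=\obstr^{w+1}$, yielding the desired $(w+1)$-truncated extension. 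Iterating produces the full weighted $\Ainf$-algebra.

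For uniqueness, let $\Alg$ and $\Alg'$ be two $(\Gamma,\lambda_d,\lambda_w)$-graded weighted $\Ainf$-algebra structures on $A$ that both extend $\Alg^W$. I will inductively build a homomorphism $f^\bullet\co \Alg\to\Alg'$ with $f^0_1=\Id$, $f^0_m=0$ for $m\neq 1$, and $f^w=0$ for $0<w\leq W$. The base case $w=W$ holds because the two deformations agree through weight~$W$. For the inductive step, suppose I have a homomorphism of $w$-truncated deformations with $w\geq W$. By Proposition~\ref{prop:deform-Ainf-alg-G}, the obstruction to extending it to a $(w+1)$-truncated homomorphism lies in $\HH_\Gamma^{w+1,-1}(\Alg^0)$, which vanishes by hypothesis; by Property~\ref{w:coboundary} of Proposition~\ref{prop:deform-Ainf-alg} I can therefore choose $f^{w+1}$ extending the homomorphism. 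Taking the limit gives a homomorphism of weighted $\Ainf$-algebras $f\co\Alg\to\Alg'$; and since $f_1^0=\Id$ is an isomorphism of $R$-modules, Lemma~\ref{lem:bij-hom-is-iso} shows that $f$ is an isomorphism of weighted $\Ainf$-algebras.

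There is essentially no hard step here: once Proposition~\ref{prop:deform-Ainf-alg-G} is in hand, both existence and uniqueness reduce to the standard obstruction-theoretic induction. The only points requiring care are bookkeeping ones, namely verifying that the gradings on $\obstr^{w+1}$ and $\fobstr^{w+1}$ are as claimed (so that the vanishing hypotheses apply), and invoking Lemma~\ref{lem:bij-hom-is-iso} at the end to promote the constructed homomorphism to an isomorphism.
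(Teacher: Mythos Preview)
Your proof is correct and follows essentially the same approach as the paper's own proof, which simply states that the result follows readily from Proposition~\ref{prop:deform-Ainf-alg-G}, with reference to the proof of Corollary~\ref{cor:deform-assoc-alg}. You have spelled out the inductive argument in detail, correctly invoking the obstruction classes $\obstr^{w+1}\in\HH_\Gamma^{w+1,-2}(\Alg^0)$ and $\fobstr^{w+1}\in\HH_\Gamma^{w+1,-1}(\Alg^0)$ from Proposition~\ref{prop:deform-Ainf-alg-G} at each step, and invoking Lemma~\ref{lem:bij-hom-is-iso} for the final isomorphism statement.
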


\begin{proof}
  This follows readily from Proposition~\ref{prop:deform-Ainf-alg-G};
  cf.\ the proof of Corollary~\ref{cor:deform-assoc-alg}.
\end{proof}

\subsection{Uniqueness of \texorpdfstring{$\MAlg$}{the torus
    algebra}}\label{sec:MAlg-unique}
In this section, we view the ground ring for $\UnDefAlg$ as $\Ground=\Field\oplus
\Field$, not $\Ground[U]$. So, our augmentation is a map
$\UnDefAlg\to \Ground$, and there is a corresponding augmentation ideal.

\begin{theorem}\label{thm:MAlg-unique}
  Up to isomorphism, there is a unique weighted deformation $\MAlg$ of $\UnDefAlg$ such
  that:
  \begin{enumerate}
  \item $\MAlg$ is $\Gamma=G\times \ZZ$-graded and
  \item\label{item:MAlg-m1} $\mu^1_0=\rho_{1234}+\rho_{2341}+\rho_{3412}+\rho_{4123}$.
  \end{enumerate}
\end{theorem}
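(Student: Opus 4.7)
The plan is to mimic the proof of Theorem~\ref{thm:UnDefAlg-unique} in the weighted setting of Section~\ref{sec:weighted-deformation}, using Proposition~\ref{prop:deform-Ainf-alg-G} and Corollary~\ref{cor:deform-Ainf-alg-G} in place of their associative-algebra analogs. By those tools, it suffices to (a) verify existence and uniqueness at weight $W=1$, and (b) establish the vanishing
\[
  \HH^{W,-1}_\Gamma(\UnDefAlg) \;=\; \HH^{W,-2}_\Gamma(\UnDefAlg) \;=\; 0 \qquad \text{for all } W\ge 2.
\]
At $W=1$ the obstruction $\obstr^1$ vanishes tautologically, since $(\mu^{\bullet\ge 1}\star \mu^{\bullet\ge 1})^1$ has no terms with both $a,b\ge 1$. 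Thus any $(\Gamma,\lambda_d,\lambda_w)$-graded cocycle $\mu^1\in \HC^{1,-1}_\Gamma(\UnDefAlg)$ defines a $1$-truncated weighted deformation; the cochain $\mu^1$ with $\mu_0^1$ as in hypothesis~(\ref{item:MAlg-m1}) and higher components $\mu_n^1$ specified by the tile-pattern counts of Definition~\ref{def:MAlg} is precisely such a cocycle, by Theorem~\ref{thm:AinftyAlgebra}.

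For the cohomology vanishing, I would build a small model for $\HC^*_\Gamma(\UnDefAlg)$ analogous to Definition~\ref{def:SmallModel}, of the form $\UnDefAlg\otimes_{\Ground\otimes\Ground}\UnDefAlg$, but with the differential of Equation~\eqref{eq:DefCdiff} corrected by additional terms coming from the higher $\Ainf$-operations $\mu_n$ of $\UnDefAlg$ (for $n\ge 4$); only finitely many such corrections appear in each fixed bigrading. Proposition~\ref{prop:SmallerModel}, applied through the Koszul quasi-isomorphism $\phi$ of Lemma~\ref{lem:Koszul}, identifies the associative part of this complex with a small model quasi-isomorphic to the associative Hochschild complex of $\AsUnDefAlg[U]$. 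Filtering by the length grading then produces a spectral sequence whose $E_1$-page is (a $\lambda_w$-enriched version of) $\HH^{*,*}_\Gamma(\AsUnDefAlg[U])$, computed in Proposition~\ref{prop:AsUnDefHoch}.

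The decisive new ingredient is the grading by $\lambda_w=(1;1,1,1,1)\times 1$: for each $W\ge 2$, the $\SpinC$-component of any class in $\HC^{W,\ell}_\Gamma$ must differ from $(1,1,1,1)^{W}$ by a multiple of $\grb(U)$, and an enumeration of generators $a\otimes[b]$ satisfying this constraint---exactly parallel to the case analysis following conditions~\ref{c:Idemp}--\ref{c:Grading} in the proof of Proposition~\ref{prop:AsUnDefHoch}---shows that the only possible surviving classes are those detected already at weight one. The main obstacle will be ensuring that the higher spectral-sequence differentials induced by $\mu_4$ (and higher $\mu_n$) indeed kill every candidate class at $W\ge 2$, rather than producing new ones; the idea is that each such candidate can be written as $\mu_0^1$ times a lower-weight class, so its image under the $\mu_4$-differential pairs it with a coboundary from weight $W-1$. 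Once this vanishing is established, Corollary~\ref{cor:deform-Ainf-alg-G} extends $\mu^1$ uniquely to all higher weights; uniqueness at $W=1$ given the prescribed $\mu_0^1$ reduces to the same small-model computation, since the vanishing of $\HH^{1,-1}_\Gamma(\UnDefAlg)$ modulo the line spanned by the prescribed curvature forces any other weight-$1$ cocycle with the same $\mu_0^1$ to be isomorphic to the one constructed above.
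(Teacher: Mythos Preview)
Your overall strategy --- reduce to weight one via Corollary~\ref{cor:deform-Ainf-alg-G} and establish the vanishing $\HH^{W,-1}_\Gamma(\UnDefAlg)=\HH^{W,-2}_\Gamma(\UnDefAlg)=0$ for $W\ge 2$ --- matches the paper exactly. But your Hochschild computation has a genuine gap, stemming from the ground ring. In Section~\ref{sec:unweighted-unique} the Hochschild complex of $\AsUnDefAlg[U]$ is taken over $\Ground[U]$, so $U$ is invisible to cobar and Lemma~\ref{lem:Koszul} applies. In the weighted setting, however, the paper views $\UnDefAlg$ over $\Ground=\Field\oplus\Field$ (see the opening of Section~\ref{sec:MAlg-unique}); now $U$ lies in the augmentation ideal, its dual $U^*$ enters $\Cobarop(\UnDefAlg)$, and the Koszul picture changes. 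The paper proves (Lemma~\ref{lem:Koszul2}) that $\Cobarop(\UnDefAlg)\simeq \SmallCobar\coloneqq\AsUnDefAlg[h]/(h^2)$, where $h=\phi'(U^*)$ has $\gamma'(h)=(-1;0,0)\times 1$ and $\partial'h=\rho_{1234}+\rho_{2341}+\rho_{3412}+\rho_{4123}$. The correct small model is therefore $\mathfrak{C}^*=\UnDefAlg\hotimes_{\Ground\otimes\Ground}\SmallCobar$ (Proposition~\ref{prop:SmallerModel2}), not $\UnDefAlg\otimes\UnDefAlg$ as you propose.

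This is not cosmetic: the new generators $a[hb]$ have $(W,k)$-bigrading shifted by $(-1,+1)$ relative to $a[b]$, and these shifts are exactly what make the weight enumeration come out. Your proposed spectral sequence with $E_1$-page equal to $\HH^{*,*}_\Gamma(\AsUnDefAlg[U])$ does not materialize, because Proposition~\ref{prop:AsUnDefHoch} computes Hochschild over $\Ground[U]$, whereas here it must be over $\Ground$; the latter has infinitely many additional generators $a[U^nb]$, which are precisely what get organized by the $h$ variable in the paper's approach. The paper's actual enumeration (Proposition~\ref{prop:UnDefHoch}) runs in parallel to Proposition~\ref{prop:AsUnDefHoch} but with the $h$-terms included, and finds directly --- with no higher spectral-sequence differentials or speculative ``$\mu_0^1$ times a lower-weight class'' arguments needed --- that every $W>0$ generator in bigrading $(W,-1)$ or $(W,-2)$ already has $W=1$. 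At $W=1$ one gets $\HH^{1,-1}_\Gamma(\UnDefAlg)\cong\FieldSup{2}$, with basis elements having $\mu_0^1$ equal to the sum of length-$4$ chords and to $U$ respectively; hypothesis~(\ref{item:MAlg-m1}) then singles out a unique class, and Corollary~\ref{cor:deform-Ainf-alg-G} finishes.
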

It follows that the distinguished central
elements in $G\times \ZZ$ are
\begin{align}
\lambda_d=\lambda\coloneqq (1;0,0)\times 0\label{eq:lambda-d-is-lambda}\\
\lambda_w=(0;0,0)\times 1
\end{align}
(as in Section~\ref{sec:Intermediate}).

Like Theorem~\ref{thm:UnDefAlg-unique}, Theorem~\ref{thm:MAlg-unique}
follows from a computation of certain Hochschild cohomology groups.
Before giving that computation, we adapt the material from
Section~\ref{subsec:CobarComplex} to $\UnDefAlg$.

Under mild assumptions on an augmented $\Ainf$ algebra
$\Alg^0$,
we can construct its cobar algebra $\Cobarop(\Alg^0)$, as follows.
Given an augmented $\Ainf$-algebra
$\Alg^0=(A,\{\mu_n\co A_+^{\otimes n}\to A\}_{n=1}^{\infty})$, let 
$\Cobarop(\Alg^0)$ be the component-wise dual of the bar complex,
\[
  \Cobarop(\Alg^0)=\bigoplus_{n=0}^{\infty} \Hom(A_+^{\otimes n},\Field)
\]
with multiplication induced by the comultiplication on the bar
complex.
Let
\[
  (\mu^0)^*\co A_+^*\to \Hom\biggl(\bigoplus_{n=1}^{\infty} A_+^{\otimes n},\Field\biggr)\cong \prod_{n=1}^{\infty} \bigl(A_+^{\otimes n}\bigr)^*
\]
be dual to the operations $\mu^0_n\co A_+^{\otimes n}\to A_+$. Call
$\Alg^0$ \emph{bounded enough for cobar} if the image of $(\mu^0)^*$
lies in
\[
  \Cobarop(\Alg^0)=\bigoplus_{n=1}^{\infty} \Hom(A_+^{\otimes n},\Field)\subset \prod_{n=1}^{\infty} \Hom(A_+^{\otimes n},\Field).
\]
Under this assumption, the differential on the bar complex dualizes to
a differential on $\Cobarop(\Alg^0)$.

As in Section~\ref{subsec:CobarComplex}, we will make the differential
more explicit with the help of a filtration. Assume that $A$ is
filtered by subspaces $F_0A\subset
F_1A\subset\cdots\subset A$ with $A=\bigcup_i F_iA$, and so that each
$F_iA$ is finite-dimensional, $\Ground\subset F_0A$, and
$\mu_n(F_{i_1}A,F_{i_2}A,\cdots,F_{i_n}A)\subset F_{i_1+\cdots+i_n}A$.
Then the $n$-input part of $\Cobarop(\Alg^0)$ is given by
$\Cobarop(\Alg^0)_n=\varprojlim (F_i(A_+^{\otimes n}))^*$. Pick a
basis $\{e_i\}$ for $A_+$ and define
$A_+^\dagger \subset (A_+)^*$ to be the subspace spanned by the
elements~$e_i^*$. Then
$\Cobarop(\Alg^0)_n$ is the completion of $(A_+^\dagger)^{\otimes n}$. The differential on $\Cobarop(\Alg^0)$ is
induced by the formula
\[
  \dcob(a_1^*\otimes\dots\otimes a_k^*)=\sum_{i=1}^{k} a_1^*\otimes\dots\otimes (\mu^0)^*(a_i^*)\otimes\dots\otimes a_k^*.
\]
There is also a grading on $\Cobarop(\Alg^0)$ as defined in
Equation~\eqref{eq:GradeCobar}, though $\Cobarop(\Alg^0)$ is not the
direct sum of its graded pieces.

In the application to the torus algebra, a suitable filtration $F_i$ is
given, for instance, by the winding number $\wingr$.
We will always take the elements $U^m\rho_{i,\dots,j}$ and $U^m\iota_i$ as the basis $\{e_i\}$. Call elements of this basis \emph{$\Field$-basic elements}.

\begin{example}
  For $\Cobarop(\UnDefAlg)$, the differential of $U^*$ is given by
  \[
    \dcob(U^*)=\rho_1^*\otimes\rho_2^*\otimes\rho_3^*\otimes\rho_4^*
    +\rho_2^*\otimes\rho_3^*\otimes\rho_4^*\otimes\rho_1^*
    +\rho_3^*\otimes\rho_4^*\otimes\rho_1^*\otimes\rho_2^*
    +\rho_4^*\otimes\rho_1^*\otimes\rho_2^*\otimes\rho_3^*.
  \]
  The differential of $(U\rho_1)^*$ is given by
  \[
   \dcob\bigl((U\rho_1)^*\bigr)= U^*\otimes\rho_1^*+\rho_1^*\otimes U^*
    +\rho_3^*\otimes\rho_4^*\otimes\rho_1^*\otimes\rho_{12}^*
    +\rho_{41}^*\otimes\rho_1^*\otimes\rho_2^*\otimes\rho_3^*.
  \]
  The differential of $(U^2)^*$ is given by
  \begin{align*}
    \dcob\bigl((U^2)^*\bigr)=U^*\otimes U^*
    &+(U\rho_1)^*\otimes\rho_2^*\otimes\rho_3^*\otimes\rho_4^*
    +\rho_1^*\otimes(U\rho_2)^*\otimes\rho_3^*\otimes\rho_4^*
    +\rho_1^*\otimes\rho_2^*\otimes(U\rho_3)^*\otimes\rho_4^*\\
    &+\rho_1^*\otimes\rho_2^*\otimes\rho_3^*\otimes(U\rho_4)^*
    +(\text{cyclic permutations of indices in these terms})\\
    &+\rho_{41}^*\otimes\rho_1^*\otimes\rho_2^*\otimes\rho_{23}^*\otimes\rho_3^*\otimes\rho_4^*
    +(\text{cyclic permutations of these 6 chords})\\
    &+\rho_{12}^*\otimes\rho_2^*\otimes\rho_3^*\otimes\rho_{34}^*\otimes\rho_4^*\otimes\rho_1^*
    +(\text{cyclic permutations of these 6 chords}).
  \end{align*}
\end{example}

The following lemma will quickly lead to a proof that $\UnDefAlg$ is bounded enough for cobar:
\begin{lemma}\label{lem:UnDefAlg-bounded-ish}
  Let $\UnDefAlg$ be a $\Gamma$-graded deformation of $\AsUnDefAlg$.  Given
  $h=(j;{\mathfrak a},{\mathfrak b})\times m\in \Gamma$, there is an upper bound
  on $n$ for which there is a sequence $(a_1,\dots,a_n)$ of $\Field$-basic elements in the augmentation ideal $A_+$ of
  $\AsUnDefAlg[U]$, with the following properties:
  \begin{enumerate}[label=(c-\arabic*),ref=(c-\arabic*)]
  \item\label{item:NonZero}\label{item:First} The tensor product
    $(a_n\otimes\cdots\otimes a_1)\neq 0\in A_+^{\otimes n}$, or equivalently
    $(a_1^*\otimes\cdots\otimes a_n^*)\neq 0$ in $(A_+^*)^{\otimes n}$.
  \item\label{item:Last} The gradings satisfy
    $\lambda^n\gamma(a_n)\cdots\gamma(a_1)=h^{-1}$, or equivalently
    $\gammaCobar(a_1^*\otimes\dots\otimes a_n^*)=h$.
  \end{enumerate}
\end{lemma}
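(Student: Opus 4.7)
The plan is to bound $n$ by separately analyzing three pieces of the grading information in $\Gamma = \smallGroup \times \ZZ$: the winding-number projection, the $\SpinC$-component of $\gr$ inside $\smallGroup$, and the idempotent structure from condition~\ref{item:NonZero}.

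Since $\wingr \co \Gamma \to \ZZ$ is a group homomorphism with $\wingr(\lambda) = 0$, projecting condition~\ref{item:Last} gives $\sum_i \wingr(a_i) = -m$. Every basic $a \in A_+$ has $\wingr(a) \geq 0$, so if $m > 0$ no such sequence exists; otherwise the set $S$ of ``heavy'' indices with $\wingr(a_i) \geq 1$ satisfies $|S| \leq -m$. Furthermore, each basic $a = U^k \rho$ satisfies $|a| \leq 4\wingr(a) + 3$, because any contiguous chord of length $L$ contains at least $\lfloor L/4 \rfloor$ copies of $\rho_4$ and $U$ adds $4$ to the length while adding $1$ to $\wingr$. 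In particular $\sum_{i \in S} |\mathfrak{a}(a_i) + \mathfrak{b}(a_i)| \leq \sum_{i \in S} |a_i| \leq 7(-m)$, so the heavy-index contribution to the sum below is bounded.

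Each ``light'' index $i \notin S$ (with $\wingr(a_i) = 0$) must be one of the six chords $\{\rho_1, \rho_2, \rho_3, \rho_{12}, \rho_{23}, \rho_{123}\}$ not passing through~$4$. Direct computation shows each of these has intermediate grading of the form $(\cdot; \mathfrak{a}(a), \mathfrak{b}(a))$ with $\mathfrak{a}(a) + \mathfrak{b}(a) \geq 0$, and equality holds only for $a = \rho_1$ and $a = \rho_3$. The projection $\smallGroup \to \ZZ$ sending $(m; a, b) \mapsto a + b$ is a group homomorphism, so condition~\ref{item:Last} yields $\sum_i (\mathfrak{a}(a_i) + \mathfrak{b}(a_i)) = -(\mathfrak{a} + \mathfrak{b})$. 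Subtracting the heavy-index contribution bounded in the previous step, the number $\ell$ of light indices with $\mathfrak{a}(a_i) + \mathfrak{b}(a_i) \geq 1$ is at most $|\mathfrak{a} + \mathfrak{b}| + 7(-m)$.

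It remains to bound the number $k$ of light indices with $a_i \in \{\rho_1, \rho_3\}$. Both $\rho_1$ and $\rho_3$ have left idempotent $\iota_0$ and right idempotent $\iota_1$; by condition~\ref{item:NonZero}, two adjacent positions (in the left-to-right ordering $a_n, a_{n-1}, \dots, a_1$) both occupied by elements of $\{\rho_1, \rho_3\}$ would force $\iota_1 = \iota_0$, a contradiction. Hence any two $\{\rho_1, \rho_3\}$-occurrences are separated by at least one other index, which gives $k \leq (n - k) + 1 = |S| + \ell + 1$. Combining the three steps yields $n = k + \ell + |S| \leq 2|S| + 2\ell + 1$, an explicit function of $h$. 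I expect the principal bookkeeping subtlety to lie in the second step, where the $\SpinC$-contributions of heavy indices may take either sign; the linear estimate $|a| \leq 4\wingr(a) + 3$ from the first step is what converts the cap on $|S|$ coming from the winding number into an absolute bound on the total $|\SpinC|$ of the heavy elements, and makes the subtraction well-controlled.
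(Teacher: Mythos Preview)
Your argument is correct and takes a somewhat different route from the paper's.

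The paper splits the sequence into those $a_i$ of the form $U^{\ell}\iota_k$ (count $x$) and the Reeb-type elements $U^{\ell}\rho$ (count $y$). It bounds $y$ by an idempotent-matching argument showing $-\wingr_2-\wingr_4\geq\lfloor y/2\rfloor$ (a Reeb element with $n_2=n_4=0$ forces the next Reeb element to start with $\rho_2$ or $\rho_4$), where $\wingr_4=m$ and $\wingr_2$ is determined from $\mathfrak a+\mathfrak b$ and $m$. It then invokes the Maslov component $j$ to bound $x$, since $\gammaCobar((U^\ell\iota_k)^*)=\lambda^{2\ell}$ contributes at least $+1$ to the Maslov part per such term.

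Your decomposition into heavy ($\wingr\geq 1$) versus light ($\wingr=0$) is different and, in one respect, cleaner: every $U^\ell\iota_k$ is automatically heavy, so its count is already controlled by $-m$ without ever touching the Maslov component. You then use the same $\SpinC$ homomorphism (your $\mathfrak a+\mathfrak b$ equals the paper's $n_2-n_4$) to bound the light chords with $n_2=1$, and the idempotent-matching argument to handle the residual $\{\rho_1,\rho_3\}$'s. The net effect is an explicit bound $n\leq 2|\mathfrak a+\mathfrak b|-16m+1$ that is independent of $j$, whereas the paper's bound $n\leq j-c+2(\mathfrak a+\mathfrak b)-4m+1$ depends on $j$ through a non-explicit constant $c$. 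Both arguments hinge on the same two ingredients (the $\SpinC$ projection and idempotent compatibility); your length estimate $|a|\leq 4\wingr(a)+3$ is what lets you trade the paper's appeal to the Maslov component for a direct bound on the heavy contribution.
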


\begin{proof}
  Recall that a $\Field$-basic element  $a_i$ of $A_+$ is of the form
  $U^{\ell_i} \rho^i$, where $\ell_i\geq 0$, or $U^\ell\iota_k$ where $\ell>0$.

  Let $a$ be a basic algebra element.
  For $i=1,\dots,4$, let $\wingr_i(a)$ denote the multiplicity of an element  of $\UnDefAlg$
  at $\rho_i$; i.e., for a chord~$\rho$, $\grb(\rho) = (-1/2;\wingr_1(\rho),\wingr_2(\rho),\wingr_3(\rho),\wingr_4(\rho))$, and $\wingr_4$ was the winding number grading $\wingr$ of Section~\ref{sec:win-num}.
  We extend this to $(a_1^*\otimes\dots\otimes a_n^*)$ by
  \[
    \wingr_i(a_1^*\otimes\dots\otimes a_n^*)=-\sum_{j=1}^n \wingr_i(a_j).
  \]

  Perusing the gradings from Section~\ref{sec:Intermediate}, we see that
  if $\gr(\rho)=(j;\alpha,\beta)$, then
  $\alpha+\beta=\wingr_2(\rho)-\wingr_4(\rho)$. Thus,
  ${\mathfrak a}+{\mathfrak b}=-\wingr_2+\wingr_4$.
  Since $\wingr_4=m$, $\wingr_2$ is now determined by $(j;{\mathfrak a},{\mathfrak b})$.

  Suppose that $x$ of the $a_i$ have the form $U^{\ell_i}\iota_k$, and let
  $y=n-x$.  Property~\ref{item:NonZero} ensures that
  \begin{equation}
    \label{eq:YisBounded}
    -\wingr_2-\wingr_4\geq \lfloor y/2\rfloor,
  \end{equation}
  for if some element $a_i^*$ has $\wingr_2(a_i^*)+\wingr_4(a_i^*)=0$
  but $\wingr_1(a_i^*)+\wingr_3(a_i^*)\neq 0$,
  then the next $j>i$ so that $a_j^*\neq (U^{n_j}\iota_k)^*$ has the property that
  $\wingr_2(a_j^*)+\wingr_4(a_j^*)\neq 0$.

  Now, there are only finitely many sequences 
  $b_1^*\otimes\dots\otimes b_y^*$ with the following properties:
  \begin{itemize}
  \item The length of the sequence is fixed. (It is $y$, as above.)
  \item Each $b_i$ is a Reeb element
  \item $\wingr_4(b_1^*\otimes\dots\otimes b_y^*)$  is fixed. (It is
    $m$, as in the statement.)
  \end{itemize}
  Thus, for all sequences as above, there is a constant $c$ with the property that
  the Maslov component of
  $\gr(b_1)^{-1}\cdots\gr(b_y)^{-1}$ is bounded below by $c$.

  Next, $\gammaCobar((U^\ell\iota_k)^*)=\lambda^{2\ell}$.  It
  follows that the Maslov component $j$ of
  $\lambda^{-n}\gr(a_1)^{-1}\cdots\gr(a_n)^{-1}$ satisfies
  \begin{equation}
    \label{eq:XisBounded}
    j\geq x+c.
  \end{equation}
  The upper bounds from Equation~\eqref{eq:YisBounded} and~\eqref{eq:XisBounded}
  give the desired upper bound in terms of $h$ on $n$ for which there
  is a sequence $(a_1,\dots,a_n)$ with Properties~\ref{item:First}
  and~\ref{item:Last}:
  $n\leq j - c + 2\mathfrak{a} + 2\mathfrak{b} - 4m + 1$.
\end{proof}

\begin{corollary}\label{cor:BoundedEnoughForCobar}
  Any $\Gamma$-graded deformation of $\AsUnDefAlg$ is bounded enough for cobar.
\end{corollary}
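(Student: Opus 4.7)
To establish the corollary, I need to show that for each $\Field$-basic element $a \in A_+$, only finitely many of the components $(\mu_n^0)^*(a^*) \in \Hom(A_+^{\otimes n}, \Field)$ are non-zero. Unpacking the duality, $(\mu_n^0)^*(a^*)$ is non-zero precisely when there exists a sequence $(a_1, \dots, a_n)$ of $\Field$-basic elements in $A_+$ with $\mu_n^0(a_n, a_{n-1}, \dots, a_1)$ containing $a$ as a term---as illustrated by the example $\mu_4^0(\rho_4, \rho_3, \rho_2, \rho_1) = U$ dualizing to $\rho_1^* \otimes \rho_2^* \otimes \rho_3^* \otimes \rho_4^*$ as a summand of $\dcob(U^*)$. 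Such a sequence must in particular satisfy $(a_n \otimes \cdots \otimes a_1) \neq 0$ in $A_+^{\otimes n}$, namely idempotent compatibility; this is exactly condition (c-1) of Lemma \ref{lem:UnDefAlg-bounded-ish}.

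Next I would extract the grading constraint. Because the deformation is $(\Gamma,\lambda_d)$-graded with $\lambda_d = \lambda$, the operation $\mu_n^0$ has degree $\lambda^{n-2}$, so $\mu_n^0(a_n, \dots, a_1) \ni a$ forces $\gamma(a) = \lambda^{n-2} \gamma(a_n) \cdots \gamma(a_1)$. Combining with the centrality of $\lambda$ and the formula \eqref{eq:GradeCobar} for $\gammaCobar$, this rearranges to
\[
\gammaCobar(a_1^* \otimes \cdots \otimes a_n^*) = \lambda^{-n} \gamma(a_1)^{-1} \cdots \gamma(a_n)^{-1} = \lambda^{-2} \gamma(a)^{-1} =: h,
\]
a fixed element of $\Gamma$ depending only on $a$ (and not on $n$ or on the choice of the $a_i$). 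Thus condition (c-2) of Lemma \ref{lem:UnDefAlg-bounded-ish} holds with this common $h$, and the lemma produces an upper bound $N = N(a)$ on the length $n$ of any sequence contributing to $(\mu^0)^*(a^*)$.

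Consequently $(\mu_n^0)^*(a^*) = 0$ for every $n > N(a)$, so $(\mu^0)^*(a^*)$ lies in $\bigoplus_{n \geq 1} \Hom(A_+^{\otimes n}, \Field)$, which is the definition of being bounded enough for cobar. The entire substance of the argument lies in Lemma \ref{lem:UnDefAlg-bounded-ish}; once the grading of the tensor monomial $a_1^* \otimes \cdots \otimes a_n^*$ is pinned down to $h = \lambda^{-2}\gamma(a)^{-1}$, the corollary is a direct translation. There is no real obstacle beyond this translation, and no additional finite-dimensionality hypothesis needs to be invoked.
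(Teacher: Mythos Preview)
Your proof is correct and follows essentially the same approach as the paper's. The paper phrases the key grading identity as $\gammaCobar\bigl((\mu^0_n)^*(b^*)\bigr)=\lambda^{-1}\cdot\gammaCobar(b^*)$, which unwinds to your formula $h=\lambda^{-2}\gamma(a)^{-1}$ since $\gammaCobar(a^*)=\lambda^{-1}\gamma(a)^{-1}$; in both cases the result is then immediate from Lemma~\ref{lem:UnDefAlg-bounded-ish}.
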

\begin{proof}
  The $\Gamma$-grading hypothesis ensures that
  \[
    \gammaCobar((\mu^0_n)^*(b^*))=\lambda^{-1}\cdot \gammaCobar(b^*).
  \]
  So, this is immediate from Lemma~\ref{lem:UnDefAlg-bounded-ish}.
\end{proof}

\begin{remark}
  For the particular $\Gamma$-graded deformation $\UnDefAlg$ of
  $\AsUnDefAlg$ constructed geometrically in Section~\ref{sec:def},
  Lemma~\ref{lem:WeightGrading} (applied to operations with $w=0$)
  immediately implies that $\UnDefAlg$ is bounded enough for cobar.
\end{remark}

Lemma~\ref{lem:Koszul} identifies the cobar algebra of $\AsUnDefAlg$.
We promote this to an identification of the cobar algebra of $\UnDefAlg$, as follows.
Consider
$\SmallCobar=\AsUnDefAlg[h]/(h^2)$, equipped 
with the differential $\partial'$ that vanishes on $\AsUnDefAlg\subset\SmallCobar$ and satisfies
\[
 \partial' h = \rho_{1234}+\rho_{2341}+\rho_{3412}+\rho_{4123}.\]
A $\Gamma$-grading on $\SmallCobar$ is specified by
\[ \gamma'(h)=(-1;0,0)\times 1\]
and the condition that the natural inclusion map 
$i\co \AsUnDefAlg\to \SmallCobar$ (with image the elements without factors of~$h$)
satisfies
\[
  \gamma'(i([a]))=\alpha(\gamma(a)),
\]
where $\alpha$ is as in Equation~\eqref{eq:DefAlpha}.

\begin{lemma}
  \label{lem:Koszul2}
  There is a quasi-isomorphism of $\Gamma$-graded algebras
  $\phi'\co \Cobarop(\UnDefAlg)\to \SmallCobar$ satisfying
  $\phi'(\iota_0)=\iota_1$, $\phi'(\iota_1)=\iota_0$,
  $\phi'(\rho_i^*)=[\rho_i]$ for $i=1,\dots,4$, $\phi'(U^*)=[h]$, and
  $\phi'(a^*)=0$ for all $\Field$-basic elements~$a$ with $a\neq U$ and
  $|a|>1$.
\end{lemma}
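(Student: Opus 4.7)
The proof plan parallels that of Lemma~\ref{lem:Koszul}, extended to handle the additional generator $h$ corresponding to the new $\Field[U]$-deformation.

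First, I would verify that $\phi'$ is a well-defined $\Gamma$-graded algebra homomorphism. Since $\phi'$ is specified on the generators $\rho_i^*$, $U^*$, $\iota_0$, $\iota_1$ and extended multiplicatively, one need only check the gradings. The computation in Lemma~\ref{lem:Koszul} handles the $\rho_i^*$ case. For $U^* \mapsto [h]$, compute $\gammaCobar(U^*) = \lambda^{-1} \gamma(U)^{-1} = (1;0,0)\times(-1)$, which matches $\gamma'(h)=(-1;0,0)\times 1$ under the automorphism $\alpha$, consistent with the grading convention on $\SmallCobar$.

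Second, I would verify $\phi'$ is a chain map. Since $\phi'$ is an algebra map and both $\dcob$ and $\partial'$ are derivations of their respective tensor/polynomial algebras, it suffices to check $\phi'(\dcob(a^*)) = \partial'(\phi'(a^*))$ on each $\Field$-basic element $a$ of $\UnDefAlg_+$. For $a = \rho_i$, both sides vanish: by Proposition~\ref{lem:Bonsai}, any $\mu_k^0$-contribution with $k > 2$ would produce a $U$-factor, and for $k=2$ no pair in $\AsUnDefAlg_+$ multiplies to $\rho_i$. For $a = U\iota_j$, Proposition~\ref{lem:Bonsai} again limits contributions to $\mu_4^0$; strict unitality combined with $\Field[U]$-linearity rules out $U\iota$-inputs, leaving only the four length-$1$-chord sequences of Theorem~\ref{thm:UnDefAlg-unique}. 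One obtains $\dcob(U^*) = \sum_i \rho_i^*\otimes\rho_{i+1}^*\otimes\rho_{i+2}^*\otimes\rho_{i+3}^*$, which $\phi'$ sends to $\sum_i \rho_{i,i+1,i+2,i+3} = \partial'(h)$. For any other basic $a$ (so $\phi'(a^*)=0$), one needs $\phi'(\dcob(a^*))=0$; since $\phi'$ kills any tensor containing a factor not in $\{\rho_j^*, U^*, (U\iota_?)^*\}$, the analysis reduces to tensors with only length-$1$ chord factors, where the product in $\SmallCobar$ either vanishes by the path-algebra relations $\rho_{i+1}\rho_i=0$ or cancels in pairs via the tree-tiling-pattern structure.

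Third, to show $\phi'$ is a quasi-isomorphism, I would use a filtration argument. Filter $\Cobarop(\UnDefAlg)$ by $F_p = $ span of tensors with total $U$-exponent $\leq p$, and filter $\SmallCobar$ by $F_p^h$, the span of elements of $h$-degree $\leq p$. By Proposition~\ref{lem:Bonsai}, $\dcob$ preserves this filtration on the left (the $\mu_2^0$ part preserves total $U$-exponent, while each $\mu_k^0$ with $k\geq 4$ strictly decreases it by $k/2-1\geq 1$); on the right, $\partial'$ lowers $h$-degree by $1$. The map $\phi'$ is filtration-preserving. On the $E^0$ page, the induced differential is the $\mu_2^0$ part alone, so $E^0$ is the associative cobar complex of $\AsUnDefAlg[U]$. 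A Kunneth-style computation combining Lemma~\ref{lem:Koszul} (which handles the $\AsUnDefAlg$ factor) with the direct identification $H(\Cobarop_\Ground(\Ground[U]))\cong\Ground[h]/(h^2)$ gives $E^1 = \SmallCobar$ as a $\Gamma$-graded $\Ground$-module. The differential $d_1$ is induced by the $\mu_4^0$ part of $\dcob$; evaluated on $h$ (represented by $U^*$) it yields $\sum_i \rho_{i,i+1,i+2,i+3} = \partial'(h)$, so $d_1 = \partial'$ by the Leibniz rule. Hence $E^2 = H(\SmallCobar,\partial')$. Since the $h$-degree on $\SmallCobar$ takes only the values $0$ and $1$, any $d_r$ with $r\geq 2$ (lowering filtration by $r$) must vanish, so $E^2 = E^\infty$ on both sides. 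The spectral sequence converges (Lemma~\ref{lem:UnDefAlg-bounded-ish} gives boundedness in each $\Gamma$-grading). Finally, $\phi'$ induces the identity on $E^1$ by construction and therefore an isomorphism on $E^\infty$, so it is a quasi-isomorphism of total complexes.

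The main technical obstacle will be the chain-map verification for generic basic elements $a$: showing that the various $\mu_k^0$-contributions to $\dcob(a^*)$ for $k\geq 4$ all evaluate to zero under $\phi'$, which in the borderline cases (length-$1$-chord tensors surviving $\phi'$) requires a careful cancellation among tree tiling patterns sharing a fixed chord sequence.
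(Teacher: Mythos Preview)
Your spectral-sequence route is genuinely different from the paper's.  The paper constructs an explicit chain map $j'\co\SmallCobar\to\Cobarop(\UnDefAlg)$ with $\phi'\circ j'=\Id$, and then builds two homotopy operators: first an $H'$ whose iteration retracts $\Cobarop(\UnDefAlg)$ onto a subcomplex~$\Filt^4$ of tensors with at most one factor of~$U^*$ (appearing last), and then an $H''$ extending the $H$ of Lemma~\ref{lem:Koszul} to show $j'\circ\phi'\simeq\Id$ on~$\Filt^4$.  This is concrete but hands-on.  Your approach via the $U$-exponent filtration is more conceptual and does work, but two steps deserve more than you give them.  The ``K\"unneth-style computation'' is the real content: you are computing $H_*(\Cobarop(\AsUnDefAlg[U]))$, and since $\AsUnDefAlg[U]=\AsUnDefAlg\otimes_{\Field}\Field[U]$ this follows from the K\"unneth theorem for $\Tor$ over a tensor product of algebras over a field, combined with Lemma~\ref{lem:Koszul} and the standard $\Tor^{\Field[U]}_*(\Field,\Field)\cong\Field[h]/(h^2)$; you should say so.  More importantly, ``$\phi'$ induces the identity on~$E^1$ by construction'' is not automatic: the K\"unneth isomorphism is abstract, and you must separately argue that the map induced by~$\phi'_0$ realizes it---for instance, observe it is a graded-algebra map hitting the generators $[\rho_i]$ and~$h$, hence surjective, and then use the dimension count from K\"unneth to conclude bijectivity.

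One correction of emphasis: the chain-map verification is \emph{not} the main technical obstacle; the paper dismisses it as ``straightforward,'' and indeed the bookkeeping from Lemma~\ref{lem:WeightGrading} and Proposition~\ref{lem:Bonsai} shows that for any basic $a\notin\{\rho_i,U\}$, every term of $\dcob(a^*)$ either contains a factor~$b^*$ with $|b|>1$ and $b\neq U$ (killed by~$\phi'$), or contributes a term in $h^2\AsUnDefAlg=0$, or cancels as $h\cdot x + x\cdot h=0$ by centrality of~$h$ in characteristic~$2$.  No delicate cancellation among tiling patterns is needed.
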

(Here, by quasi-isomorphism, we mean
a ring homomorphism that induces an
isomorphism from each graded part of the homology of
$\Cobarop(\UnDefAlg)$ to the corresponding graded part of
$\SmallCobar$; recall that $\Cobarop(\UnDefAlg)$ is not the direct sum
of its homogeneous pieces.)
\begin{proof}
  The map $\phi'$ extends continuously to $A_+^*\subset \Cobarop(\UnDefAlg)$ because all elements with
  sufficiently large $\wingr_4$ are in the kernel. It is
  well-defined on $\Cobarop(\UnDefAlg)$
  because we defined
  $\Cobarop(\UnDefAlg)$ using a direct sum instead of a direct product.
  The fact that $\phi'$ is a chain map and respects the gradings is straightforward.

  As a chain complex, $\Cobarop(\UnDefAlg)$ is generated by elements of the form
  \[
    \rho_i^*\otimes\rho_{i+1}^*\otimes\dots\otimes \rho_j^*\otimes a_1^*\otimes\dots \otimes a_m^*,
  \]
  $m\geq 0$, where $a_1$ has length greater than one, or
  $a_1=\rho_{j-1}$. (Recall that the length of $U$ is defined to be
  $4$.)

  Consider the chain map
  \[
    j'\co \SmallCobar\to \Cobarop(\UnDefAlg)
  \]
  defined by
  $j'(\iota_0) = \iota_1$, $j'(\iota_1) = \iota_0$ and
  \begin{align*} 
    j'([h])&= U^* \\
    j'([\overbrace{\rho_i\dots \rho_j}^k])&=\rho_i^*\otimes\dots\otimes \rho_j^* & k\geq 1\\
    j'([h \overbrace{\rho_i\dots \rho_j}^k])&=\rho_i^*\otimes\dots\otimes \rho_j^*\otimes U^* 
    +\rho_i^*\otimes\dots\otimes \rho_{j-1}^*\otimes (\rho_{j-1}\rho_j)^* \otimes \rho_{j}^* \otimes \rho_{j+1}^*\otimes\rho_{j+2}^* & k\geq 1.
  \end{align*}
  (The map $j'$ is a chain map, but it is not a ring homomorphism.)
  We will prove that $j'$ and $\phi'$ are homotopy inverses.
  It is easy to see that $\phi' \circ j' = \Id$ as chain maps, so we focus on the
  reverse direction.

  Consider a pure generator $b_1^*\otimes\dots\otimes b_m^*$, and suppose that
  there is some $j\in 1,\dots,m$ so that $U$ divides $b_j$. 
  Let
  $\ell$ be the minimum $i$ so that $U$ divides $b_j$.  We then
  associate to the pure generator the following integer:
  \[
    v=\sum_{i=\ell}^m|b_i|.
  \]
  In cases where there is no $j$ so that $U$ divides $b_j$, let $v=0$.
  Let $\Filt^{v_0}\subset \Cobarop(\UnDefAlg)$ denote the subset
  generated by all generators with $v\leq v_0$.  Clearly,
  $\Filt^{v_0}$ is a subcomplex of $\Cobarop(\UnDefAlg)$.  The
  image of $j'$ is contained in $\Filt^{4}$.
  (Not every element of $\Cobarop(\UnDefAlg)$ lies in some
  $\Filt^{v}$, but $\Cobarop_n(\UnDefAlg)$ is the completion of
  $\bigcup_v (\Filt^v\cap \Cobarop_n(\UnDefAlg))$ with respect to the
  winding number filtration.)

  Consider the map $H'\co \Cobarop(\UnDefAlg)\to \Cobarop(\UnDefAlg)$ defined by
  \[
    H'(b_1^*\otimes\dots\otimes b_m^*)=
    \begin{cases}
      0 & \text{if $\forall j$, $U$ does not divide $b_j$.}\\
      0 &{\text{if $b_\ell \neq U$ or $b_\ell=U$ and $\ell=m$.}} \\
    b_1^*\otimes\dots\otimes b_{\ell-1}^*\otimes (b_{\ell+1}\cdot U)^*\otimes b_{\ell+2}^*\otimes \dots\otimes b_m^*& {\text{if $b_\ell=U$ and $\ell<m$.}} 
    \end{cases}
  \]
  and extending continuously to the completion.
  
  Obviously,  $H'(\Filt^{v_0})\subset \Filt^{v_0}$.
  Moreover, if $v>4$, then we claim that
  \begin{equation}
    \label{eq:Containment}
    (\Id + \dcob\circ H'+H'\circ \dcob)(\Filt^{v})\subset  \Filt^{v-1}.
  \end{equation}
  We check this for each
  element of the form
  \[ x=b_1^*\otimes\dots\otimes b_m^*, \]
  as follows.
  \begin{itemize}
  \item If there $U$ does not divide any of the $b_j$ then
    $x\in\Filt^0$ and $H'(x)=H'(\dcob(x))=0$, so
    $(\Id + \dcob\circ H'+H'\circ \dcob)(x)=x$ still lies in
    $\Filt^0\subset \Filt^4$.
    \item  Suppose that $b_\ell\neq U$. In this case,
      $H'(x)=0$, and $H'\circ \dcob(x)=x$; in particular,
      \[ \Id+\dcob\circ H'+H'\circ \dcob=0.\]
    \item Suppose that $b_\ell=U$ and $m>\ell$.
      Terms in $\dcob\circ H'$ arising from $\dcob(b_k^*)$ for
      $k>\ell+1$
      cancel with corresponding terms in $H'\circ \dcob$.
      Terms in $\dcob \circ H'$ arising from $\dcob((b_{\ell+1}\cdot U)^*)$
      are of the following types:
      \begin{itemize}
      \item 
        those (using the part of $\dcob$ adjoint to $\mu_2^*$) that cancel against the terms in $H'\circ \dcob$
        coming from $\dcob(b_{\ell+1})$;
      \item 
        the term $x$;
      \item 
        terms arising from $\mu_i^*$ with $i>2$; these automatically lie in 
        $\Filt^{v-4}$; and
      \item additional terms corresponding to factorizations of
        $b_{\ell+1}\cdot U= a_1\cdot a_2$
        (which dualize to $a_2^*\otimes a_1^*$), 
        where $U$ does not divide $a_2$. These lie in $\Filt^{v-1}$.
      \end{itemize}
      \end{itemize}
      On the other hand, on $\Filt^4$, $H'$ vanishes, so
      $\dcob\circ H'+H'\circ \dcob=0$.

      Define
      \[
        G=\lim_{n\to\infty}(\Id + \dcob\circ H'+H'\circ \dcob)^n\co \Cobarop(\UnDefAlg)\to \Filt^4.
      \]
      To see this limit exists, note that by
      Lemma~\ref{lem:UnDefAlg-bounded-ish}, for any particular grading
      and winding number filtration level, every basis element lies in
      $\Filt^v$ for some bounded~$v$. Since $\Id + \dcob\circ
      H'+H'\circ \dcob$ decreases the filtration level until we get to
      $\Filt^4$, where it is the identity, $G$ is a well-defined
      continuous map.
      For the same reason, the
      map $G$ is homotopic to the identity map. So,
      $\Cobarop(\UnDefAlg)$ is quasi-isomorphic to its subcomplex $\Filt^4$.
      
      Next, let $H$ be the homotopy from the proof of
      Lemma~\ref{lem:Koszul}. Consider the homotopy operator
      $H''\co \Filt^{4}\to \Filt^{4}$ defined by
      \[
      H''(x)=\begin{cases}
      H(x) &{\text{if $x\in \Filt^0$}} \\
      H(x_0)\otimes U^* &{\text{if $x=x_0\otimes U^*$ with $x_0\in \Filt^0$}}
      \end{cases}
    \]
    and extending continuously to all of $\Filt^4$. (In particular,
    $H''(U)=0$.)  We claim that
    \begin{equation}
      \label{eq:HomotopyFormulaPrime}
      \bigl(\dcob \circ H''+H''\circ \dcob +\Id\bigr)|_{\Filt^{4}} = j'\circ \phi'.
    \end{equation}
    Specifically, for $x_0\in \Filt^0$ of the form
    $x_0=a_1^*\otimes\dots\otimes a_n^*$ with $|a_i|\neq 1$ for some
    $i$ or $a_{i+1}\cdot a_i\neq 0$, the proof of
    Equation~\eqref{eq:HomotopyFormula} proves that
    $(\dcob\circ H''+H''\circ\dcob)(x)=x$.  This argument readily
    adapts also to $x\in \Filt^4$ of the form $x=x_0\otimes U^*$ with
    $x_0$ as above.  For the remaining generators $x$ of the form
    $\rho_i^*\otimes \rho_{i+1}^*\otimes\dots\otimes\rho_j^*$ or
    $\rho_i^*\otimes \rho_{i+1}^*\otimes\dots\otimes\rho_j^*\otimes
    U^*$ for which $H''$ vanishes,
    Equation~\eqref{eq:HomotopyFormulaPrime} is a straightforward
    verification.  Equation~\eqref{eq:HomotopyFormulaPrime} finishes
    the proof
    that $\phi'$ and $j'$ are inverse
    quasi-isomorphisms of chain complexes.
\end{proof}

Our interest in the cobar algebra comes from its relation to the
Hochschild complex. (Compare Equation~\eqref{eq:CobarHC}.) First,
\begin{equation}
  \label{eq:CobarHC2}
  \Alg^0\hotimes_{\Ground\otimes\Ground} \Cobarop(\Alg^0)\cong \HC(\Alg^0),
\end{equation}
where $\hotimes$ denotes the completed tensor product with
respect to the filtration on
$\bigoplus_n \Alg^0\otimes \Hom(A_+^{\otimes n},\Field)$ induced by
the length filtration on~$A_+^{\otimes n}$.
(This completion also completes the direct sum over~$n$ in
the definition of $\Cobarop(\Alg^0)$ to a direct product.)
Note that for each grading on
$\Alg^0\hotimes \Cobarop(\Alg^0)$, by
Lemma~\ref{lem:UnDefAlg-bounded-ish}, it is equivalent to
complete by the length filtration on the first, $\Alg^0$, factor.

Define a differential on $\Alg^0\hotimes
\Cobarop(\Alg^0)$ by setting,
for $a \in \Alg^0$ and $x^* \in \Cobarop(\Alg^0)$,
\begin{equation}\label{eq:HC-diff-cob}
  \partial(a\otimes x^*)=a\otimes
  \dcob(x^*)+\sum_{\mu^0_n(b_1,\dots,b_k,a,c_1,\dots,c_\ell)=d}
  d\otimes c_\ell^*\otimes\cdots\otimes c_1^*\otimes x^*\otimes
  b_k^*\otimes\cdots\otimes b_1^*
\end{equation}
(with the $b_i$ and $c_i$ in the sum being
basic algebra elements)
and extending continuously. The fact that this sum
converges follows from the fact that each filtration level on $\Alg^0$ is finite-dimensional.
With respect to this differential, Equation~\eqref{eq:CobarHC2} is an
isomorphism of chain complexes.

Lemma~\ref{lem:Koszul2} leads to a smaller model of the Hochschild
complex, as follows.  Define an analogue~$\mathfrak{C}^*$ of the
complex $C^*$ from Definition~\ref{def:SmallModel}.  As a vector
space,
$\mathfrak{C}^*=\UnDefAlg\hotimes_{\Ground\otimes\Ground}
\SmallCobar$, where $\hotimes$ indicates we are completing
with respect to the winding number filtration on $\SmallCobar$.
(Again by
Lemma~\ref{lem:UnDefAlg-bounded-ish}, in each grading this is
equivalent to completing
with respect to either the winding number filtration or the length
filtration on $\UnDefAlg$.) Write an element
$a\otimes b\in \UnDefAlg\otimes\SmallCobar$ of $\mathfrak{C}^*$ as
$a[b]$. The fact that the tensor product is over
$\Ground\otimes\Ground$ means that the right idempotent of $b$ is
complementary to the left idempotent of $a$, and the left idempotent
of $b$ is complementary to the right idempotent of $a$. An arbitrary element of $\mathfrak{C}^*$ is a possibly
infinite linear combination of elements of the form $a[b]$, with
finitely many terms where $b$ has winding number $<k$.

The $\Gamma$-grading on $\mathfrak{C}^*$ is specified by
$\gamma(a[b])=\lambda\cdot \gamma(a)\cdot \gamma'(b)$.
Define the differential on basic elements by
\begin{multline}
  \partial (a [b])=a [\partial b]+\sum_{i=1}^4
  \left(\rho_i\cdot a  [b \cdot \rho_i]+ a\cdot \rho_i 
    [\rho_i\cdot b]\right) \\
      +\sum_{i=1}^4
      \bigl(\mu_4(\rho_{i+3},\rho_{i+2},\rho_{i+1}, a) [b \cdot\rho_{i+1,i+2,i+3}]
        +\mu_4(\rho_{i+2},\rho_{i+1},a,\rho_{i-1}) [\rho_{i-1}\cdot b \cdot\rho_{i+1,i+2}]\\
        +\mu_4(\rho_{i+1},a,\rho_{i-1},\rho_{i-2}) [\rho_{i-2,i-1}\cdot b \cdot\rho_{i+1}]
        +\mu_4(a,\rho_{i-1},\rho_{i-2},\rho_{i-3})  [\rho_{i-3,i-2,i-1}\cdot b ]
\end{multline}
and extend continuously.

For example, 
\begin{align}
  \partial(\rho_{1234}[\iota_1])&=\rho_{12341}[\rho_1]+\rho_{41234}[\rho_4] 
  + U \rho_{123}[\rho_{123}]+U \rho_{234}[\rho_{234}]\label{eq:diff1} \\
  \partial(\iota_0[\iota_1])&=\rho_1[\rho_1]+\rho_2[\rho_2]+\rho_3[\rho_3]+\rho_4[\rho_4]
  =\partial(\iota_1[\iota_0]) \label{eq:diff2} \\
 \partial(\rho_1[\rho_1])&= U[\rho_{1234}]+U[\rho_{3412}]+ U[\rho_{4123}]+U[\rho_{2341}]. \label{eq:diff3}
\end{align}

Observe that the differential commutes with the $\FF_2[U]$-module structure.

\begin{lemma}\label{lem:C-is-cx}
  This differential makes $\mathfrak{C}^*$ into a chain complex.
\end{lemma}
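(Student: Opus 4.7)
The plan is to realize $\mathfrak{C}^*$ as the image of the Hochschild cochain complex $\HC^*(\UnDefAlg)$ under a surjective chain map induced by $\phi'$, so that $\partial^2=0$ on $\mathfrak{C}^*$ will follow from the standard fact $\partial_{HC}^2=0$. First I would identify $\HC^*(\UnDefAlg)\cong \UnDefAlg\hotimes_{\Ground\otimes\Ground}\Cobarop(\UnDefAlg)$ via the $\Ainf$-analogue of Lemma~\ref{lem:CobarHC} (with differential as in Equation~\eqref{eq:HC-diff-cob}), and consider the map
\[
\Psi = \Id_{\UnDefAlg}\otimes\phi'\co \HC^*(\UnDefAlg)\to \mathfrak{C}^*.
\]
Surjectivity of $\Psi$ is immediate, because $\phi'$ is a surjective ring homomorphism onto $\SmallCobar$: its image contains the generators $[\rho_i]$ and $[h]$.

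The main task would be to verify that $\Psi$ intertwines the Hochschild differential with the formula for $\partial$ on $\mathfrak{C}^*$. The cobar summand $a\otimes\dcob(x^*)$ maps to $a[\partial'\phi'(x^*)]$ by the chain-map property of $\phi'$ (Lemma~\ref{lem:Koszul2}), recovering the $a[\partial b]$ term. For a contribution coming from $\mu_n^0(b_1,\dots,b_k,a,c_1,\dots,c_l)$, by Lemma~\ref{lem:Koszul2} the map $\phi'$ kills the dual of every length-$>1$ basic element except $U\iota_0$ and $U\iota_1$, so only inputs $b_j,c_j\in\{\rho_1,\dots,\rho_4,U\iota_0,U\iota_1\}$ can survive. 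For inputs in $\{\rho_1,\dots,\rho_4\}$, the $\mu_2^0$ operations produce exactly the terms $\rho_i a[b\rho_i]+a\rho_i[\rho_i b]$, while the $\mu_4^0$ operations produce the four $\mu_4$-terms in the formula: the product $\phi'(\rho_{j_3}^*\otimes\rho_{j_2}^*\otimes\rho_{j_1}^*)=[\rho_{j_3}\rho_{j_2}\rho_{j_1}]$ is nonzero exactly when the three indices form a consecutive triple $\rho_{i+1,i+2,i+3}$ or a rotation. Operations $\mu_n^0$ with $n\geq 6$ on such inputs vanish by Lemma~\ref{lem:alg-property-factor}, since at most the single input $a$ can factor as a product of Reeb elements.

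The hard part will be handling the inputs involving $U\iota_k$. By $U$-multilinearity together with the vanishing of $\mu_n^0$ on idempotent inputs for $n\neq 2$, such inputs contribute only through $\mu_2^0$. For $a$ with left idempotent $\iota_k$ and right idempotent $\iota_\ell$, both $\mu_2^0(U\iota_k,a)=Ua$ and $\mu_2^0(a,U\iota_\ell)=Ua$ are nonzero. Using the fact that $\phi'((U\iota_m)^*)=\iota_{1-m}h\iota_{1-m}$ (which one extracts from the chain-map property of $\phi'$ applied to the defining equations $\mu_4^0(\dots)=U\iota_m$) together with the centrality of $h$ in $\SmallCobar$, both contributions yield the same element $Ua[h\cdot\phi'(x^*)]$ under $\Psi$; these cancel over $\Field$ of characteristic two. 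With $\Psi$ established as a surjective chain map, the equality $\partial^2\circ\Psi = \Psi\circ\partial_{HC}^2 = 0$, combined with surjectivity, yields $\partial^2 = 0$ on all of $\mathfrak{C}^*$.
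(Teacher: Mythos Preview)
Your approach is correct but takes a genuinely different route from the paper. The paper proves the lemma by direct verification: it decomposes $\partial=\partial_1+\partial_2+\partial_4$ (according to whether a term comes from $\partial'$ on $\SmallCobar$, from $\mu_2$, or from $\mu_4$), observes that $\partial_1^2$, $\partial_2^2$, and the mixed terms involving $\partial_1$ vanish immediately, and then checks $\partial_2\partial_4+\partial_4\partial_2=0$ by an explicit computation (worked out in the case $a=\rho_1$) and $\partial_4^2=0$ by a short combinatorial argument about which pairs of $\mu_4$ operations can compose nontrivially.

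Your argument instead transports $\partial^2=0$ from the Hochschild complex through the surjection $\Psi=\Id\otimes\phi'$. This is more conceptual, and in effect front-loads the chain-map verification that the paper carries out later in the proof of Proposition~\ref{prop:SmallerModel2}; once one has that verification, both the lemma and the proposition follow. Your appeal to Lemma~\ref{lem:alg-property-factor} to kill $\mu_n^0$ for $n\geq 6$ is exactly what the paper uses in that later proof. Your treatment of the $\mu_2^0(U\iota_k,a)$ and $\mu_2^0(a,U\iota_\ell)$ contributions---showing they pair off via centrality of $h$ and $h^2=0$---is a genuine point that must be addressed in your approach (and which the paper's later proof elides). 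The trade-off: the paper's direct check is self-contained, while yours leans on Lemma~\ref{lem:Koszul2} and the structural lemma on higher operations, but in return gives Lemma~\ref{lem:C-is-cx} and the chain-map part of Proposition~\ref{prop:SmallerModel2} simultaneously.
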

\begin{proof}
  Write
  \begin{align*}
    \partial_1 (a [b])&=a [\partial b]\\
    \partial_2(a[b])&=\sum_{i=1}^4
                      \left(\rho_i\cdot a  [b \cdot \rho_i]+ a\cdot \rho_i 
                      [\rho_i\cdot b]\right) \\
    \partial_4(a[b])&=\sum_{i=1}^4
      \mu_4(\rho_{i+3},\rho_{i+2},\rho_{i+1}, a) [b \cdot\rho_{i+1,i+2,i+3}]
        +\mu_4(\rho_{i+2},\rho_{i+1},a,\rho_{i-1}) [\rho_{i-1}\cdot b \cdot\rho_{i+1,i+2}]\\       
        &\qquad+\mu_4(\rho_{i+1},a,\rho_{i-1},\rho_{i-2}) [\rho_{i-2,i-1}\cdot b \cdot\rho_{i+1}]
        +\mu_4(a,\rho_{i-1},\rho_{i-2},\rho_{i-3})  [\rho_{i-3,i-2,i-1}\cdot b ].
  \end{align*}
  It is immediate that
  \[
    \bdy_1^2=\bdy_1\bdy_2+\bdy_2\bdy_1=\bdy_1\bdy_4+\bdy_4\bdy_1=\bdy_2^2=0.
  \]
  To see that $\bdy_2\bdy_4+\bdy_4\bdy_2=0$ is a case check. The most interesting case is when $a$ has length $1$, say, $a=\rho_1$. In that case, we have
  \begin{align*}
    \bdy_2(\bdy_4(\rho_1[b]))
    &=\bdy_2\bigl(\mu_4(\rho_4,\rho_3,\rho_2,\rho_1)[b\cdot\rho_{234}]+
      \mu_4(\rho_3,\rho_2,\rho_1,\rho_4)[\rho_4\cdot b\cdot\rho_{23}]\\
    &\quad+
      \mu_4(\rho_2,\rho_1,\rho_4,\rho_3)[\rho_{34}\cdot b\cdot\rho_{2}]+
      \mu_4(\rho_1,\rho_4,\rho_3,\rho_2)[\rho_{234}\cdot b]\bigr)
    \\
    &=U\rho_1[b\cdot\rho_{2341}]+U\rho_2[\rho_2\cdot b\cdot \rho_{234}]+U\rho_4[\rho_4\cdot b\cdot \rho_{234}]\\
    &\quad+U\rho_3[\rho_{34}\cdot b\cdot \rho_{23}]
      +U\rho_4[\rho_4\cdot b\cdot \rho_{234}]+U\rho_2[\rho_{234}\cdot b\cdot\rho_2]+U\rho_3[\rho_{34}\cdot b\cdot \rho_{23}]\\
    &\quad+U\rho_1[\rho_{1234}\cdot b]+U\rho_2[\rho_{234}\cdot b\cdot \rho_2]+U\rho_4[\rho_{234}\cdot b\cdot \rho_4]\\
    &=U\rho_1[\rho_{1234}\cdot b]+U\rho_1[b\cdot \rho_{2341}]+U\rho_2[\rho_2\cdot b\cdot\rho_{234}]+U\rho_4[\rho_{234}\cdot b\cdot\rho_4]\\
    \bdy_4(\bdy_2(\rho_1[b]))
    &=\bdy_4\bigl(\rho_{12}[\rho_2\cdot b]+\rho_{41}[b\cdot \rho_4]\bigr)\\
    &=\mu_4(\rho_{12},\rho_1,\rho_4,\rho_3)[\rho_{3412}\cdot b]
      +\mu_4(\rho_4,\rho_3,\rho_2,\rho_{12})[\rho_2\cdot b\cdot \rho_{234}]\\
      &\quad+\mu_4(\rho_{41},\rho_4,\rho_3,\rho_2)[\rho_{234}\cdot b\cdot \rho_4]
        +\mu_4(\rho_3,\rho_2,\rho_1,\rho_{41})[b\cdot \rho_{4123}]\\
    &=U\rho_1[\rho_{3412}\cdot b]
      +U\rho_2[\rho_2\cdot b\cdot \rho_{234}]
      +U\rho_4[\rho_{234}\cdot b\cdot \rho_4]
        +U\rho_1[b\cdot \rho_{4123}].
  \end{align*}
  So,
  \[
    \bdy_2(\bdy_4(\rho_1[b]))+\bdy_4(\bdy_2(\rho_1[b]))=U\rho_1[(\rho_{1234}+\rho_{3412})\cdot b]+U\rho_1[b\cdot (\rho_{2341}+\rho_{4123})]=0.
  \]

  Finally, we check that $\bdy_4^2=0$. Suppose
  $
    a'[\rho_{i,\dots,j}\cdot b\cdot \rho_{k,\dots,\ell}]
  $
  appears in $\bdy_4^2(a[b])$. Then there is a composition of two
  $\mu_4$-operations taking
  $
  \rho_{\ell}\otimes\cdots\otimes\rho_k\otimes a\otimes\rho_j\otimes\cdots\otimes\rho_i
  $  
  to $a'$. It follows that the first operation must be left-extended
  or right-extended, and have $a$ (which must have length $>1$) as the
  left-most or right-most term, respectively. If the output of
  the first operation is not the right-most (respectively left-most) term of the
  second operation then the product
  in brackets vanishes:  if we write $a=\rho_{m,\dots,n}$ then in the
  left-extended case, say, the result of the first operation is
  $\rho_{m,\dots,n-1}[\rho_{n-3,n-2,n-1}b]$ and if
  $\rho_{m,\dots,n-1}$ is not the last input to the next operation
  then the coefficient of $b$ has a term $\rho_{n-2}\rho_{n-3,n-2,n-1}=0$.
  In the remaining case, the pair of operations must
  be either
  \begin{align*}
    \mu_4(\mu_4(\rho_{k+2},\rho_{k+1},\rho_{k},a),\rho_{\ell+2},\rho_{\ell+1},\rho_{\ell})\qquad\text{or}\qquad
    \mu_4(\rho_{k+2},\rho_{k+1},\rho_{k},\mu_4(a,\rho_{\ell_2},\rho_{\ell+1},\rho_{\ell})).
  \end{align*}
  These two cases cancel with each other.
\end{proof}

For integers $W,\ell$, let $\mathfrak{C}^{W,\ell}_\Gamma\subset \mathfrak{C}^*$ denote the
subspace spanned by elements $a[b]$ with
\begin{equation}
  \label{eq:SpecialGradings}
  \gamma(a[b])=\lambda_d\cdot \gamma(a)\cdot\gamma'(b)
  =\lambda_w^{W}\lambda_d^{\ell},
\end{equation}
and 
\[
  \mathfrak{C}_\Gamma=\bigoplus_{W,\ell} \mathfrak{C}^{W,\ell}.
\]

The following is an analogue of Proposition~\ref{prop:SmallerModel}:
\begin{proposition}
  \label{prop:SmallerModel2}
  The chain complex $\mathfrak{C}^*_{\Gamma}$ is quasi-isomorphic to 
  the complex $\HC^*_\Gamma(\UnDefAlg)$; in particular
  $H^{W,k}(\mathfrak{C}_\Gamma)\cong \HH^{W,k}_\Gamma(\UnDefAlg)$.
\end{proposition}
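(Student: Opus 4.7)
The strategy parallels the proof of Proposition~\ref{prop:SmallerModel}, using $\phi'$ from Lemma~\ref{lem:Koszul2} in place of $\phi$ from Lemma~\ref{lem:Koszul}. My candidate quasi-isomorphism is
\[
  \Psi = \Id\otimes\phi' \co \UnDefAlg\hotimes_{\Ground\otimes\Ground}\Cobarop(\UnDefAlg)\longrightarrow \UnDefAlg\hotimes_{\Ground\otimes\Ground}\SmallCobar = \mathfrak{C}^*,
\]
which, under the identification from~\eqref{eq:CobarHC2}, is a map from $\HC^*(\UnDefAlg)$ to $\mathfrak{C}^*$. Since $\phi'$ kills dual elements of sufficiently high length, $\Psi$ extends continuously to the completed tensor products. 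By construction it preserves the $\Gamma$-gradings and so restricts to a map $\HC^{*,*}_\Gamma(\UnDefAlg)\to\mathfrak{C}^{*,*}_\Gamma$.

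The first and main step is to verify that $\Psi$ is a chain map, by tracking how $\Id\otimes\phi'$ transforms the Hochschild differential~\eqref{eq:HC-diff-cob} into the explicit differential on $\mathfrak{C}^*$. The $a\otimes\dcob(x^*)$ summand maps to $a\otimes\partial'(\phi'(x^*))$ because $\phi'$ is a chain map; here the identity $\phi'(\dcob(U^*))=\partial'[h]=\rho_{1234}+\rho_{2341}+\rho_{3412}+\rho_{4123}$ plays a decisive role, matching the $a[\partial b]$ summand of $\partial_{\mathfrak{C}^*}$. The remaining summands of~\eqref{eq:HC-diff-cob} come from operations $\mu^0_n(b_1,\dots,b_k,a,c_1,\dots,c_\ell)$ and contribute nontrivially to $\Psi$ only when each non-$a$ argument, dualized to $\Cobarop(\UnDefAlg)$, survives $\phi'$. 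By Lemma~\ref{lem:Koszul2} this forces each such $b_i$ and $c_j$ to be a length-$1$ chord $\rho_k$. Combined with $U$-multilinearity of the operations and the vanishing of $\mu^0_n$ on idempotent arguments for $n\neq 2$, the surviving contributions reduce to $\mu_2$ operations (giving the $\partial_2$ summands) and the single-vertex-tile $\mu_4$ operations with three single-chord non-$a$ arguments (giving the $\partial_4$ summands). Higher tree tile patterns are excluded by the parity constraint that a tree tile pattern has even total sector count, which cannot match a chord sequence consisting of three single chords flanking a general $a$.

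With $\Psi$ established as a chain map, the quasi-isomorphism of $\Gamma$-graded pieces follows from a length-filtration spectral sequence argument exactly parallel to the end of the proof of Proposition~\ref{prop:SmallerModel}. Filter $\HC^{*,*}_\Gamma(\UnDefAlg)$ by the sum of the lengths of the cobar tensor factors and filter $\mathfrak{C}^{*,*}_\Gamma$ by the length in the $\SmallCobar$ factor. Then $\Psi$ preserves these filtrations, and the induced $E_1$-map is $\Id\otimes(\phi')_*$, an isomorphism by Lemma~\ref{lem:Koszul2}. Lemma~\ref{lem:UnDefAlg-bounded-ish} (together with Corollary~\ref{cor:BoundedEnoughForCobar}) ensures that in each bigrading $(W,k)$ both complexes involve only finitely many summands and are therefore already complete in that bigrading, so the spectral sequence converges to yield the desired isomorphism $H^{W,k}(\mathfrak{C}_\Gamma)\cong\HH^{W,k}_\Gamma(\UnDefAlg)$.

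The main obstacle I expect is the combinatorial verification in the second paragraph: namely, confirming that no spurious contributions to $\partial_{\mathfrak{C}^*}$ arise from operations $\mu^0_n$ whose non-$a$ arguments involve positive powers of $U$ or length-$\geq 2$ Reeb chords. The sector-count parity for tree tile patterns, together with the $U$-multilinearity and the vanishing of $\mu^0_n$ on idempotents for $n\neq 2$, are the ingredients that should rule these out, but confirming this cleanly requires a careful case analysis of the possible tile patterns involved.
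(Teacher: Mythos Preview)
Your overall strategy matches the paper's proof exactly: define $\Psi=\Id\otimes\phi'$, verify it is a chain map by tracking which terms of~\eqref{eq:HC-diff-cob} survive under $\phi'$, and then run a length-filtration spectral sequence using Lemma~\ref{lem:Koszul2} on the associated graded. However, there is a real gap in the chain-map step.

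Your argument for excluding contributions from $\mu_n^0$ with $n>4$ does not work. The ``parity constraint that a tree tile pattern has even total sector count'' does not rule out, say, a weight-$0$ $\mu_6$ with five length-$1$ inputs and one input of length~$3$: that sequence has total length $8$, compatible with a two-vertex tree pattern. What actually kills these terms is Lemma~\ref{lem:alg-property-factor}: any nonzero $\mu_n^0$ with $n+2w>4$ has at least \emph{two} inputs of length $>1$, so it cannot accept $n-1$ length-$1$ chords alongside a single general~$a$. This is precisely the lemma the paper invokes, and it is what you were groping for in your final paragraph.

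Two smaller points. First, your filtration (by total length on the cobar/$\SmallCobar$ side, following Proposition~\ref{prop:SmallerModel}) differs from the paper's (by length of the $\UnDefAlg$ factor~$a$); both work, but the paper's choice avoids having to assign a length to $h\in\SmallCobar$ and makes it transparent that the associated-graded differential is $\Id\otimes\dcob$ on one side and $\Id\otimes\partial'$ on the other. Second, your claim that each bigrading $\HC^{W,k}_\Gamma(\UnDefAlg)$ is finite-dimensional is not correct, and Lemma~\ref{lem:UnDefAlg-bounded-ish} does not establish it: that lemma bounds the tensor length~$n$ for a \emph{fixed} cobar grading, but here the cobar grading varies with the choice of~$a$. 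What is actually needed, and what the paper asserts, is that both complexes are complete with respect to the chosen filtration; that suffices for the spectral sequence comparison.
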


\begin{proof}
  Consider the map
  \[
    \Id\otimes \phi'\co \HC^*(\UnDefAlg)=\UnDefAlg\hotimes\Cobarop(\UnDefAlg)
    \to \UnDefAlg\otimes_{\Ground\otimes\Ground}\SmallCobar
  \]
  where $\phi'$ is as in Lemma~\ref{lem:Koszul2}. That is, define
  \[
    (\Id\otimes\phi')(\sum a_i\otimes x^*_i)=\sum a_i[\phi'(x^*_i)].
  \]
  Since $\phi'$ preserves the winding number, this induces a
  well-defined map of completed tensor products.
  
  It is immediate from the definitions that this respects the
  $\Gamma$-gradings.

  To see that $\Id\otimes\phi'$ is a chain map, write the differential
  on $\mathfrak{C}^*$ as $\bdy_1+\bdy_2+\bdy_4$ as in
  Lemma~\ref{lem:C-is-cx}. Given an element
  $a\otimes x^*\in\HC^*(\Alg^0)$, it is straightforward to check that
  $(\Id\otimes\phi')(a\otimes \dcob(x^*))=\bdy_1(a[x])$: both sides
  are only non-zero if $x^*=U^*$.
  Similarly, terms of the form
  $d\otimes c^*\otimes x^*$ (respectively $d\otimes x^*\otimes b^*$)
  in the differential on $\HC^*(\Alg^0)$ coming from operations
  $\mu_2^0(a,c)=d$ (respectively $\mu_2^0(b,a)=d$)
  (Equation~\eqref{eq:HC-diff-cob}) are mapped to zero by
  $\Id\otimes\phi'$ unless $c$ (respectively $b$) has length $1$, in
  which case these terms correspond to $\bdy_2$. Similarly, the only
  terms in the differential on $\HC^*(\Alg^0)$ coming from
  $\mu_4^0$-operations involving $a$ which are not mapped to zero by
  $\Id\otimes\phi'$ have the other three chords of length $1$, and
  these terms correspond to $\bdy_4$. Finally, by
  Lemma~\ref{lem:alg-property-factor}, every non-zero $\mu_n^0$ with
  $n>4$ has at least two inputs with length $>1$, and hence the
  corresponding terms are mapped to zero by $\Id\otimes\phi'$.

  To see that $\Id\otimes\phi'$ is a quasi-isomorphism, filter both
  $\HC^*_\Gamma(\UnDefAlg)$ and $\mathfrak{C}^*_\Gamma$ by the total
  length of the element of $\UnDefAlg$. Both are complete with respect
  to this filtration. By Lemma~\ref{lem:Koszul2}, the map
  $\Id\otimes\phi'$ induces an isomorphism of $E^1$-pages of the
  resulting spectral sequences, hence is a quasi-isomorphism.
\end{proof}

\begin{proposition}\label{prop:UnDefHoch} The Hochschild cohomology
  groups $\HH_{\Gamma}^{W,k}(\UnDefAlg)$, $W>0$, have
  \begin{align*}
  \HH^{W,-1}_\Gamma(\UnDefAlg)&=
  \begin{cases}
   \FieldSup{2} & W=1\\
    0 &\text{otherwise}
  \end{cases}
  \end{align*}
  and $\HH^{W,-2}_\Gamma(\UnDefAlg)$ is entirely supported in weight
  ($W$) grading $1$.
  Moreover, one can choose a basis for $\HH^{1,-1}_\Gamma(\UnDefAlg)$ so
  that one basis element sends $1\in\Ground$ to
  $\rho_{1234}+\rho_{2341}+\rho_{3412}+\rho_{4123}$ and the other
  sends $1\in\Ground$ to $U=U(\iota_0+\iota_1)$.
\end{proposition}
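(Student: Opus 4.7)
The plan is to compute these Hochschild cohomology groups using the quasi-isomorphic small model $\mathfrak{C}^*_\Gamma$ from Proposition~\ref{prop:SmallerModel2}, following the enumeration strategy used for Proposition~\ref{prop:AsUnDefHoch}. A generator $a[b]$ has $a\in\UnDefAlg$ basic and $b\in\SmallCobar=\AsUnDefAlg[h]/(h^2)$, so either $b=[c]$ or $b=[h\cdot c]$ for a basic $c\in\AsUnDefAlg$. The bigrading condition $\gamma(a[b])=(k;0,0)\times W$ imposes three constraints: matching $\smallGroup$-coordinates $(\alpha_a,\beta_a)=(\alpha_c,\beta_c)$; winding balance $\wingr(a)-\wingr(c)$ equal to $W$ (no~$h$) or $W-1$ (with~$h$); and a Maslov condition fixing~$k$. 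Each piece $\mathfrak{C}^{W,k}_\Gamma$ is finite-dimensional by an adaptation of Lemma~\ref{lem:UnDefAlg-bounded-ish} to the small model, and the $\ZZ/4\ZZ$ cyclic symmetry on chord indices cuts the bookkeeping by a factor of four.

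For $W=1$, $k=-1$, I would exhibit the two expected cocycles
\[
\Xi_1=\rho_{1234}[\iota_1]+\rho_{2341}[\iota_0]+\rho_{3412}[\iota_1]+\rho_{4123}[\iota_0],\qquad
\Xi_2=U\iota_0[\iota_1]+U\iota_1[\iota_0],
\]
which correspond under the quasi-isomorphism of Proposition~\ref{prop:SmallerModel2} to the zero-input Hochschild cochains $1\mapsto\mu_0^1$ and $1\mapsto U$ respectively. The cycle property of $\Xi_2$ follows from Equation~\eqref{eq:diff2} extended by $U$-linearity: $\partial\Xi_2=2U\sum_i\rho_i[\rho_i]=0$ in characteristic~$2$. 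For $\Xi_1$, every $\partial_4$-contribution to $\partial(\rho_{i,i+1,i+2,i+3}[\iota_\ast])$ vanishes because the putative $\mu_4$ would have inputs of total length $3+4=7$, incompatible with the divisibility-by-$4$ requirement from Lemma~\ref{lem:WeightGrading} for weight-zero tiling trees; the surviving $\partial_2$-contributions give $\partial(\rho_{1234}[\iota_1])=\rho_{41234}[\rho_4]+\rho_{12341}[\rho_1]$ and its three cyclic rotates, so each term in $\partial\Xi_1$ appears twice and cancels in characteristic~$2$. To close the $W=1$ case, I would enumerate the (finitely many) remaining generators of $\mathfrak{C}^{1,-1}_\Gamma$ and $\mathfrak{C}^{1,0}_\Gamma$, compute their differentials, and verify that $[\Xi_1]$ and $[\Xi_2]$ are linearly independent and span $\HH^{1,-1}_\Gamma$.

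For $W>1$, the plan is to show $\HH^{W,-1}_\Gamma=\HH^{W,-2}_\Gamma=0$ by exploiting the short exact sequence of complexes
\[
0\to\mathfrak{D}\to\mathfrak{C}^*_\Gamma\to\mathfrak{C}^*_\Gamma/\mathfrak{D}\to 0,
\]
where $\mathfrak{D}\subset\mathfrak{C}^*_\Gamma$ is the $h$-free subcomplex (closed under~$\partial$ because $\partial_1$ maps the $h$-containing part of $\mathfrak{C}^*_\Gamma$ into~$\mathfrak{D}$ while $\partial_2$ and $\partial_4$ preserve the $h$-degree). The quotient $\mathfrak{C}^*_\Gamma/\mathfrak{D}$ is canonically isomorphic to $\mathfrak{D}$ up to the grading shift by $\gamma'(h)=(-1;0,0)\times 1$, and the connecting homomorphism in the resulting long exact sequence is cup product with the class $[\mu_0^1]\in H^{1,-2}(\mathfrak{D})$. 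Once $H^{W,k}(\mathfrak{D})$ is computed in the relevant bigradings by a parallel enumeration (with $\partial_4$ carefully tracked), the long exact sequence produces the desired vanishing.

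The principal obstacle is computing $H^{W,k}(\mathfrak{D})$ and verifying that cup product with $[\mu_0^1]$ behaves as needed. The complex $\mathfrak{D}=\UnDefAlg\hotimes_{\Ground\otimes\Ground}\AsUnDefAlg$ with differential $\partial_2+\partial_4$ is a $\partial_4$-deformation of the complex $C^*$ from Definition~\ref{def:SmallModel}; I would run a parallel enumeration-and-cancellation argument, organized by induction on $(\wingr(c),|c|)$ and using the $\ZZ/4\ZZ$ cyclic symmetry to halve the work. The combinatorial heart of the argument is tracking the $\partial_4$-interactions between generators in different length regimes of $\UnDefAlg$, which I expect to kill all candidate $W>1$ classes and yield the stated vanishing.
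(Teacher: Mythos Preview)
Your proposal has a concrete error and misses the paper's key simplification.

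\textbf{The $\partial_4$ claim is wrong.} You assert that $\partial_4(\rho_{1234}[\iota_1])=0$ because the total input length $3+4=7$ is not divisible by~$4$. But Lemma~\ref{lem:WeightGrading} only says the output length equals the input length; it imposes no divisibility constraint unless the pattern is centered. Extended patterns with output $U\cdot(\text{length-3 chord})$ are perfectly allowed. Indeed, Equation~\eqref{eq:diff1} in the paper computes
\[
\partial(\rho_{1234}[\,])=\rho_{12341}[\rho_1]+\rho_{41234}[\rho_4]+U\rho_{123}[\rho_{123}]+U\rho_{234}[\rho_{234}],
\]
with the last two terms coming from $\partial_4$ via $\mu_4(\rho_{1234},\rho_4,\rho_3,\rho_2)=U\rho_{123}$ and $\mu_4(\rho_4,\rho_3,\rho_2,\rho_{1234})=U\rho_{234}$. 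Your cycle $\Xi_1$ is still closed, but because each $\partial_4$ term appears in exactly two of the four cyclic summands and cancels in characteristic~$2$, not because the individual terms vanish.

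\textbf{The $W>1$ machinery is unnecessary.} The paper's argument is far simpler than your proposed long exact sequence. One enumerates all generators $a[b]$ of $\mathfrak{C}^*_\Gamma$ exactly as in Proposition~\ref{prop:AsUnDefHoch}, allowing now factors of $U$ in $a$ and $h$ in $b$ with $|U|=|h|=4$. One then computes that the bigrading transforms by
\[
W(a\cdot a'[b\cdot b'])=W(a[b])+s_1-s_2,\qquad k(a\cdot a'[b\cdot b'])=k(a[b])-2s_1
\]
for $|a'|=4s_1$, $|b'|=4s_2$ (with $b'$ not divisible by~$h$), together with a separate shift for~$h$. A direct check then shows that \emph{every} generator with $W>0$ and $k\in\{-1,-2\}$ already has $W=1$, and that there are no generators with $W>0$ and $k=0$. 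The chain groups $\mathfrak{C}^{W,-1}_\Gamma$ and $\mathfrak{C}^{W,-2}_\Gamma$ are literally zero for $W>1$; no homology computation is needed there at all. Your filtration by $h$-degree and the resulting long exact sequence would eventually rediscover this, but only after you had done the same enumeration for~$\mathfrak{D}$ anyway.
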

\begin{proof}
By Proposition~\ref{prop:SmallerModel2}, we can use ${\mathfrak C}^*_\Gamma$ to compute the
$\Gamma$-graded Hochschild cohomology. In turn, this complex is
generated by the
following elements $a[b]$:
  \begin{enumerate}[label=(${\mathfrak C}$-\arabic*)]
  \item the elements
    \[
      a[b]\in \{\rho_1  [\rho_1 ], \rho_{123}[\rho_{123}],\iota_0[\iota_1], \iota_1[\iota_0]\},\]
\item 
  \label{Cenlarge2}
  any of the elements 
  obtained by multiplying the above $b$ by some further
  element $b'$ with $|b'|=4s_1$, and also multiplying $a$ by
  some further element $a'$ with $|a'|=4s_2$. Here, $a'$ might contain
  factors of $U$, and $b'$ might contain (at most one factor of) $h$;
  with the understanding that $|U|=|h|=4$.
\item any element obtained by adding some $i\in \Zmod{4}$ to all the indices
  in any of the above elements.
\end{enumerate}

The proof is straightforward. Observe that in Case~\ref{Cenlarge2}, 
$s_1$ and $s_2$ can be distinct. (Compared with Case~\ref{Cenlarge},
we now have more flexibility in the gradings.)

Next, we compute the $(W,k)$ bigradings.
The elements  $\rho_1[\rho_1]$ and $\rho_{123}[\rho_{123}]$ have $(W,k)=(0,0)$,
as do the additional six elements obtained by cyclically permuting indices; moreover,
$\iota_0[\iota_1]$ and $\iota_1[\iota_0]$ have $(W,k)=(0,1)$.
All other bigradings are determined by the relations
\[
  W(a \cdot a'[b \cdot b'])=W(a[b])+s_1-s_2 \qquad{\text{and}}\quad
  k(a \cdot a'[b \cdot b'])=k(a[b])-2s_1
\]
where $|a'|=4s_1$ and $|b'|=4 s_2$, provided $b'$ is not divisible by $h$; and
\[
  W(a [b \cdot h])=W(a[b])-1 \qquad{\text{and}}\quad k(a [b \cdot h])=k(a[b])+1.
\]

Thus, provided that $W>0$, there are no non-zero elements with $k=0$ and the
following elements with $k=-1$
or $k=-2$:
\begin{itemize}
\item $k=-1$: $\rho_{1234}[\iota_1]$, $\rho_{2341}[\iota_0]$,
  $\rho_{3412}[\iota_1]$, $\rho_{4123}[\iota_0]$, $U\iota_0[\iota_1]$ and
  $U\iota_1[\iota_0]$.
\item $k=-2$: $\rho_{1234123}[\rho_{123}]$,
  $U\rho_{123}[\rho_{123}]$, $\rho_{12341}[\rho_1]$,
  $U\rho_1[\rho_1]$,  $\rho_{12341234}[h]$,
  $U\rho_{1234}[h]$,  $U^2\iota_0[h\iota_1]$ and
  $U^2\iota_1[h \iota_0]$ and $18$ more obtained by shifting the
  indices of all but the last two terms.
\end{itemize}
From the computation above, all of these elements have weight $W=1$,
proving the claim about $\HH^{W,-2}_\Gamma(\Alg^0_-)$.

Differentials  of terms with $k=-1$ are specified by
Equations~\eqref{eq:diff1} and~\eqref{eq:diff2}
(along with the usual symmetry of adding $i\in \Zmod{4}$ to all the subscripts
in Equation~\eqref{eq:diff1}).
It follows that $H^{>0,-1}({\mathfrak C})\cong \FieldSup{2}$, generated by
$\rho_{1234}[\iota_1]+\rho_{2341}[\iota_0]+\rho_{3412}[\iota_1]+\rho_{4123}[\iota_0]$ and
$U\iota_0[\iota_1]+U\iota_1[\iota_0]$, as desired.
\end{proof}

\begin{proof}[Proof of Theorem~\ref{thm:MAlg-unique}]
  Throughout this proof, by deformation we will mean a $\Gamma$-graded deformation.

  Since the trivial deformation ($\mu_{n}^w=0$ for all $w>0$) defines
  a weighted $\Ainf$-algebra, the class $\obstr^1$ must vanish. Thus,
  by Proposition~\ref{prop:deform-Ainf-alg}, the isomorphism classes
  of 1-truncated weighted deformations correspond to
  $\HH^{1,-1}_{\Gamma}(\UnDefAlg)$. By Proposition~\ref{prop:UnDefHoch}, this
  group is isomorphic to $\FieldSup{2}$, and there is a unique generator
  satisfying Condition~(\ref{item:MAlg-m1}) of the theorem. By
  Corollary~\ref{cor:deform-Ainf-alg-G} and the fact that
  $\HH^{W,-2}_\Gamma=\HH^{W,-1}_\Gamma=0$ for $W>1$ from
  Proposition~\ref{prop:UnDefHoch}, this deformation extends uniquely
  to a weighted $\Ainf$-algebra structure.
\end{proof}

Finally, we note that Theorems~\ref{thm:UnDefAlg-unique}
and~\ref{thm:MAlg-unique} also hold with the refined grading, by
$\smallGroup(T^2)$:
\begin{corollary}\label{cor:refined-unique}
  Up to isomorphism, there is a unique $\Ainf$-deformation of
  $\AsUnDefAlg$ over $\Field[U]$ satisfying the following conditions:
  \begin{enumerate}
  \item\label{item:Uuniqe-graded-refined} The deformation is
    $\smallGroup(T^2)\times \ZZ$-graded, where the gradings of
    the chords $\rho_i$ is defined by
    $\gamma(\rho_i)=\gr_\psi(\rho_i)\times \wingr(\rho_i)$.
  \item\label{item:Uuniqe-mu-4-refined} The operations satisfy
    $\mu_4(\rho_4,\rho_3,\rho_2,\rho_1)=U\iota_1$ and
    $\mu_4(\rho_3,\rho_2,\rho_1,\rho_4)=U\iota_0$.
  \end{enumerate}

  Similarly, up to isomorphism, there is a unique weighted deformation
  $\MAlg$ of $\UnDefAlg$ such that:
  \begin{enumerate}
  \item $\MAlg$ is
    $\smallGroup(T^2)\times \ZZ$-graded and
  \item\label{item:MAlg-m1-refined} $\mu^1_0=\rho_{1234}+\rho_{2341}+\rho_{3412}+\rho_{4123}$.
  \end{enumerate}
\end{corollary}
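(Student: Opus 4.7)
The plan is to derive Corollary~\ref{cor:refined-unique} from Theorems~\ref{thm:UnDefAlg-unique} and~\ref{thm:MAlg-unique} via the grading refinement machinery of Section~\ref{sec:small-gr}. The key tool is that the refinement data $(\iota_0,\psi)$ chosen in Section~\ref{sec:gr-refine} sets up a bijective correspondence between $(\mathcal{G},\lambda,\lambda_w)$-graded deformations and $(\smallGroup(T^2),\lambda,\lambda_w)$-graded deformations: Definition~\ref{def:groupoid-refined-gr} produces a refined $\smallGroup(T^2)$-grading from a groupoid-graded deformation, while Lemma~\ref{lem:unrefine} produces a groupoid-graded deformation from a refined one.

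Since the groupoid $\mathcal{G}$ from Section~\ref{sec:small-gr} has each Hom-space $\Hom_\mathcal{G}(i,j)$ a subset of $\smallGroup$, a $\mathcal{G}$-graded deformation is the same data as a $\smallGroup$-graded deformation satisfying the idempotent compatibility: $a \in I_{\iota_i}\cdot\Alg\cdot I_{\iota_j}$ implies $\gr(a)\in \Hom_\mathcal{G}(i,j)$. This compatibility was verified for the intermediate grading in Section~\ref{sec:small-gr}, so $\mathcal{G}\times\ZZ$-graded deformations of $\AsUnDefAlg$ (respectively $\UnDefAlg$) are the same as the $\smallGroup\times\ZZ=\Gamma$-graded deformations to which Theorems~\ref{thm:UnDefAlg-unique} and~\ref{thm:MAlg-unique} apply.

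Next, I would verify that the hypotheses of the corollary correspond correctly under this equivalence. Given a $\smallGroup(T^2)$-graded deformation satisfying the hypotheses of the corollary, the unrefinement procedure of Lemma~\ref{lem:unrefine} produces a $\mathcal{G}$-graded (hence $\smallGroup$-graded) deformation whose gradings on $\rho_i$ match $\gr(\rho_i)$ from Section~\ref{sec:Intermediate}, and whose central elements $\lambda$ and $\lambda_w$ are preserved (these lie in $\Hom_\mathcal{G}(i,i)=\smallGroup(T^2)$ and are fixed by conjugation by $\psi(\iota_i)$). The algebraic conditions on $\mu_4$ and $\mu_0^1$ do not involve the grading, so they transfer without change. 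Conversely, refining a $\smallGroup$-graded deformation via $\psi$ recovers the original $\smallGroup(T^2)$-graded deformation.

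Applying Theorems~\ref{thm:UnDefAlg-unique} and~\ref{thm:MAlg-unique} then yields uniqueness in the unrefined setting; refining the resulting isomorphism via $\psi$ produces an isomorphism of the original $\smallGroup(T^2)$-graded structures. The only subtle point—and the main bookkeeping obstacle—is checking that an $\Ainf$-isomorphism produced by the theorem in the $\smallGroup$-graded setting respects the groupoid structure (i.e., sends homogeneous elements in $\Hom_\mathcal{G}(i,j)$-graded pieces to the same). This follows because $f_1^0 = \Id$ respects idempotents, and the higher components $f_n^w$ are forced by the grading constraints to lie in the appropriate pieces. Once this is checked, refining $f$ via $\psi$ in the sense of Definition~\ref{def:groupoid-refined-gr} yields the desired $\smallGroup(T^2)$-graded isomorphism.
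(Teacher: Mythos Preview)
Your proposal is correct and follows essentially the same approach as the paper: use Lemma~\ref{lem:unrefine} to convert a $\smallGroup(T^2)\times\ZZ$-graded deformation into a $\smallGroup\times\ZZ$-graded one, then invoke Theorem~\ref{thm:UnDefAlg-unique} (respectively Theorem~\ref{thm:MAlg-unique}). The paper's proof is much terser and leaves implicit the bookkeeping points you spell out (compatibility of hypotheses, refining the resulting isomorphism back), but the strategy is identical.
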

\begin{proof}
  For the unweighted case, by Lemma~\ref{lem:unrefine}, any such
  deformation of $\AsUnDefAlg$ induces a $\smallGroup\times\ZZ$-graded
  deformation.  So, the result follows from
  Theorem~\ref{thm:UnDefAlg-unique}. The weighted statement is
  similar, but using Theorem~\ref{thm:MAlg-unique}.
\end{proof}

\begin{remark}
  \label{rem:WhyWeHaveDef}
  The non-trivial weighted deformation from
  Theorem~\ref{thm:MAlg-unique} is the one which appears in bordered
  Floer homology. Weighted actions on the modules are counts of rigid
  pseudo-holomorphic curves, where the weights signify the total
  number of Reeb orbits on the curve.  In one-dimensional families,
  these Reeb orbits can wander off on the $\alpha$-side, limiting to
  Reeb orbits either on boundary degenerations or curves at east
  infinity. (See~\cite[Section~\ref{TM:sec:moduli}]{LOT:torus-mod}.)
  The term $\mu^1_0$ above comes from counting certain rigid curves at
  east infinity; see for
  example~\cite[Figure~\ref{TM:fig:orbit-curve-eg}]{LOT:torus-mod}.
\end{remark}


\section{The torus algebra and the wrapped Fukaya category}
\label{sec:Fukaya}
Let $\UnDefAlg$ denote the undeformed $\Ainf$-algebra of $\MAlg$. In
this section we give an alternate interpretation of $\UnDefAlg$ in
terms of the wrapped Fukaya category of the punctured torus.

Recall~\cite{AbouzaidSeidel10:wrapped,Auroux:beginners} that the \emph{wrapped Fukaya
  category} $\WFuk(M)$ of a symplectic manifold $(M,\omega)$ with convex, conical
ends has objects Lagrangians $L\subset (M,\omega)$ which are conical
at infinity. Typically, these Lagrangians are also equipped with brane
data---gradings and $\Pin$-structures---but we will not use brane
data in this section. We will also later restrict to simply-connected
Lagrangians. To define the morphism spaces, one chooses a
Hamiltonian function $H$ so that on the conical end $[1,\infty)\times
Z$, $H(r,x)=r^2$. If $\phi^1$ denotes the time-1 flow of $H$
then $\Hom(L_0,L_1)=\CF(\phi^1(L_0),L_1)$. The $\Ainf$-composition
map
\[
  \Hom(L_0,L_1)\otimes\cdots\otimes\Hom(L_{n-1},L_n)\to \Hom(L_0,L_n)
\]
counts holomorphic polygons
with Maslov index $2-n$ with boundary on
\[
  \bigl(\phi^n(L_0),\phi^{n-1}(L_1),\phi^{n-2}(L_2),\dots,\phi^1(L_{n-1}),L_n\bigr)
\]
(in counterclockwise order)
to obtain an element of $\CF(\phi^n(L_0),L_n)$ and then uses a
rescaling trick and a continuation map to map $\CF(\phi^n(L_0),L_n)$
to $\CF(\phi^1(L_0),L_n)=\Hom(L_0,L_n)$ via a
quasi-isomorphism.

(We have followed Auroux's exposition; Abouzaid-Seidel's original
definition is more algebraic. Note that we are not using composition
order, but rather the order of ``morphisms'' which is more natural for
an algebra. See Figure~\ref{fig:polygon-or} for our convention on the
orientations of polygons.)

\begin{figure}
  \centering
  \includegraphics{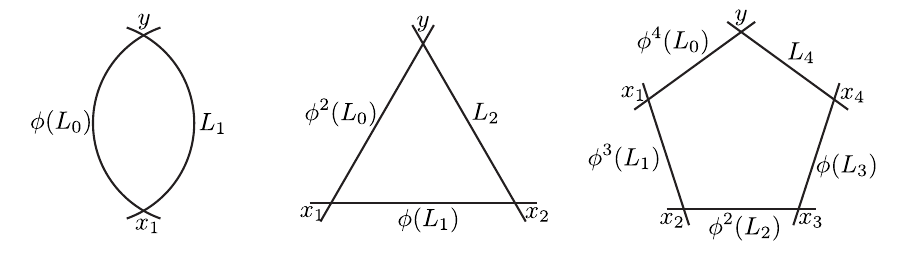}
  \caption{\textbf{Orientations of polygons.} Left: the model for a
    bigon contributing a term $y$ in $\mu_1(x)$. Center: the model for
    a triangle contributing a term $y$ in $\mu_2(x_1,x_2)$. Right: the
    model for a pentagon contributing a term $y$ in
    $\mu_4(x_1,x_2,x_3,x_4)$. This convention is consistent with
    Auroux's~\cite{Auroux:beginners} but is the opposite
    convention from~\cite{OS04:HolomorphicDisks}.}
  \label{fig:polygon-or}
\end{figure}

We are interested in the case that $M=T^2\setminus\{p\}$ is a
punctured torus, where the holomorphic curve counts are combinatorial
(by the Riemann mapping theorem) and the rescaling map
$\CF(\phi^n(L_0),L_n)\to
\CF(\phi^1(L_0),L_n)=\Hom(L_0,L_n)$ is induced by an obvious surface
diffeomorphism.

Let $\alpha_1$ and $\alpha_2$ be the arcs from Figure~\ref{fig:torus},
where the
puncture $p$ is $\alpha_1\cap\alpha_2$.  The following is a special case of a
result of Auroux~\cite[Theorem 4.7]{Auroux:beginners}:
\begin{theorem}\cite{Auroux10:ICM,Auroux10:Bordered,Auroux:beginners}
  The wrapped Fukaya category of the punctured torus,
  $\WFuk(T^2\setminus\{p\})$, is quasi-equivalent to
  $\End_{\WFuk}(\alpha_1\oplus\alpha_2)$.
\end{theorem}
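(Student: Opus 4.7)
The plan is to prove that $\alpha_1 \oplus \alpha_2$ split-generates $\WFuk(T^2\setminus\{p\})$, and then observe that the resulting $\Ainf$-functor $\WFuk(T^2\setminus\{p\})\to \ModCat\text{-}\End_{\WFuk}(\alpha_1\oplus\alpha_2)$ sending $L\mapsto\Hom(\alpha_1\oplus\alpha_2,L)$ is cohomologically full and faithful on the subcategory generated by $\alpha_1,\alpha_2$ and therefore gives the desired quasi-equivalence (after passing to twisted complexes / the derived category, which is the standard interpretation of ``quasi-equivalent'' for wrapped Fukaya categories in this setting).

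To establish split-generation, the cleanest route is Abouzaid's generation criterion: it suffices to show that the image of the open-closed map
\[
\mathcal{OC}\co \HH_*(\End_{\WFuk}(\alpha_1\oplus\alpha_2))\to SH^*(T^2\setminus\{p\})
\]
contains the unit $1\in SH^0$. First I would identify an explicit Hochschild cycle in $\End_{\WFuk}(\alpha_1\oplus\alpha_2)$ whose image under $\mathcal{OC}$ is represented, geometrically, by a count of holomorphic disks with boundary on $\alpha_1\cup\alpha_2$ passing through a generic interior point. The simplest candidate is the cycle
\[
\alpha_1\otimes\alpha_1+\alpha_2\otimes\alpha_2
\]
built from the length-zero (idempotent) morphisms, or, more honestly, a cycle built from the short chords $\rho_i$ corresponding to the four corners at the puncture, arranged to trace around $p$. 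Cutting $T^2\setminus\{p\}$ along $\alpha_1\cup\alpha_2$ yields a once-punctured square, and a small holomorphic disk placed near a generic point inside this square has boundary decomposable along $\alpha_1\cup\alpha_2$ into exactly the corners that appear in this Hochschild cycle; this realizes the constant map representing $1\in SH^0$ as an $\mathcal{OC}$-image.

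Alternatively, and probably more elementary in this low-dimensional setting, one can avoid $\mathcal{OC}$ entirely and argue by hand: any embedded, Hamiltonian-isotopy class of exact Lagrangian (arc or simple closed curve) $L$ in $T^2\setminus\{p\}$ that intersects $\alpha_1\cup\alpha_2$ minimally decomposes, after cutting along $\alpha_1\cup\alpha_2$, into a disjoint union of embedded arcs in the punctured square. Order the intersection points of $L$ with $\alpha_1\cup\alpha_2$ along $L$ and use iterated mapping cones on the morphisms in $\Hom_{\WFuk}(\alpha_i,L)$ and $\Hom_{\WFuk}(L,\alpha_i)$ detected by these intersections to build $L$, up to quasi-isomorphism, as an iterated cone on shifts of $\alpha_1$ and $\alpha_2$. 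The combinatorial core is that each elementary arc in the cut-open square corresponds to a bigon cobounding a morphism in $\End_{\WFuk}(\alpha_1\oplus\alpha_2)$, and attaching these bigons one at a time implements the twisted complex.

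The main obstacle, in either approach, is the bookkeeping of boundary/puncture conditions: one must keep careful track of the wrapping in a neighborhood of $p$ (i.e.\ of the flow of $H=r^2$ on the conical end) to identify the relevant generators of $\Hom_{\WFuk}$ with the $\rho_i$ and their compositions, and to check that the holomorphic polygons used in the mapping-cone argument are exactly those governed by the operations of $\UnDefAlg$ as defined combinatorially in Section~\ref{sec:def}. Once these identifications are made, full faithfulness of the functor $L\mapsto\Hom(\alpha_1\oplus\alpha_2,L)$ on the generators reduces to the definition of $\End_{\WFuk}(\alpha_1\oplus\alpha_2)$, and split-generation upgrades this to a quasi-equivalence on the whole wrapped category, yielding the theorem.
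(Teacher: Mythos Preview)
The paper does not prove this theorem: it is stated purely as a citation of Auroux's work (introduced as ``a special case of a result of Auroux~\cite[Theorem 4.7]{Auroux:beginners}'') and no argument is given. So there is no proof in the paper to compare against.

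That said, your outline is a reasonable sketch of how such generation results are actually proved. Of your two routes, the second --- building an arbitrary exact Lagrangian as an iterated cone on $\alpha_1,\alpha_2$ by cutting along $\alpha_1\cup\alpha_2$ into a punctured polygon --- is essentially Auroux's strategy in the cited references: once the arcs cut the surface into a disk, any object is resolved into a twisted complex on them. Your first route via Abouzaid's criterion is also valid in principle, but your candidate Hochschild cycle is not correct as written: ``$\alpha_1\otimes\alpha_1+\alpha_2\otimes\alpha_2$'' is not a Hochschild chain in $\CH_*(\End(\alpha_1\oplus\alpha_2))$ in any obvious sense, and the identity morphisms alone will not hit the unit in $SH^*$ (their $\mathcal{OC}$-image lands in ordinary cohomology, where it is the sum of fundamental classes of the $\alpha_i$, not the unit). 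You would indeed need the cycle built from the length-$1$ chords $\rho_1,\rho_2,\rho_3,\rho_4$ tracing once around the puncture, and verifying that this is a Hochschild cycle and computing its $\mathcal{OC}$-image is nontrivial bookkeeping. If you were to write this out, the combinatorial iterated-cone argument is both closer to the literature and less work.
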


We consider a relative version of the wrapped Fukaya category of
$T^2\setminus \{p\}$ by fixing a point $z\in T^2\setminus\{p\}$ not in the
conical end, considering Lagrangians disjoint from $z$, and a Hamiltonian
function $H$ which vanishes in a neighborhood of $z$. We can then enhance the
Floer complexes $\CF(\phi^1(L_0),L_n)$ to be modules over $\Field[U]$ by
counting holomorphic disks $u\co \bD^2\to T^2\setminus\{p\}$ with weight
$U^{n_z(u)}$, where $n_z(u)$ is the multiplicity of $u$ at $z$. The polygon
counts used to define the higher compositions on the wrapped Fukaya category
inherit a similar weight. Let $\WFuk_z(T^2\setminus\{p\})$ denote this
relative Fukaya category. (For a more general discussion of relative Fukaya
categories, see~\cite{Sheridan:CY-hyp}.)

With these definitions in hand, the goal of this section is to prove:
\begin{theorem}\label{thm:UnDef-is-Fuk}
  There is an $\Ainf$-quasi-isomorphism
  $
  \UnDefAlg\simeq \End_{\WFuk_z}(\alpha_1\oplus\alpha_2).
  $
\end{theorem}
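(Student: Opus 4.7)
The plan is to identify $\End_{\WFuk_z}(\alpha_1\oplus\alpha_2)$ with an $\Ainf$-deformation of $\AsUnDefAlg$ satisfying the hypotheses of Corollary~\ref{cor:refined-unique}, and then to invoke that uniqueness result.

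First, I would set up the model carefully. Viewing $T^2\setminus\{p\}$ as a punctured torus with its standard convex end, choose $\alpha_1,\alpha_2$ as in Figure~\ref{fig:torus} and let $H$ be a wrapping Hamiltonian that is quadratic on the cylindrical end and vanishes near the basepoint $z$ placed in the complementary region away from the puncture. The time-one flow $\phi^1$ wraps each $\alpha_i$ around the puncture, and the Reeb chords on the link of $p$ are naturally labelled by the indecomposable chords $\rho_1,\rho_2,\rho_3,\rho_4$ of the pointed matched circle for $T^2$. Combinatorially, this identifies $\CF(\phi^1(\alpha_i),\alpha_j)$ with the $I_i\cdot\AsUnDefAlg[U]\cdot I_j$ summand of $\AsUnDefAlg[U]$, with $\rho_{i,\dots,i+n}$ realized as a concatenation of Reeb chords and $U$ recording the multiplicity at $z$. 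The Floer differential $\mu_1$ vanishes, because there are no immersed bigons disjoint from $p$ whose boundary sits on $\phi^1(\alpha_i)\cup\alpha_j$.

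Next, I would verify that $\mu_2$ agrees with multiplication in $\AsUnDefAlg[U]$. By the Riemann mapping theorem, $\mu_2$-triangles lift to immersed triangles in the universal cover of $T^2\setminus\{p\}$ with corners alternating between intersection points; concatenating two adjacent Reeb chords yields the longer chord provided the product is non-zero in the path algebra, and the vanishing relations $\rho_{i+1}\rho_i=0$ are exactly the cases where no triangle exists (the prospective triangle would be forced to wrap around the puncture). This reproduces the relations in~\eqref{eq:path-alg}. The $U$-linearity of the product is automatic from the definition of the relative Fukaya category.

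Third, I would check that the $\Ainf$-structure is graded by $\smallGroup(T^2)\times\ZZ$ in a manner compatible with Section~\ref{sec:small-gr}. The $H_1(T^2)$-component is given by the homology class of the arc representing a chord, and the winding grading $\wingr$ records the signed number of times a morphism wraps past the distinguished region containing $z$. A Maslov-index computation shows $\mu_n$ has the expected degree $\lambda^{n-2}$, making the Fukaya endomorphism algebra into a $\smallGroup(T^2)\times\ZZ$-graded $\Ainf$-deformation of $\AsUnDefAlg[U]$.

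The main calculation — and the expected main obstacle — is the $\mu_4$-computation that pins down the deformation. The claim is that $\mu_4(\rho_4,\rho_3,\rho_2,\rho_1)=U\iota_1$ and $\mu_4(\rho_3,\rho_2,\rho_1,\rho_4)=U\iota_0$. The contributing pentagons are immersed images of the standard square tile from Figure~\ref{fig:torus} into $T^2\setminus\{p\}$ with the corners at the Reeb chord endpoints; such a pentagon necessarily covers $z$ exactly once, so it contributes $U$ times an idempotent, and choosing orientations as in Figure~\ref{fig:polygon-or} places the idempotent in the correct summand. The delicate point is to rule out additional contributions: one must check that any other immersed pentagon with the prescribed boundary data either covers $p$ (and is thus excluded) or cancels in pairs under the involution exchanging the two square tiles adjacent to a given corner, and to confirm that the Abouzaid--Seidel rescaling/continuation map used in the definition of $\mu_4$ preserves the combinatorial count.

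Finally, with the verifications above in place, Corollary~\ref{cor:refined-unique} applies: the wrapped Fukaya endomorphism algebra and $\UnDefAlg$ are two $\smallGroup(T^2)\times\ZZ$-graded $\Ainf$-deformations of $\AsUnDefAlg[U]$ satisfying the same $\mu_4$-identity on the length-one Reeb generators, hence they are $\Ainf$-isomorphic, yielding the quasi-isomorphism $\UnDefAlg\simeq \End_{\WFuk_z}(\alpha_1\oplus\alpha_2)$ asserted in the theorem.
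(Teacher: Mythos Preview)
Your proposal is correct and follows essentially the same strategy as the paper: the paper's proof of the theorem simply cites Corollary~\ref{cor:refined-unique} together with Lemmas~\ref{lem:Fuk-torus-grs}, \ref{lem:Fuk-torus-mu2}, and~\ref{lem:Fuk-torus-mu4}, which carry out exactly your steps (grading identification, $\mu_2$-verification, and the $\mu_4$-computation, respectively). The main technical content lies in the grading step, where the paper develops an anchored-Lagrangian framework (Section~\ref{sec:Fuk-gr}, culminating in Proposition~\ref{prop:w-fuk-groupoid}) to grade the wrapped Fukaya category by $\pi_1(\LagGr{T^2\setminus\{p\}})\cong\ZZ\times F_2$, and then passes to $\smallGroup(T^2)$ via an explicit homomorphism, checking in Lemma~\ref{lem:Fuk-torus-grs} that after a shift by $(2\wingr;0,0)$ this matches $\gr_\psi$ on generators.
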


\begin{corollary}
  There is an (ungraded) $\Ainf$-quasi-isomorphism
  $
  \UnDefAlg|_{U=1}\simeq \End_{\WFuk}(\alpha_1\oplus\alpha_2) \simeq \WFuk(T^2\setminus\{p\}).
  $
\end{corollary}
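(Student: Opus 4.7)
The plan is to derive the corollary directly from Theorem~\ref{thm:UnDef-is-Fuk} combined with Auroux's theorem quoted immediately before it. First I would invoke Theorem~\ref{thm:UnDef-is-Fuk} to obtain an $\Ainf$-quasi-isomorphism $\UnDefAlg \simeq \End_{\WFuk_z}(\alpha_1\oplus\alpha_2)$ of $\Field[U]$-linear $\Ainf$-algebras. Both sides are $\Field[U]$-modules (free, in fact) and the quasi-isomorphism is $\Field[U]$-equivariant, so I may base-change along the ring map $\Field[U]\to\Field$ sending $U\mapsto 1$. Since the ground ring $\Field[U]$ is a PPID and both sides are free, base change commutes with taking homology, so the specialized map remains a quasi-isomorphism.

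Next I would identify the result of specializing the right-hand side at $U=1$ with $\End_{\WFuk}(\alpha_1\oplus\alpha_2)$. By the definition of the relative Fukaya category $\WFuk_z$ recalled just before Theorem~\ref{thm:UnDef-is-Fuk}, a holomorphic polygon with multiplicity $n_z(u)$ at $z$ is weighted by $U^{n_z(u)}$; setting $U=1$ forgets the basepoint $z$ entirely and recovers the unweighted polygon count used to define $\WFuk(T^2\setminus\{p\})$. Thus $\End_{\WFuk_z}(\alpha_1\oplus\alpha_2)|_{U=1} = \End_{\WFuk}(\alpha_1\oplus\alpha_2)$ as $\Ainf$-algebras over $\Field$, yielding the first of the two claimed quasi-isomorphisms.

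Finally, I would invoke Auroux's theorem (the displayed theorem cited as~\cite{Auroux10:ICM,Auroux10:Bordered,Auroux:beginners} just above Theorem~\ref{thm:UnDef-is-Fuk}), which states that $\alpha_1\oplus\alpha_2$ is a generator of $\WFuk(T^2\setminus\{p\})$, so $\End_{\WFuk}(\alpha_1\oplus\alpha_2)$ is quasi-equivalent to the full wrapped Fukaya category $\WFuk(T^2\setminus\{p\})$. Composing the three identifications gives the ungraded $\Ainf$-quasi-isomorphism asserted by the corollary.

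The main obstacle is the second step: carefully justifying that the $U=1$ specialization of the $\Field[U]$-relative Fukaya category $\WFuk_z$ really coincides on the nose (or at least up to quasi-equivalence) with the ordinary wrapped Fukaya category $\WFuk(T^2\setminus\{p\})$. One needs to check that the Hamiltonian perturbations, compatible almost complex structures, and wrapping data used to set up $\WFuk_z$ (which are constrained near $z$ so that $z$ remains interior to the domain) can be chosen so that the resulting polygon counts on $T^2\setminus\{p\}$ agree with those of a standard setup for $\WFuk(T^2\setminus\{p\})$ once the weight $U$ is forgotten. In the planar/combinatorial situation at hand this is straightforward, but it is the step where one must do more than quote prior results.
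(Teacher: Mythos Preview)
Your approach is correct and matches the paper's intended argument, which treats the corollary as an immediate consequence of Theorem~\ref{thm:UnDef-is-Fuk} together with Auroux's theorem and does not give a separate proof. One simplification: the paper actually produces an $\Ainf$-\emph{isomorphism} (not merely a quasi-isomorphism) between $\UnDefAlg$ and $\End_{\WFuk_z}(\alpha_1\oplus\alpha_2)$ for suitable perturbation data, so your base-change concern disappears and the identification $\End_{\WFuk_z}|_{U=1}=\End_{\WFuk}$ is literally true at the chain level.
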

(In fact, the proof of Theorem~\ref{thm:UnDef-is-Fuk} also specifies a
grading on $\UnDefAlg|_{U=1}$ so that this becomes a graded
quasi-isomorphism.)

We will deduce Theorem~\ref{thm:UnDef-is-Fuk} from the uniqueness
theorem for $\UnDefAlg$, Theorem~\ref{thm:UnDefAlg-unique}. Since
gradings play an essential role in the uniqueness theorem, we
introduce a corresponding notion of gradings on
the wrapped Fukaya
category in Section~\ref{sec:Fuk-gr}, before returning to the model
computations needed to prove Theorem~\ref{thm:UnDef-is-Fuk} in
Section~\ref{sec:Fuk-proof}.

\subsection{Gradings on the wrapped Fukaya category}\label{sec:Fuk-gr}
In this section, we define a grading on the wrapped Fukaya
category using (a variant of) the notions of anchored
Lagrangians~\cite{FOOO10:anchored,Sheridan:CY-hyp}.  Fix a symplectic
manifold $(M,\omega)$ with $\pi_2(M)=0$; for instance, $(M,\omega)=(T^2\setminus
\{p\},\text{Area})$. Let $\LagGr{M}$ denote the space of Lagrangian
subspaces of $TM$. The space $\LagGr{M}$ fibers over $M$, with fiber
diffeomorphic to
the Grassmanian $\LagGras$ of Lagrangian planes in $\RR^{2n}$. Any
Lagrangian submanifold $L\subset M$ has a canonical lift $\tilde{L}\subset
\LagGr{M}$.

Recall that $\LagGras$ is path connected and
$\pi_1(\LagGras)\cong\ZZ$; the isomorphism is given by the Maslov
index. Thus, since $\pi_2(M)=0$, for basepoints $b \in M$ and
$\tilde{b} \in \LagGr{M}$, we have an exact sequence
\[
0\to \ZZ=\pi_1(\LagGras)\to \pi_1(\LagGr{M},\tilde{b})\to \pi_1(M,b)\to 0.
\]
\begin{lemma}\label{lem:act-trivial}
  The action of $\pi_1(M,b)$ on $\pi_1(\LagGras)$ is trivial.
\end{lemma}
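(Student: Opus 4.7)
The plan is to identify the action as monodromy in a fiber bundle and then show this monodromy factors through a connected Lie group, forcing it to act trivially on the discrete group $\pi_1(\LagGras) \cong \ZZ$.

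First, I would observe that $\LagGr{M} \to M$ is a fiber bundle with fiber $\LagGras$, associated to the symplectic frame bundle of $(M,\omega)$ via the standard transitive action of $\Sp(2n,\RR)$ on $\LagGras$ (where $2n = \dim M$). The canonical action of $\pi_1(M,b)$ on $\pi_1(\LagGras,\tilde b) \cong \pi_1(\LagGras)$ that appears in the long exact sequence of this fibration is precisely the monodromy action: a loop $\gamma$ in $M$ based at $b$ lifts (using any connection on the frame bundle) to a path in the symplectic frame bundle, whose endpoints differ by an element $g_\gamma \in \Sp(2n,\RR)$, and the action of $[\gamma]$ on $\pi_1(\LagGras)$ is given by the induced automorphism $(g_\gamma)_*\co \pi_1(\LagGras) \to \pi_1(\LagGras)$.

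Next, I would observe that this action factors through the homomorphism $\pi_0(\Sp(2n,\RR)) \to \Aut(\pi_1(\LagGras))$: continuous deformations of $g_\gamma$ in $\Sp(2n,\RR)$ induce homotopies of the corresponding self-map of $\LagGras$, hence give the same automorphism of $\pi_1$. The main (and essentially only) point is then that $\Sp(2n,\RR)$ is connected (it deformation retracts onto $U(n)$), so $\pi_0(\Sp(2n,\RR))$ is trivial, and therefore the action of $\pi_1(M,b)$ on $\pi_1(\LagGras)$ is trivial.

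There is no serious obstacle here; the only thing worth being careful about is just checking that the action from the fiber sequence really does coincide with the monodromy action on the fiber, which is a standard fact about fiber bundles. (The hypothesis $\pi_2(M)=0$ is what lets us identify the middle term of the sequence as a group extension in the first place, but is not needed for the triviality argument itself.)
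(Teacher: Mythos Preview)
Your argument is correct and takes a genuinely different route from the paper's. You invoke the general principle that for an associated bundle with connected structure group, the monodromy action of $\pi_1(B)$ on $\pi_*(\text{fiber})$ is trivial, since holonomy lands in $G$ and any element of a connected $G$ acts on the fiber by a map homotopic to the identity. Applying this with $G = \Sp(2n,\RR)$ (or its maximal compact $U(n)$) finishes the lemma in one stroke.

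The paper instead argues directly and by hand. It first reduces to showing $\gamma\cdot\eta = \eta$ for a single nontrivial $\eta$ (since $\Aut(\ZZ) = \{\pm 1\}$), then picks a compatible almost complex structure $J$ to get a global fiberwise $S^1$-action on $\LagGr{M}$, takes $\eta$ to be the $S^1$-orbit of the basepoint, and writes down an explicit lift of the square $[0,1]^2$ using this $S^1$-action along any lift of $\gamma$. This is essentially your argument specialized to the one-parameter subgroup $S^1\subset U(n)\subset \Sp(2n,\RR)$, carried out concretely rather than by appeal to the general monodromy-through-$\pi_0(G)$ fact.

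Your approach is cleaner and more general; the paper's is more self-contained and avoids invoking the identification of the long-exact-sequence action with bundle holonomy. Either way, the key content is the connectedness of the structure group, which you identify correctly.
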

\begin{proof}
  Recall that the action of $\gamma\in \pi_1(M,b)$ on
  $\eta\in\pi_1(\LagGras(T_bM),\tilde{b})$ is given by applying the homotopy
  lifting property to the diagram
  \[
    \xymatrix{
      [0,1]\times\{0\} \ar[r]^-{\eta} \ar@{^(->}[d]& \LagGr{M}\ar[d]\\
      [0,1]\times[0,1]\ar[r]_-{(s,t)\mapsto \gamma(t)} \ar@{-->}[ur] & M.
    }
  \]
  and restricting the dashed arrow to $[0,1]\times\{1\}$. (We may be
  negligent about basepoints since $\pi_1(\LagGras)$ is abelian.)
  Since $\pi_1(\LagGras)\cong\ZZ$ and $\gamma$ acts by a group automorphism on $\pi_1(\LagGras)$, the induced element
  $\gamma\cdot\eta$ has the form $\pm\eta$. So, it suffices to show that for each $\gamma$
  there is some nontrivial~$\eta$ with $\gamma\cdot\eta=\eta$.
  
  Fix an almost complex structure $J$ on $M$ compatible with $\omega$, making
  $TM$ into a Hermitian vector bundle. Consequently, $TM$ inherits an action of
  $S^1$ which takes Lagrangian subspaces to Lagrangian subspaces. Hence, we have
  an action $\cdot\co S^1\times \LagGr{M}\to \LagGr{M}$. Observe that the image
  of $\tilde{b}$ under the action of $S^1$ is $n=\dim(M)/2$ times a generator of
  $\pi_1(\LagGras(T_bM),\tilde{b})\cong\ZZ$. Let $\eta$ be this loop in
  $\pi_1(\LagGras(T_bM),\tilde{b})$. Then we can take the dashed arrow to be the
  map $(s,t)\mapsto e^{2\pi i t}\cdot\gamma(s)$, and the restriction to
  $\{1\}\times[0,1]$ is again $\eta$.
\end{proof}

Consequently, $\pi_1(\LagGr{M},\tilde{b})$ is a $\ZZ$ central extension of
$\pi_1(M,b)$.
Let $\lambda\in \pi_1(\LagGr{M},\tilde{b})$ be the image of the positive
generator of $\pi_1(\LagGras)$ (the one with $\mu=1$ or, equivalently, the direction induced by an almost complex structure; in the one-dimensional case, this is counterclockwise rotation).

Given a pair of connected Lagrangians $L,L'\subset M$, let $\GrSet(L,L')$ be the
set of homotopy classes of maps
$([0,1],\{0\},\{1\})\to(\LagGr{M},\tilde{L},\tilde{L}')$, i.e., the homotopy classes of
paths from $\tilde{L}$ to $\tilde{L}'$. The group $\pi_1(\LagGras)=\ZZ$ acts on
$\GrSet(L,L')$ by concatenating with loops in the fiber. In particular, we can
view $\lambda$ as an element of $\GrSet(L,L)$.

The following is a simple extension of Definition~\ref{def:groupoid-gr} to
$\Ainf$-categories (rather than algebras):
\begin{definition}\label{def:groupoid-grading}
  Given a groupoid $G$ with a central element $\lambda_s$ or $\lambda$
  (Definition~\ref{def:groupoid-central})  and an $\Ainf$-category
  $\Cat$, with composition
  operations denoted $\mu_n$, a \emph{grading of $\Cat$ by $G$} consists of:
  \begin{itemize}
  \item For each object $L\in\ob(\Cat)$, an object $s(L)\in\ob(G)$.
  \item For each pair of objects $L,L'$ of $\Cat$, a decomposition
    \[
      \Hom(L,L')=\bigoplus_{\gamma\in\Hom_G(s(L),s(L'))}\Hom(L,L';\gamma).
    \]
    An element
    $x\in\Hom_G(s(L),s(L');\gamma)$ is called \emph{homogeneous}, and we write
    $\gr(x)=\gamma$.
  \end{itemize}
  These data are required to satisfy the condition that for homogeneous elements
  $x_i\in\Hom(L_i,L_{i+1})$, $i=1,\dots,n$, the element
  $\mu_n(x_{1},x_{2},\dots,x_n)$ is homogeneous and
  \begin{equation}\label{eq:groupoid-gr-compat}
    \gr\bigl(\mu_n(x_{1},x_{2},\dots,x_n)\bigr)=\lambda^{n-2}\gr(x_{1})\gr(x_{2})\cdots\gr(x_n).
  \end{equation}
\end{definition}

If the Lagrangian submanifolds $L$ are all simply-connected then we
can form a groupoid $\GrGroupoid$
whose objects are Lagrangians in $M$, 
whose morphism sets are $\Hom(L,L')=\GrSet(L,L')$, and 
whose composition 
$\Hom(L,L')\times \Hom(L',L'')\to\Hom(L,L'')$ sends $(\gamma,\gamma')$ to the
concatenation $\gamma*\eta*\gamma'$, where $\eta$ is a path in $L'$ from
$\gamma(1)$ to $\gamma'(0)$. This concatenation is well-defined because $L'$ is
simply-connected. For any Lagrangian $L$, $\GrSet(L,L)\cong \pi_1(\LagGr{M})$
which, in particular, contains an element $\lambda_L$, and these elements
$\lambda$ are central in the sense described in
Definition~\ref{def:groupoid-grading}.

The grading on the Fukaya category by $\GrGroupoid$ is given as follows. Consider the following two kinds of paths in the Lagrangian Grassmanian: 
\begin{itemize}
\item
  Given Lagrangians $L$ and $L'$ with $L\pitchfork L'$ and $x\in L\cap L'$
  let $\gpath{x}{L}{L'}$ be the standard path in $\LagGras(T_xM)$ from $T_xL$ to
  $T_xL'$, corresponding to moving in the negative direction from $T_xL$ and
  remaining transverse to $T_xL$ for all positive time. (``Negative'' means opposite
  the direction induced by an almost complex structure, that is, negative means moving from $Jv$ towards
  $v$; compare~\cite{RobbinSalamon93:Maslov}. In the one-dimensional case, this is clockwise in the plane.) 
\item
  Given $m,n\in\RR$ and $x\in \phi^m(L)$ let $\hpath{x}{L}{m}{n}$ be the
  path in $\LagGr{M}$ from $T_x\phi^m(L)$ to $T_{\phi^{n-m}(x)}\phi^n(L)$
  induced by the Hamiltonian isotopy of $L$. (That is, the Hamiltonian induces a
  path of Lagrangians, hence a path in the Lagrangian Grassmanian bundle.)
\end{itemize}
Then, given Lagrangians $L_0$ and $L_1$ with $\phi^1(L_0)\pitchfork L_1$ and
an element $x\in \phi^1(L_0)\cap L_1$ the element $\gr(x)\in \GrSet(L_0,L_1)$
is the concatenation
\begin{equation}\label{eq:Fukaya-grading-1}
  \gr(x)=\hpath{\phi^{-1}(x)}{L_0}{0}{1}*\gpath{x}{\phi^1(L_0)}{L_1}
\end{equation}
of the path from $T_{\phi^{-1}(x)}L_0$ to $T_x\phi^1(L_0)$ induced by the
Hamiltonian isotopy and the positive path in $T_xM$ from $T_x\phi^1(L_0)$
to $T_xL_1$.

It is sometimes convenient to view all the elements $\gr(x)$ as having
the same endpoints, by
defining some additional paths:
\begin{itemize}
\item Given $x,y\in L$, let $\epath{x}{y}{L}$ be a path in $\tilde{L}$ from $T_xL$ to $T_yL$.
\end{itemize}
Then if we choose a point $q_0\in L_0$ and $q_1\in L_1$ we can also
write
\begin{equation}\label{eq:Fukaya-grading-2}
  \gr(x)=\epath{q_0}{\phi^{-1}(x)}{L_0}*\hpath{\phi^{-1}(x)}{L_0}{0}{1}*\gpath{x}{\phi^1(L_0)}{L_1}*\epath{x}{q_1}{L_1}.
\end{equation}

\begin{proposition}\label{prop:w-fuk-groupoid}
  Equation~\eqref{eq:Fukaya-grading-1} or~\eqref{eq:Fukaya-grading-2}
  defines a grading of the wrapped Fukaya category $\WFuk(M)$ of
  simply-connected
  Lagrangians in $M$ by the groupoid $\GrGroupoid$.
\end{proposition}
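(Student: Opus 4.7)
My plan is to verify the proposition in three stages: well-definedness of $\gr(x)$, compatibility of $\gr$ with composition in $\GrGroupoid$, and the $\Ainf$-relation~\eqref{eq:groupoid-gr-compat}. The first two are routine and follow from the simple-connectedness hypothesis; the third carries the geometric content.

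For well-definedness, the paths $\hpath{\phi^{-1}(x)}{L_0}{0}{1}$ and $\gpath{x}{\phi^1(L_0)}{L_1}$ in Equation~\eqref{eq:Fukaya-grading-2} are canonical up to homotopy, so the only freedom lies in the choice of basepoints $q_i\in L_i$ and of the auxiliary paths $\epath{q_i}{\cdot}{L_i}$ in the canonical lifts $\tilde L_i\subset\LagGr{M}$. Since each $L_i$ is simply-connected, so is $\tilde L_i$ (as a section of $\LagGr{M}$ over $L_i$), and hence these auxiliary paths are unique up to homotopy rel endpoints. The same input gives functoriality in $\GrGroupoid$: the bridging path needed to concatenate $\gr(x)\co s(L_0)\to s(L_1)$ with $\gr(x')\co s(L_1)\to s(L_2)$ lives in $\tilde L_1$ and so is unique up to homotopy.

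The central step is the $\Ainf$-compatibility. Given a holomorphic $(n+1)$-gon $u\co\bD^2\to M$ contributing to $\mu_n(x_1,\dots,x_n)=y$, with boundary arcs mapped in counterclockwise order to $\phi^n(L_0),\phi^{n-1}(L_1),\dots,\phi^1(L_{n-1}),L_n$, I would compare the concatenation
\[
  \gr(x_1)*\gr(x_2)*\cdots*\gr(x_n)*\gr(y)^{-1},
\]
closed into a loop in $\LagGr{M}$ via basepoint-paths in each $\tilde L_i$, with the loop obtained by lifting $\partial u$ tangentially to the wrapped Lagrangians along the boundary and closing each corner by the standard clockwise path $\gpath{}{}{}$. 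The latter loop is null-homotopic because it bounds the disk $u$ in $\LagGr{M}$, so the concatenation equals the discrepancy between the two constructions. This discrepancy consists of the Hamiltonian-wrapping paths $\hpath{}{}{}$ inserted in each $\gr(x_i)$, which cancel telescopically after accounting for the wrapping of the Lagrangians, together with the contribution of the Liouville rescaling map that identifies $\CF(\phi^n(L_0),L_n)\cong\CF(\phi^1(L_0),L_n)=\Hom(L_0,L_n)$. A direct computation shows the total discrepancy is $\lambda^{n-2}$; this is the classical Maslov index count saying that a holomorphic $(n+1)$-gon contributing to $\mu_n$ has Maslov index $n-2$ in the paper's homological convention.

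The main obstacle is the careful bookkeeping of the rescaling/continuation map from $\CF(\phi^n(L_0),L_n)$ to $\Hom(L_0,L_n)$, which alters the representative of $\gr(y)$. Because both the Liouville rescaling and the continuation map are realized by canonical Hamiltonian isotopies, each lifts tautologically to a path in $\LagGr{M}$; combining these lifts with the $\hpath{}{}{}$ pieces in each $\gr(x_i)$ produces an explicit loop whose Maslov index I would compute either from a local model near each corner, or by induction on~$n$ starting from the base case of a triangle, where $\mu_2$ has degree zero and the identity reduces to the statement that a holomorphic triangle's boundary lifts to a null-homotopic loop in $\LagGr{M}$.
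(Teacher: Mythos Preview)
Your overall strategy matches the paper's: both reduce the compatibility condition to the Maslov index of the holomorphic $(n{+}1)$-gon. But you have inverted the roles of the two contributions. The loop obtained by lifting $\partial u$ tangentially to the wrapped Lagrangians and closing each corner by the standard path is \emph{not} null-homotopic in $\LagGr{M}$: its class in $\pi_1(\LagGras)\cong\ZZ$ is by definition the Maslov index of the polygon, which in the paper's convention is $2-n$ (not $n-2$), so this loop represents $\lambda^{2-n}$. The disk $u$ lives in $M$, and extending it to a disk in $\LagGr{M}$ with this prescribed boundary is obstructed precisely by that Maslov class. Conversely, the ``discrepancy'' you describe --- the Hamiltonian-wrapping paths $\eta$ together with the rescaling contribution --- does not carry the $\lambda$-power; the paper shows it is trivial, so that $\gr(x_1)\cdots\gr(x_n)\gr(x_0)^{-1}$ is conjugate to the Maslov loop above and hence equals $\lambda^{2-n}$.

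The paper carries this out by isolating the rescaling step as a separate lemma: if $x\in\phi^1(L_0)\cap L_n$ and $x'\in\phi^n(L_0)\cap L_n$ correspond under dilation, then the two versions of the grading (built from $\phi^1$ and from $\phi^n$) agree in $\GrSet(L_0,L_n)$. With that in hand, one writes out $\gr(x_1)\cdots\gr(x_n)\gr(x_0)^{-1}$, replaces $\gr(x_0)$ by its $\phi^n$-version, and commutes all the $\eta$-paths to the left using the fact that Hamiltonian flow carries standard corner paths to standard corner paths and Lagrangian paths to Lagrangian paths. The $\eta$'s then cancel against one another, leaving exactly the Maslov-index loop. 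You correctly flag the rescaling step as the main obstacle but do not supply the argument; saying that rescaling ``lifts tautologically to $\LagGr{M}$'' is not enough, since what must be checked is that this lift is homotopic rel endpoints to the $\eta$-path built from $\phi^n$.
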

\begin{proof}
  We need to check the compatibility
  condition~\eqref{eq:groupoid-gr-compat}. The idea is that if $x_0$
  appears as a term in
  $\mu_n(x_1,\cdots, x_n)$ then there
  is a holomorphic disk connecting $\phi^{n-1}(x_1)$,
  $\phi^{n-2}(x_2),\dots,x_n$ and $x_0$ with Maslov index $2-n$.
  We will use the following lemma, which relates the gradings on
  points in $\phi^n(L_n) \cap L_0$ and $\phi^1(L_n) \cap L_0$, so the
  product of the gradings of the $x_i$ is equal to $\lambda^{n-2}$
  times the grading of $x_0$.
  
  \begin{lemma}\label{lem:dilate-grading}
    Let $x\in \phi^1(L_0)\cap L_n$ and let $x'$ be a point in
    $\phi^n(L_0)\cap L_n$ which corresponds to $x$ under dilation.
    Let $q_0\in L_0$ and $q_n\in L_n$ be any points. Then we have the
    following equality in $\GrSet(L_0,L_n)$:
    \[
      \epath{q_0}{\phi^{-1}(x)}{L_0}*\hpath{\phi^{-1}(x)}{L}{0}{1}*\gpath{x}{\phi^1(L_0)}{L_n}*\epath{x}{q_n}{L_n} =
      \epath{q_0}{\phi^{-n}(x')}{L_0}*\hpath{\phi^{-n}(x')}{L}{0}{n}*\gpath{x'}{\phi^n(L_0)}{L_n}*\epath{x'}{q_n}{L_n}.
    \]
  \end{lemma}
  \begin{proof}
    The statement is clearly independent of the choice of~$H$
    (defining~$\phi$),
    $q_0$, and $q_n$ and
    of compactly supported Hamiltonian isotopies of $L_0$ and~$L_n$.
    So, let $q_0=q_n\in L_0\pitchfork L_n$ outside the
    conical end and choose $H$ to vanish in a neighborhood of $q_0$. Then the
    two loops are homotopic to
    \[
      \epath{q_0}{x}{\phi^1(L_0)}*\gpath{x}{\phi^1(L_0)}{L_n}*\epath{x}{q_0}{L_n}
      \qquad\text{and}\qquad
      \epath{q_0}{x'}{\phi^n(L_0)}*\gpath{x'}{\phi^n(L_0)}{L_n}*\epath{x'}{q_0}{L_n}.
    \]
    So, we want to show that the loop
    \begin{multline*}
      \epath{q_0}{x}{\phi^1(L_0)}
      *\gpath{x}{\phi^1(L_0)}{L_n}*\epath{x}{q_0}{L_n}
        *\epath{q_0}{x'}{L_n}*\left(\gpath{x'}{\phi^n(L_0)}{L_n}\right)^{-1}*\epath{x'}{q_0}{\phi^n(L_0)}\\
      =
        \epath{q_0}{x}{\phi^1(L_0)}*\gpath{x}{\phi^1(L_0)}{L_n}\epath{x}{x'}{L_n}*\left(\gpath{x'}{\phi^n(L_0)}{L_n}\right)^{-1}*\epath{x'}{q_0}{\phi^n(L_0)}
    \end{multline*}
    is nullhomotopic. Applying the homotopy given by dilation (with varying dilation parameter) to the last three terms, this loop becomes
    \[
      \epath{q_0}{x}{\phi^1(L_0)}*\gpath{x}{\phi^1(L_0)}{L_n}*\epath{x}{x}{L_n}*\left(\gpath{x}{\phi^1(L_0)}{L_n}\right)^{-1}*\epath{x}{q_0}{\phi^1(L_0)}
    \]
    which is nullhomotopic, as desired.
  \end{proof}
  
  Returning to the proof of Proposition~\ref{prop:w-fuk-groupoid}, for $i=1,\dots,n$ let
  $x_i\in \phi^1(L_{i-1})\cap L_i$ and let
  $x_0\in \phi^1(L_0)\cap L_n$, and suppose that $x_0$ appears as a term in
  $\mu_n(x_1,\cdots, x_n)$. Since this composition map counts
  holomorphic disks with Maslov index $2-n$, we have
  \begin{equation}\label{eq:mu-is-2-n}
  \begin{split}
    \epath{x_0'}{\phi^{n-1}(x_1)}{\phi^n(L_0)}*
    \gpath{\phi^{n-1}(x_1)}{\phi^n(L_0)}{\phi^{n-1}(L_1)}
    &*\epath{\phi^{n-1}(x_1)}{\phi^{n-2}(x_2)}{\phi^{n-1}(L_1)}
    *\gpath{\phi^{n-2}(x_2)}{\phi^{n-1}(L_1)}{\phi^{n-2}(L_2)}
    *\epath{\phi^{n-2}(x_2)}{\phi^{n-3}(x_3)}{\phi^{n-2}(L_2)}
    \\
    &*\cdots
      *\gpath{x_n}{\phi^{1}(L_{n-1})}{L_n}
      *\epath{x_n}{x_0'}{L_n}
      *(\gpath{x_0'}{\phi^n(L_0)}{L_n})^{-1}
      =\lambda^{2-n}.
    \end{split}
  \end{equation}
  See, for instance,~\cite[Definition 1.8 and Formula (2.5)]{Auroux:beginners},
  and note in particular that the inverse of the negative path
  $\gpath{x_0}{\phi^n(L_0)}{L_n}$ at $x_0$ from $\phi^n(L_0)$ to $L_n$ is the
  positive path at $x_0$ from $L_n$ to $L_0$.

  On the other hand,
  \begin{align*}
    \gr(x_1)&\gr(x_2)\cdots\gr(x_n)\gr(x_0)^{-1}\\
    &=
      \hpath{\phi^{-1}(x_1)}{L_0}{0}{1}
      *\gpath{x_1}{\phi^1(L_0)}{L_1}
      *\epath{x_1}{\phi^{-1}(x_2)}{L_1}
      *\hpath{\phi^{-1}(x_2)}{L_1}{0}{1}
        *\gpath{x_2}{\phi^1(L_1)}{L_2}
        *\epath{x_2}{\phi^{-1}(x_3)}{L_2}\\
        &\qquad*\cdots*\hpath{\phi^{-1}(x_n)}{L_{n-1}}{0}{1}
          *\gpath{x_n}{\phi^1(L_{n-1})}{L_n}
          *\epath{x_n}{x_0}{L_n}
    *\left(\hpath{\phi^{-1}(x_0)}{L_{0}}{0}{1}
    *\gpath{x_0}{\phi^1(L_{0})}{L_n}\right)^{-1}
                      \\
                                   &=
                                 \hpath{\phi^{-1}(x_1)}{L_0}{0}{1}
      *\gpath{x_1}{\phi^1(L_0)}{L_1}
      *\epath{x_1}{\phi^{-1}(x_2)}{L_1}
      *\hpath{\phi^{-1}(x_2)}{L_1}{0}{1}
        *\gpath{x_2}{\phi^1(L_1)}{L_2}
        *\epath{x_2}{\phi^{-1}(x_3)}{L_2}
        \\
        &\qquad*\cdots*\hpath{\phi^{-1}(x_n)}{L_{n-1}}{0}{1}
          *\gpath{x_n}{\phi^1(L_{n-1})}{L_n}
          *\epath{x_n}{x_0'}{L_n}
    *\left(\hpath{\phi^{-n}(x_0')}{L_{0}}{0}{n}
    *\gpath{x_0'}{\phi^n(L_{0})}{L_n}\right)^{-1}
  \end{align*}
  where the second equality uses
  Lemma~\ref{lem:dilate-grading}.
  Since Hamiltonian isotopies take standard paths associated to Lagrangian
  intersections to standard paths associated to Lagrangian intersections and
  paths in Lagrangians to paths in Lagrangians,
  we have, for instance,
  \[
    \gpath{x_1}{\phi^1(L_0)}{L_1} * \epath{x_1}{\phi^{-1}(x_2)}{L_1} *
       \hpath{\phi^{-1}(x_2)}{L_1}{0}{1}
       = \hpath{x_1}{L_0}{1}{2} * \gpath{\phi(x_1)}{\phi^2(L_0)}{\phi(L_1)}
         * \epath{\phi(x_1)}{x_2}{\phi(L_1)}.
  \]
  Consequently, commuting
  the copies of $\eta$ to the left gives
  \begin{align*}
    \gr(x_1)\cdots\gr(x_n)\gr(x_0)^{-1}
    &=\hpath{\phi^{-n}(x_0)}{L_{0}}{0}{n}
      *\gpath{\phi^{n-1}(x_1)}{\phi^n(L_0)}{\phi^{n-1}(L_1)}
    *\epath{\phi^{n-1}(x_1)}{\phi^{n-2}(x_2)}{\phi^{n-1}(L_1)}\\
    &\qquad
      *\cdots* \gpath{x_n}{\phi^1(L_{n-1})}{L_n}
          *\epath{x_n}{x_0}{L_n}
      *\bigl(\gpath{x_0}{\phi^n(L_{0})}{L_n}\bigr)^{-1}
      *\bigl(\hpath{\phi^{-n}(x_0)}{L_{0}}{0}{n}\bigr)^{-1}.
  \end{align*}
  By Formula~\eqref{eq:mu-is-2-n}, since $\lambda$ is central, this is
  equal to $\lambda^{2-n}\in\GrSet(L_0,L_0)$, as desired.
\end{proof}

We can reduce this groupoid grading to a grading by a group,
as in Section~\ref{sec:gr-refine}, as follows. Fix a
basepoint $\tilde{b}\in \LagGr{M}$. For each Lagrangian $L$ choose a point
$\tilde{b}_L\in \tilde{L}$ and a path
$\eta_L$ in $\LagGr{M}$ from $\tilde{b}$
to $\tilde{b}_L$, i.e., an \emph{anchor} for $L$. Let
$G=\pi_1(\LagGr{M},\tilde{b})$. For each pair $L_0,L_1$ there is an induced
identification $\GrSet(L_0,L_1)\cong G$ which sends a path $\gamma$ from
$\tilde{L}_0$ to $\tilde{L}_1$ to the concatenation
$\eta_{L_0}*\nu_0*\gamma*\bar{\nu}_1*\eta_{L_1}^{-1}$ where $\nu_i$ is a path in
$\tilde{L}_i$ from $\tilde{b}_{L_i}$ to $\gamma(i)$. Since $L_i$ is simply
connected, this is independent of the choice of paths $\nu_0$ and
$\nu_1$. Further, this construction defines a homomorphism of groupoids from
$\GrGroupoid$ to $G$ sending $\lambda$ to $\lambda$. Hence, it induces a grading
of $\WFuk(M)$ by $G$.

We are interested in the case that $M=T^2\setminus\{p\}$ and the
Lagrangians are $\alpha_1$ and $\alpha_2$. In this case, the grading
group $\pi_1(\LagGr{T^2\setminus\{p\}},\tilde{b})$ is a $\ZZ$
central extension of the free group $F_2$. Indeed, the circle bundle
\[
  \LagGras\to \LagGr{T^2\setminus\{p\}}\to T^2\setminus\{p\}
\]
is trivial, since any surface bundle over a punctured surface is, so
we have $\pi_1(\LagGr{T^2\setminus\{p\}},\tilde{b})\cong \ZZ\times
F_2$. This isomorphism is not canonical; to fix one, let
$a=S^1\times\{q\}$ and $b=\{q\}\times S^1$ in $S^1\times S^1=T^2$,
viewed as oriented loops so that $a\cdot b=1$. The
curves $a,b$ are Lagrangian, so have canonical lifts
$\tilde{a},\tilde{b}\subset\LagGr{T^2\setminus\{p\}}$. Then
$\{\lambda,\tilde{a},\tilde{b}\}$ is a set of generators for
$\pi_1(\LagGr{T^2\setminus\{p\},\tilde{b}})$, giving an isomorphism 
$\pi_1(\LagGr{T^2\setminus\{p\},\tilde{b}})\cong \ZZ\times F_2$.

If we impose the relation
$\tilde{a}\tilde{b}\tilde{a}^{-1}\tilde{b}^{-1}=\lambda^{-2}$ we obtain the
group~$\smallGroup$.
Hence, we have a surjection
$\pi_1(\LagGr{T^2\setminus\{p\}},\tilde{b})\to \smallGroup$, and we
can consider the Fukaya category of $T^2\setminus\{p\}$ graded by
$\smallGroup$.

We will also exploit one further \emph{winding number grading}, coming from the fact that we are
considering a relative Fukaya category.
Fix a path $\gamma_z$ from $z$ to $p$ so that:
\begin{itemize}
\item $\gamma_z$ approaches $\{p\}$ from the region labeled $\rho_4$ in Figure~\ref{fig:torus}.
\item $\gamma_z$ is disjoint from $\alpha_1\cup\alpha_2$.
\end{itemize}
(See, e.g., Figure~\ref{fig:Fuk-gr}.)
Choose the Hamiltonian $H$ so that $\phi^1$ has a single fixed point $\iota_0$
on $\alpha_1$ and a single fixed point $\iota_1$ on $\alpha_2$, and
$H$ is constant in a neighborhood of $z$. Given a point
$\rho\in \phi^1(\alpha_i)\cap\alpha_j$, let $\wingr(\rho)$ be the algebraic
intersection number of the path in $\phi^1(\alpha_i)$ from $\iota_i$ to $\rho$ with
$\gamma_z$. More generally, given
$\rho\in \phi^m(\alpha_i)\cap\phi^n(\alpha_j)$, $m>n$, let $\eta_\rho$ (respectively
$\nu_\rho$) be the path in $\phi^m(\alpha_i)$ from $\iota_i$ to $\rho$
(respectively in $\phi^n(\alpha_j)$ from $\iota_j$ to $\rho$), and set
\[
  \wingr(\rho)=\gamma_z\cdot(\eta_\rho*\overline{\nu_\rho}),
\]
the difference of algebraic intersection numbers. Finally, define $\wingr(U)=1$.

\begin{lemma}
  If $\rho\in \phi^m(\alpha_i)\cap\phi^n(\alpha_j)$ and $\rho'$ is the
  corresponding point in $\phi^{m-n}(\alpha_i)\cap\alpha_j$ then
  $\wingr(\rho)=\wingr(\rho')$. Similarly, if
  $\rho\in \phi^n(\alpha_i)\cap\alpha_j$ and $\rho'$ is the point in
  $\phi^1(\alpha_i)\cap\alpha_j$ which corresponds to $\rho$ under rescaling
  then $\wingr(\rho)=\wingr(\rho')$.  
  Finally, the composition maps in $\End_{\WFuk_z}(\alpha_1\oplus\alpha_2)$
  preserve $\wingr$.
\end{lemma}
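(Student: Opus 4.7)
The plan is to prove the three assertions in order: translation invariance under the Hamiltonian flow, invariance under rescaling, and preservation by the $\mu_n$ operations, using the first two to reduce the third to a boundary intersection-number computation.

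For the first statement, I would arrange the setup so that the Hamiltonian $H$ vanishes not only near $z$ but in a neighborhood of the entire path $\gamma_z$ (this can be achieved by choosing $\gamma_z$ to avoid the wrapping region, since $\alpha_1\cup\alpha_2$ also avoids it). Then $\phi^{-n}$ fixes $\gamma_z$ pointwise, fixes $\iota_i$ and $\iota_j$, and carries $\phi^m(\alpha_i)\cap\phi^n(\alpha_j)$ to $\phi^{m-n}(\alpha_i)\cap\alpha_j$. It maps $\eta_\rho$ to a path in $\phi^{m-n}(\alpha_i)$ from $\iota_i$ to $\rho'$, and since each $\phi^k(\alpha_i)$ is contractible (an arc in the punctured torus), that path is homotopic rel endpoints to $\eta_{\rho'}$; analogously for $\nu$. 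Hence the algebraic intersection numbers with $\gamma_z$ are preserved. The rescaling statement is similar: the rescaling diffeomorphism is the identity outside the conical end, so it fixes $\gamma_z$ and the points~$\iota_i$, and it carries $\eta_\rho$, $\nu_\rho$ to paths with the same endpoints as $\eta_{\rho'}$, $\nu_{\rho'}$, which are therefore homotopic to them in the respective arcs.

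For the third statement, suppose $u$ is a holomorphic polygon with $n_z(u)=k$ contributing the term $U^k x_0$ to $\mu_n(x_1,\dots,x_n)$, with boundary running (counterclockwise) along $\phi^n(L_0)$, $\phi^{n-1}(L_1),\dots,L_n$ and corners at $x_0'$ (the rescaled version of $x_0$) and $\phi^{n-i}(x_i)$ for $i=1,\dots,n$. In each Lagrangian arc, the boundary segment is homotopic rel endpoints to a concatenation of the form $\overline{\mathrm{path}}_{\text{start}} * \mathrm{path}_{\text{end}}$ with the $\eta$/$\nu$ paths, since the Lagrangians are contractible. Chaining these together, the oriented boundary $\partial u$ is homotopic, in $T^2\setminus\{p\}$, to the concatenation
\[
\overline{\eta_{x_0'}} * \Bigl(*_{i=1}^{n-1}\eta_{\phi^{n-i}(x_i)} * \overline{\nu_{\phi^{n-i}(x_i)}}\Bigr) * \eta_{x_n} * \overline{\nu_{x_n}} * \nu_{x_0'},
\]
whose algebraic intersection number with~$\gamma_z$ simplifies to $\sum_{i=1}^n \wingr(\phi^{n-i}(x_i)) - \wingr(x_0')$. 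By the first two parts this equals $\sum_{i=1}^n \wingr(x_i)-\wingr(x_0)$. On the other hand, $(\partial u)\cdot\gamma_z = n_z(u) = k$ because $\gamma_z$ is a properly embedded arc to the puncture~$p$ disjoint from~$u$ at~$p$. Combining these gives $\sum_i\wingr(x_i) = \wingr(x_0)+k = \wingr(U^k x_0)$, which is exactly preservation of $\wingr$ by $\mu_n$.

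The main conceptual obstacle is making the boundary computation of the middle paragraph rigorous: one has to verify that each $\eta$/$\nu$ identification is natural enough that everything glues to a loop homotopic to $\partial u$, which requires being careful about basepoints at the fixed points $\iota_0$, $\iota_1$ and about the fact that after wrapping the arcs $\phi^k(\alpha_i)$ are still contractible. Once this is set up, the invariance statements in parts one and two are essentially formal applications of diffeomorphism invariance of intersection numbers, and the final identity $(\partial u)\cdot \gamma_z = n_z(u)$ is a standard Gauss--Bonnet--type computation for holomorphic maps with boundary on Lagrangians disjoint from $\gamma_z\setminus\{z\}$.
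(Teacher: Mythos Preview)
Your approach is essentially the paper's, which is extremely terse: it just asserts that the loops $\eta_\rho*\overline{\nu_\rho}$ and $\eta_{\rho'}*\overline{\nu_{\rho'}}$ are isotopic in $T^2\setminus\{p,z\}$, that the rescaling statement is ``clear'', and that the boundary of a polygon meets $\gamma_z$ with algebraic intersection number $n_z(u)$. You have correctly unpacked these statements, especially the third, where your decomposition of $\partial u$ into $\eta$ and $\nu$ pieces is exactly what is needed to translate the intersection identity into preservation of $\wingr$.

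There is one technical slip in your treatment of the first statement. You propose to arrange that $H$ vanish along all of $\gamma_z$, justifying this by saying $\gamma_z$ and $\alpha_1\cup\alpha_2$ avoid the wrapping region. They do not: both the $\alpha_i$ and $\gamma_z$ run out to the puncture $p$, hence into the conical end where $H=r^2$. So $\phi^{-n}$ cannot be assumed to fix $\gamma_z$ pointwise. The repair is immediate and is what the paper's one-line argument is really using: since $H$ vanishes near $z$, the Hamiltonian isotopy $\{\phi^{-t}\}_{t\in[0,n]}$ carries the compact loop $\eta_\rho*\overline{\nu_\rho}$ through $T^2\setminus\{p,z\}$, so its homology class there---and hence its algebraic intersection with the properly embedded arc $\gamma_z$---is unchanged. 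The image loop $\phi^{-n}(\eta_\rho*\overline{\nu_\rho})$ is then homotopic, inside the contractible arcs $\phi^{m-n}(\alpha_i)$ and $\alpha_j$, to $\eta_{\rho'}*\overline{\nu_{\rho'}}$, completing the argument. With this adjustment your proof is complete and somewhat more detailed than the paper's.
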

\begin{proof}
  The first statement follows from the observation that the loops
  $(\eta_\rho*\overline{\nu_\rho})$ and
  $(\eta_{\rho'}*\overline{\nu_\rho'})$ are isotopic in $T^2\setminus\{p,z\}$.
  The second statement is clear. The third statement follows from the
  fact that the intersection number of the boundary of a polygon with $\gamma_z$
  is the same as the multiplicity with which the polygon covers $z$.
\end{proof}

\begin{figure}
  \centering
  \includegraphics{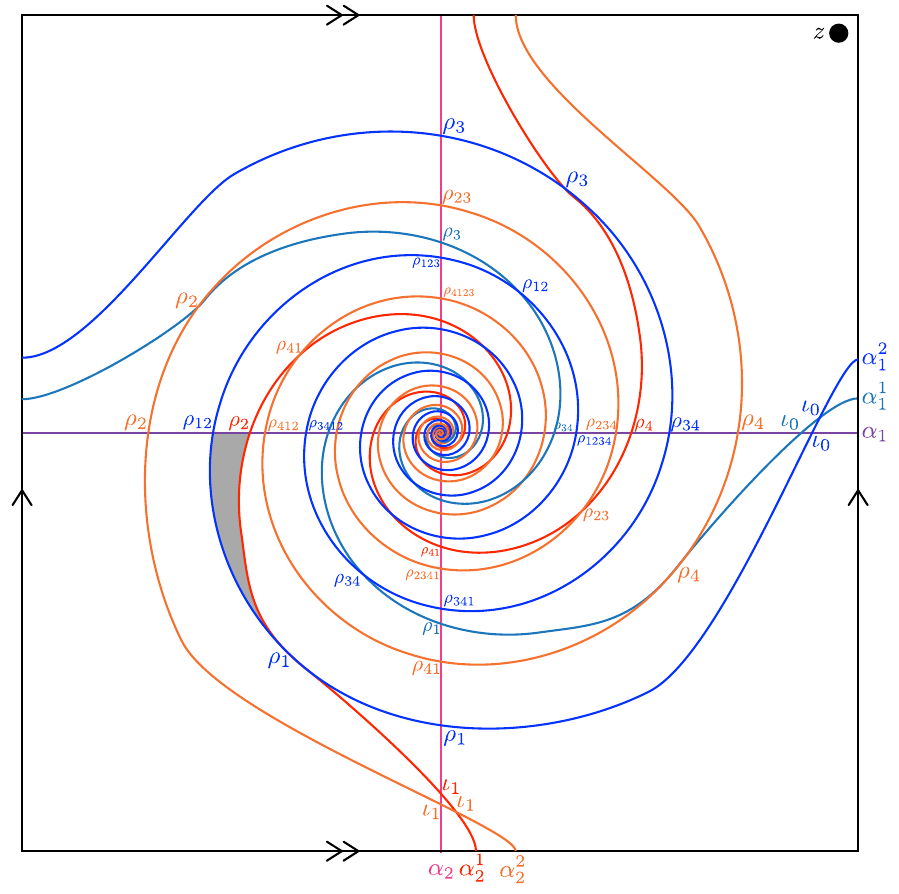}
  \caption{\textbf{Wrapped Fukaya category of the torus.}
    The puncture is in the middle of the picture.
    The shaded triangle corresponds to the operation
    $\mu_2(\rho_1,\rho_2)=\rho_{12}$. Here, we have perturbed the
    $\alpha$-curves slightly to resolve the triple intersections at
    the fixed points $\iota_0$ and $\iota_1$ (which are fixed points
    of $\phi$).}
  \label{fig:torus-fuk}
\end{figure}

\subsection{Model computations and the proof of Theorem~\ref{thm:UnDef-is-Fuk}}
\label{sec:Fuk-proof}
In this section, we prove Theorem~\ref{thm:UnDef-is-Fuk}, i.e., that
$\End_{\WFuk_z}(\alpha_1\oplus\alpha_2)$ is quasi-isomorphic to
$\UnDefAlg$. In fact, we will choose perturbations defining
$\End_{\WFuk_z}(\alpha_1\oplus\alpha_2)$ so that the two algebras are
isomorphic.

The identification of an $\FF_2$-basis for $\UnDefAlg$ and
$\End_{\WFuk_z}(\alpha_1\oplus\alpha_2)$ is shown in
Figure~\ref{fig:torus-fuk}.
In words, let $M = T^2 \setminus \{p\}$,
fix a cylindrical neighborhood $V$ of the
puncture, and choose $\alpha_1$ and $\alpha_2$ so that, in cylindrical
polar coordinates around the puncture,
$\alpha_1\cap \bdy V=\{(0,0),(0,\pi)\}$ and
$\alpha_2\cap \bdy V=\{(0,\pi/2),(0,3\pi/2)\}$.  Choose a perturbation
$\tilde{\alpha}_i$ of $\alpha_i$ so that the two intervals in
$\tilde{\alpha}_i\cap V$ are slightly counter-clockwise of
the $\alpha_i\cap V$ and so that
$\tilde{\alpha}_i$ intersects $\alpha_i$ in a single point.

Let $q_{0,0}^0=\alpha_1\cap\tilde{\alpha}_1$ and
$q_{1,1}^0=\alpha_2\cap \tilde{\alpha}_2$, and let
$\alpha_i^k=\phi^k(\tilde{\alpha}_i)$. Then, still using
cylindrical polar coordinates, we have
\begin{align*}
  \alpha_1\cap\alpha_1^1 &= \{(n-\epsilon,0)\mid n=1,2,\dots\}
                      \cup\{(n-\epsilon,\pi)\mid  n=1,2,\dots\}\cup\{q_{0,0}^0\}\\
  \alpha_2\cap\alpha_2^1 &= \{(n-\epsilon,\pi/2)\mid n=1,2,\dots\}
          \cup\{(n-\epsilon,3\pi/2)\mid  n=1,2,\dots\}\cup\{q_{1,1}^0\}\\
  \alpha_1\cap\alpha_2^1 &= \{(n+1/2,0)\mid n=0,1,\dots\}\cup
    \{(n+1/2,\pi)\mid n=0,1,\dots\}\\
  \alpha_2\cap\alpha_1^1 &= \{(n+1/2,\pi/2)\mid n=0,1,\dots\}\cup
    \{(n+1/2,3\pi/2)\mid n=0,1,\dots\},
\end{align*}
where the $\epsilon$s are small, unimportant positive real numbers.

The identification between these generators and $\UnDefAlg$ is given
by
\begin{align*}
  q_{0,0}^0&\leftrightarrow \iota_0 & q_{1,1}^0&\leftrightarrow \iota_1\\
  (n-\epsilon,0)&\leftrightarrow
    \begin{cases}
      \rho_{34}\rho_{1234}^{(n-1)/2} & n\text{ odd}\\
      \rho_{1234}^{n/2} & n \text{ even}
    \end{cases} &
    (n-\epsilon,\pi)&\leftrightarrow
    \begin{cases}
      \rho_{12}\rho_{3412}^{(n-1)/2} & n\text{ odd}\\
      \rho_{3412}^{n/2} & n \text{ even}
    \end{cases} \\
    (n-\epsilon,\pi/2) &\leftrightarrow
    \begin{cases}
      \rho_{23}\rho_{4123}^{(n-1)/2} & n\text{ odd}\\
      \rho_{4123}^{n/2} & n \text{ even}
    \end{cases} &
    (n-\epsilon,3\pi/2) &\leftrightarrow
    \begin{cases}
      \rho_{41}\rho_{2341}^{(n-1)/2} & n\text{ odd}\\
      \rho_{2341}^{n/2} & n \text{ even}
    \end{cases} \\
    (n+1/2,0) &\leftrightarrow
    \begin{cases}
      \rho_4(\rho_{1234})^{n/2} & n\text{ even}\\
      \rho_{234}(\rho_{2341})^{(n-1)/2} & n\text{ odd}
    \end{cases} &
    (n+1/2,\pi) & \leftrightarrow
    \begin{cases}
      \rho_2(\rho_{3412})^{n/2} & n\text{ even}\\
      \rho_{412}(\rho_{3412})^{(n-1)/2} & n\text{ odd}
    \end{cases}\\
    (n+1/2,\pi/2) &\leftrightarrow
    \begin{cases}
      \rho_3(\rho_{4123})^{n/2} & n\text{ even}\\
      \rho_{123}(\rho_{4123})^{(n-1)/2} & n\text{ odd}
    \end{cases} &
    (n+1/2,3\pi/2) &\leftrightarrow
    \begin{cases}
      \rho_1(\rho_{2341})^{n/2} & n\text{ even}\\
      \rho_{341}(\rho_{2341})^{(n-1)/2} & n\text{ odd}.
    \end{cases}
\end{align*}
%
We will henceforth abuse notation and use $\rho_I$ to refer to both
the element of $\UnDefAlg$ and the corresponding element of
$\End_{\WFuk_z}(\alpha_1\oplus\alpha_2)$.

\begin{lemma}\label{lem:Fuk-torus-grs}
  The above identification of basis elements intertwines the winding
  number gradings. Further, there is a homomorphism from
  $\ZZ\times F_2$ to $\smallGroup(T^2)$ so that if we let $\gr_F$
  denote the induced $\smallGroup(T^2)$-valued grading on the Fukaya
  category then the identification of basis elements intertwines the
  grading $\gr_F-(2\wingr;0,0)$ on the Fukaya category and the grading
  $\gr_\psi$ from Section~\ref{sec:gr-refine} on $\MAlg$.
\end{lemma}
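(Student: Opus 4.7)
The winding number equality is a direct inspection of the spiral picture. Each intersection point $p \in \phi^1(\alpha_i) \cap \alpha_j$ (or $\phi^1(\alpha_i) \cap \tilde\alpha_i$) lies on a copy of $\tilde\alpha_i$ after some number of wraps around the puncture; the oriented arc from $\iota_i$ to $p$ along $\phi^1(\alpha_i)$ crosses $\gamma_z$ algebraically the number of times prescribed by the identification table. For instance, $(n - \epsilon, 0)$ is identified with $\rho_{2341}^{n/2}$ (for $n$ even) or $\rho_{41}\rho_{2341}^{(n-1)/2}$ (for $n$ odd), whose winding numbers $n/2$ and $(n-1)/2$ agree with the crossing count because one full wrap of the spiral crosses $\gamma_z$ exactly once by the choice of $\gamma_z$ approaching $p$ from the $\rho_4$ sector; the other angular sectors are analogous. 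The element $U$ (identified with one wrap of the spiral) has $\wingr = 1$ by definition.

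For the group-valued grading, fix a basepoint $\tilde{x}_0 \in \LagGr{T^2 \setminus \{p\}}$ over a point in the fundamental rectangle disjoint from $\alpha_1 \cup \alpha_2$, and anchors $\eta_{\alpha_i}$ from $\tilde{x}_0$ to basepoints in $\tilde\alpha_i$ near the fixed points $\iota_i$. Define a homomorphism $\phi \co \ZZ \times F_2 \to \smallGroup(T^2)$ by $\phi(\lambda) = (1;0,0)$ and $\phi(\tilde a) = (m_a; 1, 0)$, $\phi(\tilde b) = (m_b; 0, 1)$ for Maslov shifts $m_a, m_b$ to be fixed below. Since $\AsUnDefAlg$ is generated as an $\Field[U]$-algebra by $\rho_1, \rho_2, \rho_3, \rho_4$ together with the idempotents, and since both $\gr_\psi$ and $\gr_F - (2\wingr; 0, 0)$ are gradings in the sense of Equation~\eqref{eq:groupoid-gr-compat} with the common central element $\lambda_d = \lambda$, it suffices to verify the agreement on these four chords (together with checking that $U$ is sent to $(-2;0,0)$, which follows from the winding number match and the Maslov contribution of a full tangent rotation).

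For each generator $\rho_i$, I would expand $\gr_F(\rho_i)$ via Equation~\eqref{eq:Fukaya-grading-2}: the anchor path at $\alpha_k$, the Hamiltonian path $\hpath{\phi^{-1}(\rho_i)}{\alpha_k}{0}{1}$, the negative path $\gpath{\rho_i}{\phi^1(\alpha_k)}{\alpha_\ell}$ at the intersection, and the anchor path at $\alpha_\ell$. The $\SpinC$ component of the resulting class in $\smallGroup(T^2)$ reads off directly from the homology class of the underlying loop in $T^2 \setminus \{p\}$ under $\phi$, and matches the $\SpinC$ component of $\gr_\psi(\rho_i)$ listed in Section~\ref{sec:gr-refine}. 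The Maslov component is the total tangent rotation along the concatenation. The main obstacle is precisely this Maslov-component bookkeeping: each full wrap of a spiral contributes $+2$ to the tangent rotation, which the shift $(2\wingr; 0, 0)$ precisely absorbs, and the negative half-turn $\gpath{}{}{}$ at each transverse intersection must be counted with the correct sign. Once the Maslov contribution is verified for $\rho_4$ (the only length-one chord with nonzero winding), the remaining $\rho_i$ give two further equations in the two unknowns $m_a, m_b$; showing these are consistent—which ultimately reflects the central extension structure of $\smallGroup(T^2)$ encoding the intersection form on $T^2$—is the crux of the proof.
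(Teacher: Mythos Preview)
Your overall strategy---reduce to the four generating chords and $U$, compute the $\ZZ\times F_2$-grading of each, then choose the homomorphism to $\smallGroup(T^2)$ and verify---is exactly the paper's. But your proposal stops at the strategy and does not execute it, and the execution is the entire content of this lemma. The paper actually computes
\[
  \grt_F(\rho_1)=(0;\Id),\quad
  \grt_F(\rho_2)=(-1;a^{-1}),\quad
  \grt_F(\rho_3)=(0;b*a),\quad
  \grt_F(\rho_4)=(-1;b^{-1}),
\]
specifies the homomorphism by $(0;a)\mapsto(-1/2;-1,0)$, $(0;b)\mapsto(-1/2;0,1)$, and then checks $\gr_F(\rho_i)-(2\wingr(\rho_i);0,0)=\gr_\psi(\rho_i)$ case by case. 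Your proposal writes ``I would expand'' but leaves all of this undone, and your proposed $\phi(\tilde a)=(m_a;1,0)$ has the $\SpinC$ component with the wrong sign relative to what the computation actually forces.

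There are also two concrete slips in what you did write. First, $\wingr(\rho_{41}\rho_{2341}^{(n-1)/2})=(n+1)/2$, not $(n-1)/2$: already for $n=1$ the element $\rho_{41}$ has winding number $1$, not $0$. Second, your claim that ``each full wrap of a spiral contributes $+2$ to the tangent rotation'' is not what happens: the Maslov (fiber) contribution of one full wrap is $-2$ (coming from the $-1$'s at $\rho_2$ and $\rho_4$), and the $+2$ that the $2\wingr$ shift absorbs arises from the $F_2$-commutator $a^{-1}bab^{-1}$ mapping to $\lambda^2$ under the homomorphism to $\smallGroup(T^2)$, not from tangent rotation. Getting this mechanism right is exactly the ``consistency of the overdetermined system'' you flag as the crux, so you cannot leave it as an intuition.
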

\begin{proof}
  It is clear that the identification respects the winding number
  grading; see Figure~\ref{fig:Fuk-gr} on the left for the curve $\gamma_z$. 
  
  To specify the grading on the Fukaya category, choose anchors for
  the two Lagrangians as shown in Figure~\ref{fig:Fuk-gr}. There is
  then a loop of Lagrangian subspaces of the tangent space to $T^2$
  associated to each intersection point; an example is shown on the right in
  Figure~\ref{fig:Fuk-gr}.
  Trivialize the Lagrangian Grassmanian bundle using the
  section of lines at slope $\pi/4$ in the figure, so to extract an
  integer from the Maslov (fiber) component of the grading, one counts
  with sign how many times the loop of Lagrangian subspaces passes
  through slope $\pi/4$.

  We will construct the homomorphism $\ZZ\times F_2$ to
  $\smallGroup(T^2)$ below. To see that it intertwines the gradings of
  generators, it suffices to show that it intertwines the gradings of
  $\rho_1$, $\rho_2$, $\rho_3$, $\rho_4$, and $U$. 

  \begin{figure}
    \centering
    \includegraphics[scale=.5]{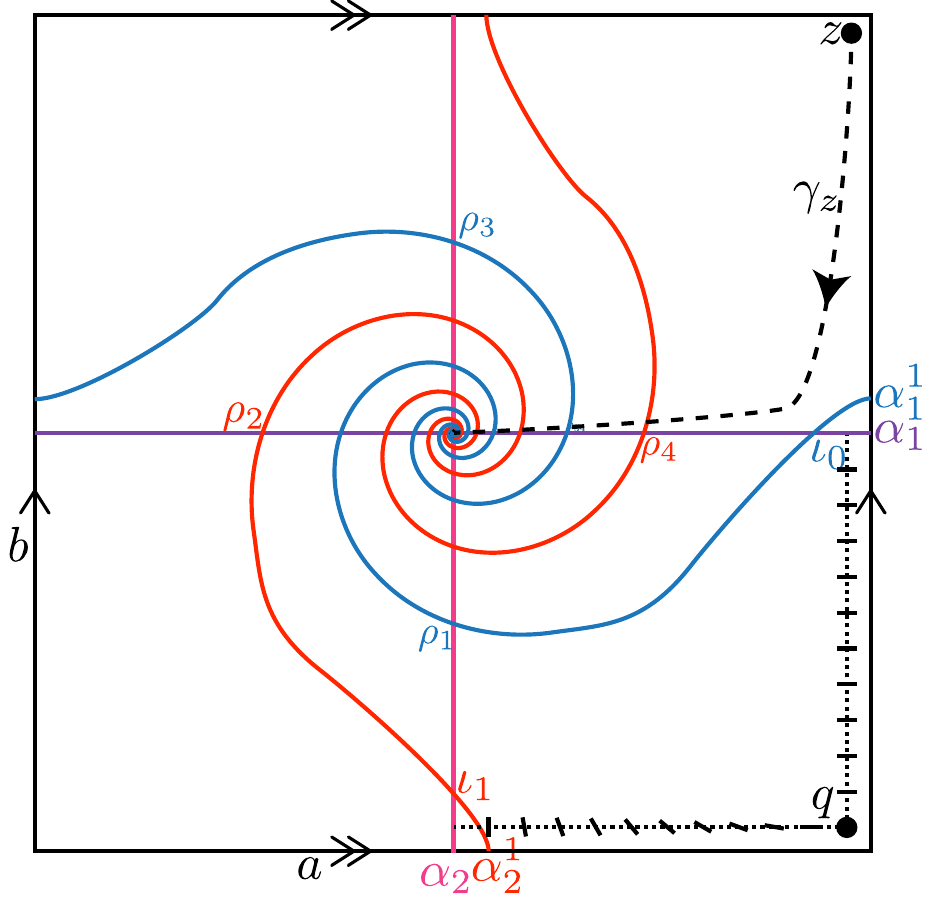} \quad 
    \includegraphics[scale=.5]{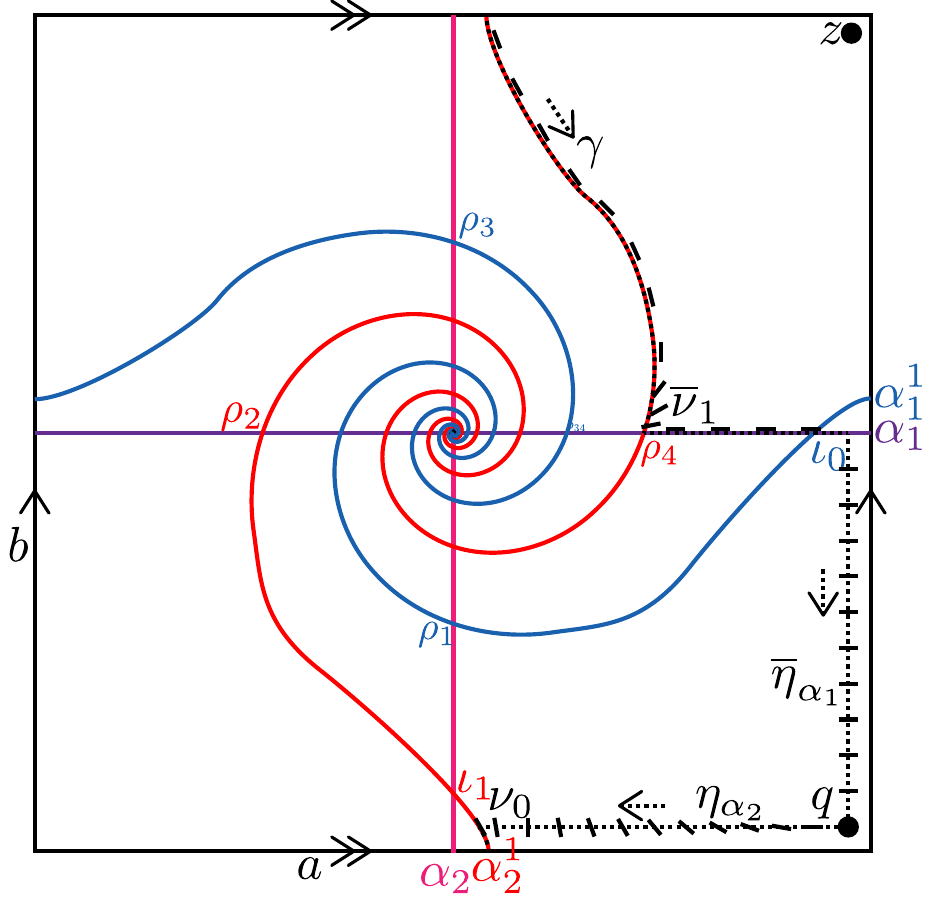}
    \caption{\textbf{Identifying the gradings on the Fukaya category
        of the torus.} Left: the base paths to the two Lagrangians
      used to identify the gradings (dotted). Right: the loop giving
      the grading of $\rho_4$. The short black segments indicate the
      lifts of the paths to the Lagrangian Grassmannian bundle. The
      paths $\nu_0$ and $\overline{\nu}_1$ correspond to the planes
      turning at the end; more precisely, these happen at the
      intersection points where $\gamma$ ends, but we have merged them
      slightly with $\eta_{\alpha_2}$ and $\gamma$ to make the drawing
      legible.}
    \label{fig:Fuk-gr}
  \end{figure}

  It is straightforward to compute that the $\ZZ\times F_2$-valued
  grading
  $\grt_F$ is given by
  \begin{align*}
    \grt_F(\rho_1)&=(0;\Id) &
    \grt_F(\rho_2)&=(-1;a^{-1})\\
    \grt_F(\rho_3)&=(0;b*a) &
    \grt_F(\rho_4)&=(-1;b^{-1}).
  \end{align*}
  (The final computation is illustrated on the right in 
  Figure~\ref{fig:Fuk-gr}.)  By construction, we have
  $\grt_F(U)=(0;0)$. The winding number grading is given by
  \begin{align*}
    \wingr(\rho_1)&=\wingr(\rho_2)=\wingr(\rho_3)=0 & \wingr(\rho_4)=\wingr(U)=1.
  \end{align*}

  Consider the homomorphism from
  $\ZZ\times F_2$ to $\smallGroup(T^2)$ defined by
  \begin{align*}
    (1;0)&\mapsto \lambda=(1;0,0) &
    (0;a)&\mapsto (-1/2;-1,0) & 
    (0;b)&\mapsto (-1/2;0,1).
  \end{align*}
  The image of $\grt_F$ under this map is
  \begin{align*}
    \gr_F(\rho_1) &= (0;0,0) &
    \gr_F(\rho_2) &= (-1;0,0)\cdot(1/2;1,0)=(-1/2;1,0) \\
    \gr_F(\rho_3)&= (-1/2;0,1)\cdot(-1/2;-1,0)=(0;-1,1) &
    \gr_F(\rho_4)&=(-1;0,0)\cdot(1/2;0,-1)=(-1/2;0,-1)\\
  \end{align*}

  To identify this with the grading on $\MAlg$ we subtract two times the
  winding number grading from the Maslov component of the grading, to get
  \begin{align*}
    \gr(\rho_1)&=(0;0,0) &
    \gr(\rho_2) &= (-1/2;1,0) \\
    \gr(\rho_3)&= (0;-1,1) &
    \gr(\rho_4)&=(-5/2;0,-1) &
    \gr(U)&=(-2;0,0).
  \end{align*}
  This agrees with the grading $\gr_\psi$ in
  Section~\ref{sec:gr-refine}, as claimed.
\end{proof}

\begin{figure}
  \centering
  \includegraphics[scale=.5]{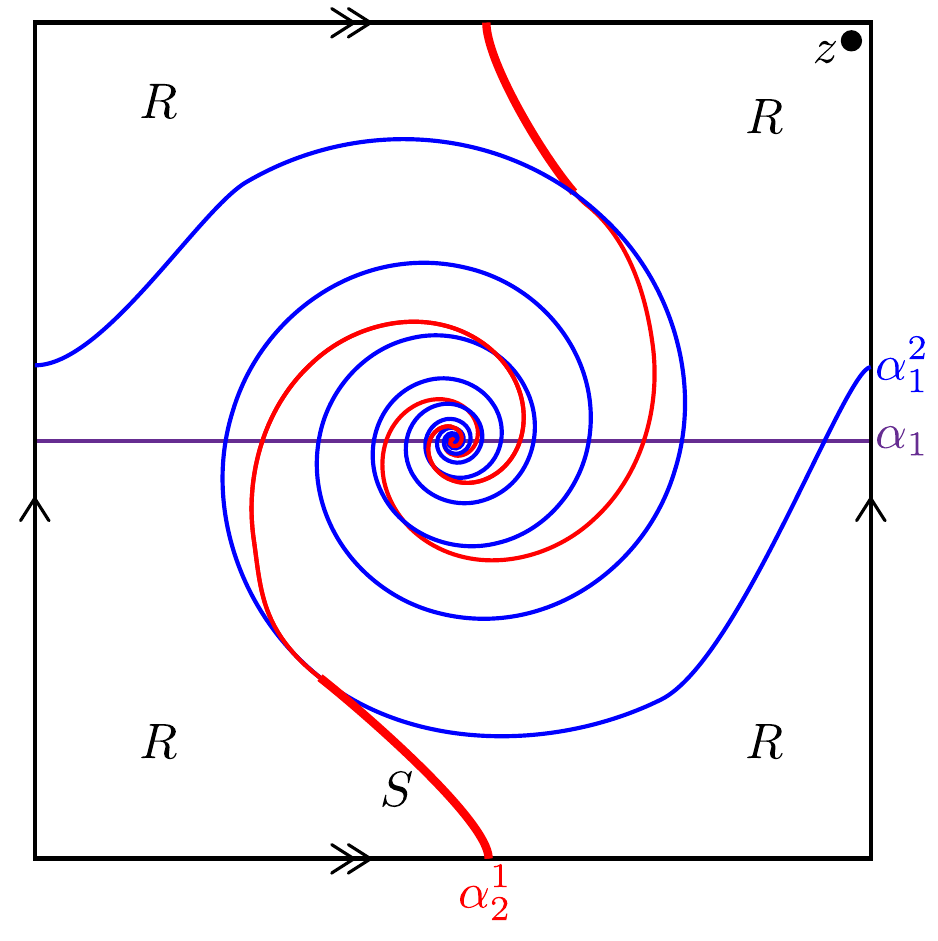}
  \caption{\textbf{No triangles cover $z$.} The segment $S$ on $\alpha_j$
    discussed in the proof of Lemma~\ref{lem:Fuk-torus-mu2} is in
    bold, and the region $R$ is indicated.}
  \label{fig:z-uncovered}
\end{figure}

\begin{lemma}\label{lem:Fuk-torus-mu2}
  The above identification of basis elements intertwines the
  operations $\mu_2$.
\end{lemma}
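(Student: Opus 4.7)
The strategy is a direct combinatorial verification, exploiting the fact that in a surface, holomorphic polygons are immersed topological polygons by the Riemann mapping theorem. First I would reduce the problem to checking $\mu_2$ on Reeb-like generators (elements of the form $\rho_I$) with no factors of $U$. Both sides are $\Field[U]$-bilinear: on $\UnDefAlg$ by construction, and on $\End_{\WFuk_z}(\alpha_1\oplus\alpha_2)$ because the $U$-action is defined by multiplicity at $z$ and composition of chains commutes with this weighting. The idempotent cases $\mu_2(\iota_i,x)=\mu_2(x,\iota_i)=x$ are immediate from the fact that $\iota_i$ are fixed points of $\phi$ and represent units for the composition.

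The second step is to show that no triangle contributing to $\mu_2$ covers the marked point $z$. Choose the Hamiltonian $H$ so that it vanishes in a small disk neighborhood $U_z$ of $z$ that is disjoint from the conical end around $p$; then every curve $\alpha_i^k=\phi^k(\tilde\alpha_i)$ coincides with $\tilde\alpha_i$ throughout $U_z$. The point $z$ lies in the open region of $T^2\setminus(\alpha_1\cup\alpha_2)$ whose label in Figure~\ref{fig:torus} is $\rho_4$, and $\gamma_z$ approaches $p$ from this same region (by choice in Section~\ref{sec:Fuk-gr}). Any immersed triangle with boundary on three of the curves $\alpha_i^k$ is supported in the $\pi/2$-sectors of the spiral diagram adjacent to its vertices, none of which meet the $\rho_4$ region near~$z$, so $n_z(u)=0$ for every such triangle. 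Thus $\mu_2$ on the Fukaya side produces no factors of $U$.

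The third step is to enumerate the triangles and match them with the products $\rho_I\cdot\rho_J$ in $\AsUnDefAlg$. Fix a triple of Lagrangians $(\phi^2(\alpha_k),\phi^1(\alpha_\ell),\alpha_m)$ and an input pair $(\rho_I,\rho_J)$ representing intersection points on the first two and the last two, respectively. An immersed triangle with these two vertices and a third vertex on $\phi^2(\alpha_k)\cap\alpha_m$ exists if and only if the concatenated label $IJ$ extends one of the Reeb chord arcs in the cyclic order $(1,2,3,4)$ around~$p$; equivalently, the right endpoint of $\rho_I$ on $\bdy(T^2\setminus\{p\})$ is immediately followed by the left endpoint of $\rho_J$. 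In that case there is a unique such triangle, given by the obvious ``thin'' region in the spiral picture (as in the shaded triangle of Figure~\ref{fig:torus-fuk}), and its third vertex is exactly the intersection point labeled $\rho_{IJ}$ under the identification of Section~\ref{sec:Fuk-proof}. When the concatenation would produce two consecutive Reeb chords in a forbidden pattern (corresponding to $\rho_2\rho_1$, $\rho_3\rho_2$, etc.), the would-be triangle is obstructed: the third vertex would have to sit on the ``wrong side'' of the spiral, which is impossible by the alternating nature of intersections on $\alpha_k$.

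The final step is to account for the rescaling map converting $\CF(\phi^2(L_0),L_2)$ to $\CF(\phi^1(L_0),L_2)=\Hom(L_0,L_2)$; in our setup this rescaling is induced by an explicit surface diffeomorphism that maps the intersection points identified above to those of the same $\rho_{IJ}$-label, so the matching is preserved. The main obstacle is the combinatorial case check at the third step, but using the $\ZZ/4$-symmetry that cyclically permutes the indices, the verification reduces to a small number of cases which can be read off directly from Figure~\ref{fig:torus-fuk}.
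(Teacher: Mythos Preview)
Your overall strategy---find the small triangles in the conical end realizing the products $\rho_I\rho_J=\rho_{IJ}$, then rule out any extra triangles---is the same as the paper's. However, your step~2 has a genuine gap, and your step~3 misses the cleanest route to uniqueness.

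In step~2 you assert that ``any immersed triangle with boundary on three of the curves $\alpha_i^k$ is supported in the $\pi/2$-sectors of the spiral diagram adjacent to its vertices.'' This is exactly the nontrivial statement that needs proof; an immersed polygon in a surface can a priori be large and wrap around. The observation that the curves all coincide with $\tilde\alpha_i$ on $U_z$ does not by itself prevent a triangle from covering $z$. (Also, $T^2\setminus(\alpha_1\cup\alpha_2)$ is a single open square, so ``the region labeled $\rho_4$'' is not a well-defined component containing~$z$.) The paper's argument is more topological: it looks at the region $R$ adjacent to $z$ in the complement of $\alpha_i\cup\phi^1(\alpha_j)\cup\phi^2(\alpha_k)$. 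If all three Lagrangians are copies of the same $\alpha_i$, then $R$ is an essential annulus and cannot embed in a disk. Otherwise one of the Lagrangians appears once, and $R$ meets itself along a single arc $S$ on that curve; the boundary of an immersed triangle must cross $S$ an even number of times (since $R$ lies on both sides), hence zero times by immersion, so the triangle would contain a non-nullhomotopic loop---contradiction.

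For step~3, your existence argument is fine (the paper does it by lifting to the universal cover of the conical end, where the curves become straight lines), but your uniqueness and vanishing claims (``unique such triangle'', ``the would-be triangle is obstructed'') are not justified by the combinatorics you describe. The paper avoids this case analysis entirely: once step~2 shows no $U$-powers appear, the grading identification (Lemma~\ref{lem:Fuk-torus-grs}) pins down the output, so the single small triangle is the only possible contribution. This is both shorter and more robust than arguing directly that no other triangles exist.
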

\begin{proof}
  This is similar to the computation for $T^*S^1$~\cite{Auroux:beginners}. For
  any of the operations $\rho_I\rho_J=\rho_{I\cup J}$ there is a
  unique small triangle
  contained in the conical end of $T^2\setminus\{p\}$ connecting
  $\phi^1(\rho_I)$, $\rho_J$, and the image of $\rho_{I\cup J}$ under
  rescaling. This is straightforward to verify, for instance, by considering
  lifts of the $\alpha$-arcs to the universal cover of the conical end of
  $T^2\setminus\{p\}$. We may take the lift of each component of
  $\alpha_i\cap [0,\infty)\times Z$ to be a straight line at slope $0$, and
  $\phi^1(\alpha_j)$ and $\phi^2(\alpha_i)$ to be straight lines at slopes
  $\pi$ and $2\pi$. The choice of a lift of $\alpha_i\cap [0,\infty)\times Z$
  and the points $\rho_{I\cup J}$ and $\rho_J$ fixes the other two lifts, and
  the three lines cut out a
  triangle. We just need to verify that the third vertex of the triangle is at
  the point $\phi^1(\rho_I)$. Writing $\widetilde{\phi}^1(x,y)=(x,y+\pi x)$ for the
  induced action on $[0,\infty)\times Z$, this is a straightforward computation.
  Triangles realizing the operations $\iota_i\rho_J=\rho_J$ and
  $\rho_J\iota_i=\rho_J$ can be found similarly.

  Further, we show that there is no immersed triangle
  in $T^2\setminus\{p\}$ with boundary on $\alpha_i$, $\phi^1(\alpha_j)$ and
  $\phi^2(\alpha_k)$ covers the region $R$ containing $z$. If all of
  the $\alpha_i$ that appear are the same, then $R$ is a non-trivial
  annulus that cannot be part of a disk. Otherwise, one $\alpha_i$
  appears one time; suppose it is $\phi^1(\alpha_2)$, as in
  Figure~\ref{fig:z-uncovered}, and consider the resulting segment~$S$,
  indicated there, which has $R$ on both sides. The
  boundary of such a triangle would then have to traverse $S$ an even
  number of times
  (since $R$ is on both sides of the segment). Since the boundary of
  the triangle is immersed, in fact $S$ must be traversed $0$
  times. Then the purported triangle contains a non-nullhomotopic
  loop, the
  preimage of the loop in $R$ passing through $S$ once, which is a contradiction.  Hence, terms of the
  form $U^m\rho_K$ for $m>0$ do not appear in $\rho_I\rho_J$. So, it follows
  from Lemma~\ref{lem:Fuk-torus-grs} that there are no more terms in $\rho_I\rho_J$.
\end{proof}

\begin{figure}
  \centering
  \includegraphics{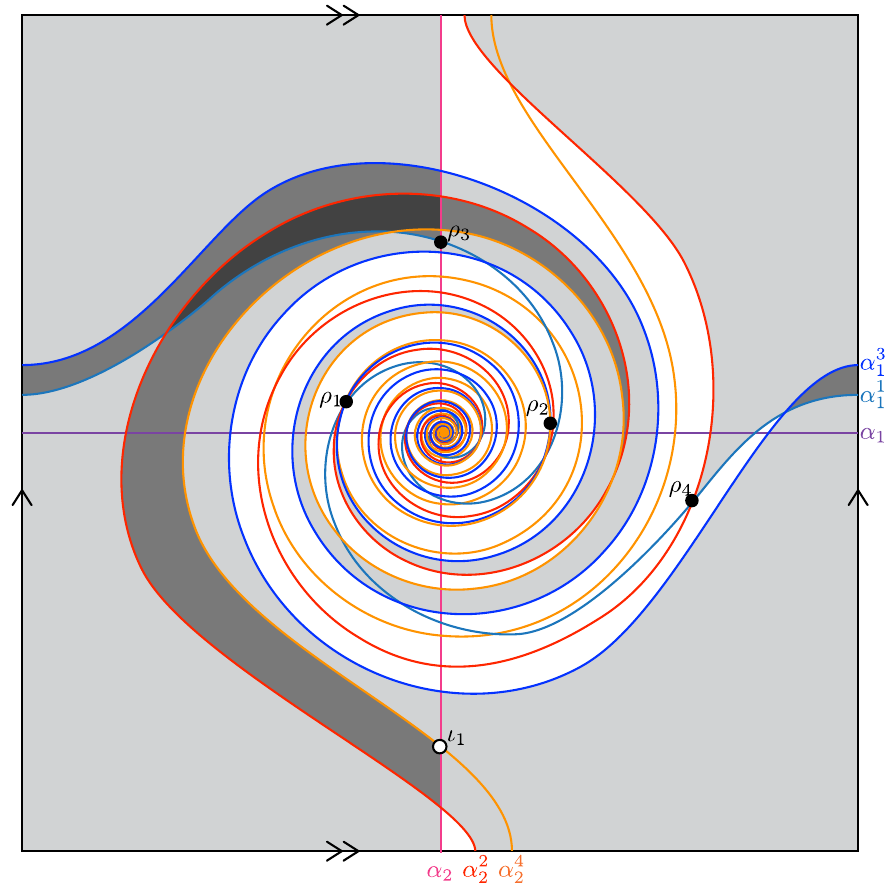}
  \caption{\textbf{The $\mu_4$-operation.} The shaded region is the
     image of the pentagon contributing
     $\mu_4(\rho_2,\rho_1,\rho_4,\rho_3)=U\iota_1$. Lightly shaded
     regions are covered once, medium-shading twice, and darkly-shaded
     regions are covered three times.}
  \label{fig:Fuk-mu4}
\end{figure}

\begin{lemma}\label{lem:Fuk-torus-mu4}
  The operation $\mu_4$ on $\End_{\WFuk_z}(\alpha_1\oplus\alpha_2)$
  satisfies
  \begin{align*}
    \mu_4(\rho_4,\rho_3,\rho_2,\rho_1)&=U\iota_1\\
    \mu_4(\rho_3,\rho_2,\rho_1,\rho_4)&=U\iota_0.
  \end{align*}
\end{lemma}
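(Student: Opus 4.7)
The plan is to prove both identities by direct pentagon-counting, exhibiting the contributing holomorphic polygons in $T^2\setminus\{p\}$ and verifying uniqueness. Because we are working with arcs in a punctured surface, the Riemann mapping theorem reduces holomorphic polygon counting to a combinatorial count of immersed polygons with prescribed boundary arcs and corners.

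First I would exhibit the pentagon contributing $\mu_4(\rho_4,\rho_3,\rho_2,\rho_1)=U\iota_1$ depicted in Figure~\ref{fig:Fuk-mu4}. With the conventions of Section~\ref{sec:Fuk-proof}, this pentagon has boundary arcs lying (in counterclockwise order) on $\phi^4(\alpha_2), \phi^3(\alpha_1), \phi^2(\alpha_2), \phi^1(\alpha_1), \alpha_2$, with corners at the rescaled images of $\rho_4, \rho_3, \rho_2, \rho_1$ and at a point of $\phi^4(\alpha_2)\cap\alpha_2$ which rescales to the self-intersection $\iota_1\in\phi^1(\alpha_2)\cap\alpha_2$. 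The key feature visible in the figure is that the pentagon's image covers $T^2\setminus\{p\}$, and in particular covers the marked point~$z$ exactly once, which is what produces the factor of~$U$ in the output.

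Next I would verify the Maslov index equals $2-4=-2$, which can be extracted formally from Lemma~\ref{lem:Fuk-torus-grs}: the identification of gradings with those of $\UnDefAlg$ together with the already-computed grading of $U\iota_1$ in $\MAlg$ (Theorem~\ref{thm:grading}) forces any nonzero output of $\mu_4$ on this input sequence to lie in the one-dimensional graded piece containing $U\iota_1$. The bulk of the argument is uniqueness: I would lift the five Lagrangians to the universal cover of the cylindrical end of $T^2\setminus\{p\}$, where they become families of straight lines at slopes $0,\pi,2\pi,3\pi,4\pi$ with explicit endpoints determined by the formula $\widetilde\phi^1(x,y)=(x,y+\pi x)$. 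Fixing one lift of the $\iota_1$ corner then determines unique lifts of the four other corners (by the identification in Section~\ref{sec:Fuk-proof}), and these five lifts bound at most one immersed convex pentagon. To rule out more exotic pentagons whose image wanders around the handle of the torus, I would use the winding number grading to pin $n_z=1$ and argue, as in the proof of Lemma~\ref{lem:Fuk-torus-mu2}, that any two pentagons with the same boundary, corners, and $n_z$-value would differ by an immersed subsurface of $T^2\setminus\{p\}$ with boundary on the $\alpha$-arcs; such a subsurface is excluded by the same argument used to eliminate wrapping triangles there.

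The main technical obstacle will be the uniqueness step, specifically ruling out pentagons that traverse one of the five boundary arcs more than once; this requires a careful case analysis of how an immersed pentagon can fail to embed in $T^2\setminus\{p\}$, and is the analogue for pentagons of the bold-segment argument in Figure~\ref{fig:z-uncovered}. Finally, the identity $\mu_4(\rho_3,\rho_2,\rho_1,\rho_4)=U\iota_0$ follows from the first by the evident symmetry of the torus (with our perturbations) that cyclically permutes the indices $\rho_i\mapsto\rho_{i-1}$ and swaps $\alpha_1\leftrightarrow\alpha_2$ and $\iota_0\leftrightarrow\iota_1$; alternatively, the same pentagon-counting argument applies verbatim to the rotated configuration.
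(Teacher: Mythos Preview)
Your proposal goes well beyond what the paper actually does. The paper's entire proof is: ``By inspection, there is a holomorphic pentagon contributing each of these terms. See Figure~\ref{fig:Fuk-mu4}.'' That is, the paper only exhibits existence of a contributing pentagon and does not attempt uniqueness at all. Strictly speaking this does not prove the lemma as stated (equality), but for the application to Theorem~\ref{thm:UnDef-is-Fuk} via Corollary~\ref{cor:refined-unique}, only the presence of the $U\iota_1$ term is required: by the last sentence of Proposition~\ref{prop:AsUnDefHoch}, any cycle $\xi\in\HC^{4,-1}_\Gamma$ with $\xi(\rho_4\otimes\rho_3\otimes\rho_2\otimes\rho_1)=U$ already represents the generator, so the nontriviality of $[\mu_4]$ follows from the one pentagon.

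Your uniqueness argument, while more ambitious, has a genuine gap. You propose to lift the five Lagrangians to the universal cover of the cylindrical end and find a unique convex pentagon there. But you yourself observe that the pentagon in Figure~\ref{fig:Fuk-mu4} covers all of $T^2\setminus\{p\}$ (with multiplicities up to three); it is not contained in the cylindrical end, and it is not convex in any lift to a half-plane. So the cylindrical-end cover simply does not see the pentagon you are trying to count, and the argument as written would in fact find zero pentagons. A correct uniqueness proof would need to work in the universal (or universal abelian) cover of the full punctured torus, where the lifted arcs are more complicated than straight lines of fixed slope. Your fallback step---using the winding-number grading and the bold-segment argument of Lemma~\ref{lem:Fuk-torus-mu2}---also does not close the gap: the grading alone does not rule out outputs such as $\rho_{2341}$ or $\rho_{4123}$, which share the same $\gr_\psi$ and $\wingr$ as $U\iota_1$, and the pentagon you want to keep already crosses any candidate ``bold segment''. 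If you want full equality rather than just the term needed for the application, you would need a substantially different uniqueness mechanism.
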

\begin{proof}
  By inspection, there is a holomorphic pentagon contributing each of
  these terms. See Figure~\ref{fig:Fuk-mu4}.
\end{proof}

\begin{proof}[Proof of Theorem~\ref{thm:UnDef-is-Fuk}]
  This is immediate from Corollary~\ref{cor:refined-unique} and
  Lemmas~\ref{lem:Fuk-torus-grs},~\ref{lem:Fuk-torus-mu2},
  and~\ref{lem:Fuk-torus-mu4}.
\end{proof}

\begin{remark}
  While $\UnDefAlg$ has a grading by $\ZZ\times F_2$, $\MAlg$ does not: with
  respect to this grading, the element
  $\mu^1_0=\rho_{1234}+\rho_{2341}+\rho_{3412}+\rho_{4123}$ does not have
  grading a central element of $\ZZ\times F_2$ (as required) and, in fact, is
  not even homogeneous. After quotienting by the relation
  $\tilde{a}\tilde{b}\tilde{a}^{-1}\tilde{b}^{-1}=\lambda^{-2}$, the grading of
  $\mu_0^1$ becomes well-defined and central.
\end{remark}


\newcommand\smu{\boldsymbol{\mu}}
\newcommand\smap{\boldsymbol{f}}
\newcommand\smapg{\boldsymbol{g}}
\section{Signs}
\label{sec:Signs}

In this section, we lift the discussion above from
$\FF_2$-coefficients to $\ZZ$-coefficients. Specifically, we lift the
weighted algebra $\MAlg$ to a weighted algebra over $\ZZ$, denoted
$\MAlgZ$, and extend the uniqueness theorems,
Theorems~\ref{thm:UnDefAlg-unique} and~\ref{thm:MAlg-unique}, to
$\ZZ$-coefficients. Note that for the case of $\HFa$, there is also
work of Knowles-Petkova lifting not just the bordered algebra but also
the modules over it (defined via nice diagrams) to $\ZZ$-coefficients~\cite{KP:integers}.

\subsection{Weighted algebras over \texorpdfstring{$\ZZ$}{the integers}}
So far, we have discussed group-graded algebras with characteristic $2$.  
We explain how to extend this to arbitrary characteristic.

In order to define $\Ainf$-algebras with characteristic $\neq 2$, 
the algebra needs to be equipped with a
$\Zmod{2}$-valued grading, which we denote by $\|\cdot \|$ here.
This grading is needed to  formulate
the Leibniz rule:
\[
  d(a\cdot b)=(da)\cdot b + (-1)^{\|a\|}a \cdot (db)
\]
or, more generally, the $n$-input $\Ainf$ relation
\begin{equation}
  \label{eq:AinfWithSign}
  \sum_{n=r+s+t} (-1)^{r+st} \mu_{r+1+t}\circ (\Id^{\otimes r} \otimes \mu_s \otimes \Id^{\otimes t})=0.
\end{equation}
In interpreting the above expression, we  follow conventions of Keller~\cite{AinftyAlg},
according to which
\begin{equation}
  \label{eq:SignConvention}
  (f\otimes g)(x\otimes y)=(-1)^{\|g\|\|x\|}f(x) \otimes g(y).
\end{equation}

The $\Ainf$-algebra homomorphism relation on $\{f_n\co A_+^{\otimes n}\to B_+\}$
takes the form 
\[
\sum _{n=r+s+t} (-1)^{r+st} f_{r+1+t}\circ (\Id^{\otimes r} \otimes \mu_s \otimes \Id^{\otimes t})= \sum_{n=i_1+\dots i_m} (-1)^{\sigma}\mu_m\circ(f_{i_1}\otimes\dots\otimes f_{i_m}),
\]
 where $\sigma=\sigma(i_1,\dots,i_m)$ is given by
 \begin{equation}
   \label{eq:DefSigma}
   \sigma=\sum_{k=1}^{m-1} (m-k)(i_k-1).
   \end{equation}

The $\Ainf$ relation has the obvious weighted generalization (for each $n,w$)
\begin{equation}
  \label{eq:WeightedAinf}
  \sum_{\substack{n=r+s+t\\w=u+v}} (-1)^{r+st} \mu^u_{r+1+t}\circ(\Id^{\otimes r} \otimes \mu^v_s \otimes \Id^{\otimes t})=0.
\end{equation}
(That is, the weight is treated as having even grading, so does not
contribute to the sign.) 
Similarly, the a weighted $\Ainf$ homomorphism relation (for each $n$
and $w$) is given by
\begin{equation}
  \label{eq:WeightedAinfHom}
\sum _{\substack{n=r+s+t\\w=w_1+w_2}} (-1)^{r+st} 
f^{w_1}_{r+1+t}\circ (\Id^{\otimes r} \otimes \mu_s^{w_2} \otimes \Id^{\otimes t})
\ = \hspace{-1.5em}
\sum_{\substack{n=i_1+\dots i_m\\
  w=w_0+w_1+\dots+w_m}}\hspace{-1.5em} (-1)^{\sigma}\mu_m^{w_0}\circ(f_{i_1}^{w_1}\otimes\dots\otimes f_{i_m}^{w_m}),
\end{equation}
with $\sigma$ as in Equation~\eqref{eq:DefSigma}.

Another convention we adhere to is the following: if $(C_*,\partial)$ and 
$(C'_*,\partial')$ are 
$\Zmod{2}$-graded complexes, then $\Mor(C_*,C'_*)$ is also $\Zmod{2}$-graded, with differential given by
\begin{equation}
  \label{eq:MorDifferential}
  \partial(f) = \partial\circ f - (-1)^{\|f\|} f\circ \partial.
\end{equation}

\subsection{Construction of the bordered algebras over \texorpdfstring{$\ZZ$}{the integers}}
Define $\AsUnDefAlgZ$ exactly as in Section~\ref{sec:AsUnDefAlg}, with
the understanding that the path algebra is taken over $\ZZ$, rather
than $\Field$. For the purposes of signs, we equip this algebra with
the $\Zmod{2}$ grading with the property that all of $\AsUnDefAlgZ$ is
supported in degree $0$. In other words, use the grading $\epsilon$
from Section~\ref{sec:mod-2-gr}.

Define the operations on $\MAlgZ$ as in Definition~\ref{def:MAlg}, by
counting tiling patterns; every tiling pattern counts
positively. Similarly, no signs appear in the definition of $\mu_0^1$
from Equation~\eqref{eq:mu-0-1}.
The operations $\mu^w_n$ are non-zero only when $n$ is even. This simplifies the sign
computations: the sign appearing in the weighted $\Ainf$ relation
(Equation~\eqref{eq:WeightedAinf}) is $(-1)^{r}$. Additionally, since
$\MAlgZ$ is supported in $\Zmod{2}$-grading $0$, the sign in
Equation~\eqref{eq:SignConvention} drops out.

A more careful look at the proof of Theorem~\ref{thm:AinftyAlgebra}
gives the following generalization:

\begin{theorem}
  \label{thm:AinftyAlgebraZ}
  The operations above give $\MAlgZ$ the
  structure of a weighted $\Ainf$-algebra over $\ZZ$.
\end{theorem}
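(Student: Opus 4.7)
The plan is to follow the combinatorial cancellation scheme of Theorem~\ref{thm:AinftyAlgebra} and verify that each matching used there lifts to an equality of signed contributions. Three preliminary observations make this tractable. First, because $\MAlgZ$ is concentrated in $\Zmod{2}$-grading zero, the Koszul sign of Equation~\eqref{eq:SignConvention} never arises in compositions of operations. Second, since only operations $\mu_n^w$ with $n$ even are nonzero (Lemma~\ref{lem:alg-even-operations}), the sign factor $(-1)^{r+st}$ in the weighted $\Ainf$ relation~\eqref{eq:WeightedAinf} reduces to $(-1)^r$, as $s$ is always even. Third, in any nontrivial $\Ainf$ relation each summand is of the form $\mu_p^u\circ\mu_q^v$ with $p+q=N+1$, where $N$ is the total number of algebra inputs; since both $p$ and $q$ must be even, $N$ must be odd. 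Moreover, the operations in Definition~\ref{def:MAlg} count tiling patterns each with multiplicity one, so no hidden integer coefficients appear.

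With these in hand, it suffices to verify that each cancellation bijection in the proof of Theorem~\ref{thm:AinftyAlgebra} matches terms whose $r$-positions differ by an odd integer (where $r$ counts the inputs fed to the outer operation before the inner). I would work through Table~\ref{tab:Ainf-terms} case by case. The curvature cancellations $(L0+)\leftrightarrow(R0-)$ and $(C0+)\leftrightarrow(C0-)$ relocate $\mu^1_0$ between the first and last slots, shifting $r$ by $N$, which is odd. The Lemma~\ref{lem:ProductCancellations} bijection $\Graphs_1 \leftrightarrow \mathfrak{P}\cup\mathfrak{T}\cup\mathfrak{L}\cup\mathfrak{R}$ arises from pushing out the edge realizing a factorization $\rho^i=\sigma_1\sigma_2$; in every subcase the inner operation's position shifts by either $1$ or an odd integer of the form $k-1$, where $k$ is the (necessarily even) chord-sequence length of one component. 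The involution of Lemma~\ref{lem:FirstInvolution} moves the root between the two endpoints of $S$, shifting the labeling of inputs by the number of chord inputs on one side, again odd (one half of the odd total $N$). Finally, the map $M$ pairs $(2C-)^0\leftrightarrow(2C+)^0$, $(2R-)\leftrightarrow(2L+)$, $[LL-]\leftrightarrow[RR+]$ and the neighboring variants; the positions can be computed explicitly (for instance $r=0$ versus $r=p-1$ for an outer arity $p\geq 4$ even), always giving an odd difference.

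The main obstacle is the position bookkeeping in the $\mathfrak{P}$, $\mathfrak{L}$, $\mathfrak{R}$ subcases of Lemma~\ref{lem:ProductCancellations}: one must distinguish precisely which side of the pushed-out arc contains the root, count how many inputs that side consumes, and track how the original chord sequence is split between the two components of the composite. In each subcase the evenness of chord-sequence lengths (Lemma~\ref{lem:alg-even-operations}) guarantees the needed odd parity shift in $r$, and because the identification of output elements under the bijection is unchanged from the $\Field$-case, the two terms cancel in $\ZZ$. Assembling all cases and comparing with the easy $n\leq 2$ (or $n=3$, $w=0$) relations verifies Equation~\eqref{eq:WeightedAinf} for all $n,w$, completing the argument.
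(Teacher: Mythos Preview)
Your proposal is correct and follows the same strategy as the paper. The paper streamlines the analysis of the cases not governed by Lemma~\ref{lem:ProductCancellations} by observing uniformly that each such cancellation pairs a term of type ``$+$'' (inner operation fed to the first slot of the outer, so $r=0$) with a term of type ``$-$'' (inner fed to the last slot, so $r$ equals one less than the even outer arity, hence odd); this single observation handles the curvature cancellations, the involution of Lemma~\ref{lem:FirstInvolution}, and the bijection~$M$ simultaneously, rather than treating them separately as you do. One small correction: your parenthetical ``one half of the odd total $N$'' for the Lemma~\ref{lem:FirstInvolution} case is not the right justification---that involution swaps which component plays the role of inner and outer, so the change in $r$ is from $p-1$ (for some even outer arity~$p$) to $0$, and the parity comes from $p$ being even, not from halving~$N$.
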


\begin{proof}
  We must check that the cancelling terms in the proof of
  Theorem~\ref{thm:AinftyAlgebra} indeed come with opposite signs.

  We leave the case with $\leq 3$ inputs or $\leq 2$ inputs and weight
  $\geq 1$ to the reader.

  We consider the other cancellations in the order they are verified
  in Theorem~\ref{thm:AinftyAlgebra}, now with sign.  As in that proof,
  we can consider input sequences of basic algebra elements, with at
  most one element which is an idempotent (i.e., not Reeb-like); and
  the idempotent comes either as the first or last element in the
  sequence.  When the idempotent is the first element, we have terms
  of type $(2\wild-)$ and $(\wild2+)$. Evidently, the term of type $(2\wild-)$ has
  $r=1$ (and $t=0$), so it contributes to the $\Ainf$ relation with
  sign $-1$; while the term of type $(\wild2+)$ has $r=0$ (and $s=2$), so it
  contributes to the $\Ainf$ relation with sign $+1$. Thus these terms
  cancel with sign. Similarly, when the idempotent comes at the end,
  we obtain cancellation of terms of type $(2\wild+)$, which counts with sign $+1$ (as it has $r=0$ and even $s$),
  against terms of type $(\wild 2 -)$,
  which counts with sign $-1$ 
  (as it has  odd $r$ and $t=0$).

  Next, we consider the cancellation of terms as in
  Lemma~\ref{lem:ProductCancellations}. For the factorization of
  $\rho^i=\sigma_1\cdot \sigma_2$, we have a term of type $(\wild2\wild)$
  in the notation of the proof of Theorem~\ref{thm:AinftyAlgebra};
  or ${\mathfrak P}$, in the notation of Lemma~\ref{lem:ProductCancellations}. 
  The
  $\mu_2$ node has $r=i-1$. Consider the corresponding element of
  $\Graphs_1\cup{\mathfrak T}\cup{\mathfrak L}\cup{\mathfrak R}$ 
  under the correspondence from Lemma~\ref{lem:ProductCancellations}.
  If the term is of type ${\mathfrak T}$, the term has $r=i$.  
  If the terms is of type ${\mathfrak P}\cup{\mathfrak L}\cup{\mathfrak R}$
  we have a composition of two terms; and the higher one 
  can have $\rho_{i+1}$ as its first input, in which case $r=i$;
  or it can have $\rho_i$ as its last input, in which case $r=i-s$,
  where the second term has $s$ inputs.
  In any of these cases, by Lemma~\ref{lem:alg-even-operations},
  the composite contributes with a sign of $(-1)^i$.

  For the remaining cases---the cancellations coming from the
  bijection $M$ from the proof of Theorem~\ref{thm:AinftyAlgebra}--all
  cancellations are between a term of type $+$ (i.e., $(\wild0+)$,
  $(2\wild+)$, $(\wild2+)$, and $[\wild\wild+]$, where each $\wild\in\{L,C,R\}$) and a
  term of type $-$. By construction, terms of type $+$ contribute
  $+1$, and terms of type $-$ contribute $-1$, so the cancellation
  holds with signs, as well.
\end{proof}

\subsection{Section~\ref{sec:A-inf-deform} revisited}

There are two equivalent formulations of the $\Ainf$ relations, and we
will find it to convenient to move between them.  One formulation is
stated in Equation~\eqref{eq:AinfWithSign}. There is an alternative,
somewhat simpler formulation, as follows.

To set notation, let $A[1]$ denote $A$ with the following grading
shift: $A[1]_{g}=A_{g\cdot \lambda^{-1}}$. In particular, if $A$ is
$\ZZ$-graded, then $A[1]_n=A_{n-1}$. Thus, the identity map can be
viewed as a map $\sigma\co A\to A[1]$, of degree $+1$.  (Note
that we follow homological conventions, rather than the cohomological
conventions of~\cite{KellerRepTheory}.)

We now view the $\Ainf$ operations as maps $\smu_k\co A_+[1]^{\otimes
  k}\to A[1]$ of degree $-1$.
With this normalization, the $\Ainf$ relation takes the simpler form:
\begin{equation}
  \label{eq:AinfSigned2}
  \sum_{n=r+s+t} \smu_{r+1+t}\circ (\Id^{\otimes r} \otimes \smu_s
  \otimes \Id^{\otimes t})=0.
\end{equation}
(Our $\smu_n$ are the maps $b_n$
from~\cite{GetzlerJones,KellerRepTheory}.)
Formula~\eqref{eq:AinfSigned2} is equivalent to
Formula~\eqref{eq:AinfWithSign}, as follows.  The operations $\smu_n$
and $\mu_n$ are related by the following commutative diagram:
\[ \begin{CD}
  (A[1])^{\otimes n} @>{\smu_n}>> A[1] \\
  @AA{\sigma^{\otimes n}}A  @AA{\sigma}A \\
  A^{\otimes n} @>{\mu_n}>> A.
\end{CD}\]
Then, the relation $0=(\smu\circ \smu)_n\circ \sigma^{\otimes n}$ gives
\begin{align*}
  0 &=\sum_{\{r,s\mid r+1+s=n\}} \smu_{r+1+s} \circ (\Id_{{A[1]}^{\otimes r}}\otimes \smu_s \otimes \Id_{{A[1]}^{\otimes t}}) \circ (\sigma^{\otimes r}\otimes \sigma^{\otimes s}\otimes \sigma^{\otimes t}) \\
  &= \sum_{\{r,s\mid r+1+s=n\}} (-1)^{\|\smu_s\|\cdot \|\sigma^{\otimes r}\|} \smu_{r+1+s} \circ (\sigma^{\otimes (s+1)}\otimes \Id_{{A[1]}^{\otimes t}})
  \circ (\Id_{A_+^{\otimes r}}\otimes (\sigma^{-1}\circ \smu_s \circ \sigma^{\otimes s})
  \otimes \sigma^{\otimes t}) \\
  &= \sum_{\{r,s\mid r+1+s=n\}} (-1)^{\|\smu_s\|\cdot \|\sigma^{\otimes r}\|} \smu_{r+1+s} \circ (\sigma^{\otimes(s+1)}\otimes \Id_{{A[1]}^{\otimes t}})
  \circ (\Id_{A_+^{\otimes r}}\otimes \mu_s  \otimes \sigma^{\otimes t}) \\
  &= \sum_{\{r,s\mid r+1+s=n\}} (-1)^{\|\smu_s\|\cdot \|\sigma^{\otimes r}\|+\|\mu_s\|\cdot \|\sigma^{\otimes t}\|} \smu_{r+1+s} \circ (\sigma^{\otimes(s+1+t)})
  \circ (\Id_{A_+^{\otimes r}}\otimes \mu_s
  \otimes \Id_{A_+^{\otimes t}}) \\
  &=\sum_{\{r,s\mid r+1+s=n\}} (-1)^{r+st} \mu_{r+1+s}(\Id_{A_+^{\otimes r}}\otimes \mu_s\otimes \Id_{A_+^{\otimes t}}),
\end{align*}
since
$\mu_n=\sigma^{-1}\circ \smu_n\circ \sigma^{\otimes n}$, $\|\smu_s\|=-1$, $\|\sigma^{\otimes \ell}\|=-\ell$, so $\|\mu_s\|=s-2$.
We prefer to use the normalization $\smu_n$ in the proofs below, 
since the formulas are a
little simpler. 

We also normalize the $\Ainf$-homomorphism relation as follows.  For us,
an $\Ainf$-homomorphism is a collection of maps 
$\{\smap_n\co A_+[1]^n\to B_+[1]\}$ of degree $0$, satisfying
\begin{equation}
  \label{eq:AinfMor}
  \sum_{n=r+s+t} \smap_{r+1+t}\circ (\Id^{\otimes r} \otimes \smu_s
  \otimes \Id^{\otimes t}) = 
  \sum_{n=i_1+\dots i_m} \smu_m(\smap_{i_1}\otimes\dots\otimes \smap_{i_m}).
\end{equation}

The bar complex is defined by 
\[ \Barop(A)=\bigoplus_{n=0}^{\infty} A\otimes (A_+[1])^{\otimes n}\otimes A,\]
with differential
\[ \partial \co A\otimes (A_+[1])^{\otimes n} \otimes A\to
A\otimes (A_+[1])^{\otimes(n-1)}\otimes A \]
defined by 
\[ \sum_{r=0}^{n} \Id_{A^{\otimes r}[r-1]} \otimes \smu_{2}\otimes  \Id_{A^{\otimes(n-1-r)}[n-2-r]}.\]
(Compare Equation~\eqref{eq:BarDifferential}.)  
We can define the Hochschild
complex as in Definition~\ref{def:HH-assoc}, by 
\[ \HC^*(A)=\Hom_{A\otimes A^{\op}}(\Barop(A),A),\]
with its induced differential.

Equivalently, the analogue of Equation~\eqref{eq:HC-Hom-over-ground} gives
\[
  \HC^n(A)=\Hom_{\Ground\otimes\Ground}((A_+[1])^{\otimes n},A),
\]
with differential 
\begin{equation}\label{eq:signed-Hoch-diff}
  \delta  \smap_n
= \smu_2(\Id_{A_+[1]}\otimes \smap_n) -(-1)^{\|\smap_n\|} \smap_n\circ 
\left(\sum_{r=0}^{n-1} \Id_{A_+[1]^{\otimes r}}\otimes \smu_2\otimes \Id_{A_+[1]}^{n-r}\right) + \smu_2\circ (\smap_n\otimes \Id_{A_+[1]}).
\end{equation}
(The sign comes from Equation~\eqref{eq:MorDifferential}.)

The isomorphism of chain complexes
\[
  \Phi\co\bigoplus_n\Hom_{\Ground\otimes\Ground}(A_+^{\otimes n},A)
  \to \Hom_{A\otimes A^{\op}}(\Barop(A),A)
\]
is defined by
\[
  \Phi(\smap)=(\smu_2\circ (\Id_A\otimes \smu_2))\circ (\Id_{A} \otimes \smap \otimes \Id_{A}).
\]

In particular, if an $A_n$ algebra has only $\smu_2$ and $\smu_n$, the $\Ainf$ relation
with $n+1$ inputs is
\[ \delta \smu_n=0.\]

From its construction the Hochschild complex inherits a $\Zmod{2}$
grading.

\begin{proposition}
  \label{prop:deform-assoc-algZ}
  The statement of Proposition~\ref{prop:deform-assoc-alg} holds in arbitrary characteristic.
\end{proposition}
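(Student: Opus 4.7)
The plan is to follow the proof of Proposition~\ref{prop:deform-assoc-alg} essentially verbatim, using the shifted normalization $\smu_n\co A_+[1]^{\otimes n}\to A[1]$ introduced above. In this normalization, all $\smu_n$ have (internal) degree $-1$, the $\Ainf$ relation takes the sign-free form~\eqref{eq:AinfSigned2}, and the Hochschild differential on $\smap\in\Hom_{\Ground\otimes\Ground}(A_+[1]^{\otimes n},A)$ is given by~\eqref{eq:signed-Hoch-diff}. Because of this, most algebraic manipulations from the characteristic $2$ proof carry over immediately, and signs only need to be tracked in a handful of Leibniz- and associator-type identities.

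Concretely, first I would redefine the composition operation
\[
(\smap \star \smapg) \;=\; \smap \circ (\Id\otimes \smapg\otimes \Id)\circ \Delta^3
\]
on $\prod_i \Hom(A_+[1]^{\otimes i},A[1])$, and the higher-arity compositions $F_n$ used in the definition of $\fobstr_n$, interpreting all tensor products using the Koszul sign rule~\eqref{eq:SignConvention}. The obstruction classes are then defined by the same graphical recipes as before,
\[
  \obstr_n \;=\; \sum_{\substack{i,j\ge 3\\ i+j=n+2}} \smu_i\star \smu_j,
  \qquad
  \fobstr_n \;=\; \Bigl(\sum_{\substack{j\ge 3\\i+j=n+1}} \smap_i\star \smu_j\Bigr) + \smu\circ F_n,
\]
but now as elements of $\HC^{n+1}(A)$ and $\HC^n(A)$ in the characteristic-free sense. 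Property~\ref{o:Extension} and its counterpart~\ref{item:coboundaryfobstr} follow directly from Equation~\eqref{eq:AinfSigned2} and its analogue~\eqref{eq:AinfMor}, which now contain no signs.

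To verify Properties~\ref{o:Cocycle} and~\ref{item:mapcocycle}, I need signed versions of the two key identities used in the proof of Proposition~\ref{prop:deform-assoc-alg}: a Leibniz rule for $\star$ (Equation~\eqref{eq:LeibnizStar}) and the pre-Lie-associator identity (Equation~\eqref{eq:Associator}). Since both $\smu$ and (in this normalization) the image of the Hochschild differential $\delta$ have fixed parity, these become
\[
  \delta(\smap\star \smapg) \;=\; (\delta\smap)\star \smapg \;\pm\; \smap\star(\delta\smapg) \;\pm\; \smu_2\circ(\smap\otimes \smapg) \;\pm\; \smu_2\circ(\smapg\otimes \smap),
\]
together with an analogous signed version of Equation~\eqref{eq:Associator}, where the signs are determined by the degrees of $\smap$ and $\smapg$ and the Koszul convention. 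Once these two identities are established, the computations $\delta\obstr_n=0$ and $\delta\fobstr_n=0$ from Proposition~\ref{prop:deform-assoc-alg} go through unchanged: the $\star$-cancellations pair up in exactly the same way, and the Koszul signs on paired terms are opposite by construction. Properties~\ref{item:coboundary} and~\ref{item:naturality} are similarly sign-preserving translations of their $\Field$-coefficient counterparts.

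The main (and only) obstacle is the bookkeeping needed to establish the signed Leibniz rule for $\star$ and the signed associator identity. These are standard identities in the theory of brace algebras and pre-Lie operations on Hochschild cochains, and in the shifted normalization reduce to a careful application of the Koszul sign rule to each of the trees appearing in the diagrammatic proof; no new ideas are required beyond those already present in the characteristic $2$ argument. With these in hand, each of the properties~\ref{o:Cocycle}--\ref{item:naturality} and~\ref{item:mapcocycle}--\ref{item:coboundaryfobstr} follows by exactly the same chain of manipulations as in the proof of Proposition~\ref{prop:deform-assoc-alg}, completing the verification.
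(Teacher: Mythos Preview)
Your proposal is correct and follows essentially the same approach as the paper: pass to the shifted normalization $\smu_n$, identify the signed Leibniz rule for $\star$ and the signed associator identity as the only new ingredients, and then rerun the computations from Proposition~\ref{prop:deform-assoc-alg} with these in hand. The paper does exactly this, though it is more explicit than your $\pm$ placeholders: it writes down the precise signs in the signed Leibniz rule (Equation~\eqref{eq:LeibnizStar-signed}) and the signed associator (Equation~\eqref{eq:AssociatorSign}), and then carefully tracks them through the analogues of Equations~\eqref{eq1}--\eqref{eq3} to verify $\delta\fobstr_n=0$.
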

\begin{proof}
  Signs are incorporated into the definition of $\star$, as follows:
  \[ \smap_i\star \smapg_m = \smap_i\circ \left(\sum_{i=r+1+t} \Id^{\otimes r}\otimes \smapg_m \otimes \Id^{\otimes t}\right).\]
  Thus,
  \[ \delta \smap_n=\smu_2\star \smap_n -(-1)^{\|\smap_n\|} \smap_n\star \smu_2.\]

  With these sign conventions, define
  \[ 
    \obstr_{n}=-\sum_{\substack{i,j\geq 3\\ i+j=n+2}} \smu_i\star \smu_j.
  \]
  With this notation, using Equation~\eqref{eq:signed-Hoch-diff} and the fact that $\|\smu_n\|=1$, the $\Ainf$ relation with $n+1$ inputs  (Equation~\eqref{eq:AinfSigned2}) becomes
  \[
    \delta \smu_n + \sum_{\substack{i,j\geq 3 \\ i+j=n+2}}\smu_i\star\smu_j=0
  \]
  (compare Equation~\eqref{eq:AinfHochschild}).
  Thus, Property~\ref{o:Extension} holds in arbitrary characteristic.

    To verify Property~\ref{o:Cocycle}, we use the following refinement of
    Equation~\eqref{eq:LeibnizStar}:
    \begin{equation}\label{eq:LeibnizStar-signed}
      \delta (\smap_i\star \smapg_j)=(\delta \smap_i)\star \smapg_j
      + (-1)^{\|\smap_i\|} \smap_i\star (\delta \smapg_j) 
      -\smu_2(\smap_i,\smapg_j)
      -(-1)^{\|\smap_i\|\|\smapg_j\|}\smu_2(\smapg_j,\smap_i).
    \end{equation}
    
    The
    last two terms, for example, cancel against terms in $(\delta\smap_i)\star \smapg_j$, as follows.
    Note that $(\delta \smap_i)$ includes a term $\smu_2\circ (\Id\otimes \smap_i)$.
    When computing $\bigl(\smu_2 \circ (\Id\otimes \smap_i)\bigr)\star \smapg_j$, 
    the term with $r=0$ is given by
    \[
      (\Id\otimes \smap_i)\circ (\smapg_j\otimes \Id^{\otimes i})=
      (-1)^{\|\smap_i\| \|\smapg_j\|} \smapg_j\otimes \smap_i.
    \]
    Also, $(\delta \smap_i)$ includes another term, $\smu_2(\smap_i\otimes \Id)$.
    The $r=i$, $t=0$ term in
    $\bigl(\smu_2\circ (\smap_i\otimes \Id)\bigr)\star \smapg_j$ is
    \[ \smu_2\circ (\smap_i\otimes \Id)\circ (\Id\otimes \smapg_j)=\smu_2(\smap_i, \smapg_j).\]
    
    Thus, since $\|\smu_i\|=\|\smu_j\|=1$, we have that
    \begin{align*}
      \delta(\obstr_n)
      &=-\hspace{-.5em}\sum_{\substack{i,j\geq 3 \\ i+j=n+2}}\Bigl[(\delta\smu_i)\star\smu_j
      -\smu_i\star(\delta \smu_j) \Bigr]\\
      &=\hspace{-.5em}\sum_{\substack{i,j,k\geq 3\\i+j+k=n+2}}
      \Bigl[(\smu_i\star\smu_j)\star\smu_k - \smu_i\star(\smu_j\star\smu_k)\Bigr].
    \end{align*}
    
    To complete the verification of Property~\ref{o:Cocycle},
    we use the following signed version of Equation~\eqref{eq:Associator}:
    \begin{equation}
      \label{eq:AssociatorSign}
      (a \star b)\star c - a \star(b\star c)= 
      a\circ (\Id \otimes b \otimes \Id\otimes c\otimes \Id)\circ\Delta^5 
      +(-1)^{\|b\|\|c\|}
      a\circ (\Id \otimes c \otimes \Id\otimes b\otimes \Id)\circ
      \Delta^5.
  \end{equation}

    To verify Property~\ref{item:coboundary}, observe that the
    existence of a homomorphism $\smap\co \Alg\to \Alg'$ is equivalent
    to the existence of $c_{n-1}$ with
    $\delta(c_{n-1})=\mu_n-\mu_n'$.
    The proof of Property~\ref{item:naturality} from before applies
    similarly.

    Turning to the statements about maps, we define $\tilde \smap$ as in
    Equation~\eqref{eq:deftilde}, $D$ as in Equation~\eqref{eq:DefD},
    and $\dtilde \phi$ as in Equation~\eqref{eq:deftilde2}.  
    
    We will assume that the components of $\phi_i$ all have $\Zmod{2}$
    grading equal to zero. In particular, under the assumption that
    $\|\phi_i\|=0$, Equation~\eqref{eq:SecondLeibniz} is replaced by
    \[
      \delta((\smap\circ \dtilde{\phi})_m)= (\delta(\smap)\circ \dtilde{\phi})_{m+1}
      + (-1)^{\|f\|}(\smap \circ D \dtilde{\phi})_{m+1} - \sum_{\substack{i,j\\ i+j=m+1, i>1}}
      \left(\smu_2(\phi_{i},(\smap\circ \dtilde{\phi})_j)+
        \smu_2((\smap\circ \dtilde{\phi})_j,\phi_{i})\right).
    \]
    Summing the maps
    $\smu_k\colon A_+[1]^{\otimes k}\to A$
    over all $k\leq n$ gives a map
    \[ \smu_{\leq n}\colon \bigoplus_{k<n} A_+[1]^{\otimes k} \to A. \]
    Let
    \[
    \fobstr_n= \Bigl(\sum_{\substack{j\geq 3 \\ i+j=n+1}} \smap_i\star
    \smu_j\Bigr)-\Bigl(\smu_{\leq n}\circ \dtilde{\smap}\Bigr)_n\co A_+[1]^{\otimes n}\to A,
  \]
    where $(\cdot)_n$ denotes the restriction of a map from $\Tensor^* A_+[1]$ to the summand $A_+[1]^{\otimes n}$.
    Moreover, since the components of $\smap$ all have degree zero,
    the $A_{n-1}$-homomorphism relation is equivalent to 
    \[
      \delta \smap_{n-1} - (\smap_{\leq n-2}\star \smu_{\geq 3})_n+(\smu_{\geq 2}\circ \dtilde \smap)_n=0,
    \]
    i.e.,
    \[
      \delta \smap_{n-1}=\fobstr_n.
    \] 
    Property~\ref{item:coboundaryfobstr} follows in arbitrary characteristic.

    Analogous to Equation~\eqref{eq:AinfMap1}, we can formulate this
    $\Ainf$ relation for maps as
    \begin{equation}\label{eq:AinfMap1-signed}
      \delta(\smap_{n-1}) -(\smap_{\leq n-2}\star\smu_{\geq 3})_n + 
      (\smu_{\geq 3}\circ \dtilde{\smap_{\leq n-2}})_n -
      (\smu_2(\smap_{>1},\smap_{>1}))_n=0.
    \end{equation}
    If we define
    \[
      D(\Phi)=\widetilde{\smu_2}\circ \Phi-(-1)^{\|\Phi\|}\Phi\circ \widetilde{\smu_2},
    \]
  then the $\Ainf$ relation for maps can be reformulated (analogous to
  Equation~\eqref{eq:AinfMap2}) as
  \begin{equation}\label{eq:AinfMap2-signed}
    D({\dtilde{\smap_{\leq n-1}}})-\Bigl({\dtilde{\smap_{\leq n-2}}}\circ \widetilde{\smu_{\geq 3}} + \widetilde{\smu_{\geq 3}}\circ {\dtilde{\smap_{\leq n-2}}}\Bigr)_{n}=0
  \end{equation}

  Analogous to Equation~\eqref{eq1}, we find that
  \begin{align*}
  \delta \Bigl((\smap_{\leq n-2}\star \smu_{\geq 3})_n\Bigr)
  & =\Bigl(\delta(\smap_{\leq n-2})\star \smu_{\geq 3}
  + \smap_{\leq n-3} \star \delta(\smu_{\geq 3}) - \smu_2(\smap_{>1},\smu_{\geq 3})-
  \smu_2(\smu_{\geq 3},\smap_{>1})\Bigr)_{n+1}  \nonumber \\
  &=\Bigl( (\smap_{\leq n-3}\star \smu_{\geq 3})\star \smu_{\geq 3}- 
(\smu_{\geq 3} \circ {\dtilde{\smap_{\leq n-3}}})\circ \widetilde{\smu_{\geq 3}}
  - \smu_2(\smap_{>1}\star \smu_{\geq 3},\smap_{>1}) \\ 
  & \quad -\smu_2(\smap_{>1},\smap_{>1}\star \smu_{\geq 3}) -
  \smap_{\leq n-3} \star (\smu_{\geq 3} \star \smu_{\geq 3})
   - \smu_2(\smap_{>1},\smu_{\geq 3})-
  \smu_2(\smu_{\geq 3},\smap_{>1})\Bigr)_{n+1} \nonumber \\
  &= \Bigl(-(\smu_{\geq 3} \circ {\dtilde{\smap_{\leq n-3}}})\circ \widetilde {\smu_{\geq 3}}
  - \smu_2(\smap\star \smu_{\geq 3},\smap_{>1})-\smu_2(\smap_{>1},\smap\star \smu_{\geq 3})\Bigr)_{n+1}.
\end{align*}
Analogous to Equation~\eqref{eq2}:
  \begin{align*}
    \delta \Bigl((\smu_{\geq 3}\circ \dtilde{\smap_{\leq n-2}})_n\Bigr)
    &= \Bigl(\delta(\smu_{\geq 3})\circ \dtilde{\smap_{\leq n-2}}
    - \smu_{\geq 3} \circ D(\dtilde{\smap_{\leq n-2}})  - \smu_2(\smu_{\geq 3}\circ \dtilde{\smap},\smap_{>1})
    - \smu_2(\smap_{>1},\smu_{\geq 3}\circ \dtilde{\smap})\Bigr)_{n+1} \nonumber \\
    &=
    \Bigl(-(\smu_{\geq 3}\star \smu_{\geq 3})\circ \dtilde{\smap_{\leq n-2}}
    +\smu_{\geq 3}\circ(\widetilde{\smu_{\geq 3}}\circ \dtilde{\smap_{\leq n-2}}) \\
    &
    \qquad -\smu_{\geq 3} \circ (\dtilde{\smap_{\leq n-3}}\circ \widetilde{\smu_{\geq 3}}) 
        -\smu_2(\smu_{\geq 3}\circ \dtilde{\smap},\smap_{>1})
        - \smu_2(\smap_{>1},\smu_{\geq 3}\circ \dtilde{\smap})\Bigr)_{n+1} \\
        &=
        \Bigl(-\smu_{\geq 3} \circ (\dtilde{\smap_{\leq n-3}}\circ \widetilde{\smu_{\geq 3}}) 
        -\smu_2(\smu_{\geq 3}\circ \dtilde{\smap},\smap_{>1})
        - \smu_2(\smap_{>1},\smu_{\geq 3}\circ \dtilde{\smap})\Bigr)_{n+1}
  \end{align*}
  and analogous to Equation~\eqref{eq3}:
\[
  \delta\Bigl(\smu_2(\smap_{>1},\smap_{>1})_n\Bigr)
    = -\smu_2(\delta \smap_{>1},\smap_{>1})_{n+1}-\smu_2(\smap_{>1},\delta \smap_{>1})_{n+1}
    = -\smu_2(\delta \smap,\smap_{>1})_{n+1}-\smu_2(\smap_{>1},\delta \smap)_{n+1}.
    \]

    Adding these expressions up, we find that
    \begin{align*} \delta(\fobstr_n)&=\delta\Bigl((\smap_{\leq k-2}\star \smu_{\geq 3})_n\Bigr)-
      \delta\Bigl((\smu_{\geq 3}\circ \dtilde{\smap_{\leq n-2}})_n\Bigr)
      -\delta\Bigl((\smu_2(\smap_{>1},\smap_{>1}))_n\Bigr) \\
      &= 
      -\Bigl(\smu_2(\smap_{\leq n-2} \star \smu_{\geq 3}-\smu_{\geq 3}\circ {\dtilde \smap}
      - \delta \smap_{\leq n-2},\smap_{>1}) + 
\smu_2(\smap_{>1},\smap_{\leq n-2} \star \smu_{\geq 3}-\smu_{\geq 3}\circ {\dtilde \smap}
      - \delta \smap_{\leq n-2})\Bigr)_{n+1} \\
      &= 0.
      \end{align*}
    Property~\ref{item:mapcocycle} follows in arbitrary characteristic.
    \end{proof}

\subsection{Uniqueness of \texorpdfstring{$\UnDefAlgZ$}{the un-weighted algebra}
}\label{sec:unweighted-unique-Z}

The following analogue of Theorem~\ref{thm:UnDefAlg-unique} holds:

\begin{theorem}\label{thm:UnDefAlg-unique-Z}
  Up to isomorphism, there is a unique $\Ainf$-deformation of
  $\AsUnDefAlgZ$ over $\ZZ[U]$ satisfying the following conditions:
  \begin{enumerate}
  \item\label{item:Uuniqe-graded-Z} The deformation is $\Gamma=G\times \ZZ$-graded, where the gradings of the
    chords $\rho_i$ is defined by $\gamma(\rho_i)=\gr(\rho_i)\times \wingr(\rho_i)$.
    (The gradings $\gr$ and $\wingr$ are defined in Section~\ref{sec:grading}.)
  \item\label{item:Uuniqe-mu-4-Z} The operations satisfy
    $\mu_4(\rho_4,\rho_3,\rho_2,\rho_1)=U\iota_1$ and
    $\mu_4(\rho_3,\rho_2,\rho_1,\rho_4)=U\iota_0$.
  \end{enumerate}
\end{theorem}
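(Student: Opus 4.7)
The plan is to mimic the proof of Theorem~\ref{thm:UnDefAlg-unique} verbatim, but over $\ZZ$: first upgrade the obstruction theory of Section~\ref{sec:A-inf-deform} to the signed, group-graded setting, then redo the Hochschild cohomology computation of Proposition~\ref{prop:AsUnDefHoch} over~$\ZZ$, then apply the obstruction-theoretic uniqueness argument. Throughout, $\AsUnDefAlgZ$ is concentrated in $\Zmod{2}$-degree~$0$, so the shifted operations $\smu_n$ have $\Zmod{2}$-degree equal to~$n\bmod 2$; this means many of the sign complications simplify in our situation.

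First I would adapt Proposition~\ref{prop:deform-assoc-alg-G} to the signed setting. Proposition~\ref{prop:deform-assoc-algZ} already provides the signed analogue of Proposition~\ref{prop:deform-assoc-alg}, and the $(\Gamma,\lambda)$-graded refinement is a formal consequence: one simply restricts to the subcomplex $\HC^{*,*}_{\Gamma;\ZZ}(\AsUnDefAlgZ[U])$ of Hochschild cochains of fixed weight, with the signed differential of Equation~\eqref{eq:signed-Hoch-diff}. The obstruction classes $\obstr_n\in\HC^{n+1,-2}_{\Gamma;\ZZ}$ and $\fobstr_n\in\HC^{n,-1}_{\Gamma;\ZZ}$ have the same interpretations as before, and the analogue of Corollary~\ref{cor:deform-assoc-alg-G} holds, now with $\ZZ$-coefficients.

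The main step is the $\ZZ$-valued analogue of Proposition~\ref{prop:AsUnDefHoch}. Here I would use the signed analogue of the small-model complex $C^*_{\Gamma;\ZZ}$ from Definition~\ref{def:SmallModel} and Proposition~\ref{prop:SmallerModel}. Since $\AsUnDefAlgZ$ is $\ZZ$-free in each grading, $C^*_{\Gamma;\ZZ}$ is a complex of free abelian groups with exactly the same basis as the $\FF_2$ version in Definition~\ref{def:SmallModel}; what changes are only the signs appearing in Equation~\eqref{eq:DefCdiff}. The enumeration of basis elements carried out in the proof of Proposition~\ref{prop:AsUnDefHoch} (items (C-1)--(C-3) and the case analysis of homological grading~$k$) is purely combinatorial and carries over verbatim. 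The claim to verify is
\[
  \HH^{n,-1}_\Gamma(\AsUnDefAlgZ[U]) \cong
  \begin{cases} \ZZ & n=4 \\ 0 & \text{otherwise} \end{cases}
  \qquad
  \HH^{n,-2}_\Gamma(\AsUnDefAlgZ[U]) \cong
  \begin{cases} \ZZ & n=5 \\ 0 & \text{otherwise}.\end{cases}
\]
The ranks are determined by universal coefficients from the $\FF_2$ computation, so the only question is whether torsion appears. I would check this by writing down the differentials of the finitely many basis elements in gradings $(4,-1), (4,0), (5,-2), (5,-1)$ and observing that the integral matrices have all elementary divisors equal to~$1$. Concretely, the distinguished cycle $U\iota_0[]+U\iota_1[]$ appearing in Proposition~\ref{prop:UnDefHoch}, along with its unweighted counterparts from Proposition~\ref{prop:AsUnDefHoch}, generates a direct $\ZZ$-summand because it maps nontrivially under an explicit augmentation (e.g., evaluation at $\rho_4\otimes\rho_3\otimes\rho_2\otimes\rho_1$ returns $\pm U$, which is a unit in the associated graded).

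The hard part I anticipate is precisely this torsion check: the signs in $C^*_{\Gamma;\ZZ}$ could in principle introduce factors of~$2$ in the image of the integral differential. My expectation is that they do not, for the following reason: since every non-zero $\mu_n$-operation in $\AsUnDefAlgZ[U]$ has $n\in\{2,4\}$, all signs arising in the Hochschild differential are of the form $(-1)^r$ with $r$ of fixed parity on each summand, so the signed differential is obtained from the unsigned one by rescaling basis elements by $\pm 1$. Consequently the signed and unsigned complexes are isomorphic over~$\ZZ$, and the $\ZZ$-computation reduces mechanically to the $\FF_2$-computation of Proposition~\ref{prop:AsUnDefHoch}, with $\Field$ replaced by~$\ZZ$. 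Once this is established, the inductive extension argument from the proof of Theorem~\ref{thm:UnDefAlg-unique} goes through unchanged: the existence and uniqueness at the $A_4$ stage is governed by the single class in $\HH^{4,-1}_\Gamma$ (with the normalization in Condition~(\ref{item:Uuniqe-mu-4-Z}) pinning down the sign), and the vanishing of $\HH^{m+1,-2}_\Gamma$ and $\HH^{m,-1}_\Gamma$ for $m>4$ gives existence and uniqueness of all higher operations.
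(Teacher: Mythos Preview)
Your overall plan is correct and matches the paper's: upgrade the obstruction theory to arbitrary characteristic (Proposition~\ref{prop:deform-assoc-algZ}, and its $\Gamma$-graded corollary), recompute the relevant Hochschild groups over~$\ZZ$ (Proposition~\ref{prop:AsUnDefHochZ}, via the signed small model built from Lemma~\ref{lem:Koszul-Z}), then rerun the inductive argument from Theorem~\ref{thm:UnDefAlg-unique}. The paper's proof is literally one sentence invoking these two ingredients.

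The genuine gap is your proposed shortcut. The claim that the signed small-model differential is conjugate to the unsigned one by a diagonal $\pm 1$ rescaling is false; indeed the unsigned differential does not even square to zero over~$\ZZ$. In
\[
\partial^2(a[b])=\sum_{i,j}\Bigl(\rho_j\rho_i a[b\rho_i\rho_j]+\rho_i a\rho_j[\rho_j b\rho_i]+\rho_j a\rho_i[\rho_i b\rho_j]+a\rho_i\rho_j[\rho_j\rho_i b]\Bigr),
\]
the two middle sums are identical after relabeling $(i,j)\leftrightarrow(j,i)$, so over~$\ZZ$ they add rather than cancel; the sign $(-1)^{|b|}$ in the paper's formula is precisely what produces the cancellation. (Also, $\AsUnDefAlgZ[U]$ is an associative algebra: only $\mu_2$ is present, so the remark about $\mu_4$ contributing to its Hochschild differential is misplaced.)

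What survives is your earlier, correct suggestion: write down the signed differentials on the finitely many generators in bigradings $(n,k)$ with $k\in\{-1,-2\}$ and check the integral elementary divisors directly. This is exactly what the paper does in Proposition~\ref{prop:AsUnDefHochZ}; for instance $\partial(\rho_{123}[\rho_{123}])=\rho_{1234}[\rho_{4123}]-\rho_{4123}[\rho_{1234}]$, and the generator of $\HH^{4,-1}_\Gamma$ is $U[\rho_{1234}]-U[\rho_{2341}]+U[\rho_{3412}]-U[\rho_{4123}]$. Your universal-coefficients remark is legitimate (reducing the signed $\ZZ$-complex mod~$2$ does give the $\FF_2$-complex of Proposition~\ref{prop:AsUnDefHoch}), but it only controls $2$-torsion; the direct computation handles all primes at once and is no harder.
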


This follows from a Hochchild homology computation, which is
facilitated by a signed analogue of Lemma~\ref{lem:Koszul}, which we
state after a few remarks.  Recall that the cobar algebra is obtained
by dualizing the tensor algebra over $A_+[1]$. We will use the trivial
$\Zmod{2}$ grading on the torus algebra (cf.\ Section~\ref{sec:mod-2-gr}). The cobar algebra inherits a $\Zmod{2}$
grading from this description. Concretely, the elements $\rho_i^*$ in
$\Cobarop(\AsUnDefAlgZ)$ all have odd $\Zmod{2}$ grading.

\begin{lemma}\label{lem:Koszul-Z}
  There is a quasi-isomorphism of $\Gamma$-graded algebras (over $\ZZ$)
  \[
    \phi\co (\Cobarop(\AsUnDefAlgZ),\gammaCobar)\to (\AsUnDefAlgZ,\alpha\circ\gamma)
  \]
  specified by $\phi(\iota_0)=\iota_1$, $\phi(\iota_1)=\iota_0$,
  $\phi(\rho_i^*)=[\rho_i]$ for $i=1,\dots,4$, and $\phi(a^*)=0$ if
  $|a|>1$.  The map $\phi$ shifts $\Zmod{2}$ gradings: it identifies
  $\Cobarop(\AsUnDefAlgZ)$ with the above $\Zmod{2}$ grading with the
  algebra $(\AsUnDefAlgZ,\alpha\circ\gamma)$, with a $\Zmod{2}$ grading
  which is the mod-2 reduction of the length grading.
\end{lemma}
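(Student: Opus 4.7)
The plan is to adapt the proof of Lemma~\ref{lem:Koszul}, tracking signs that arise from the Koszul rule applied to the dual of $\smu_2\co A_+[1]\otimes A_+[1]\to A_+[1]$. First I would check the easy statements. That $\phi$ is a ring homomorphism is immediate from its specification on generators. The $\Zmod{2}$-grading compatibility follows because each $\rho_i^*$ lies in the length-one tensor power and so has odd $\Zmod{2}$-degree, while its image $\rho_i$ has odd length; the assertion extends to all of $\Cobarop(\AsUnDefAlgZ)$ by multiplicativity. Compatibility with the $\Gamma$-grading is unchanged from the characteristic-two setting, since $\gammaCobar$ and $\alpha\circ\gamma$ are given by the same formulas.

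To prove that $\phi$ is a quasi-isomorphism, I would construct a chain map $j\co \AsUnDefAlgZ\to \Cobarop(\AsUnDefAlgZ)$ and a homotopy operator $H$ as in Lemma~\ref{lem:Koszul}. On generators, $j(\iota_k)=\iota_{1-k}$ and $j(\rho_{i,\dots,\ell})=\pm\,\rho_i^*\otimes\cdots\otimes\rho_\ell^*$, with signs chosen so that $j$ is a chain map and $\phi\circ j=\Id$. The homotopy $H$ is defined by the same combinatorial formula as Equation~\eqref{eq:DefH}, multiplied by a sign depending on the ``leading-run'' length $k$. These signs are chosen so that the homotopy identity $\delta H=\Id-j\circ\phi$ holds, where $\delta$ is the Hochschild differential given by Equation~\eqref{eq:signed-Hoch-diff}. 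The three-case verification used in the proof of Equation~\eqref{eq:HomotopyFormula} then carries through with the same combinatorial content, each case now requiring a sign check rather than a cancellation mod $2$.

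The main obstacle is sign bookkeeping. Since each $\rho_i^*$ is odd in $\Zmod{2}$-grading, the cobar product incurs Koszul signs when reordering factors, and $\dcob$ contributes signs from dualizing $\smu_2$. Determining the correct sign in $j(\rho_{i,\dots,\ell})$ and in each branch of $H$, and then verifying the three-case homotopy relation consistently, requires care. The bookkeeping is eased by the fact that $\AsUnDefAlgZ$ is supported entirely in trivial $\Zmod{2}$-grading, so all signs originate from the $A_+[1]$-shift on the cobar side and can be absorbed systematically; in practice the sign in $j(\rho_{i,\dots,\ell})$ depends only on the length $\ell-i+1\pmod{4}$, and the sign in the nontrivial branch of $H$ is controlled by the parity of the leading-run length together with the parity of the already-accumulated chord block.
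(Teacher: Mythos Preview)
Your approach is essentially the paper's: adapt the homotopy $H$ from Lemma~\ref{lem:Koszul} by inserting a sign depending on the leading-run length, and verify the same three cases. The paper is more explicit, simply setting
\[
  H(\overbrace{\rho_i^*\otimes\cdots\otimes\rho_\ell^*}^k\otimes a_1^*\otimes\cdots\otimes a_m^*)
  =(-1)^{\ell-i}\,\rho_i^*\otimes\cdots\otimes\rho_{\ell-1}^*\otimes(a_1\cdot\rho_\ell)^*\otimes a_2^*\otimes\cdots\otimes a_m^*
\]
for $k>0$, and checking Equation~\eqref{eq:HomotopyFormula} directly.

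Two small corrections. First, $j$ needs no signs: each $\rho_j^*$ is a length-one dual, so $\smu_2^*(\rho_j^*)=0$ and $j(\rho_{i,\dots,\ell})=\rho_i^*\otimes\cdots\otimes\rho_\ell^*$ is already a cycle; moreover $\phi\circ j=\Id$ on the nose since $\phi$ is multiplicative. Your mod-$4$ speculation is unnecessary. Second, the homotopy identity you want is with respect to the cobar differential $\dcob$, not the Hochschild differential of Equation~\eqref{eq:signed-Hoch-diff}; the latter lives on a different complex.
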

\begin{proof}
  The proof of Lemma~\ref{lem:Koszul} can be adapted, noting that 
  the element $\rho_i^*\otimes\rho_{i+1}^*\dots\otimes \rho_{\ell-1}\otimes \rho_\ell^*\in\Cobarop(\AsUnDefAlgZ)$ 
  has $\Zmod{2}$ grading specified by $\ell-i+1$,
  which coincides with the length of 
  $\phi(\rho_i^*\otimes\rho_{i+1}^*\otimes\dots\otimes \rho_{\ell-1}^*\otimes \rho_\ell^*)$.
  
  Equation~\eqref{eq:HomotopyFormula} (now in characteristic different than $2$) follows, using the following sign
  refinement of the homotopy operator from Equation~\eqref{eq:DefH}:
  \[
    H(\overbrace{\rho_i^*\otimes \rho_{i+1}^*\otimes\dots\otimes \rho_\ell^*}^k
    \otimes a_1^*\otimes\dots\otimes a_m^*)=
    \begin{cases}
      (-1)^{\ell-i}\rho_i^*\otimes\rho_{i+1}^*\otimes\dots\otimes \rho_{\ell-1}^*
      \otimes (a_1\cdot \rho_\ell)^*\otimes a_2^*\otimes\dots\otimes a_m^* &
      k>0 \\
      0 &k=0.
    \end{cases}
  \]
  This completes the proof.
\end{proof}

We have the following sign refinement of
Proposition~\ref{prop:AsUnDefHoch}:

\begin{proposition}\label{prop:AsUnDefHochZ}
  The graded Hochschild cohomology
  $\HH^{*,*}_\Gamma(\AsUnDefAlgZ\otimes\ZZ[U])$ of
  $\AsUnDefAlgZ\otimes_\ZZ\ZZ[U]$ over $\Ground\otimes\ZZ[U]$
  satisfies
  \begin{align*}
  \HH^{n,-1}_\Gamma(\AsUnDefAlgZ\otimes\ZZ[U])&=
  \begin{cases}
   \ZZ & n=4\\
    0 &\text{otherwise}
  \end{cases}\\
  \HH^{n,-2}_\Gamma(\AsUnDefAlgZ\otimes\ZZ[U])&=
  \begin{cases}
   \ZZ & n=5\\
    0 &\text{otherwise}
  \end{cases}
  \end{align*}
  Moreover, suppose $\xi \in
  \HC^{4,-1}_\Gamma(\AsUnDefAlgZ\otimes\ZZ[U])$ is a cycle and
  $\xi(\rho_4\otimes\rho_3\otimes\rho_2\otimes\rho_1)=U$. Then $\xi$
  represents a generator of
  $\HH^{4,-1}_\Gamma(\AsUnDefAlgZ\otimes\ZZ[U])$.
\end{proposition}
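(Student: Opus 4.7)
The plan is to follow the same strategy used for Proposition~\ref{prop:AsUnDefHoch}, but tracking signs using the signed Koszul duality from Lemma~\ref{lem:Koszul-Z}. First I would establish a signed analogue of the small model Hochschild complex from Definition~\ref{def:SmallModel} and Proposition~\ref{prop:SmallerModel}. Namely, define $C^*_\ZZ=\AsUnDefAlgZ\otimes_{\Ground\otimes\Ground}\AsUnDefAlgZ$ with the same generators $a\otimes [b]$ satisfying conditions~\ref{c:Idemp} and~\ref{c:Grading}, but now with the differential
\[
  \partial(a\otimes[b])=\sum_{i=1}^4\Bigl(\epsilon_i\,\rho_i\cdot a\otimes[b\cdot\rho_i]+\epsilon'_i\,a\cdot\rho_i\otimes[\rho_i\cdot b]\Bigr),
\]
where the signs $\epsilon_i,\epsilon'_i\in\{\pm 1\}$ are determined by the signs appearing in the differential on $\HC^*(\AsUnDefAlgZ[U])$ translated through $\Id\otimes\phi$ with $\phi$ the signed Koszul map from Lemma~\ref{lem:Koszul-Z}. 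Then I would show $\partial^2=0$ (there is essentially nothing to check beyond $\rho_i\rho_j=0$ whenever $j-i\not\equiv 1\pmod{4}$, together with the sign bookkeeping) and that $\Id\otimes\phi$ induces a quasi-isomorphism $\HC^*_\Gamma(\AsUnDefAlgZ[U])\to C^{*,*}_{\Gamma,\ZZ}$, exactly as in the proof of Proposition~\ref{prop:SmallerModel}; the filtration/spectral sequence argument works verbatim once we know $\phi$ is a quasi-isomorphism over $\ZZ$.

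Second, the classification of generators of $C^{n,k}_{\Gamma,\ZZ}$ by bigrading is a purely combinatorial statement that depends only on the grading group and the idempotent constraints, so the list given in the proof of Proposition~\ref{prop:AsUnDefHoch} carries over unchanged: in particular, for $(n,k)=(4,-1)$ the generators are obtained from $\rho_{1234}[\rho_{4123}]$, $\rho_{4123}[\rho_{1234}]$, $U[\rho_{1234}]$ (and cyclic rotations), together with contributions coming from multiplying by length-four elements, and for $(n,k)=(5,-2)$ they are obtained from $U\rho_i[\rho_{i,i+1,i+2,i+3,i}]$ and cyclic rotations. The homological grading for $(n,k)=(n,-2)$ other than $n=5$ is empty over $\ZZ$ for exactly the same reasons as over $\FF_2$.

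Third, I would redo the differential computations leading up to Equations~\eqref{eq:diff1}--\eqref{eq:diff3} (restricted to the associative case, so really just the analogues of the bulleted computations in the proof of Proposition~\ref{prop:AsUnDefHoch}) and check that the two terms $\rho_{1234}[\rho_{4123}]$ and $\rho_{4123}[\rho_{1234}]$ in $\partial(\rho_{123}[\rho_{123}])$ appear with the same sign, so the four boundaries $\partial(\rho_{i,i+1,i+2}[\rho_{i,i+1,i+2}])$ cut out the four cyclic permutations of $\rho_{1234}[\rho_{4123}]+\rho_{4123}[\rho_{1234}]$ up to sign; this leaves a single free $\ZZ$-summand in $H^{4,-1}$ generated by
\[
  U[\rho_{1234}]\pm U[\rho_{2341}]\pm U[\rho_{3412}]\pm U[\rho_{4123}]
\]
(for an appropriate sign choice dictated by $\partial$), and this class evaluates to $U$ on $\rho_4\otimes\rho_3\otimes\rho_2\otimes\rho_1$, proving the last assertion of the proposition. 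A similar check in bigrading $(5,-2)$ produces the single free $\ZZ$-summand $\HH^{5,-2}_\Gamma\cong\ZZ$, and for $(n,-1)$ with $n\neq 4$ or $(n,-2)$ with $n\neq 5$, direct inspection shows there are no cycles at all.

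The main obstacle is purely bookkeeping: verifying that the signs introduced by Lemma~\ref{lem:Koszul-Z} (which shifts the $\Zmod 2$-grading by the length) combine correctly with the signs in the Hochschild differential, Equation~\eqref{eq:signed-Hoch-diff}, to give the small-model differential I wrote above, and then checking the handful of explicit boundary formulas carefully enough that no cycle is accidentally killed or created by a sign flip. Once that signed differential is pinned down, the homology computation is identical to the characteristic-two one, with $\FF_2$ replaced by $\ZZ$ in each nonzero graded piece.
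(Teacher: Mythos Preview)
Your approach is essentially the paper's: build the signed small model $C^*_{\ZZ}$ via Lemma~\ref{lem:Koszul-Z}, prove it is quasi-isomorphic to $\HC^*_\Gamma(\AsUnDefAlgZ[U])$ by the same filtration/spectral-sequence argument as in Proposition~\ref{prop:SmallerModel}, and then rerun the explicit boundary computations from Proposition~\ref{prop:AsUnDefHoch} with signs. One small correction to your sign prediction: in the paper's convention the small-model differential is
\[
  \partial(a[b])=\sum_{i=1}^4\Bigl((-1)^{|b|}\rho_i\cdot a\,[b\cdot\rho_i]+a\cdot\rho_i\,[\rho_i\cdot b]\Bigr),
\]
so $\partial(\rho_{123}[\rho_{123}])=\rho_{1234}[\rho_{4123}]-\rho_{4123}[\rho_{1234}]$ (opposite signs, not the same), and the generator of $\HH^{4,-1}_\Gamma$ is $U[\rho_{1234}]-U[\rho_{2341}]+U[\rho_{3412}]-U[\rho_{4123}]$ with alternating signs. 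This is precisely the bookkeeping you flagged as the main obstacle and does not affect the structure of your argument.
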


\begin{proof}
  As in the proof of Proposition~\ref{prop:AsUnDefHoch}, we will
  perform the Hochschild cohomology computations with the help of a
  small model of the cobar algebra.

  Specifically, define $C^*_{\ZZ}$ to be the sign-refined
  analogue of Definition~\ref{def:SmallModel},
  defined using $\AsUnDefAlgZ$ in place of $\AsUnDefAlg$. That is,
  $C^*_{\ZZ}$ is the free $\ZZ$-module generated by elements of the form
  $a\otimes [b]$ where $a\in \AsUnDefAlgZ$ and $[b]\in\AsUnDefAlgZ$ are
  basic elements with the property that $i\cdot a \cdot j=a$ and
  $[j\cdot b \cdot i]=[b]$, for some idempotents
  $i,j\in\{\iota_0,\iota_1\}$. The differential is defined by the following
  analogue of Equation~\eqref{eq:DefCdiff}:
  \[
  \partial (a\otimes [b])=\sum_{i=1}^4 \left((-1)^{|b|}\rho_i\cdot a \otimes [b \cdot \rho_i]+ a\cdot \rho_i \otimes [\rho_i\cdot b]]\right). 
    \]
  
  Define
  $C^*_{\Gamma;\ZZ}\subset C^*$ to be the portion in grading
  $0\times \ZZ\subset G\times \ZZ$. The analogue  of Proposition~\ref{prop:SmallerModel} gives a quasi-isomorphism
  \[
    H^{n,k}(C_{\Gamma;\ZZ})\cong \HH^{n,k}(\AsUnDefAlgZ\otimes\ZZ[U]).
  \]
  (This proof uses Lemma~\ref{lem:Koszul-Z} in place of Lemma~\ref{lem:Koszul}, which was used to prove Proposition~\ref{prop:SmallerModel}.)
  
  With these remarks in place, the proof of
  Proposition~\ref{prop:AsUnDefHoch} applies, with minor modifications. The
  vanishing of homology for grading reasons follows exactly as before.
  In the present case, the differentials appearing in the computation
  of that proposition involving elements with $k=-1$ and $-2$ have
  signs in them.  For example:
  \begin{align*}
    \partial(\rho_{123}[\rho_{123}])&=
    \rho_{1234}[\rho_{4123}]-
    \rho_{4123}[\rho_{1234}] \\
    \partial(U[\rho_{1234}])&=
    U\rho_1[\rho_{12341}]+U\rho_4[\rho_{41234}].
  \end{align*}
  The homology class $\HH^{4,-1}_\Gamma$ is represented by the cycle
  \[ U[\rho_{1234}]-U[\rho_{2341}]+U[\rho_{3412}]-U[\rho_{4123}] \]
  The homology class $\HH^{5,-1}$ is represented by 
  \[
    U\rho_1 [\rho_{12341}]\sim
    U\rho_2 [\rho_{23412}]\sim
    U\rho_3 [\rho_{34123}]\sim
    U\rho_4 [\rho_{41234}].\qedhere
  \]
\end{proof}
    
\begin{proof}[Proof of Theorem~\ref{thm:UnDefAlg-unique-Z}]
  This proof is the same as the proof of
  Theorem~\ref{thm:UnDefAlg-unique}, replacing the use of
  Proposition~\ref{prop:deform-assoc-alg} (which required
  characteristic $2$) with its analogue,
  Proposition~\ref{prop:deform-assoc-algZ}, and replacing the
  characteristic $2$ Hochschild cohomology computation of
  Proposition~\ref{prop:AsUnDefHoch} with the more general
  Proposition~\ref{prop:AsUnDefHochZ}.
\end{proof}

\subsection{Weighted algebras and Hochschild cohomology revisited}

We explain how to put signs into the discussion of
Section~\ref{sec:weighted-deformation}.

Let $\Ground$ be an algebra that is  free as a $\ZZ$-module.
Fix an augmented $\Ainf$-algebra $\Alg^0=(A,\{\smu_m\})$
over $\Ground$ with underlying
$\ZZ$-module $A$ and augmentation ideal $A_+\subset A$. By a
\emph{weighted deformation} of $\Alg^0$ we mean a weighted
$\Ainf$-algebra $(A,\{\smu_m^k\})$ with the same underlying vector
space as $\Alg^0$ and whose weight-zero operations are the same as for
$\Alg^0$: $\smu_m^0=\smu_m$ for all $m\geq 0$. If $\Alg$ and $\Blg$ 
are both weighted deformations of the same undeformed $\Ainf$-algebra, a {\em homomorphism of deformations}
from $\Alg$ to $\Blg$ is a sequence of $\Zmod{2}$-grading-preserving maps 
$\smap^{\bullet}=\{\smap^W\co \Tensor^*(A_+[1])\to A_+[1]\}_{W=0}^{\infty}$
satisfying the weighted $\Ainf$ relation
\[
  \sum_{a+b=W} \smap^a \circ (\Id\otimes \smu^b\otimes \Id)\circ \Delta^3 
  - \sum_{w_1+\dots+w_m=W} (\smap^{w_1}\otimes\dots\otimes \smap^{w_m})\circ
  \Delta^m=0
\]
for each $W\geq 0$.

In this case, the Hochschild complex of $\Alg^0$ is given, as a $\ZZ$-module, by
\[
\HC^*(\Alg^0)=\prod_{n=0}^\infty \Hom_{\Ground\otimes\Ground}(\Ground\otimes (A_+)^{\otimes
  n}, A\otimes\Ground)=\prod_{n=0}^\infty \Hom_{\Ground\otimes\Ground}((A_+)^{\otimes
  n}, A),\]
with differential specified by
\[ \delta(\smap)=\smu^0\star \smap - (-1)^{\|\smap\|} \smap\star  \smu^0,\]
with $\star$ as in Proposition~\ref{prop:deform-assoc-algZ}.

\begin{proposition}\label{prop:deform-Ainf-algZ}
  The statement of Proposition~\ref{prop:deform-Ainf-alg}
  holds in arbitrary characteristic.
\end{proposition}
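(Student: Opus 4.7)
The plan is to adapt the proof of Proposition~\ref{prop:deform-Ainf-alg} to arbitrary characteristic using the same strategy already employed in Proposition~\ref{prop:deform-assoc-algZ}: work with the normalized operations $\smu_n^w\co A_+[1]^{\otimes n}\to A_+[1]$ (all of $\Zmod{2}$-degree $-1$) and the normalized morphisms $\smap_n^w\co A_+[1]^{\otimes n}\to A_+[1]$ (all of $\Zmod{2}$-degree $0$). With this normalization, Keller's sign from Equation~\eqref{eq:SignConvention} is absorbed and the weighted $\Ainf$ relation takes the simple form
\[
  \sum_{\substack{n=r+s+t\\ w=u+v}} \smu_{r+1+t}^u\circ(\Id^{\otimes r}\otimes \smu_s^v\otimes \Id^{\otimes t})=0,
\]
and the weighted $\Ainf$-homomorphism relation becomes the evident signed analogue of Equation~\eqref{eq:AinfMapFirstFormulation}. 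The Hochschild differential is $\delta(\smap)=\smu^0\star \smap - (-1)^{\|\smap\|}\smap\star \smu^0$.

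First I would copy the constructions of the proof of Proposition~\ref{prop:deform-Ainf-alg}, namely $\star$ (extended to sequences), $\dtildew{\smap}^\bullet$ as in Equation~\eqref{eq:DefdTildew}, the operator $\eta$ as in Equation~\eqref{eq:DefEta}, and $D(\Phi)=\widetilde{\smu^0}\circ \Phi-(-1)^{\|\Phi\|}\Phi\circ \widetilde{\smu^0}$ (now using the signed $\star$ from Proposition~\ref{prop:deform-assoc-algZ} and the signed $D$ from the proof of Proposition~\ref{prop:deform-assoc-algZ}). Then I set
\[
  \obstr^W=-\sum_{\substack{a+b=W\\ 1\leq a\leq W-1}}\smu^a\star \smu^b,\qquad
  \fobstr^W=\sum_{\substack{a+b=W\\ 0\leq a\leq W-1}}\smap^a\star\smu^b-(\smu^{\bullet}\circ \dtildew{\smap}^\bullet)^W.
\]
The weight $W$ $\Ainf$ relation then reads $\delta\smu^W=\obstr^W$ and the weight $W$ $\Ainf$-homomorphism relation reads $\delta\smap^W=\fobstr^W$, which proves Properties~\ref{wo:Extension} and~\ref{w:coboundary}. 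Properties~\ref{wo:coboundary} and~\ref{wo:naturality} come out the same way as in characteristic $2$, using the signed Leibniz rule
\[
  \delta(\smap\star\smapg)=(\delta\smap)\star\smapg+(-1)^{\|\smap\|}\smap\star(\delta\smapg)-\eta(\smap,\smapg),
\]
which already appears (in the unweighted case) as Equation~\eqref{eq:LeibnizStar-signed}, together with the identity $\delta\smu^0=0$.

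The main work is verifying Properties~\ref{wo:Cocycle} and~\ref{w:mapcocycle} with signs. For the cocycle property of $\obstr^W$, I would repeat the computation
\[
  \delta\obstr^W=-\sum \bigl((\delta\smu^a)\star\smu^b-\smu^a\star(\delta\smu^b)\bigr)^W
  =\sum\bigl((\smu\star\smu)\star\smu-\smu\star(\smu\star\smu)\bigr)^W,
\]
which vanishes by the signed associator identity
\[
  (a\star b)\star c-a\star(b\star c)=a\circ(\Id\otimes b\otimes \Id\otimes c\otimes \Id)\circ\Delta^5+(-1)^{\|b\|\|c\|}a\circ(\Id\otimes c\otimes\Id\otimes b\otimes \Id)\circ \Delta^5
\]
from Equation~\eqref{eq:AssociatorSign}: the two terms on the right cancel in pairs when summed symmetrically over the internal weights, since $\|\smu^a\|=\|\smu^b\|=-1$ so $(-1)^{\|\smu^a\|\|\smu^b\|}=-1$, matching the sign that arises from the swap in the associator sum. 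For $\fobstr^W$, the signed weighted versions of Equations~\eqref{eq:Part1}--\eqref{eq:Part4} are obtained by promoting each unsigned identity to its signed counterpart exactly as in the proof of Proposition~\ref{prop:deform-assoc-algZ}; the crucial input is the signed second Leibniz rule
\[
  \delta(\smap^\bullet\circ \dtildew{\phi}^\bullet)=(\delta\smap^\bullet)\circ \dtildew{\phi}^\bullet+(-1)^{\|\smap^\bullet\|}\smap^\bullet\circ D\dtildew{\phi}^\bullet+(\smu^0\star\smap^\bullet)\circ\dtildew{\phi}^\bullet+\smu^0\star(\smap^\bullet\circ\dtildew{\phi}^\bullet),
\]
together with the $\Ainf$-relation reformulations analogous to Equations~\eqref{eq:AinfMap1-signed} and~\eqref{eq:AinfMap2-signed} for $w<W$. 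The hard part will be keeping track of all the signs so that the cancellations that worked in characteristic~$2$ continue to work; the bookkeeping is eased by the fact that $\smap^w$ has even $\Zmod 2$ degree (so no signs are introduced by reordering $\smap$-factors), and by the fact that all $\smu^v$ have the same odd $\Zmod 2$ degree (so all signs depend only on the number of $\smu$-factors encountered, not on their weights). This reduces the verification of each identity to the corresponding computation already carried out for Proposition~\ref{prop:deform-assoc-algZ} in the unweighted case, now summed over partitions of the total weight~$W$.
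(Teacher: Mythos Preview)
Your proposal is correct and follows essentially the same approach as the paper: pass to the normalized operations $\smu_n^w$ and $\smap_n^w$, define $\obstr^W=-(\smu^{\bullet\geq 1}\star\smu^{\bullet\geq 1})^W$ and $\fobstr^W$ with the appropriate signs, and verify each property by promoting the characteristic-$2$ identities (Leibniz, associator, second Leibniz, and the two reformulations of the $\Ainf$-homomorphism relation) to their signed counterparts. One small discrepancy: in your signed second Leibniz rule the term $(\smu^0\star\smap^\bullet)\circ\dtildew{\phi}^\bullet$ should carry a minus sign (as in the paper's version), and your expression for $\fobstr^W$ should be written a bit more carefully to separate out the $\smu^0\circ(\dtildew{\smap^{\bullet<W}})^W$ piece, since $\smap^W$ is not yet defined; but you explicitly flag the sign bookkeeping as the remaining work, and the paper's proof is precisely that bookkeeping carried out.
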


\begin{proof}
  In place of Equation~\eqref{eq:WeightedAinftyRelation}, the signed weight $W$ $\Ainf$ relation takes the form 
  \[
    \delta\smu^{W}+(\smu^{\bullet\geq 1}\star\smu^{\bullet\geq 1})^W=0.
  \]
  Letting 
  \[
    \obstr^W=-(\smu^{\bullet\geq 1}\star \smu^{\bullet\geq 1})^W,
  \]
  the $\Ainf$ relation is equivalent to 
  \[
    \delta \smu^W = \obstr^W.
  \]
  Hence, Property~\ref{wo:Extension} follows in arbitrary characteristic.

  We verify Property~\ref{wo:Cocycle} as follows.
  We modify the definition of $\eta$: if $\|\smap^\bullet\|=\|\smapg^\bullet\|=1$, then
  let 
  \[ \eta^W(\smap^\bullet,\smapg^\bullet)=\smu^0\circ (\Id \otimes \smap^{\bullet}\otimes \Id\otimes \smapg^\bullet\otimes \Id)\circ \Delta^5-\smu^0\circ (\Id \otimes \smapg^\bullet\otimes \Id\otimes \smap^\bullet \otimes \Id)\circ\Delta^5.\]
  Equation~\eqref{eq:FirstLeibnizW2} generalizes to 
  \begin{equation}
    \label{eq:FirstLeibnizW2Z}
    \delta(f^\bullet\star g^\bullet)=(\delta f^\bullet)\star g^\bullet+(-1)^{\|f^\bullet\|} f^{\bullet}\star (\delta g^\bullet) -\eta^\bullet(f^\bullet,g^\bullet)
  \end{equation}
  (cf.\ Equation~\eqref{eq:LeibnizStar-signed}).
  (We are using here that $\|\smap^\bullet\|=\|\smapg^\bullet\|=1$.)

  To verify Property~\ref{wo:Cocycle}, observe that
  \begin{align*}
    \delta \obstr^W= -\delta(\smu^{\bullet\geq 1} \star \smu^{\bullet\geq 1})^W
    &= -\bigl(\delta(\smu^{\bullet\geq 1})\star \smu^{\bullet\geq 1}\bigr)^W 
     +\bigl(\smu^{\bullet\geq 1}\star \delta(\smu^{\bullet\geq 1})\bigr)^W
    + \eta^{\bullet}(\smu^{\bullet\geq 1},\smu^{\bullet^\geq 1}) \\
    &= -\bigl((\smu^{\bullet \geq 1}\star\smu^{\bullet\geq 1})\star \smu^{\bullet \geq 1}\bigr)^W +
    \bigl(\smu^{\bullet \geq 1}\star (\smu^{\bullet \geq 1}\star\smu^{\bullet\geq 1})\bigr)^W\\ &= 0,
  \end{align*}
  in view of Equation~\eqref{eq:AssociatorSign}.
  
  Defining $\dtildew{\smap}^\bullet$ as in Equation~\eqref{eq:DefdTildew}, we find that the
  weight $W$  $\Ainf$-homomorphism relation (Equation~\eqref{eq:WeightedAinfHom}) takes the form
  \[
    (\smap^{\bullet}\star \smu^{\bullet})^W - (\smu^{\bullet}\circ \dtildew{\smap}^{\bullet})^W=0
  \]
  (cf.\ Equation~\eqref{eq:AinfMor}).
  Analogous to Equation~\eqref{eq:AinfMap1w2}, we can formulate this as
  \[
    \delta \smap^W=(\smap^\bullet\star \smu^{\bullet\geq 1})^W - (\smu^{\bullet\geq 1}\circ \dtildew{\smap}^\bullet)^W - \smu^0\circ (\dtildew{\smap^{\bullet<W}})^W.
  \]
  Equivalently, if we let
  \begin{align*}
    \fobstr^W &= (\smap^\bullet\star \smu^{\bullet\geq 1})^W - (\smu^{\bullet\geq 1}\circ \dtildew{\smap}^\bullet)^W - \smu^0\circ (\dtildew{\smap^{\bullet<W}})^W,
 \\
    &= (\smap^\bullet\star \smu^{\bullet\geq 1})^W - (\smu^{\bullet\geq 1}\circ \dtildew{\smap}^\bullet)^W - \smu^0\circ (\dtildew{\smap^{\bullet}})^W +
 \smu^0\star \smap^{\bullet}
    \end{align*}
  then the $\Ainf$-homomorphism relation takes the form
  \[
    \delta \smap^W=\fobstr^W.
  \]
  Property~\ref{w:coboundary} follows.

  To verify Property~\ref{w:mapcocycle}, we use the following signed version
  of Equation~\eqref{eq:AinfMap2w}:
  \[
    D(\dtildew{\smap}^\bullet)^w+ (\overline{\smu^{\bullet\geq 1}}\circ \dtildew{\smap}^\bullet)^w - (\dtildew{\smap}^\bullet\circ \overline{\smu^{\bullet\geq 1}})^w=0
  \]
  (cf.\ Equation~\eqref{eq:AinfMap2-signed})
  where
  \[
    D(\Phi)=\overline{\smu^0}\circ \Phi-(-1)^{\|\Phi\|}\Phi\circ \overline{\smu^0}.
  \]

  In place of Equation~\eqref{eq:Map1wStarMu}, we have
  \[
    (\delta \smap^\bullet\star \smu^{\bullet\geq 1})^W = 
    \Big(\Big((\smap^\bullet\star \smu^{\bullet\geq 1})
    -  (\smu^{\bullet \geq 1} \circ \dtildew{\smap}^\bullet) - 
    (\smu^0\circ \dtildew{{\smap}^{\bullet<W}})\Big)\star\smu^{\bullet\geq1}\big)^W.
  \]
  In place of Equation~\eqref{eq:SecondLeibnizW}, we have:
  \[
    \delta(\smap^\bullet\circ \dtildew{\phi}^\bullet)= (\delta(\smap^\bullet)\circ \dtildew{\phi}^\bullet)
    + (-1)^{\|\smap^\bullet\|}(\smap^\bullet \circ D \dtildew{\phi}^\bullet) 
    - (\smu^0\star \smap^\bullet)\circ \dtildew{\phi}^\bullet
    + \smu^0\star (\smap^\bullet\circ \dtildew{\phi}^\bullet)
  \]
  (cf.\ Equation~\eqref{eq:AinfMap1-signed}).  In place of
  Equation~\eqref{eq:AinfMap2w2}, if we know that the weight $<W$
  $\Ainf$\hyp homomorphism relations hold, we have
  \begin{align*}
    D (\dtildew{\smap^{\bullet<W}})^W&+ 
    ({\overline{\smu^{\bullet\geq 1}}}\circ {\dtildew \smap}^\bullet)^W 
    - (\dtildew{\smap}^\bullet\circ \overline{\smu^{\bullet\geq 1}})^W \nonumber \\
    &
    = {\overline {(\smu^0\circ \dtildew{\smap^{\bullet<W}})^W}} +
    {\overline{(\smu^{\bullet\geq 1}\circ \dtildew{\smap^{\bullet<W}})^W}}
    - {\overline{(\smap^{\bullet<W}\star \smu^{\bullet\geq 1})^W}}
  \end{align*}

  In place of Equation~\eqref{eq:Part1}:
  \begin{align*}
    \delta(\smap^\bullet\star \smu^{\bullet\geq 1})^W &=
            ((\delta \smap^\bullet)\star \smu^{\bullet\geq 1})^W
    + (\smap^\bullet\star \delta(\smu^{\bullet \geq 1}))^W- 
    \eta^W(\smap^\bullet,\smu^{\bullet \geq 1}) \nonumber \\
    &= 
    \left((\smap^\bullet\star\smu^{\bullet\geq 1})\star\smu^{\bullet\geq 1}
-(\smu^{\bullet \geq 1}\circ \dtildew{\smap}^\bullet)\star \smu^{\bullet \geq 1}
- (\smu^0\circ  \dtildew{\smap}^\bullet) \star \smu^{\bullet\geq 1}
+ (\smu^0\star  \smap^\bullet) \star \smu^{\bullet\geq 1}\right)^W \\
    &\qquad - \left(\smap^{\bullet}\star (\smu^{\bullet\geq 1}\star \smu^{\bullet\geq 1})\right)^W
    -  \eta^W(\smap^\bullet,\smu^{\bullet \geq 1}) \\
    &= -((\smu^{\bullet \geq 1}\circ \dtildew{\smap}^\bullet)\star \smu^{\bullet \geq 1})^W
    - ((\smu^0\circ  \dtildew{\smap}^\bullet) \star \smu^{\bullet\geq 1})^W
    + \bigl((\smu^0\star  \smap^\bullet) \star \smu^{\bullet\geq 1}\bigr)^W\\
    &\qquad-  \eta^W(\smap^\bullet,\smu^{\bullet \geq 1}). 
    \end{align*}
    
    In place of Equation~\eqref{eq:Part2}:
      \begin{align*}  \delta(\smu^{\bullet \geq 1}\circ \dtildew{\smap}^\bullet)^W&=
        \Big(\delta(\smu^{\bullet \geq 1})\circ \dtildew{\smap}^\bullet
        - \smu^{\bullet \geq 1}\circ (D \dtildew{\smap}^\bullet)-(\smu^0\star\smu^{\bullet \geq 1})\circ \dtildew{\smap}^\bullet+\smu^0\star(\smu^{\bullet \geq 1}\circ\dtildew{\smap}^\bullet)\Big)^W
        \nonumber \\
        &=\Big(-(\smu^{\bullet \geq 1}\star \smu^{\bullet \geq 1})\circ {\dtildew{\smap}^\bullet} 
        + \smu^{\bullet \geq 1}\circ (\overline{\smu^{\bullet \geq 1}}\circ \dtildew{\smap}^\bullet)
        - \smu^{\bullet \geq 1}\circ (\dtildew{\smap}^\bullet \circ\overline{\smu^{\bullet \geq 1}})\nonumber \\
        &\qquad - (\smu^0\circ(\overline{\smu^{\bullet \geq 1}}\circ \dtildew{\smap}^\bullet)+\smu^0\star(\smu^{\bullet \geq 1}\circ \dtildew{\smap}^\bullet)\Big)^W \\
        &=\Big(- \smu^{\bullet \geq 1}\circ (\dtildew{\smap}^\bullet \circ\overline{\smu^{\bullet \geq 1}})
        - \smu^0\circ(\overline{\smu^{\bullet \geq 1}}\circ \dtildew{\smap}^\bullet)+\smu^0\star(\smu^{\bullet \geq 1}\circ \dtildew{\smap}^\bullet)\Big)^W 
      \end{align*}
      In place of Equation~\eqref{eq:Part3}:
    \begin{align*}
    \delta(\smu^0\circ (\dtildew{\smap^{\bullet<W}})^W)&=(\delta\smu^0)\circ (\dtildew{\smap^{\bullet<W}})^W
    - \smu^0\circ (D\dtildew{\smap^{\bullet<W}})^W - (\smu^0\star\smu^0)\circ (\dtildew{\smap^{\bullet<W}})^W\\
    &\qquad+ \smu^0\star(\smu^0\circ \dtildew{\smap^{\bullet<W}})^W \\
    &= -\smu^0\circ (D\dtildew{\smap^{\bullet<W}})^W 
    + \smu^0\star(\smu^0\circ \dtildew{\smap^{\bullet<W}})^W.
    \end{align*}
    In place of Equation~\eqref{eq:Part4}:
    \begin{align*}
      \smu^0\circ D (\dtildew{\smap^{\bullet<W}})^W&= 
      -\smu^0\circ({\overline{\smu^{\bullet\geq 1}}}\circ {\dtildew \smap}^\bullet)^W 
      + \smu^0\circ(\dtildew{\smap}^\bullet\circ \overline{\smu^{\bullet\geq 1}})^W \nonumber \\
      &
      +\smu^0\star (\smu^0\circ \dtildew{\smap^{\bullet<W}})^W +
      \smu^0\star (\smu^{\bullet\geq 1}\circ \dtildew{\smap^{\bullet<W}})^W
      - \smu^0\star(\smap^{\bullet<W}\star \smu ^{\bullet\geq 1})^W.
    \end{align*}

    Finally, using 
    \[ (\smu^0\star \smap^\bullet)\star \smu^{\bullet\geq 1}
    -\smu^0\star(\smap^{\bullet}\star \smu^{\bullet\geq 1})-\eta^W(\smap^\bullet,\smu^{\bullet\geq 1})=0\]
    (a special case of  Equation~\eqref{eq:AssociatorSign}),
    it now follows that  $\delta(\fobstr^W)=0$, 
    that is, that Property~\ref{w:mapcocycle} holds in arbitrary characteristic.
\end{proof}

\subsection{Uniqueness of \texorpdfstring{$\MAlgZ$}{the torus algebra over 
the integers}}\label{sec:MAlg-uniqueZ}

In this section, we view the ground ring for $\UnDefAlgZ$ as
$\Ground=\ZZ\oplus \ZZ$, rather than $\ZZ[U]$; so our
augmentation is a map $\UnDefAlgZ\to \Ground$, and there is a
corresponding augmentation ideal.

\begin{theorem}\label{thm:MAlg-uniqueZ}
  Up to isomorphism, there is a unique weighted deformation $\MAlgZ$ of $\UnDefAlgZ$ such
  that:
  \begin{enumerate}
  \item $\MAlgZ$ is $\Gamma=G\times \ZZ$-graded and
  \item $\mu^1_0=\rho_{1234}+\rho_{2341}+\rho_{3412}+\rho_{4123}$.
  \end{enumerate}
\end{theorem}

We sketch the modifications needed to make to the discussion in
Section~\ref{sec:MAlg-unique} to hold over $\ZZ$.  Endow
$\SmallCobarZ=\AsUnDefAlgZ[h]/(h^2)$, with the $\Zmod{2}$-grading
which coincides with the mod $2$ reduction of the length grading on
$\AsUnDefAlgZ\subset \SmallCobarZ$, and so that $\|h\|$ is odd.  With this
understood, we have the following analogue of Lemma~\ref{lem:Koszul2}:

\begin{lemma}
  \label{lem:Koszul2Z}
  There is a quasi-isomorphism of $\Gamma\times \Zmod{2}$-graded algebras
  $\phi'\co \Cobarop(\UnDefAlgZ)\to \SmallCobarZ$, determined by
  $\phi'(\iota_0)=\iota_1$, $\phi'(\iota_1)=\iota_0$,
  $\phi'(\rho_i^*)=[\rho_i]$ for $i=1,\dots,4$, $\phi'(U^*)=[h]$, and
  $\phi'(a^*)=0$ if $a\neq U$ and length $|a|>1$.
  This map is $\Zmod{2}$-grading preserving, using the induced $\Zmod{2}$ grading on
  $\Cobarop(\UnDefAlgZ)$, corresponding to a length grading on $\SmallCobarZ$
  (with the understanding
  that $|h|$ is odd).
\end{lemma}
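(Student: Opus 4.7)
The plan is to follow the proof of Lemma~\ref{lem:Koszul2}, adapting signs throughout, and invoking the signed Koszul equivalence from Lemma~\ref{lem:Koszul-Z} at the step where the unsigned proof used Lemma~\ref{lem:Koszul}. The first step is to observe that the definition of $\phi'$ makes sense verbatim over $\ZZ$: it extends continuously to the completion because all elements of sufficiently large $\wingr_4$ lie in its kernel, and it is well-defined on the direct-sum model of $\Cobarop(\UnDefAlgZ)$. Checking that $\phi'$ is a ring homomorphism and respects the $\Gamma$-grading is identical to the $\Field$-coefficient case. For the $\Zmod{2}$-grading, note that every $a^* \in A_+[1]^*$ with $a$ basic is odd (since $\rho_i^*$, $U^*$ live in $A_+[1]$ with length~$1$), while on the target $[\rho_i]$ and $[h]$ are also odd, so $\phi'$ is grading-preserving on generators; and it clearly kills everything it ought to, for grading reasons.

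Next I would introduce the inverse-up-to-homotopy $j' \co \SmallCobarZ \to \Cobarop(\UnDefAlgZ)$ by a signed version of the formulas in the proof of Lemma~\ref{lem:Koszul2}, namely $j'([\iota_0])=\iota_1$, $j'([\iota_1])=\iota_0$, $j'([h])=U^*$, $j'([\rho_i\cdots\rho_j])=\rho_i^*\otimes\cdots\otimes\rho_j^*$ for $k\ge 1$, and for $k\geq 1$,
\[
j'([h\rho_i\cdots\rho_j]) = \rho_i^*\otimes\cdots\otimes\rho_j^*\otimes U^* + \epsilon\cdot \rho_i^*\otimes\cdots\otimes\rho_{j-1}^*\otimes(\rho_{j-1}\rho_j)^*\otimes\rho_j^*\otimes\rho_{j+1}^*\otimes\rho_{j+2}^*,
\]
where $\epsilon=\pm 1$ is determined by the requirement that $j'$ be a chain map. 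The signs in $\dcob$ (which come from the differential on $A_+[1]^{\otimes n}$) together with the known signs from Lemma~\ref{lem:Koszul-Z} pin down $\epsilon$ uniquely, and the relation $\phi'\circ j'=\Id$ is then immediate from the defining formulas.

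For the other composition, I would repeat the filtration argument. Define $\Filt^{v_0}\subset \Cobarop(\UnDefAlgZ)$ exactly as in the proof of Lemma~\ref{lem:Koszul2}, so that $\Image(j')\subset\Filt^4$. Introduce a signed homotopy operator
\[
H'(b_1^*\otimes\cdots\otimes b_m^*) = \eta_\ell\cdot b_1^*\otimes\cdots\otimes b_{\ell-1}^*\otimes (b_{\ell+1}\cdot U)^*\otimes b_{\ell+2}^*\otimes\cdots\otimes b_m^*
\]
when $b_\ell=U$ is the first occurrence of a $U$-divisible factor and $\ell<m$, and zero otherwise, with a sign $\eta_\ell=\pm 1$ chosen so that the analogue of the containment $(\Id+\dcob\circ H'+H'\circ\dcob)(\Filt^v)\subset\Filt^{v-1}$ holds for $v>4$; the case analysis in the unsigned proof goes through once $\eta_\ell$ is set to produce the correct cancellation of the $\dcob\circ H'((b_{\ell+1}\cdot U)^*)$ term against the corresponding term in $H'\circ\dcob$. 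Iterating $\Id+\dcob\circ H'+H'\circ\dcob$ gives a well-defined continuous retraction $G \co \Cobarop(\UnDefAlgZ)\to \Filt^4$ chain-homotopic to the identity, exactly as in the unsigned case, using Lemma~\ref{lem:UnDefAlg-bounded-ish} to ensure convergence in each graded piece.

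Finally, on $\Filt^4$ I would introduce a signed analogue of the operator $H''$, setting $H''(x)=H(x)$ for $x\in\Filt^0$ (with $H$ the signed homotopy from Lemma~\ref{lem:Koszul-Z}) and $H''(x_0\otimes U^*)=\pm H(x_0)\otimes U^*$ for $x_0\in\Filt^0$, with the sign dictated by Koszul convention~\eqref{eq:SignConvention} applied to the tensor factor $U^*$. The signed analogue of Equation~\eqref{eq:HomotopyFormulaPrime}, $\dcob\circ H''+H''\circ\dcob+\Id=j'\circ\phi'$ on $\Filt^4$, then reduces on $\Filt^0$ to Equation~\eqref{eq:HomotopyFormula} of Lemma~\ref{lem:Koszul-Z}, and on the complementary generators $x_0\otimes U^*$ to a direct verification. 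Combining with the contraction to $\Filt^4$ shows $\phi'$ is a quasi-isomorphism. The main obstacle will be the bookkeeping for $\eta_\ell$ and the sign in $H''(x_0\otimes U^*)$: one must verify that a single consistent choice of signs makes $H'$ a chain-level contraction to $\Filt^4$ while simultaneously making $H''$ witness the homotopy $\Id\simeq j'\circ\phi'$ on $\Filt^4$; once Lemma~\ref{lem:Koszul-Z} is in hand, this is a careful but mechanical verification.
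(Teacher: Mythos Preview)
Your proposal is correct and takes essentially the same approach as the paper, which simply says ``The proof of Lemma~\ref{lem:Koszul2} applies, with signs added as in the proof of Lemma~\ref{lem:Koszul-Z}.'' You have filled in the details of that one-line proof faithfully: the same inverse $j'$, the same filtration $\Filt^v$, and the same homotopy operators $H'$ and $H''$, each decorated with the signs forced by the Koszul convention~\eqref{eq:SignConvention} and the signed homotopy from Lemma~\ref{lem:Koszul-Z}.
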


\begin{proof}
  The proof of Lemma~\ref{lem:Koszul2} applies, with signs added
  as in the proof of Lemma~\ref{lem:Koszul-Z}.
\end{proof}

The small model of the Hochschild complex is defined as before, as
$\mathfrak{C}^*_{\ZZ}=\UnDefAlgZ\hotimes_{\Ground\otimes\Ground}\SmallCobarZ$ as before.
Signs are now inserted into the differential, as follows:
\begin{align*}
  \partial (a [b])&=a [\partial b]+\sum_{i=1}^4
  \left((-1)^{|b|}\rho_i\cdot a  [b \cdot \rho_i]+ a\cdot \rho_i 
    [\rho_i\cdot b]\right) \\
      &\quad+\sum_{i=1}^4 
      \bigl((-1)^{|b|}\mu_4(\rho_{i+3},\rho_{i+2},\rho_{i+1}, a) [b \cdot\rho_{i+1,i+2,i+3}]
        +\mu_4(\rho_{i+2},\rho_{i+1},a,\rho_{i-1}) [\rho_{i-1}\cdot b \cdot\rho_{i+1,i+2}]\\       
        &\quad\qquad+(-1)^{|b|}\mu_4(\rho_{i+1},a,\rho_{i-1},\rho_{i-2}) [\rho_{i-2,i-1}\cdot b \cdot\rho_{i+1}]
        +\mu_4(a,\rho_{i-1},\rho_{i-2},\rho_{i-3},)  [\rho_{i-3,i-2,i-1}\cdot b ].
\end{align*}
For example, Equation~\eqref{eq:diff3} is replaced by
\[
  \partial(\rho_1[\rho_1])= -U[\rho_{1234}]+U[\rho_{2341}]-U[\rho_{3412}]+ U[\rho_{4123}].
\]

Lemma~\ref{lem:C-is-cx} has the following analogue:

\begin{lemma}
This differential makes $\mathfrak{C}^*_{\ZZ}$ into a chain complex.
\end{lemma}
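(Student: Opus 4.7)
I would follow the decomposition strategy from the proof of Lemma~\ref{lem:C-is-cx} while carefully tracking Koszul signs. Write $\partial = \partial_1 + \partial_2 + \partial_4$, where $\partial_1(a[b]) = a[\partial b]$ is the piece coming from $\partial h = \rho_{1234}+\rho_{2341}+\rho_{3412}+\rho_{4123}$, where $\partial_2$ is the sum of length-one terms (with the sign $(-1)^{|b|}$ on terms of the form $\rho_i \cdot a[b\cdot \rho_i]$), and where $\partial_4$ is the sum of $\mu_4$ terms (again with the sign $(-1)^{|b|}$ appearing when the trailing chord factor is appended to the right of $b$). The relation $\partial^2=0$ then decomposes into six summands; the cross-terms $\partial_1^2$, $\partial_1\partial_2+\partial_2\partial_1$, $\partial_1\partial_4+\partial_4\partial_1$, and $\partial_2^2$ vanish essentially immediately, since $h^2=0$ in $\SmallCobarZ$, $\mu_0^1$ is central and killed when multiplied on either side by any $\rho_i$, and products $\rho_i\cdot\rho_j$ pair with $\rho_j\cdot \rho_i$ under a sign provided by $(-1)^{|b|}$ versus $(-1)^{|b|+1}$ (one of them gets flipped when $b$ picks up an extra length-one factor).

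The two substantive cancellations are $\partial_2\partial_4+\partial_4\partial_2=0$ and $\partial_4^2=0$, and these are exactly the case analyses from Lemma~\ref{lem:C-is-cx}; the question is whether the new $(-1)^{|b|}$ signs break any of the pairings there. The key observation is that whenever two terms cancelled in the $\Field$-case because the same product $[\rho_I\cdot b\cdot \rho_J]$ appeared with opposite formal paths (one from $\partial_2\partial_4$, the other from $\partial_4\partial_2$, say), the two contributions differ by moving a length-one letter from the right of $b$ to the left of $b$ (or vice versa). In the integer lift each such move toggles the parity of $|b|$ by $\pm 1$, producing exactly the sign needed to make the pair cancel. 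The identity $\mu_4(\rho_{i+3},\rho_{i+2},\rho_{i+1},\rho_i)=U\iota_{i+1 \bmod 2}$ has no sign in Theorem~\ref{thm:UnDefAlg-unique-Z}, so it contributes no extra signs to these cancellations.

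A cleaner conceptual route, which I would present as an alternative (or main) argument, is to bypass the direct verification by reducing to the fact that $\HC^*(\UnDefAlgZ)$ is automatically a chain complex (being the morphism complex of an $\Ainf$ bimodule, cf.\ Definition~\ref{def:HH-ainf} and the discussion in Section~\ref{sec:A-inf-deform}). Namely, the Koszul map $\phi'$ of Lemma~\ref{lem:Koszul2Z} induces, via $\Id \otimes \phi'$, an analogue of the map in Proposition~\ref{prop:SmallerModel2} from $\HC^*_\Gamma(\UnDefAlgZ)$ to $\mathfrak{C}^*_{\Gamma;\ZZ}$. The signs in the definition of $\partial$ on $\mathfrak{C}^*_\ZZ$ are precisely the signs required to make this a chain map, as one checks by tracing through the signs in the Hochschild differential (Equation~\eqref{eq:signed-Hoch-diff}) under the restriction to length-one Koszul dual generators $\rho_i^*$ and $U^*$; the $(-1)^{|b|}$ factor comes from the Koszul sign convention~\eqref{eq:SignConvention} as $\rho_i^*$ (odd) is moved past the length-$|b|$ element $b^*$. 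Since $\Id\otimes\phi'$ is surjective onto $\mathfrak{C}^*_{\Gamma;\ZZ}$ at the level of each graded piece, the vanishing $\delta^2=0$ on $\HC^*$ forces $\partial^2=0$ on $\mathfrak{C}^*_{\ZZ}$.

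The main obstacle in either approach is bookkeeping of the Koszul signs: the direct approach requires verifying that every one of the combinatorial cancellations in Lemma~\ref{lem:C-is-cx}—in particular the delicate $\partial_2\partial_4+\partial_4\partial_2$ argument—still pairs up correctly with the $(-1)^{|b|}$ factors, while the conceptual approach shifts the difficulty to verifying that the stated differential on $\mathfrak{C}^*_\ZZ$ is the one induced by the Koszul map with its Koszul signs. I expect the conceptual approach to be easier to write correctly, since it localizes the sign verification to the interaction of two length-one chords (the only nonzero outputs of $\phi'$), after which the chain complex property is automatic.
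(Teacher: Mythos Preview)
Your first approach is essentially the paper's own proof: decompose $\partial=\partial_1+\partial_2+\partial_4$ and check the six cross-terms, with $\partial_2\partial_4+\partial_4\partial_2$ requiring the case analysis from Lemma~\ref{lem:C-is-cx}. One caution: your heuristic that cancelling pairs ``differ by moving a length-one letter from right to left of $b$'' captures the bulk of the intermediate cancellations, but not the residual terms in the critical $|a|=1$ case. There the surviving terms (as the paper computes) are
\[
U\rho_1[(\rho_{1234}+\rho_{3412})\cdot b]-U\rho_1[b\cdot(\rho_{4123}+\rho_{2341})],
\]
and the cancellation is not a single-letter swap but the algebraic identity that left- and right-multiplication by the appropriate length-$4$ chord give the same element; the opposite signs arise from accumulated $(-1)^{|b|}$ factors along entirely different paths (all letters appended left versus all appended right). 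So your case check will go through, but the reason is slightly subtler than the heuristic suggests.

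Your second, conceptual approach---deduce $\partial^2=0$ from the surjectivity of $\Id\otimes\phi'$ and the chain-map relation $(\Id\otimes\phi')\circ\delta=\partial\circ(\Id\otimes\phi')$---is a genuine alternative not taken in the paper, and it is valid. The paper instead establishes the chain-complex property first (by direct computation) and only afterwards, in Proposition~\ref{prop:SmallerModel2Z}, verifies that $\Id\otimes\phi'$ is a chain map. Your route reverses this order; it has the advantage that $\delta^2=0$ is automatic on the Hochschild side, so the only sign verification is the intertwining relation, which is localized to the interaction with length-one generators. The trade-off is that you must check the intertwining relation carefully (including the terms of the Hochschild differential coming from $\mu_4$, which survive under $\phi'$ precisely when all three auxiliary inputs have length one, by Lemma~\ref{lem:alg-property-factor}), and this is comparable work to the direct check.
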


\begin{proof}
  The proof follows along the lines of Lemma~\ref{lem:C-is-cx}.
  Again, we break up the differential into its components $\partial_i$
  for $i=1,2,4$, according to which $\mu_i$ action contributes.  It is
  immediate that $\partial_1^2=0$. The identity $\partial_1\circ
  \partial_2+\partial_2\circ \partial_1$ follows quickly from the fact
  that $|h|\equiv1\pmod{2}$, 
  as does $\partial_1 \partial_4+\partial_4\partial_1=0$.

  Again, verifying $\partial_2\partial_4+\partial_4\partial_2=0$ is a case check.
  For example, as in the proof of Lemma~\ref{lem:C-is-cx}, only now
  keeping track of signs, we find that
  \[ \partial_2\partial_4(\rho_1[b])+\partial_4\partial_2(\rho_1[b])=
  U\rho_1[(\rho_{1234}+\rho_{3412})\cdot b]-U\rho_1[b\cdot(\rho_{4123}+\rho_{2341})]=0.\]

  The cancellation in $\partial_4^2=0$ is also straightforward,
  and is left to the reader.
\end{proof}

Specializing gradings, we have
$\mathfrak{C}^{W,\ell}_{\Gamma;\ZZ}\subset \mathfrak{C}^*_{\ZZ}$
(analogous to $\mathfrak{C}^{W,\ell}_\Gamma\subset \mathfrak{C}^*$;
see Equation~\eqref{eq:SpecialGradings}).  We now have the following
analogue of Proposition~\ref{prop:SmallerModel2}:

\begin{proposition}
  \label{prop:SmallerModel2Z}
  The chain complex $\mathfrak{C}^*_{\Gamma;\ZZ}$ is quasi-isomorphic to 
  the complex $\HC^*_\Gamma(\UnDefAlgZ)$; in particular
  $H^{w,k}(\mathfrak{C}_{\Gamma;\ZZ})\cong \HH^{w,k}_\Gamma(\UnDefAlgZ)$.
\end{proposition}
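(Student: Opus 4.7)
The plan is to follow the proof of Proposition~\ref{prop:SmallerModel2} essentially verbatim, replacing $\Field$ by $\ZZ$ throughout and replacing Lemma~\ref{lem:Koszul2} by its signed refinement Lemma~\ref{lem:Koszul2Z}. Specifically, I would define the map
\[
  \Id\otimes\phi'\colon \HC^*(\UnDefAlgZ) = \UnDefAlgZ\hotimes_{\Ground\otimes\Ground}\Cobarop(\UnDefAlgZ)\longrightarrow \UnDefAlgZ\hotimes_{\Ground\otimes\Ground}\SmallCobarZ = \mathfrak{C}^*_{\ZZ}
\]
by $(\Id\otimes\phi')(\sum a_i\otimes x_i^*) = \sum a_i[\phi'(x_i^*)]$, using that $\phi'$ preserves the winding number so this descends to the completed tensor product. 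It is clear from the definitions that this map respects the $\Gamma$-grading and takes the $\Gamma$-graded part to the $\Gamma$-graded part. The key thing to check is then that it is a chain map, after which the quasi-isomorphism statement follows by filtering both sides by the total length of the element of $\UnDefAlgZ$ (both complexes are complete with respect to this filtration in each fixed grading by Lemma~\ref{lem:UnDefAlg-bounded-ish}) and applying Lemma~\ref{lem:Koszul2Z} to see that $\Id\otimes\phi'$ induces an isomorphism on the $E_1$ page.

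The main work is verifying the chain map property with signs. As in the proof of Proposition~\ref{prop:SmallerModel2}, I would decompose the differential on $\mathfrak{C}^*_{\ZZ}$ as $\bdy_1+\bdy_2+\bdy_4$ and match each with a corresponding term in the signed Hochschild differential (the signed analogue of Equation~\eqref{eq:HC-diff-cob}, induced by Equation~\eqref{eq:signed-Hoch-diff} combined with the dualized $\mu_n^0$ for $n\geq 3$). On the $\mathfrak{C}^*_{\ZZ}$ side, the signs $(-1)^{|b|}$ attached to the left-multiplication terms in $\bdy_2$ and $\bdy_4$ arise precisely because, in the signed bar/cobar conventions, moving an element of $A_+[1]$ past a tensor of length $|b|$ of elements of $A_+[1]$ introduces a Koszul sign. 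Since Lemma~\ref{lem:Koszul2Z} identifies the $\Zmod 2$-grading on $\Cobarop(\UnDefAlgZ)$ with the mod-$2$ length grading on $\SmallCobarZ$ under $\phi'$, the sign conventions on the two sides match under $\Id\otimes\phi'$. The terms from $\mu_n^0$ with $n>4$ are killed by $\phi'$ in any case (by Lemma~\ref{lem:alg-property-factor}, any such non-zero $\mu_n^0$ has at least two inputs of length $>1$, all of which are sent to zero by $\phi'$), and the surviving $\mu_4^0$ terms contribute exactly $\bdy_4$ with the correct sign since the signs in Equation~\eqref{eq:signed-Hoch-diff} match those in the definition of $\bdy_4$.

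The hard part I expect is bookkeeping the signs cleanly: while the overall strategy is entirely parallel to the $\FF_2$ case, one must correctly identify the two different sources of signs---the suspension signs implicit in the bar complex (via Equation~\eqref{eq:BarDifferential} interpreted with signs as in Section~\ref{sec:Signs}) and the Koszul signs in $\SmallCobarZ$ coming from the fact that $|h|$ is odd---and verify that they transform consistently under $\phi'$. Once this sign matching is established on the level of $\bdy_1+\bdy_2+\bdy_4$ versus the corresponding pieces of the Hochschild differential, the chain map property holds, and the filtration argument then completes the proof in the same way as in the $\FF_2$ case.
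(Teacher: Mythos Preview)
Your proposal is correct and follows exactly the approach the paper takes: the paper's proof simply states that the proof of Proposition~\ref{prop:SmallerModel2} applies with minor changes, and you have spelled out precisely what those minor changes are (replacing Lemma~\ref{lem:Koszul2} by Lemma~\ref{lem:Koszul2Z} and tracking the Koszul signs). Your explanation is in fact more detailed than the paper's own proof.
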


\begin{proof}
  The proof of Proposition~\ref{prop:SmallerModel2} applies with minor changes.
\end{proof}
  
Proposition~\ref{prop:UnDefHoch} now has the following analogue:
\begin{proposition}\label{prop:UnDefHochZ} The Hochschild cohomology
  groups $\HH_{\Gamma}^{w,k}(\UnDefAlgZ)$, $w>0$, have
  \begin{align*}
  \HH^{w,-1}_\Gamma(\UnDefAlgZ)&=
  \begin{cases}
   \ZZ^{2} & w=1\\
    0 &\text{otherwise}
  \end{cases}
  \end{align*}
  and $\HH^{w,-2}_\Gamma(\UnDefAlgZ)$ is entirely supported in weight
  ($w$) grading $1$.
  Moreover, one can choose a basis for $\HH^{1,-1}_\Gamma(\UnDefAlgZ)$ so
  that one basis element sends $1\in\Ground$ to
  $\rho_{1234}+\rho_{2341}+\rho_{3412}+\rho_{4123}$ and the other
  sends $1\in\Ground$ to $U=U(\iota_0+\iota_1)$.
\end{proposition}

\begin{proof}
  This follows as in the proof of Proposition~\ref{prop:UnDefHoch}.
  Again, the generators of the homology are
  $\rho_{1234}[\iota_1]+\rho_{2341}[\iota_0]+\rho_{3412}[\iota_1]+\rho_{4123}[\iota_0]$ and
  $U\iota_0[\iota_1]+U\iota_1[\iota_0]$.
\end{proof}

\begin{proof}[Proof of Theorem~\ref{thm:MAlg-uniqueZ}]
  As in the case of Theorem~\ref{thm:MAlg-unique}, this follows from
  the above Hochschild computation (Proposition~\ref{prop:UnDefHochZ})
  and deformation theory (Proposition~\ref{prop:deform-Ainf-algZ}).
\end{proof}

\subsection{Application to the Fukaya category}
Sheridan showed that the anchored Fukaya category can be defined with
$\ZZ$, instead of $\FF_2$, coefficients~\cite{Sheridan:CY-hyp}. In
particular, his construction makes $\End_{\WFuk_z}(\alpha_1\oplus\alpha_2;\ZZ)$
into an $\Ainf$-algebra over $\ZZ[U]$. For his, or any other, way of
lifting the wrapped Fukaya category of the torus to
$\ZZ$-coefficients, we have:

\begin{theorem}\label{thm:UnDef-is-Fuk-Z}
  There is an $\Ainf$-quasi-isomorphism
  $\UnDefAlgZ\simeq \End_{\WFuk_z}(\alpha_1\oplus\alpha_2;\ZZ).$
\end{theorem}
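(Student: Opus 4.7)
The plan is to reduce the statement to the integral uniqueness theorem for $\UnDefAlgZ$ (Theorem~\ref{thm:UnDefAlg-unique-Z}), paralleling the mod-$2$ argument used to deduce Theorem~\ref{thm:UnDef-is-Fuk} from Corollary~\ref{cor:refined-unique}. Specifically, I will show that $\End_{\WFuk_z}(\alpha_1\oplus\alpha_2;\ZZ)$, with the Hamiltonian perturbations chosen exactly as in Section~\ref{sec:Fuk-proof}, is a $(\Gamma=G\times\ZZ)$-graded $\Ainf$-deformation of $\AsUnDefAlgZ$ satisfying the normalization of condition~(\ref{item:Uuniqe-mu-4-Z}) of Theorem~\ref{thm:UnDefAlg-unique-Z}. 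Since that theorem is already formulated over $\ZZ[U]$, the desired quasi-isomorphism follows immediately.

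First, I would identify an integral $\Ground$-basis of $\End_{\WFuk_z}(\alpha_1\oplus\alpha_2;\ZZ)$ lifting the intersection-point basis listed in Section~\ref{sec:Fuk-proof}. The grading computation of Lemma~\ref{lem:Fuk-torus-grs} is topological and passes verbatim to $\ZZ$-coefficients, equipping this algebra with the required $\Gamma$-grading and giving the $\Zmod{2}$-grading in which every generator is even (this matches the convention used in Section~\ref{sec:Signs} for $\AsUnDefAlgZ$). Second, I would lift Lemma~\ref{lem:Fuk-torus-mu2}: each structure constant of $\mu_2$ is computed by a unique immersed triangle contained in the conical end, and I would rescale the basis elements by signs so that every such triangle contributes $+1$. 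Concretely, fix the signs of $\rho_1,\rho_2,\rho_3,\rho_4$ and $\iota_0,\iota_1$ arbitrarily, then inductively define $\rho_{i,\ldots,i+n} \coloneqq \rho_i\cdot\rho_{i+1}\cdots\rho_{i+n}$ (where the product is computed in the Fukaya algebra). The fact that the mod-$2$ reduction of this associative structure agrees with $\AsUnDefAlg$ forces every non-trivial $\mu_2$ coincidence (such as $\rho_{i,\ldots,j}\cdot\rho_{j+1,\ldots,k} = \pm\rho_{i,\ldots,k}$) to be $\pm$; associativity and the fact that the signs all appear on generators of free rank~$1$ summands imply that, after this renormalization, every $\mu_2$ product is exactly $+\rho_{i,\ldots,k}$. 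This exhibits the underlying associative algebra as $\AsUnDefAlgZ$, and hence the Fukaya algebra as an $\Ainf$-deformation of $\AsUnDefAlgZ$.

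Third, I would verify the integer normalization of $\mu_4$. After lifting Lemma~\ref{lem:Fuk-torus-mu4}, each of $\mu_4(\rho_4,\rho_3,\rho_2,\rho_1)$ and $\mu_4(\rho_3,\rho_2,\rho_1,\rho_4)$ is counted by a single pentagon covering $z$ once, so each equals $\pm U\iota_1$ or $\pm U\iota_0$ respectively. I would then choose the sign of $U$ (which is only constrained up to $\pm 1$ by the wrapping of the once-punctured torus) so that $\mu_4(\rho_4,\rho_3,\rho_2,\rho_1) = U\iota_1$; the fourfold symmetry of the construction, together with the four-input $\Ainf$-relation with input $\iota_0\otimes\rho_4\otimes\rho_3\otimes\rho_2\otimes\rho_1$ type entries, forces the second identity $\mu_4(\rho_3,\rho_2,\rho_1,\rho_4) = U\iota_0$ to hold with the same sign. (If the pentagon counts are controlled by a brane structure whose sign is opposite to what I want, I would absorb this by also flipping the overall sign of $U$ or equivalently changing the orientation of the path $\gamma_z$.) With all three hypotheses of Theorem~\ref{thm:UnDefAlg-unique-Z} verified, the uniqueness statement gives the desired isomorphism over $\ZZ[U]$.

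The main obstacle is the sign verification in the last step: the Fukaya category sign conventions depend on detailed orientation choices on moduli spaces and on $\Pin$-structures on the Lagrangians $\alpha_1,\alpha_2$, and tracing these through to a specific integer value of the pentagon count is delicate. However, because the count is a single pentagon, the only freedom in its sign is an overall $\pm 1$ that can be absorbed either into the sign of $U$ (which does not affect any of the already-verified structure, since $U$ does not appear in $\mu_2$ of two Reeb elements) or into a choice of brane structure on $\alpha_2$; this degree of freedom is exactly what is needed for the normalization to succeed.
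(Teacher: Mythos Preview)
Your approach is essentially the same as the paper's, which also reduces to Theorem~\ref{thm:UnDefAlg-unique-Z} by observing that any sign assignment on $\mu_2$ is isomorphic to $\AsUnDefAlgZ$ and that a sign in front of the $\mu_4$-normalization can be absorbed by $U\mapsto -U$.

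There is one concrete error in your sign-consistency step. The $\Ainf$-relation with inputs $\iota_0\otimes\rho_4\otimes\rho_3\otimes\rho_2\otimes\rho_1$ does not link the two $\mu_4$-values: since $\iota_0\rho_4=0$ and $\mu_4$ vanishes on any idempotent input, every term in that relation is zero. The relation the paper actually uses has inputs $(\rho_4,\rho_3,\rho_2,\rho_1,\rho_4)$; there all the intermediate $\mu_2$ terms vanish (because $\rho_4\rho_3=\rho_3\rho_2=\rho_2\rho_1=\rho_1\rho_4=0$), leaving exactly
\[
\pm\,\mu_2\bigl(\mu_4(\rho_4,\rho_3,\rho_2,\rho_1),\rho_4\bigr)\;\pm\;\mu_2\bigl(\rho_4,\mu_4(\rho_3,\rho_2,\rho_1,\rho_4)\bigr)=0,
\]
which forces the two pentagon counts to have the same sign. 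The appeal to ``fourfold symmetry'' is not a substitute: it depends on brane and orientation choices and is not a priori compatible with the sign conventions.
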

\begin{proof}
  This follows from Theorem~\ref{thm:UnDefAlg-unique-Z} and the
  observation that for any way of assigning signs in the
  multiplication on
  $\AsUnDefAlgZ=(\End_{\WFuk_z}(\alpha_1\oplus\alpha_2),\mu_2)$ there
  is an isomorphism to $\AsUnDefAlgZ$, and similarly if one negates
  the equations in Theorem~\ref{thm:UnDefAlg-unique-Z} (\ref{item:Uuniqe-mu-4-Z}) above one obtains an
  isomorphic $\Ainf$-algebra (by sending $U$ to $-U$). (Note that the
  $\Ainf$-relation with inputs $(\rho_4,\rho_3,\rho_2,\rho_1,\rho_4)$
  ensures that both equations in
  Theorem~\ref{thm:UnDefAlg-unique-Z} (\ref{item:Uuniqe-mu-4-Z}) have
  the same sign.)
\end{proof}


\bibliographystyle{hamsalpha}\bibliography{heegaardfloer}

\providecommand{\bysame}{\leavevmode\hbox to3em{\hrulefill}\thinspace}
\providecommand{\href}[2]{#2}
\providecommand{\eprint}{\begingroup \urlstyle{rm}\Url}
\begin{thebibliography}{DHKK14}

\bibitem[AB22]{AB22:Fukaya}
Haniya Azam and Christian Blanchet, \emph{Topological {F}ukaya categories of
  surfaces}, J. Pure Appl. Algebra \textbf{226} (2022), no.~6, Paper No.
  106941, 30.

\bibitem[Abo08]{Abouzaid08:FukSurf}
Mohammed Abouzaid, \emph{On the {F}ukaya categories of higher genus surfaces},
  Adv. Math. \textbf{217} (2008), no.~3, 1192--1235.

\bibitem[AS10]{AbouzaidSeidel10:wrapped}
Mohammed Abouzaid and Paul Seidel, \emph{An open string analogue of {V}iterbo
  functoriality}, Geom. Topol. \textbf{14} (2010), no.~2, 627--718.

\bibitem[AS20]{AS:Fuk-surf}
Denis Auroux and Ivan Smith, \emph{{F}ukaya categories of surfaces, spherical
  objects, and mapping class groups}, 2020, \eprint{arXiv:2006.09689}.

\bibitem[Aur10a]{Auroux10:ICM}
Denis Auroux, \emph{Fukaya categories and bordered {H}eegaard-{F}loer
  homology}, Proceedings of the {I}nternational {C}ongress of {M}athematicians.
  {V}olume {II} (New Delhi), Hindustan Book Agency, 2010, pp.~917--941.

\bibitem[Aur10b]{Auroux10:Bordered}
\bysame, \emph{Fukaya categories of symmetric products and bordered
  {H}eegaard-{F}loer homology}, J. G\"okova Geom. Topol. GGT \textbf{4} (2010),
  1--54, \eprint{arXiv:1001.4323}.

\bibitem[Aur14]{Auroux:beginners}
\bysame, \emph{A beginner's introduction to {F}ukaya categories}, Contact and
  symplectic topology, Bolyai Soc. Math. Stud., vol.~26, J\'anos Bolyai Math.
  Soc., Budapest, 2014, pp.~85--136.

\bibitem[CS20]{CS20:Hall}
Benjamin Cooper and Peter Samuelson, \emph{The {H}all algebras of surfaces
  {I}}, J. Inst. Math. Jussieu \textbf{19} (2020), no.~3, 971--1028.

\bibitem[DHKK14]{DHKK14:dynamic-cat}
G.~Dimitrov, F.~Haiden, L.~Katzarkov, and M.~Kontsevich, \emph{Dynamical
  systems and categories}, The influence of {S}olomon {L}efschetz in geometry
  and topology, Contemp. Math., vol. 621, Amer. Math. Soc., Providence, RI,
  2014, pp.~133--170.

\bibitem[Efi12]{Efimov12:higher-g}
Alexander~I. Efimov, \emph{Homological mirror symmetry for curves of higher
  genus}, Adv. Math. \textbf{230} (2012), no.~2, 493--530.

\bibitem[FOOO09]{FOOO1}
Kenji Fukaya, Yong-Geun Oh, Hiroshi Ohta, and Kaoru Ono, \emph{Lagrangian
  intersection {F}loer theory: anomaly and obstruction. {P}art {I}}, AMS/IP
  Studies in Advanced Mathematics, vol.~46, American Mathematical Society,
  Providence, RI; International Press, Somerville, MA, 2009.

\bibitem[FOOO10]{FOOO10:anchored}
\bysame, \emph{Anchored {L}agrangian submanifolds and their {F}loer theory},
  Mirror symmetry and tropical geometry, Contemp. Math., vol. 527, Amer. Math.
  Soc., Providence, RI, 2010, pp.~15--54.

\bibitem[GJ90]{GetzlerJones}
Ezra Getzler and John D.~S. Jones, \emph{{$A_\infty$}-algebras and the cyclic
  bar complex}, Illinois J. Math. \textbf{34} (1990), no.~2, 256--283.

\bibitem[HLW17]{HLW17:decat}
Jennifer Hom, Tye Lidman, and Liam Watson, \emph{The {A}lexander module,
  {S}eifert forms, and categorification}, J. Topol. \textbf{10} (2017), no.~1,
  22--100.

\bibitem[HRW24]{HRW}
Jonathan Hanselman, Jacob Rasmussen, and Liam Watson, \emph{Bordered {F}loer
  homology for manifolds with torus boundary via immersed curves}, J. Amer.
  Math. Soc. \textbf{37} (2024), no.~2, 391--498.

\bibitem[Kel01]{AinftyAlg}
Bernhard Keller, \emph{Introduction to {$A$}-infinity algebras and modules},
  Homology Homotopy Appl. \textbf{3} (2001), no.~1, 1--35,
  \eprint{arXiv:math.RA/9910179}.

\bibitem[Kel02]{KellerRepTheory}
Bernhard Keller, \emph{{$A$}-infinity algebras in representation theory},
  Representations of algebra. {V}ol. {I}, {II}, Beijing Norm. Univ. Press,
  Beijing, 2002, pp.~74--86.

\bibitem[KP21]{KP:integers}
Douglas Knowles and Ina Petkova, \emph{Bordered {H}eegaard {F}loer homology
  with integral coefficients over the torus algebra}, 2021,
  \eprint{arXiv:2104.15120}.

\bibitem[KS02]{KhS02:BraidGpAction}
Mikhail Khovanov and Paul Seidel, \emph{Quivers, {F}loer cohomology, and braid
  group actions}, J. Amer. Math. Soc. \textbf{15} (2002), no.~1, 203--271,
  \eprint{arxiv:math.QA/0006056}.

\bibitem[LOT11]{LOTHomPair}
Robert Lipshitz, Peter~S. Ozsv{\'a}th, and Dylan~P. Thurston, \emph{Heegaard
  {F}loer homology as morphism spaces}, Quantum Topol. \textbf{2} (2011),
  no.~4, 381--449, \eprint{arXiv:1005.1248}.

\bibitem[LOT13]{LOT:faith}
\bysame, \emph{A faithful linear-categorical action of the mapping class group
  of a surface with boundary}, J. Eur. Math. Soc. (JEMS) \textbf{15} (2013),
  no.~4, 1279--1307.

\bibitem[LOT15]{LOT2}
\bysame, \emph{Bimodules in bordered {H}eegaard {F}loer homology}, Geom. Topol.
  \textbf{19} (2015), no.~2, 525--724, \eprint{arXiv:1003.0598}.

\bibitem[LOT18]{LOT1}
\bysame, \emph{Bordered {H}eegaard {F}loer homology}, Mem. Amer. Math. Soc.
  \textbf{254} (2018), no.~1216, viii+279, \eprint{arXiv:0810.0687}.

\bibitem[LOT20]{LOT:abstract}
\bysame, \emph{Diagonals and {$A$}-infinity tensor products}, 2020,
  \eprint{arXiv:2009.05222}.

\bibitem[LOT23]{LOT:torus-mod}
\bysame, \emph{Bordered {$\HFm$} for three-manifolds with torus boundary},
  2023, \eprint{arXiv:2305.07754}.

\bibitem[LP11]{LekiliPerutz11:torus}
Yank{\i } Lekili and Timothy Perutz, \emph{Fukaya categories of the torus and
  {D}ehn surgery}, Proc. Natl. Acad. Sci. USA \textbf{108} (2011), no.~20,
  8106--8113.

\bibitem[LP17]{LekPol17:arithmetic}
Yank{\i} Lekili and Alexander Polishchuk, \emph{Arithmetic mirror symmetry for
  genus 1 curves with {$n$} marked points}, Selecta Math. (N.S.) \textbf{23}
  (2017), no.~3, 1851--1907.

\bibitem[O{\relax Sz}04]{OS04:HolomorphicDisks}
Peter~S. Ozsv{\'a}th and Zolt{\'a}n {\relax Sz}ab{\'o}, \emph{Holomorphic disks
  and topological invariants for closed three-manifolds}, Ann. of Math. (2)
  \textbf{159} (2004), no.~3, 1027--1158, \eprint{arXiv:math.SG/0101206}.

\bibitem[Pet18]{Petkova18:decat}
Ina Petkova, \emph{The decategorification of bordered {H}eegaard {F}loer
  homology}, J. Symplectic Geom. \textbf{16} (2018), no.~1, 227--277.

\bibitem[PS19]{PS19:Fuk-surf-mirror}
James Pascaleff and Nicol\`o Sibilla, \emph{Topological {F}ukaya category and
  mirror symmetry for punctured surfaces}, Compos. Math. \textbf{155} (2019),
  no.~3, 599--644.

\bibitem[RS93]{RobbinSalamon93:Maslov}
Joel Robbin and Dietmar Salamon, \emph{The {M}aslov index for paths}, Topology
  \textbf{32} (1993), no.~4, 827--844.

\bibitem[Sei11]{Seidel11:genus2}
Paul Seidel, \emph{Homological mirror symmetry for the genus two curve}, J.
  Algebraic Geom. \textbf{20} (2011), no.~4, 727--769.

\bibitem[Sei15]{Seidel15:quartic}
\bysame, \emph{Homological mirror symmetry for the quartic surface}, Mem. Amer.
  Math. Soc. \textbf{236} (2015), no.~1116, vi+129,
  \eprint{arXiv:math/0310414v4}.

\bibitem[She15]{Sheridan:CY-hyp}
Nick Sheridan, \emph{Homological mirror symmetry for {C}alabi-{Y}au
  hypersurfaces in projective space}, Invent. Math. \textbf{199} (2015), no.~1,
  1--186, \eprint{arXiv:1111.0632}.

\end{thebibliography}
\end{document}